\newcommand{\lel}{\mathrel{\leqslant_\ell}}
\newcommand{\ler}{\mathrel{\leqslant_{r}}}
\newtheorem{thm}{Theorem}[section]
\newtheorem{lem}[thm]{Lemma}
\newtheorem{pro}[thm]{Proposition}
\theoremstyle{definition}
\newtheorem{dfn}{Definition}[section]
\theoremstyle{remark}
\newtheorem{rmk}{Remark}[section]
\DeclareMathOperator{\im}{im}
\title[A Tale of Two Categories: Inductive groupoids and Cross-connections]{A Tale of Two Categories:\\
Inductive groupoids and Cross-connections}
\author{P. A. Azeef Muhammed}
\address{Department of Mathematics and Natural Sciences, Prince Mohammad Bin Fahd University, Al Khobar 31952, Kingdom of Saudi Arabia.}
\address{Institute of Natural Sciences and Mathematics,
Ural Federal University, Lenina 51, 620000 Ekaterinburg, Russia}
\email{azeefp@gmail.com}
\author{Mikhail V. Volkov}
\address{Institute of Natural Sciences and Mathematics,
Ural Federal University, Lenina 51, 620000 Ekaterinburg, Russia}
\email{m.v.volkov@urfu.ru}
\keywords{Biordered set, inductive groupoid, normal category, cross-connection, regular semigroup}
\subjclass[2010]{18B40, 20M17, 18B35, 18A32, 20M50, 20M10}
\begin{document}

\begin{abstract}
A groupoid is a small category in which all morphisms are isomorphisms. An inductive groupoid is a specialised groupoid whose object set is a regular biordered set and the morphisms admit a partial order. A normal category is a specialised small category whose object set is a strict preorder and the morphisms admit a factorisation property. A pair of `related' normal categories constitutes a cross-connection. Both inductive groupoids and cross-connections were identified by Nambooripad as categorical models of regular semigroups. We explore the inter-relationship between these seemingly different categorical structures and prove a direct category equivalence between the category of inductive groupoids and the category of cross-connections.
\end{abstract}

\maketitle

\section{Introduction}

In the 1880s, Sophus Lie introduced pseudogroups\footnote{Lie termed them `infinite continuous groups' as opposed to `finite continuous groups', i.e., Lie groups in modern terminology.} as generalisations of Lie groups appropriate in the context of his work in geometries of infinite dimension; see \cite[Chapter~1]{lawson}. After algebra had managed to exempt the idea of a group from its geometric cradle and developed the abstract concept of a group, the quest for the abstract structure represented by pseudogroups began. This quest led to two main solutions in the 1950s. The first solution proposed independently by Wagner \cite{wagner} and Preston \cite{invpres} involved the introduction of inverse semigroups. The second solution provided by Ehresmann \cite{ehrs,ehrs1} used the categorical idea of a groupoid. Later, Schein \cite{schein,schein1} connected Ehresmann's work on differential geometry with Wagner's ideas on inverse semigroups to provide a structure theorem for inverse semigroups using groupoids.

Incidentally, inverse semigroups, born around the 1950s, were a special instance of a much more general concept invented earlier and for a completely different purpose. In the 1930s, von Neumann \cite{rr,cg1} introduced and made a deep study of regular rings in his ground-breaking work on continuous geometry. A ring $R$ is said to be (von Neumann) regular if for every $a\in R$, there exists $b\in R$ such that $aba=a$. Observe that the regularity of a ring is in fact a property of its multiplicative semigroup so that it is fair to say that von Neumann introduced regular semigroups as well, even though he did not use the latter term. Beyond the class of multiplicative semigroups of regular rings, examples of regular semigroups are plentiful and include many natural and important objects such as the semigroup of all transformations of a set. Certain species of regular semigroups were studied as early as 1940; see \cite{clifinv,rees}, and by the 1970s, studying regular semigroups became a hot topic in the blossoming area of the algebraic theory of semigroups.

Regular semigroups arguably form the most general class of semigroups which admits a notion of inverse elements that naturally extends the corresponding notion for groups. Namely, two elements $a,b$ of a semigroup $S$ are said to be inverses of each other if $aba=a$ and $bab=b$. It is well known and easy to see that a semigroup is regular if and only if each of its elements has an inverse. Inverse semigroups can be defined as semigroups in which every element has a unique inverse. In a given inverse semigroup $S$, the map that assigns to each element $a$ its unique inverse $a^{-1}$ can be seen to constitute an involutary anti-automorphism of $S$, similarly to the case of groups. Hence, the usual right-left duality reduces to a mere isomorphism: if $S^{op}$ denotes the dual\footnote{Recall that $S^{op}$ is defined on the same set as $S$ but the multiplication $\circ$ on $S^{op}$ is defined by $a\circ b:=b\cdot a$, where $\cdot$ stands for the multiplication in $S$.} of $S$, then $S$ is isomorphic to $S^{op}$ under the map $a\mapsto a^{-1}$, whenever $S$ is inverse. Thus, an inverse semigroup seen from the left looks the same as from the right. As we move to general regular semigroups, this inbuilt symmetry is lost, and the right-left duality may become highly nontrivial. In particular, providing structure theorems for regular semigroups using categories requires inventing categorical structures that would be less symmetric than groupoids on the one hand but still possess some intrinsic duality on the other hand, quite a challenging task!

There are two successful approaches to this task, both due to Nambooripad. They rely on the idea of replacing a `too symmetric' object by a couple of interconnected `unilateral' objects. We observe in passing that this idea has got other interesting incarnations in categorical algebra. As examples, consider Loday's approach to Leibniz algebras \cite{loday} or the recent notion of a constellation studied by Gould, Hollings, and Stokes \cite{gouldrestr,Gouldstokes2017,Stokes2017}\footnote{Leibniz algebras are non-anticommutative versions of Lie algebras. A standard way to produce a Lie algebra is to define Lie bracket $[x,y]:=xy-yx$ on an associative algebra $A$; the bracket operation is clearly anticommutative, that is, $[x,y]=-[y,x]$. Loday `splits' the multiplication in $A$ into two by considering two associative operations: the `right' product $x\triangleright y$ and the `left' product $x\triangleleft y$ which are interconnected by certain axioms. The axioms ensure that the bracket $[x,y]:=x\triangleright y-y\triangleleft x$ satisfies the Leibniz law $\bigl[[x,y],z\bigr]=\bigl[[x,z],y\bigr]+\bigl[x,[y,z]\bigr]$ while the anticommutativity may fail. In a similar way, constellations are partial algebras that are one-sided generalisations of categories.}.

Nambooripad's first approach \cite{mem} takes as its starting point the structure of idempotents in a semigroup. Recall that an element $e$ of a semigroup $S$ is called an idempotent if $e^2=e$. On the set $E(S)$ of all idempotents of $S$, one can define the relation $\leqslant $ letting $e\leqslant  f$ for $e,f\in E(S)$ if and only if $e=ef=fe$. It is easy to see that $\leqslant $ is a partial order on $E(S)$. In inverse semigroups idempotents commute (in fact, it is this property that specifies inverse semigroups within the class of regular semigroups) whence $(E(S),\leqslant)$ is a semilattice. This semilattice plays a crucial role in the structure theory of inverse semigroups. Nambooripad \cite{mem} considered general semigroups, for which he `split' the partial order $\leqslant $ into two interconnected preorders. This led him to the notion of a biordered set as the abstract model of the idempotents of an arbitrary semigroup. Nambooripad isolated a property that characterises biordered sets of regular semigroups and developed the notion of an inductive groupoid on the base of this characterisation. This way, the category $\mathbf{IG}$ of inductive groupoids arose, the first of the two categories being the objects of the present paper. Using this category, Nambooripad generalised Schein's work to regular semigroups and proved a category equivalence between the category of regular semigroups and the category $\mathbf{IG}$.

Later, Nambooripad \cite{cross}, building on an alternative approach initiated by Hall \cite{hall} and Grillet \cite{grilcross}, introduced the notion of a normal category as the abstract categorical model of principal one-sided ideals of a regular semigroup. Each regular semigroup $S$ gives rise to two normal categories: one that models the principal left ideals of $S$ and another one that corresponds to the principal right ideals. In the treatise~\cite{cross}, which was --- quoting Meakin and Rajan \cite{jmar} --- ``somewhat reminiscent of von Neumann's foundational work on regular rings'', Nambooripad devised a structure called a cross-connection which captured the non-trivial interrelation between these two  normal categories. Cross-connections also form a category, denoted by $\mathbf{CC}$, which constitutes the second main object of the present paper. Nambooripad mastered the category $\mathbf{CC}$ as an alternate technique to describe regular semigroups; namely, he proved that the category of regular semigroups is equivalent to $\mathbf{CC}$.

The two approaches of Nambooripad seemed unrelated if not orthogonal to each other. However, the present authors \cite{indcxn1} established an equivalence between inductive groupoids and cross-connections that `bypasses' regular semigroups in the sense that the equivalence between $\mathbf{IG}$ and $\mathbf{CC}$ exposited in \cite{indcxn1} was not a mere composition of the aforementioned categorical equivalences found in~\cite{mem,cross}. Still, the equivalence from \cite{indcxn1} remained in the realm of regular semigroups: what we did is that we explored the inter-relationship between the idempotent structure and ideal structure in an arbitrary regular semigroup to establish how one can be retrieved from the other. In the present paper, we make one further step. Namely, we discuss inductive groupoids and cross-connections in a purely categorical setting and build upon this a direct category equivalence between the categories $\mathbf{IG}$ and $\mathbf{CC}$, thus divorcing ourselves completely from a semigroup setting.

The rest of the paper is divided into five sections. In Section \ref{secpre}, we briefly discuss some preliminaries needed for the sequel; in particular, we introduce inductive groupoids and cross-connections. In the next section, we construct the inductive groupoid associated with a given cross-connection and build a functor $\mathbb{I}\colon\mathbf{CC}\to \mathbf{IG}$. In Section \ref{cxnind}, we construct a cross-connection from an inductive groupoid and the corresponding functor $\mathbb{C}\colon \mathbf{IG}\to \mathbf{CC}$. In Section~\ref{sec:equiv}, we verify that the functor $\mathbb{I}\mathbb{C}$ is naturally isomorphic to the functor $1_\mathbf{CC}$ and the functor $\mathbb{C}\mathbb{I}$ is naturally isomorphic to the functor $1_\mathbf{IG}$, thus establishing the category equivalence between $\mathbf{IG}$ and $\mathbf{CC}$. The final section re-discusses the results and outlines some possible developments.

\section{Preliminaries}\label{secpre}
We assume the reader's acquaintance with basic notions of category theory \cite{mac}. As mentioned, the ideas discussed in the paper arose in the realm of regular semigroups; however, here we deal with them in the realm of categories only. So, although a semigroup background may be helpful, it is not a prerequisite for understanding the constructions in the paper. The reader interested in a more detailed presentation of the genesis of the concepts of an inductive groupoid and a cross-connection and their role within semigroup theory may find rather a self-contained account of this material in~\cite[Sections~1 and~2]{indcxn1}.

Our basic notational conventions are the following. For a small category $\mathcal C$, its set of objects is denoted by $v\mathcal C$ and its set of morphisms is denoted by $\mathcal{C}$ itself. For $c,d\in v\mathcal{C}$, the set all morphisms from $c$ to $d$ is denoted by $\mathcal{C}(c,d)$. We compose functions and morphisms from left to right so that in expressions like $fg$ or $\gamma\ast\delta$ etc., the left factor applies first.

\subsection{Biordered sets}
Let $E$ be a set with a partial binary operation denoted by juxtaposition. Let $D_E\subseteq E\times E$ stand for the domain of the partial operation. Define two binary relations \raisebox{-3.5pt}{\begin{tikzpicture}\draw[>-, thick]   (0,0)node[anchor=east]{} -- (.5,0)node[anchor=west]{};\draw[->, thick]   (1.3,0)node[anchor=east]{ and } -- (1.8,0)node[anchor=west]{};\end{tikzpicture}} on the set $E$ as follows:
\begin{center}
\begin{tikzpicture}
\draw[>-, thick]   (0,0)node[anchor=east]{$e$} -- (.5,0)node[anchor=west]{$f$};
\draw (3,0) node{$\iff (e,f)\in D_E$ and $ef=e$;};
\draw[->, thick]   (6,0)node[anchor=east]{$e$} -- (6.5,0)node[anchor=west]{$f$};
\draw (9.1,0)node{$\iff (f,e)\in D_E$ and $fe=e$.};
\end{tikzpicture}
\end{center}
We use the notation\raisebox{-3.5pt}{\begin{tikzpicture}\draw[>-<, thick]   (0,0)node[anchor=east]{} -- (.5,0)node[anchor=west]{};\draw[<->, thick]   (1.3,0)node[anchor=east]{ and } -- (1.8,0)node[anchor=west]{};\end{tikzpicture}}for the `symmetric versions' of respectively\begin{tikzpicture}\draw[>-, thick]   (0,0)node[anchor=east]{} -- (.5,0)node[anchor=west]{};\end{tikzpicture}and \raisebox{-3.5pt}{\begin{tikzpicture}\draw[->, thick]   (1.3,0)node[anchor=east]{} -- (1.8,0)node[anchor=west]{;};\end{tikzpicture}} that is, \raisebox{-5.5pt}{\begin{tikzpicture}\draw[>-<, thick]   (0,0)node[anchor=east]{} -- (.5,0)node[anchor=west]{ := }; \draw[>-, thick]   (1.2,0)node[anchor=east]{} -- (1.7,0)node[anchor=west]{$\cap$};  \draw[>-, thick]   (2.4,0)node[anchor=east]{(} -- (2.9,0)node[anchor=west]{$)^{-1}$};\end{tikzpicture}} and \raisebox{-5.5pt}{\begin{tikzpicture}\draw[<->, thick]   (0,0)node[anchor=east]{} -- (.5,0)node[anchor=west]{ := }; \draw[->, thick]   (1.2,0)node[anchor=east]{} -- (1.7,0)node[anchor=west]{$\cap$};  \draw[->, thick]   (2.4,0)node[anchor=east]{(} -- (2.9,0)node[anchor=west]{$)^{-1}$.};\end{tikzpicture}} Also, we let \raisebox{-5.5pt}{\begin{tikzpicture}\draw[>->, thick]   (0,0)node[anchor=east]{} -- (.5,0)node[anchor=west]{ := }; \draw[>-, thick]   (1.2,0)node[anchor=east]{} -- (1.7,0)node[anchor=west]{$\cap$};  \draw[->, thick]   (2.2,0)node[anchor=east]{} -- (2.7,0)node[anchor=west]{};\end{tikzpicture}}.

Recall that a \emph{preorder} is a reflexive and transitive binary relation. The partial algebra $E$ as above is said to be a \emph{biordered set} if the following axioms hold for all $e,f,g \in E$:

\begin{enumerate}
\item [(B1)] both\raisebox{-7pt}{%
\begin{tikzpicture}
\draw[>-, thick](.1,0)node[anchor=east]{} -- (.6,0)node[anchor=west]{};
\draw[->, thick](1.4,0)node[anchor=east]{ and } -- (1.9,0)node[anchor=west]{}; 		
\draw(3.7,0)node{ are preorders, and $D_E=$};
\draw[>-, thick](5.5,0)node[anchor=east]{} -- (6,0)node[anchor=west]{$\cup$};
\draw[->, thick](6.5,0)node[anchor=east]{} -- (7,0)node[anchor=west]{$\cup$};
\draw[>-, thick](7.7,0)node[anchor=east]{(} -- (8.2,0)node[anchor=west]{$\cup$};
\draw[->, thick](8.7,0)node[anchor=east]{} -- (9.2,0)node[anchor=west]{$)^{-1};$};
\end{tikzpicture}
        }
\vspace*{.1cm}

\item [(B2)] if\raisebox{-5pt}{%
			\begin{tikzpicture}
			\node at (1.8,0) (e){$e$};
			\node at (4.2,0) (f){$f$;};
			\node at (3,0) (fe){$fe$};
			\draw[>-, thick](0,0)node[anchor=east]{$e$} -- (.5,0)node[anchor=west]{$f$, then};
			\draw[>-<, thick]   (e) -- (fe);
			\draw[>->, thick]   (fe) -- (f);
			\end{tikzpicture}
  }		
	if\raisebox{-5pt}{%
		\begin{tikzpicture}
		\node at (1.8,0) (e){$e$};
		\node at (4.2,0) (f){$f$;};
		\node at (3,0) (ef){$ef$};
		\draw[->, thick](0,0)node[anchor=east]{$e$} -- (.5,0)node[anchor=west]{$f$, then};
		\draw[<->, thick]   (e) -- (ef);
		\draw[>->, thick]   (ef) -- (f);
		\end{tikzpicture}
		}
\vspace*{.1cm}

		\item [(B3)]if\raisebox{-5pt}{%
			\begin{tikzpicture}
			\node at (1.25,1) (e){$e$};
			\node at (.5,0) (f){$f$};
			\node at (2,0) (g){$g$,};
			\draw[->, thick]   (f) -- (e);
			\draw[->, thick]   (g) -- (e);
			\draw[>-, thick]   (f) -- (g);
            \end{tikzpicture}}
			then\raisebox{-5pt}{%
		    \begin{tikzpicture}
			\node at (.5,0) (fe){$fe$};
            \node at (2,0) (ge){$ge$ \ and};
            \draw[>-, thick] (fe) -- (ge);
			\end{tikzpicture}}
$(gf)e=(ge)(fe)$;\\
if \raisebox{-5pt}{%
			\begin{tikzpicture}
			\node at (1.25,1) (e){$e$};
			\node at (.5,0) (f){$f$};
			\node at (2,0) (g){$g$,};
			\draw[>-, thick]   (f) -- (e);
			\draw[>-, thick]   (g) -- (e);
			\draw[->, thick]   (f) -- (g);
            \end{tikzpicture}}
			then\raisebox{-5pt}{%
		    \begin{tikzpicture}
			\node at (.5,0) (fe){$ef$};
            \node at (2,0) (ge){$eg$ \ and};
            \draw[->, thick] (fe) -- (ge);
			\end{tikzpicture}}
$e(fg)=(ef)(eg)$;
\vspace*{.2cm}
	
	\item [(B4)]if\raisebox{-6.pt}{%
			\begin{tikzpicture}
			\node at (.6,0) (e){$e$};
			\node at (1.6,0) (f){$f$};
			\node at (2.6,0) (g){$g$,};
			\draw[>-, thick]   (e) -- (f);
			\draw[>-, thick]   (f) -- (g);
			\node at (4.2,0){ then $f(ge)=fe$ ;};
			\end{tikzpicture}}
			if\raisebox{-6.pt}{%
			\begin{tikzpicture}
            \node at (0.6,0) (e){$e$};
			\node at (1.6,0) (f){$f$};
			\node at (2.6,0) (g){$g$,};
			\draw[->, thick]   (e) -- (f);
			\draw[->, thick]   (f) -- (g);
			\node at (4.2,0){then $(eg)f=ef$;};
			\end{tikzpicture}}
\vspace{.1cm}

\item [(B5)] if$\begin{array}{c}
 		\begin{tikzpicture}
		\node at (.5,0) (f){$f$};
		\node at (1.4,0) (e){$e$};
		\node at (2.3,0) (g){$g$};
		\draw[>-, thick]   (f) -- (e);
		\draw[-<, thick]   (e) -- (g);
		\end{tikzpicture}\\
	    \begin{tikzpicture}
 		\node at (.3,0) (ef){and $ef$};
        \node at (1.8,0)(eg){$eg$,};
        \draw[->, thick]  (ef) -- (eg);
		\end{tikzpicture}
\end{array}$
		there exists $f'\in E$ such that
\raisebox{-5pt}{	
		\begin{tikzpicture}
		\node at (7.5,0) (f1){$f'$};
		\node at (9.1,0) (g1){$g$};
		\node at (8.3,1) (e1){$e$};
		\draw[->, thick]   (f1) -- (g1);
		\draw[>-, thick]   (f1) -- (e1);
		\draw[>-, thick]   (g1) -- (e1);
		\node at (10.4,0) {and $ef'= ef$;};
				\end{tikzpicture}
	}
if$\begin{array}{c}
 		\begin{tikzpicture}
		\node at (.5,0) (f){$f$};
		\node at (1.4,0) (e){$e$};
		\node at (2.3,0) (g){$g$};
		\draw[->, thick]   (f) -- (e);
		\draw[<-, thick]   (e) -- (g);
		\end{tikzpicture}\\
	    \begin{tikzpicture}
 		\node at (.3,0) (ef){and $fe$};
        \node at (1.8,0)(eg){$ge$,};
        \draw[>-, thick]  (ef) -- (eg);
		\end{tikzpicture}
\end{array}$
		there exists $f'\in E$ such that
\raisebox{-5pt}{	
		\begin{tikzpicture}
		\node at (7.5,0) (f1){$f'$};
		\node at (9.1,0) (g1){$g$};
		\node at (8.3,1) (e1){$e$};
		\draw[>-, thick]   (f1) -- (g1);
		\draw[->, thick]   (f1) -- (e1);
		\draw[->, thick]   (g1) -- (e1);
		\node at (10.4,0) {and $f'e= fe$.};
				\end{tikzpicture}
	}
\end{enumerate}

\vspace*{.5cm}
Further, for any two elements $e,f\in E$, their \emph{sandwich set} $\mathcal{S}(e,f)$ is defined as follows:
\begin{center}
$h\in \mathcal{S}(e,f) \iff $\raisebox{-5pt}{%
\begin{tikzpicture}
\node at (1.5,0) (e){$e$};
\node at (2.3,0) (h){$h$};
\node at (3.1,0) (f){$f$};
\draw[-<, thick]   (e) -- (h);
\draw[->, thick]   (h) -- (f);
\end{tikzpicture}}
and for every $g\in E$ such that\raisebox{-5pt}{%
\begin{tikzpicture}
\node at (4.5,0) (e1){$e$};
\node at (5.3,0) (g1){$g$};
\node at (6.1,0) (f1){$f$,};
\draw[-<, thick]   (e1) -- (g1);
\draw[->, thick]   (g1) -- (f1);
\end{tikzpicture}}
one has\\[2pt]
 \begin{tikzpicture}
\draw[>-, thick](7.6,0)node[anchor=east]{$gf$} -- (8.1,0)node[anchor=west]{$hf$};
\draw[->, thick](9.9,0)node[anchor=east]{and \  $eg$} -- (10.3,0)node[anchor=west]{$eh$.};
\end{tikzpicture}
\end{center}
A biordered set $E$ is {\em regular} if for every $e,f \in E$, the sandwich set $\mathcal{S}(e,f)$ is non-empty.

Given two biordered sets $E$ and $E'$ with the domains of partial operations $D_E$ and $D_{E'}$ respectively, we define a \emph{bimorphism} as a mapping $\theta\colon E\to E'$ satisfying:
\begin{enumerate}
	\item [(BM1)] $(e,f)\in D_E \implies (e\theta,f\theta)\in D_{E'}$;
	\item [(BM2)] $(ef)\theta = (e\theta)(f\theta)$.
\end{enumerate}

Observe that a bimorphism $\theta$ necessarily preserves arrows: for example, if\raisebox{-6pt}{%
			\begin{tikzpicture}
			\draw[>-, thick](0,0)node[anchor=east]{$e$} -- (.4,0)node[anchor=west]{$f$,};
			\end{tikzpicture}}
			then\raisebox{-5pt}{%
			\begin{tikzpicture}
			\node at (1.8,0) (et){$e\theta$};
			\node at (2.9,0) (ft){$f\theta$,};
			\draw[>-, thick]   (et) -- (ft);
			\end{tikzpicture}}  and so on.
If $E$ is a regular biordered set, then a bimorphism $\theta\colon E \to E'$ is called a \emph{regular bimorphism} if it satisfies:
\begin{enumerate}
	\item [(RBM)] $(\mathcal{S}(e,f))\theta\subseteq \mathcal{S}'(e\theta,f\theta).$
\end{enumerate}

The above used arrow notation for the preorders in biordered sets was introduced by Easdown \cite{eas}. It allows one to present the axioms of a biordered set in a concise way and may be helpful for fresh readers. In the sequel, we shall refer to the partial binary operation of a biordered set as the \emph{basic product} and we mostly use the following alternative notation to denote the preorders (for ease of writing and to save the arrow $\to$ for maps and morphisms):

\vspace{.2cm}
\begin{tikzpicture}
\draw[>-, thick]   (0,0)node[anchor=east]{$e\lel f \iff e$} -- (.5,0)node[anchor=west]{$f \iff e\:f=e$};
\draw (3.5,0) node{ and };
\draw[->, thick]   (6.5,0)node[anchor=east]{$e\ler  f \iff e$} -- (7,0)node[anchor=west]{$f \iff f\:e=e$.};
\end{tikzpicture}
\vspace{.2cm}

In a given biordered set $E$ with preorders $\lel$ and $\ler$, we can easily see that the relations  $\mathscr{L} :=\ \lel\:  \cap\: (\lel )^{-1}$\raisebox{-3.50pt}{\begin{tikzpicture}\draw[>-<, thick]   (0,0)node[anchor=east]{=} -- (.5,0)node[anchor=west]{};\end{tikzpicture}} and $\mathscr{R} :=\ \ler \: \cap  \:(\ler )^{-1}$\raisebox{-3.50pt}{\begin{tikzpicture}\draw[<->, thick]   (0,0)node[anchor=east]{=} -- (.5,0)node[anchor=west]{};\end{tikzpicture}} are equivalence relations while the relation $\leqslant: =\ \lel\:  \cap \:\ler $\raisebox{-3.50pt}{\begin{tikzpicture}\draw[>->, thick]   (0,0)node[anchor=east]{=} -- (.5,0)node[anchor=west]{};\end{tikzpicture}} is a partial order.

Although the axioms of biordered sets are complicated and may appear slightly artificial, biordered sets arise quite naturally in several mathematical contexts. If a semigroup $S$ has idempotents, the set $E(S)$ of all idempotents of $S$ can be seen to form a biordered set whose partial operation is a certain restriction of the multiplication of $S$. In \cite{mem}, the biordered sets of the form $E(S)$ where $S$ is a regular semigroup were characterised by Nambooripad as regular biordered sets. Later, Easdown \cite{eas} showed that given a biordered set $E$, we can always construct a semigroup $S$ such that $E(S)$ and $E$ are isomorphic as biordered sets. Beyond semigroups, Putcha  \cite{putcha} showed that pairs of opposite parabolic subgroups of a finite group of Lie type form a biordered set.

In this paper, we do not need the explicit use of the biorder axioms except in a few proofs; nevertheless we have included the full set of axioms for the sake of completeness.

\subsection{Ordered groupoids}
The notion of an ordered groupoid was introduced by Ehresmann \cite{ehrs,ehrs1} in the context of his work on pseudogroups. Ordered groupoids are essentially groupoids such that their morphisms admit a partial order compatible with the composition. Recall that our convention is to compose the morphisms from left to right.

\begin{dfn}\label{og}
Let $\mathcal{G}$ be a groupoid and denote by $\mathbf{d}\colon\mathcal{G}\to v\mathcal{G}$ and $\mathbf{r}\colon\mathcal{G}\to v\mathcal{G}$ its domain and codomain maps, respectively. Let $\leq$ be a partial order on $\mathcal{G}$. Then $(\mathcal{G},\leq)$ is called an \emph{ordered groupoid} if the following hold for all $e,f \in v\mathcal{G}$ and all $x,y,u,v\in\mathcal{G}$.
	\begin{enumerate}
		\item [(OG1)] If $u\leq x$, $v\leq y$ and $\mathbf{r}(u)=\mathbf{d}(v)$, $\mathbf{r}(x)=\mathbf{d}(y)$, then $uv \leq xy$.
		\item [(OG2)] If $x\leq y$, then $x^{-1}\leq y^{-1}$.
		\item [(OG3)] If $1_e\leq 1_{\mathbf{d}(x)}$, then there exists a unique morphism $e{\downharpoonleft}x\in\mathcal{G}$ (called the \emph{restriction} of $x$ to $e$) such that $e{\downharpoonleft} x\leq x$ and $\mathbf{d}(e{\downharpoonleft} x) = e$.
		\item [(OG3$^*$)] If $1_f\leq 1_{\mathbf{r}(x)}$, then there exists a unique morphism $x{\downharpoonright}f\in\mathcal{G} $ (called the \emph{corestriction} of $x$ to $f$) such that $x{\downharpoonright} f \leq x$ and $\mathbf{r}(x{\downharpoonright} f) = f$.
	\end{enumerate}
\end{dfn}

Observe that in an ordered groupoid $(\mathcal{G},\leq)$, the restriction of $\leq$ to the identity morphisms in $\mathcal{G}$ induces a partial order on the set $v\mathcal{G}$ of the objects of the groupoid.
An order preserving functor $F$ between two ordered groupoids is said to be a $v$-\emph{isomorphism} if its object map $vF$ is an order isomorphism.

\subsection{Inductive groupoids}
We are approaching the definition of the category of inductive groupoids, the first of the two main objects of this paper. Roughly speaking, an inductive groupoid is an ordered groupoid whose object set is a regular biordered set and containing certain distinguished morphisms which are induced by alternating sequences of $\mathrel{\mathscr{R}}$- and $\mathrel{\mathscr{L}}$-related elements of the biordered set. To formalise this rough idea, we start with associating an ordered groupoid with any given (not necessarily regular) biordered set $E$.

An \emph{$E$-path} is a sequence $(e_1,e_2,\dots,e_n)$ of elements of $E$ such that $e_i\mathrel{(\mathscr{R}\cup\mathscr{L})}e_{i+1}$ for all $i=1,\dots,n-1$. An element $e_i$ in such an $E$-path is called \emph{inessential} if $e_{i-1}\mathrel{\mathscr{R}}e_i \mathrel{\mathscr{R}}e_{i+1}$ or $e_{i-1}\mathrel{\mathscr{L}}e_i\mathrel{\mathscr{L}}e_{i+1}$. Two $E$-paths that share the same first and last elements are said to be \emph{essentially the same} if each of them can be obtained from the other by a sequence of adding or removing inessential elements. Clearly, this defines an equivalence relation on the set of all $E$-paths. The equivalence class of an $E$-path relative to this relation is referred to as an \emph{$E$-chain}. In the sequel, we take the liberty of using expressions like `let $\mathfrak{c}=(e_1,e_2,\dots,e_n)$ be an $E$-chain', meaning, of course, the equivalence class of all $E$-paths that are essentially the same as $(e_1,e_2,\dots,e_n)$.

The set $\mathcal{G}(E)$ of all $E$-chains can be thought of as a groupoid if we consider each $E$-chain $\mathfrak{c}=(e_1,e_2,\dots,e_n)$ as a morphism with domain $e_1$ and codomain $e_n$. The composition of two $E$-chains, say, $\mathfrak{c}$ as above and $\mathfrak{c}'=(f_1,f_2,\dots,f_m)$, is defined if and only if $e_n=f_1$, and if so, then the product $\mathfrak{c}\mathfrak{c}'$ is set to be equal to the $E$-chain containing the $E$-path
\[
(e_1,e_2,\dots,e_n=f_1,f_2,\dots,f_m).
\]
The inverse of $\mathfrak{c}$ is the $E$-chain $(e_n,e_{n-1},\dots,e_1)$.

Further, we introduce a binary relation $\leq_E$ on the set $\mathcal{G}(E)$. Let $\mathfrak{c}=(e_1,e_2,\dots,e_n)$ and $\mathfrak{c'}=(f_1,f_2,\dots,f_m)$ be two $E$-chains. Suppose that $e_1\leqslant f_1$ in $E$ and define the sequence $h_1,h_2,\dots,h_m$ inductively by letting $h_1:=e_1$ and $h_i:=(f_ih_{i-1})f_i$ for each $i=2,3,\cdots,m$. Using the assumption $e_1\leqslant f_1$ and the axioms of a biordered set, it is easy to check that all $h_2,\dots,h_m$ are indeed well-defined elements of $E$, and moreover, $(h_1,h_2,\dots,h_m)$ forms an $E$-path. Now we let $\mathfrak{c}\leq_E \mathfrak{c'}$ if and only if $e_1\leqslant f_1$  and the $E$-paths  $(e_1,e_2,\dots,e_n)$ and  $(h_1,h_2,\dots,h_m)$ are essentially the same.
Alternatively,  $\mathfrak{c}\leq_E \mathfrak{c'}$ if and only if for any $E$-path $(f_1,f_2,\dots,f_m)$ in the $E$-chain $\mathfrak{c'}$, there exists an $E$-path $(h_1,h_2,\dots,h_m)$ in the $E$-chain $\mathfrak{c}$ such that $h_i\leqslant f_i$ for each $i=1,2,\dots,m$. It can be shown (see \cite[Proposition 3.3]{mem}) that $\leq_E$ is a partial order on $\mathcal{G}(E)$ and the pair $(\mathcal{G}(E),\leq_E)$ constitutes an ordered groupoid. The partial order $\leq_E$ may be seen as an extension of the natural partial order $\leqslant$ of the biordered set $E$ to the set $\mathcal{G}(E)$. 

Given a biordered set $E$, a $2\times 2$ matrix $\bigl[\begin{smallmatrix} e&f\\ g&h \end{smallmatrix}\bigr]$ of elements of $E$ such that
\[
e\mathrel{\mathscr{R}} f\mathrel{\mathscr{L}}h\mathrel{\mathscr{R}} g\mathrel{\mathscr{L}}e
\]
or, in Easdown's arrow notation,
\begin{center}
\begin{tikzpicture}
\node at (0,0) (e){$e$};
\node at (1.5,0) (f){$f$};
\node at (0,-1.5) (g){$g$};
\node at (1.5,-1.5,0) (h){$h$};
\draw[<->, thick]   (e) -- (f);
\draw[>-<, thick] (e) -- (g);
\draw[<->, thick]   (g) -- (h);
\draw[>-<, thick] (f) -- (h);
\end{tikzpicture}
\end{center}
stands for the $E$-path $(e,f,h,g,e)$. We refer to the $E$-chain corresponding to this $E$-path as an \emph{$E$-square} and allow ourselves expressions like `$\bigl[ \begin{smallmatrix} e&f\\ g&h \end{smallmatrix} \bigr]$ forms an $E$-square'. 

Given $e,g,h$ in a biordered set $E$ such that $g,h \lel e$ and $g\mathrel{\mathscr{R}} h$, one can easily deduce from axioms (B2) and (B3) that
$\bigl[ \begin{smallmatrix} g&h\\ eg&eh \end{smallmatrix} \bigr]$
forms an $E$-square. Such an $E$-square is called \emph{row-singular}. Dually, we define a \emph{column-singular} $E$-square, and an $E$-square is said to be \emph{singular} if it is either row-singular or column-singular.

Given an ordered groupoid $\mathcal{G}$ and a $v$-isomorphism $\epsilon\colon \mathcal{G}(E)\to\mathcal{G}$, an $E$-square $\bigl[ \begin{smallmatrix} e&f\\ g&h \end{smallmatrix} \bigr]$
is said to be $\epsilon$-commutative if
\begin{equation}
\label{eq:commut}
\epsilon(e,f)\epsilon(f,h) =\epsilon(e,g)\epsilon(g,h).
\end{equation}
Observe that to simplify notation in~\eqref{eq:commut}, we denoted the image of the $E$-chain $(e,f)$ under $\epsilon$ by $\epsilon(e,f)$ rather than $\epsilon((e,f))$ and did so also for the other $E$-chains that occur in~\eqref{eq:commut}. This convention, of leaving out unnecessary braces when there is no ambiguity regarding the expression, shall be followed in the sequel.

\begin{dfn}\label{dfnig}
	Let $E$ be a regular biordered set, let $\mathcal{G}$ be an ordered groupoid, and let $\epsilon\colon \mathcal{G}(E)\to \mathcal{G}$ be a $v$-isomorphism, called an \emph{evaluation functor}. We say that $(\mathcal{G},\epsilon)$ forms an \emph{inductive groupoid} if the following axioms and their duals hold.
	\begin{enumerate}
		\item[(IG1)] Let $x\in \mathcal{G}$ and for $i=1,2$, let $e_i$, $f_i \in E$ be such that $e_1\ler  e_2$, $\epsilon (e_i) \leq \mathbf{d}(x)$ and $\epsilon (f_i)= \mathbf{r}(\epsilon (e_i){\downharpoonleft} x)$. Then $f_1\ler  f_2$, and
		$$\epsilon (e_1,e_1e_2)(\epsilon (e_1e_2){\downharpoonleft} x) = (\epsilon (e_1){\downharpoonleft} x)\epsilon (f_1,f_1f_2).$$
		\item[(IG2)] All singular $E$-squares are $\epsilon $-commutative.
	\end{enumerate}
\end{dfn}

Let $(\mathcal{G},\epsilon)$ and $(\mathcal{G}',\epsilon')$ be two inductive groupoids with biordered sets $E$ and $E'$ respectively. Suppose that  $F\colon \mathcal{G}\to\mathcal{G}'$ is an   order preserving functor such that
its object map $vF\colon E\to E'$ is a regular bimorphism of biordered sets. Then $vF$ induces a unique order preserving functor $\mathcal{G}(vF)$ between the ordered groupoids of $E$-chains $\mathcal{G}(E)$ and $\mathcal{G}(E')$ defined as follows; see \cite[Proposition 3.3]{mem}. For every $E$-chain $\mathfrak{c}=(e_1,e_2,\dots,e_n)$ in $\mathcal{G}(E)$,
$$\mathcal{G}(vF)(\mathfrak{c}):= (vF(e_1),vF(e_2),\dots,vF(e_n)).$$
The order preserving functor $F$ is said to be \emph{inductive} if the following diagram commutes.
\begin{equation*}
\xymatrixcolsep{6pc}\xymatrixrowsep{4pc}\xymatrix
{
	\mathcal{G}(E) \ar[r]^{\mathcal{G}(vF)} \ar[d]_{\epsilon }
	& \mathcal{G}(E') \ar[d]^{\epsilon '} \\
	\mathcal{G} \ar[r]^{F} & \mathcal{G}'
}
\end{equation*}

\begin{pro}[\!{\cite[Remark 3.1]{mem}}]\label{procat1}
Inductive groupoids with inductive functors as morphisms form a category.	
\end{pro}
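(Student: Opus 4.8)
The plan is to verify the standard category axioms for $\mathbf{IG}$ by checking that identity functors and composites of inductive functors are inductive, and that composition is associative with the expected units. The associativity and unit laws for composition of functors are inherited from the ambient $2$-category of small categories, so the real content is closure: (i) that the identity functor on an inductive groupoid is inductive, and (ii) that if $F\colon(\mathcal{G},\epsilon)\to(\mathcal{G}',\epsilon')$ and $F'\colon(\mathcal{G}',\epsilon')\to(\mathcal{G}'',\epsilon'')$ are inductive then so is $F F'$.

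For (i), the identity functor $1_{\mathcal{G}}$ is trivially order preserving, and its object map is $1_E$, which is a regular bimorphism since it preserves $D_E$, the basic product, and (being the identity) sends $\mathcal{S}(e,f)$ onto itself. The induced functor $\mathcal{G}(1_E)$ on $E$-chains is the identity on $\mathcal{G}(E)$ by the explicit formula $\mathcal{G}(vF)(\mathfrak{c})=(vFe_1,\dots,vFe_n)$, so diagram~\eqref{indf} reads $\epsilon\circ 1_{\mathcal{G}(E)}=1_{\mathcal{G}}\circ\epsilon$, which commutes. For (ii), first I would check that $F F'$ is order preserving (composite of order preserving functors) and that its object map $vF\cdot vF'$ is a regular bimorphism: conditions (BM1) and (BM2) compose directly, and (RBM) composes because $(\mathcal{S}(e,f))(vF\cdot vF')\subseteq(\mathcal{S}'(vFe,vFf))vF'\subseteq\mathcal{S}''(vF vF'e,vF vF'f)$. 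It remains to see that $\mathcal{G}(vF\cdot vF')=\mathcal{G}(vF)\circ\mathcal{G}(vF')$; this is immediate from the chain-level formula, since applying $vF$ entrywise and then $vF'$ entrywise is the same as applying $vF\cdot vF'$ entrywise, and this operation is well defined on $E$-chains because regular bimorphisms respect the $\mathscr{R}$ and $\mathscr{L}$ relations and hence carry inessential elements to inessential elements. Then the required square
\begin{equation*}
\xymatrixcolsep{6pc}\xymatrixrowsep{3pc}\xymatrix
{
	\mathcal{G}(E) \ar[r]^{\mathcal{G}(vF)} \ar[d]_{\epsilon }
	& \mathcal{G}(E') \ar[r]^{\mathcal{G}(vF')} \ar[d]^{\epsilon '}
	& \mathcal{G}(E'') \ar[d]^{\epsilon ''} \\
	\mathcal{G} \ar[r]^{F} & \mathcal{G}' \ar[r]^{F'} & \mathcal{G}''
}
\end{equation*}
commutes by pasting the two given commuting squares~\eqref{indf} for $F$ and for $F'$; reading off the outer rectangle gives $\epsilon\circ\mathcal{G}(vF\cdot vF')=(F F')\circ\epsilon''$, which is exactly the inductivity condition for $F F'$.

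The one place that genuinely needs care, rather than bookkeeping, is the claim that the object map of a composite inductive functor induces the \emph{composite} of the induced chain functors, i.e.\ $\mathcal{G}(vF\cdot vF')=\mathcal{G}(vF)\circ\mathcal{G}(vF')$ as ordered-groupoid functors. The subtlety is that $\mathcal{G}(-)$ is defined in \cite[Proposition~3.3]{mem} via a universal property (the unique order preserving functor extending a regular bimorphism), so strictly one must either invoke functoriality of the construction $E\mapsto\mathcal{G}(E)$ asserted there, or re-prove uniqueness: both $\mathcal{G}(vF\cdot vF')$ and $\mathcal{G}(vF)\circ\mathcal{G}(vF')$ are order preserving functors $\mathcal{G}(E)\to\mathcal{G}(E'')$ with object map $vF\cdot vF'$, and since that object map determines the functor uniquely, they coincide. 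I expect this verification, together with the corresponding check that $\mathcal{G}(1_E)=1_{\mathcal{G}(E)}$, to be the main (though still routine) obstacle; everything else is formal diagram pasting and the elementary observation that the defining properties of regular bimorphisms are closed under composition.
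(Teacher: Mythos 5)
Your proposal is correct: the only real content is closure (identities are inductive, and composites of inductive functors are inductive), and you verify this properly, including the two points that need care --- that (BM1), (BM2), (RBM) are preserved under composition and that $\mathcal{G}(vF\cdot vF')=\mathcal{G}(vF)\circ\mathcal{G}(vF')$, which indeed follows from the entrywise formula together with the fact that bimorphisms preserve $\mathscr{R}$ and $\mathscr{L}$ and hence respect essential sameness of $E$-paths. The paper itself gives no argument for this proposition (it is quoted from Nambooripad's memoir \cite{mem}), so there is nothing to compare against beyond the standard verification, which is exactly what you carry out.
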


We denote the category of inductive groupoids with inductive functors by $\mathbf{IG}$.

\subsection{Normal categories}
Now, we proceed to introduce the second main object of this paper: the category of cross-connections. Again, the construction requires several steps.  We begin by discussing normal categories. 

Recall that a preorder $\mathcal{P}$ is said to be \emph{strict} if identity morphisms are the only isomorphisms in $\mathcal{P}$. Observe that a small preorder is strict if and only if it is induced by a partially ordered set.

Let $\mathcal{C}$ be a small category and $\mathcal{P}$ a subcategory of $\mathcal{C}$ such that $\mathcal{P}$ is a strict preorder category with $v\mathcal{P} = v\mathcal{C}$. The pair $(\mathcal{C},\mathcal{P})$ is called a \emph{category with subobjects} if, first, every $f\in\mathcal{P}$ is a monomorphism in $\mathcal{C}$ and, second, if $f,g\in \mathcal{P}$ and $h \in \mathcal{C}$ are such that $f=hg$, then $h\in \mathcal{P}$.

In a category $(\mathcal{C},\mathcal{P})$ with subobjects, the morphisms in $ \mathcal{P}$ are called \emph{inclusions}. We write $c'\subseteq c$ if there is an inclusion $c'\to c$, and we denote this inclusion by $j(c',c)$. An inclusion $j(c',c)$ \emph{splits} if there exists a morphism $q\colon c\to c' \in \mathcal{C}$ such that $j(c',c)q =1_{c'}$. In this situation, the morphism $q$ is called a \emph{retraction}.

A \emph{normal factorization} of a morphism $f\colon c\to d$ in $\mathcal{C}$ is a factorization of the form $f=quj$ where $q\colon c\to c'$ is a retraction, $u\colon c'\to d'$ is an isomorphism, and $j=j(d',d)$ is an inclusion, where $c',d' \in v\mathcal{C}$ are such that $c' \subseteq c$, $ d'\subseteq d$. Figure \ref{fignf} represents the normal factorisation property.
	
	\begin{figure}[ht]
		\centering
		\xymatrixcolsep{1.5pc}\xymatrixrowsep{1.5pc}
		\xymatrix{
			&&&&&c \ar@{->}[rrr]^{f} \ar@{-->}[dd]_{q}&&&d\\
			\\
			&&&&&c' \ar@{->}@/^1pc/[rrr]^{u} &&&d'\ar@{-->}[uu]_{j}
		}
		\caption{Normal factorisation of a morphism $f$}\label{fignf}
	\end{figure}

The morphism $qu$ is called the \emph{epimorphic component} of the morphism $f$ and is denoted by $f^\circ$. It can be seen that $f^\circ$ is uniquely determined by $f$. The codomain of $f^\circ$ is called the \emph{image} of the morphism $f$ and is denoted by $\im f$. The following properties of epimorphic components may prove crucial in the sequel:
\begin{pro}[\!{\cite[Corollary II.4]{cross}}]\label{proepi}
Let $\mathcal{C}$ be a category with normal factorisation property where inclusions split.
\begin{enumerate}
    \item If $f$ and $g$ are composable such that the inclusion of $f$ is $j_f= j(\im f,\mathbf{r}(f))$, then 
    $$(fg)^\circ=f^\circ (j_f g)^\circ.$$
    \item If $f$ is an epimorphism, then $f^\circ=f$.
\end{enumerate}
\end{pro}

\begin{dfn}\label{dfnnc}
Let $\mathcal{C}$ be a category with subobjects and $d\in v\mathcal{C}$. A map $\gamma\colon v\mathcal{C}\to\mathcal{C}$ is called a \emph{cone}\footnote{Notice that what we call a cone here was called a \emph{normal cone} in \cite{cross,indcxn1}.} from the \emph{base} $v\mathcal{C}$ to the \emph{apex} $d$ if:
\begin{itemize}
\itemindent=10pt
{
\item[(Ncone1)] $\gamma(c)$ is a morphism from $c$ to $d$ for each $c\in v\mathcal{C}$;
\item[(Ncone2)] if $c\subseteq c'$, then $j(c,c')\gamma(c') = \gamma(c)$;
\item[(Ncone3)] there exists $c\in v\mathcal{C}$ such that $\gamma(c)\colon c\to d$ is an isomorphism.
}
\end{itemize}
The apex of the cone $\gamma$ shall be denoted by $c_{\gamma}$ in the sequel. 
\end{dfn}

A cone $\gamma$ is said to be \emph{idempotent} if $\gamma(c_\gamma)= 1_{c_\gamma}$. It is easy to verify that for any cone $\gamma$ and any epimorphism $f\colon c_\gamma\to d$, the map $v\mathcal{C}\to\mathcal{C}$ defined by $a\mapsto\gamma(a)f$ is a cone with apex $d$. This cone is denoted by $\gamma*f$.

\begin{dfn}
A category $\mathcal{C}$ with subobjects is called a \emph{normal category} if the following holds.
	\begin{enumerate}
		\item [(NC1)]Any morphism in $\mathcal{C}$ has a normal factorization.
		\item [(NC2)]Every inclusion in $\mathcal{C}$ splits.
		\item [(NC3)]For each $c \in v\mathcal{C} $, there is an idempotent cone with apex $c$.
	\end{enumerate}
\end{dfn}
Natural examples of a normal category include the powerset category (subsets of a set with functions as morphisms) \cite{tx}, the subspace category  (subspaces of a vector space with linear transformations as morphisms) \cite{tlx}, etc. 

\subsection{Normal dual}
The \emph{normal dual} $N^\ast\mathcal C$ of a normal category $\mathcal{C}$ is a full subcategory of the category $\mathcal{C}^\ast$ of all functors from $\mathcal C$ to the category $\bf{Set}$. The objects of $N^\ast\mathcal C$ are certain functors and the morphisms are natural transformations between them. Namely, for each cone $\gamma$ in $\mathcal{C}$, we define a functor (called an \emph{H-functor}  and denoted by $H({\gamma};-)$) from $\mathcal{C}$ to $\mathbf{Set}$ as follows. For each $c\in v\mathcal{C}$ and for each $g\in \mathcal{C}(c,d)$,
		\begin{align*}
		H({\gamma};{c})&\text{ is the set } \{\gamma\ast f^\circ : f \in \mathcal{C}(c_{\gamma},c)\}\ \text{ and }\\
		H({\gamma};{g})&\text{ is the map } H({\gamma};{c})\to H({\gamma};{d})\ \text{ given by }\gamma\ast f^\circ \mapsto \gamma\ast (fg)^\circ.
		\end{align*}
	
We define the \emph{M-set} of a cone $\gamma$ as
\begin{equation*}
M\gamma: = \{ c \in \mathcal{C}:\gamma(c)\text{ is an isomorphism} \}.
\end{equation*}
It can be shown that if two $H$-functors $H({\gamma};-)$ and $H({\gamma'};-)$ are equal, so are the $M$-sets of the cones $\gamma$ and $\gamma'$. Hence we can define the $M$-set of an $H$-functor as $MH(\gamma;-): = M\gamma$.

It can be seen that $H$-functors are {representable functors} such that for a cone $\gamma$, there is a natural isomorphism $\eta_\gamma\colon  H(\gamma;-) \to \mathcal{C}(c_\gamma,-)$. Here $\mathcal{C}(c_\gamma,-)$ is the hom-functor determined by $c_\gamma$.

It can be shown that the normal dual of a normal category is, in fact, normal (see \cite[Section III.4.2]{cross}). The proof is quite non-trivial since it involves characterising the morphisms in the normal dual, which are natural transformations. In some special cases, the normal dual can be transparently described; for instance, the normal dual of the subspace category has been described via the annihilator category \cite{tlx}, wherein the algebraic duality coincides with the cross-connection duality. In general, however, such simple descriptions do not appear to be possible. 

\subsection{Cross-connections}
An \emph{ideal} of a normal category $\mathcal{C}$ generated by its object $c$ is the full subcategory of $\mathcal{C}$, denoted $\langle c\rangle$, whose objects are given by
$$v\langle c \rangle :=\{d\in v\mathcal{C}: d\subseteq c\}.$$

A functor $F$ between two normal categories $\mathcal{C}$ and $\mathcal{D}$ is said to be a \emph{local isomorphism} if $F$ is inclusion preserving, fully faithful and for each $c\in v\mathcal{C}$, the restriction $F_{|\langle c \rangle}$ of $F$ to the ideal $\langle c\rangle$ is an isomorphism of $\langle c \rangle$ onto the ideal $\langle F(c) \rangle$.
\begin{dfn} \label{ccxn}
	Let $\mathcal{C}$ and $\mathcal{D}$ be normal categories. A \emph{cross-connection} from $\mathcal{D}$ to $\mathcal{C}$ is a triplet $(\mathcal{D},\mathcal{C};{\Gamma})$ (often denoted by just $\Gamma$), where $\Gamma\colon  \mathcal{D} \to N^\ast\mathcal{C}$ is a local isomorphism such that for every $c \in v\mathcal{C}$, there is some $d \in v\mathcal{D}$ such that $c \in M\Gamma(d)$.
\end{dfn}

Observe that in the above definition, $M\Gamma(d)$ is the $M$-set of the $H$-functor $\Gamma(d)$. Given a cross-connection $\Gamma$ between two normal categories $\mathcal{C}$ and $\mathcal{D}$, we define the set $E_\Gamma$ as:
\begin{equation}\label{eqb1}
E_\Gamma := \{ (c,d) \in v\mathcal{C}\times v\mathcal{D} \text{ such that } c\in M\Gamma(d) \}.
\end{equation}

For a cross-connection $\Gamma$ from $\mathcal{D}$ to $\mathcal{C}$, it can be shown that there is a unique \emph{dual cross-connection} $\Delta=(\mathcal{C},\mathcal{D};{\Delta})$ from $\mathcal{C}$ to $\mathcal{D}$ such that $(c,d) \in E_\Gamma$ if and only $(d,c) \in E_\Delta$. Then, for $(c,d)\in E_\Gamma$, there is a unique idempotent cone $\epsilon$ in $\mathcal{C}$ such that $c_\epsilon=c$ and $H(\epsilon;-)=\Gamma(d)$; this cone is denoted by $\gamma(c,d)$, in the sequel. Similarly $\delta(c,d)$ denotes a unique idempotent cone in $\mathcal{D}$ such that $(d,c)\in E_\Delta$.

Given a cross-connection $(\mathcal{D},\mathcal{C};{\Gamma})$ with dual $\Delta$, $(c,d),(c',d') \in E_\Gamma$, $f\in \mathcal{C}(c,c')$ and $f^*\in \mathcal{D}(d',d)$, the morphism $f^*$ is called the \emph{transpose} of $f$ if $f$ and $f^*$ make the following diagram commute:
\begin{equation*}\label{trans}
\xymatrixcolsep{3pc}\xymatrixrowsep{4pc}\xymatrix
{
	c\ar[d]_{f}& \Delta(c) \ar[rr]^{\eta_{\delta(c,d)}} \ar[d]_{\Delta(f)}
	& & \mathcal{D}(d,-) \ar[d]^{\mathcal{D}(f^*,-)}& d \\
	c'& \Delta(c') \ar[rr]^{\eta_{\delta(c',d')}} & & \mathcal{D}(d',-)&d'\ar[u]_{f^*}
}
\end{equation*}
It is worth noting that  cross-connection transposes enjoy several properties of usual matrix transposes. For instance, $f^{**}=f$ and $(fg)^*=g^*f^*$.
\begin{dfn}\label{morcxn}
Let $(\mathcal{D},\mathcal{C};{\Gamma})$ and $(\mathcal{D}',\mathcal{C}';{\Gamma}')$ be two cross-connections. A \emph{morphism of cross-connections} $m\colon \Gamma\to \Gamma'$ is a pair $m=(F_m,G_m)$ of inclusion preserving functors $F_m\colon \mathcal{C}\to \mathcal{C}'$ and $G_m\colon \mathcal{D}\to \mathcal{D}'$ satisfying the following axioms:
\begin{enumerate}
		\item [(M1)] if $(c,d)\in E_\Gamma$, then $(F_m(c),G_m(d)) \in E_{\Gamma'}$ and for all $c'\in v\mathcal{C}$,
		\[F_m(\gamma(c,d)(c'))=\gamma(F_m(c),G_m(d))(F_m(c'));\]
		\item [(M2)] if $(c,d), (c',d') \in E_\Gamma$ and $f^*\colon d'\to d$ is the transpose of $f\colon c\to c'$, then $G_m(f^*)$ is the transpose of $F_m(f)$.
	\end{enumerate}
\end{dfn}

\begin{pro}[\!{\cite[Section V.2.1]{cross}}]\label{procat2}
The cross-connections with cross-connection morphisms form a category.
\end{pro}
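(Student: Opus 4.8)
The plan is to verify the three things that distinguish a category from a mere collection of objects and morphisms: that the class of cross-connection morphisms is closed under composition, that composition is associative, and that each cross-connection carries an identity morphism which acts as a two-sided unit. Since a cross-connection morphism $m=(F_m,G_m)$ is literally a pair of inclusion-preserving functors, the obvious candidate for composition is $m'\circ m:=(F_{m'}\circ F_m,\;G_{m'}\circ G_m)$ and the obvious candidate for the identity on $\Gamma=(\mathcal D,\mathcal C;\Gamma)$ is $1_\Gamma:=(1_{\mathcal C},1_{\mathcal D})$. Associativity of this composition and the unit laws are then inherited verbatim from associativity of functor composition and the functor identity laws, componentwise; there is nothing to do there beyond observing that the pairing is coordinatewise. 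So the entire content of the proof is: \emph{the composite of two cross-connection morphisms is again a cross-connection morphism}, i.e.\ it satisfies axioms (M1) and (M2), and likewise that $1_\Gamma$ does.

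First I would dispatch the identity. For $1_\Gamma=(1_{\mathcal C},1_{\mathcal D})$, both functors are trivially inclusion preserving. For (M1): if $(c,d)\in E_\Gamma$ then $(1_{\mathcal C}(c),1_{\mathcal D}(d))=(c,d)\in E_\Gamma$, and for every $c'\in v\mathcal C$ we need $1_{\mathcal C}(\gamma(c,d)(c'))=\gamma(1_{\mathcal C}(c),1_{\mathcal D}(d))(1_{\mathcal C}(c'))$, which is the tautology $\gamma(c,d)(c')=\gamma(c,d)(c')$. For (M2): if $f^*$ is the transpose of $f$, then $1_{\mathcal D}(f^*)=f^*$ is the transpose of $1_{\mathcal C}(f)=f$. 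Hence $1_\Gamma$ is a cross-connection morphism and clearly a unit for the coordinatewise composition.

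Next, closure under composition. Let $m=(F_m,G_m)\colon\Gamma\to\Gamma'$ and $m'=(F_{m'},G_{m'})\colon\Gamma'\to\Gamma''$ be cross-connection morphisms, and set $F:=F_{m'}\circ F_m$, $G:=G_{m'}\circ G_m$. That $F$ and $G$ are inclusion preserving is immediate since a composite of inclusion-preserving functors is inclusion preserving. For (M1): given $(c,d)\in E_\Gamma$, apply (M1) for $m$ to get $(F_m(c),G_m(d))\in E_{\Gamma'}$, then (M1) for $m'$ to get $(F_{m'}F_m(c),G_{m'}G_m(d))=(F(c),G(d))\in E_{\Gamma''}$. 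For the cone identity, I would chain the two instances of (M1): for $c'\in v\mathcal C$,
\begin{align*}
F(\gamma(c,d)(c'))
&=F_{m'}\bigl(F_m(\gamma(c,d)(c'))\bigr)\\
&=F_{m'}\bigl(\gamma(F_m(c),G_m(d))(F_m(c'))\bigr)\\
&=\gamma\bigl(F_{m'}F_m(c),G_{m'}G_m(d)\bigr)\bigl(F_{m'}F_m(c')\bigr)\\
&=\gamma(F(c),G(d))(F(c')),
\end{align*}
where the second equality is (M1) for $m$ and the third is (M1) for $m'$ applied to the pair $(F_m(c),G_m(d))\in E_{\Gamma'}$ at the object $F_m(c')$. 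For (M2): if $(c,d),(c',d')\in E_\Gamma$ and $f^*\colon d'\to d$ is the transpose of $f\colon c\to c'$, then (M2) for $m$ says $G_m(f^*)$ is the transpose of $F_m(f)$, whereupon (M2) for $m'$ (legitimately applicable, since $(F_m(c),G_m(d)),(F_m(c'),G_m(d'))\in E_{\Gamma'}$ by the first half of (M1) for $m$) says $G_{m'}(G_m(f^*))=G(f^*)$ is the transpose of $F_{m'}(F_m(f))=F(f)$. Thus $m'\circ m$ satisfies (M1) and (M2), so it is a cross-connection morphism.

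The only place any care is needed — and the step I would flag as the genuine (if modest) obstacle — is making sure the intermediate incidence conditions needed to \emph{invoke} (M1) and (M2) for the second morphism are actually available; these come precisely from the first half of (M1) for the first morphism, which is why one must verify the $E_{\Gamma'}$-membership clause of (M1) before using the cone-compatibility clause and before using (M2). Everything else is bookkeeping: once composition is shown to land in the right place, associativity and the unit laws are just the corresponding statements for functors read off one coordinate at a time, and the sizes cause no set-theoretic trouble since all categories in sight are small. This completes the verification that cross-connections with cross-connection morphisms form a category.
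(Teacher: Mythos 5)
Your proof is correct: the only substantive point is closure of the morphism class under componentwise composition, and you verify (M1) and (M2) for the composite by chaining the two instances, correctly noting that the $E_{\Gamma'}$-membership clause of (M1) for the first morphism is what licenses applying (M1) and (M2) for the second. The paper itself gives no proof (it cites Nambooripad's treatise), and your verification is exactly the standard routine argument one would expect there, so nothing further is needed.
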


We denote by $\mathbf{CC}$ the category of cross-connections with cross-connection morphisms.

\section{Inductive groupoid of a cross-connection}\label{indcxn}

Recall that the aim of the present paper is to establish a category equivalence between the categories $\mathbf{IG}$ and $\mathbf{CC}$. In this section, given a cross-connection $\Gamma= (\mathcal{D},\mathcal{C};{\Gamma})$, we identify the inductive groupoid $(\mathcal{G}_\Gamma,\epsilon_\Gamma)$ associated with the cross-connection $\Gamma$. Further, we prove that this correspondence is also functorial.

\subsection{Biordered set of a cross-connection}
First, observe that for an element $(c,d)\in E_\Gamma$, we can uniquely associate with it the pair of idempotent cones $(\gamma(c,d),\delta(c,d))$.  By suitably defining  the basic products and preorders  \cite{cross}, the set $E_\Gamma$ can be realised as the regular biordered set associated with the cross-connection $\Gamma$.

Define two preorders $\lel $ and $\ler $ on $ E_\Gamma$ as follows. For any two elements $(c,d)$ and $(c',d')$ in $E_\Gamma$,
\begin{equation}\label{eqb2}
(c,d)\lel (c',d')\iff c\subseteq c'\text{  and  }(c,d)\ler (c',d')\iff d\subseteq d'.
\end{equation}
Also, we define basic products in $E_\Gamma$ as:
\begin{equation}\label{eqb3}
(c,d)(c',d')=\begin{cases}(c,d), & \text{ if } c\subseteq c';\\
(c',{\im\delta(c,d)(d') } ), & \text{ if } c'\subseteq c;\\
(c',d'), & \text{ if } d'\subseteq d;\\
(\im\gamma(c',d')(c),d),& \text{ if } d\subseteq d'.
\end{cases}
\end{equation}

Then $E_\Gamma$ as defined in (\ref{eqb1}) forms a regular biordered set with preorders and basic products as defined in (\ref{eqb2}) and (\ref{eqb3}) respectively. This biordered set $E_\Gamma$ shall serve as the set of objects $v\mathcal{G}_\Gamma$ of our required inductive groupoid $\mathcal{G}_\Gamma$.

\subsection{Ordered groupoids of a cross-connection}
Let $(c,d),(c',d') \in E_\Gamma$. Consider an isomorphism $f\colon c\to c'$. Then it has an inverse $f^{-1}\colon c'\to c$. Now let $g\colon d \to d'$ be the transpose of $f^{-1}$ relative to $\Gamma$, i.e., $g:=(f^{-1})^*$. Then by \cite[Corollary IV.23]{cross} and properties of transposes, the morphism $g$ will be an isomorphism so that $(f,g)$ will be a pair of isomorphisms in $\mathcal{C}\times\mathcal{D}$. So, a morphism in the inductive groupoid $\mathcal{G}_\Gamma$ from $ (c,d)$ to $(c',d')$ is defined as a pair $(f,g)\colon (c,d)\to(c',d')$. This is well-defined by the uniqueness of inverses and transposes for a fixed pair of objects in $E_\Gamma$.

\begin{lem}
$\mathcal{G}_\Gamma$ is a groupoid.
\end{lem}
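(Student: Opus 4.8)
The plan is to verify the groupoid axioms directly from the definition of morphisms in $\mathcal{G}_\Gamma$ as pairs $(f,g)\colon(c,d)\to(c',d')$, where $f\colon c\to c'$ is an isomorphism in $\mathcal{C}$ and $g=(f^{-1})^*\colon d\to d'$ is the transpose of $f^{-1}$. The key technical point throughout is that the operations of taking inverses in $\mathcal{C}$ and of taking transposes relative to $\Gamma$ are compatible with composition, so that the pairing $(f,g)$ is stable under composition and inversion.

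First I would define composition: given $(f,g)\colon(c,d)\to(c',d')$ and $(f',g')\colon(c',d')\to(c'',d'')$ in $\mathcal{G}_\Gamma$, set $(f,g)(f',g'):=(ff',gg')$. To see this is a legitimate morphism of $\mathcal{G}_\Gamma$, one checks that $ff'\colon c\to c''$ is an isomorphism in $\mathcal{C}$ (clear) and that $gg'$ is the transpose of $(ff')^{-1}=f'^{-1}f^{-1}$. The latter reduces to the fact that the transpose operation reverses composition, i.e. $(h k)^* = k^* h^*$ for composable isomorphisms — this is the crucial lemma and should follow from the defining commutative square of the transpose in Definition~\ref{ccxn}'s surrounding discussion, together with functoriality of $\Delta$ and naturality of the isomorphisms $\eta_{\delta(c,d)}$; I would cite \cite[Corollary IV.23]{cross} or the relevant statement in \cite{cross} for the multiplicativity of transposition. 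Associativity of composition in $\mathcal{G}_\Gamma$ is then inherited componentwise from associativity in $\mathcal{C}$ and $\mathcal{D}$.

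Next I would exhibit identities and inverses. For $(c,d)\in E_\Gamma=v\mathcal{G}_\Gamma$, the identity morphism is $(1_c,1_d)$: here $1_c$ is an isomorphism and $1_d$ is the transpose of $1_c^{-1}=1_c$, which holds since the transpose of an identity is an identity (again from the defining square, using $\Delta(1_c)=1_{\Delta(c)}$ and the naturality square collapsing). The identity laws $(1_c,1_d)(f,g)=(f,g)=(f,g)(1_{c'},1_{d'})$ are then immediate. For inverses, given $(f,g)\colon(c,d)\to(c',d')$, I claim $(f^{-1},g^{-1})\colon(c',d')\to(c,d)$ is a morphism of $\mathcal{G}_\Gamma$ and is a two-sided inverse. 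Here $g^{-1}$ exists and is an isomorphism by \cite[Corollary IV.23]{cross}; that $g^{-1}$ is the transpose of $(f^{-1})^{-1}=f$ follows from the symmetry of the transpose relation (the transpose of the transpose returns the original morphism, which is exactly the statement that $(f^{-1})^{**}=f^{-1}$ read appropriately, or can be extracted from the duality between $\Gamma$ and $\Delta$). Then $(f,g)(f^{-1},g^{-1})=(ff^{-1},gg^{-1})=(1_c,1_d)$ and symmetrically on the other side, so every morphism is an isomorphism.

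The main obstacle I anticipate is the bookkeeping around the transpose operation — specifically, establishing cleanly that transposition is contravariantly functorial on isomorphisms (sends $1$ to $1$ and reverses composites) and that $g=(f^{-1})^*$ being well-defined forces $g^{-1}=f^*$ and hence $(f^{-1},g^{-1})$ has $g^{-1}$ as the transpose of $f$. All of this is essentially contained in the cited results from \cite{cross} about transposes being isomorphisms when their source morphisms are, so the proof should amount to assembling these facts; once the transpose calculus is in hand, every groupoid axiom is verified componentwise and the argument is routine.
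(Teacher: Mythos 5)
Your proposal is correct and follows essentially the same route as the paper: componentwise composition, identities and inverses, with the key point being the anti-multiplicativity of the transpose (the paper cites \cite[Corollary IV.22]{cross} for $(f'^{-1}f^{-1})^*=(f^{-1})^*(f'^{-1})^*$ and \cite[Corollary IV.23]{cross} for transposes of isomorphisms being isomorphisms). Your extra care in checking that $(1_c,1_d)$ and $(f^{-1},g^{-1})$ are genuinely morphisms of $\mathcal{G}_\Gamma$ only makes explicit what the paper leaves implicit.
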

\begin{proof}
First, observe that given two morphisms $(f,g)$ from $(c_1,d_1)$ to $(c_2,d_2)$ and $(f',g')$ from $(c_2,d_2)$ to $(c_3,d_3)$, then by the composition in $\mathcal{C}$ and $\mathcal{D}$, we have $ff'\colon c_1 \to c_3$ and $gg'\colon d_1 \to d_3$. So $(ff')^{-1}\colon c_3 \to c_1$ and by \cite[Corollary IV.22]{cross},
$$((ff')^{-1})^*=(f'^{-1}f^{-1})^*= (f^{-1})^*(f'^{-1})^* =gg'.$$
So, the composition is well-defined and associative. The morphism $(1_c,1_d)$ is the identity morphism at $(c,d) \in E_\Gamma$. Since $(f,g)\in \mathcal{G}_\Gamma$ is a pair of isomorphisms in $\mathcal{C}\times\mathcal{D}$, $(f,g)^{-1}= (f^{-1},g^{-1})$ in $\mathcal{G}_\Gamma$. Hence $\mathcal{G}_\Gamma$ is a groupoid.
\end{proof}

Now given a morphism $(f,g)$ from $(c,d)$ to $(c',d')$ and a morphism $(f_1,g_1)$ from $(c_1,d_1)$ to $(c'_1,d'_1)$, define a relation $\leq_\Gamma$ on $\mathcal{G}_\Gamma$ as follows:
\begin{align*}
(f,g) \leq_\Gamma (f_1,g_1) \iff & (c,d) \subseteq (c_1,d_1),\ 
(c',d') \subseteq (c'_1,d'_1),\\ &\text{and } (f,g)= ((j(c,c_1)f_1)^\circ,(j(d,d_1)g_1)^\circ)
\end{align*}
where $j(c,c_1)$ is the inclusion from $c$ to $c_1$ and $h^\circ$ stands for the epimorphic component of a morphism $h$ in the normal category $\mathcal{C}$.
\begin{lem}
The relation $\leq_\Gamma$ is a partial order on $\mathcal{G}_\Gamma$.
\end{lem}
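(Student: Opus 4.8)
The plan is to verify the three defining properties of a partial order — reflexivity, antisymmetry, and transitivity — directly from the definition~\eqref{po}, making essential use of the fact that in a normal category the epimorphic component $f^\circ$ is uniquely determined by $f$, together with the obvious facts that $\subseteq$ is a partial order on $v\mathcal{C}$ (resp.\ $v\mathcal{D}$) and that inclusions compose: $j(c,c')j(c',c'')=j(c,c'')$ whenever $c\subseteq c'\subseteq c''$. Throughout I will write things only on the $\mathcal{C}$-coordinate, since the $\mathcal{D}$-coordinate is handled identically and the two conditions in~\eqref{po} are conjoined.

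For reflexivity, note that $(c,d)\subseteq(c,d)$ trivially, and $j(c,c)=1_c$, so $(j(c,c)f)^\circ=f^\circ=f$ because $f$ is already an isomorphism (hence its own epimorphic component). Thus $(f,g)\leq_\Gamma(f,g)$. For antisymmetry, suppose $(f,g)\leq_\Gamma(f_1,g_1)$ and $(f_1,g_1)\leq_\Gamma(f,g)$. The subobject conditions give $(c,d)\subseteq(c_1,d_1)$ and $(c_1,d_1)\subseteq(c,d)$, so by antisymmetry of $\subseteq$ we get $c=c_1$, $d=d_1$, and likewise for the codomains; then $j(c,c_1)=1_c$, and the factorisation condition collapses to $(f,g)=(f_1^\circ,g_1^\circ)=(f_1,g_1)$.

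For transitivity, suppose $(f,g)\leq_\Gamma(f_1,g_1)\leq_\Gamma(f_2,g_2)$ with the evident labelling of domains and codomains. Transitivity of $\subseteq$ yields the required subobject inclusions $(c,d)\subseteq(c_2,d_2)$ and $(c',d')\subseteq(c_2',d_2')$. It remains to check $f=(j(c,c_2)f_2)^\circ$. By hypothesis $f=(j(c,c_1)f_1)^\circ$ and $f_1=(j(c_1,c_2)f_2)^\circ$. Substituting and using $j(c,c_1)j(c_1,c_2)=j(c,c_2)$, the key point is that $\bigl(j(c,c_1)\,(j(c_1,c_2)f_2)^\circ\bigr)^\circ=\bigl(j(c,c_1)j(c_1,c_2)f_2\bigr)^\circ$; in other words, precomposing a morphism with an inclusion and then taking the epimorphic component is the same as replacing the morphism by its epimorphic component first. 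This is immediate from the facts that $h^\circ$ and $h$ differ by postcomposition with an inclusion (so $k h^\circ$ and $k h$ have the same epimorphic component for any $k$, since an inclusion on the right does not change the image), and that the epimorphic component is idempotent under the $(-)^\circ$ operation, $(h^\circ)^\circ=h^\circ$.

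The routine bookkeeping is entirely in the last paragraph; I expect the only genuine obstacle to be justifying the identity $\bigl(j(c,c_1)(j(c_1,c_2)f_2)^\circ\bigr)^\circ=\bigl(j(c,c_1)j(c_1,c_2)f_2\bigr)^\circ$ cleanly, i.e.\ making precise that taking epimorphic components is well-behaved under precomposition with inclusions. This follows from uniqueness of normal factorisations in $\mathcal{C}$ (and $\mathcal{D}$): writing a normal factorisation $j(c_1,c_2)f_2=q'u'j'$ and composing on the left with the inclusion $j(c,c_1)$, one reassembles a normal factorisation of $j(c,c_2)f_2$ with the same epimorphic component, because an inclusion followed by a retraction again has a retraction as epimorphic component after splitting (using (NC2)). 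Once this lemma about $(-)^\circ$ is in place, transitivity drops out, and the proof is complete.
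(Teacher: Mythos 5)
Your proof is correct and follows essentially the same route as the paper: reflexivity and antisymmetry are handled identically (using that an isomorphism is its own epimorphic component), and transitivity rests on the same key identity $\bigl(j(c,c_1)(j(c_1,c_2)f_2)^\circ\bigr)^\circ=\bigl(j(c,c_2)f_2\bigr)^\circ$, which the paper simply quotes as \cite[Corollary II.4]{cross} while you derive it from the uniqueness of the epimorphic component. Your first justification of that identity (postcomposition with an inclusion does not alter the epimorphic component, hence $(kh^\circ)^\circ=(kh)^\circ$) is the right one and suffices on its own; the closing remark about an inclusion followed by a retraction having a retraction as epimorphic component is unnecessary (and, as stated, not quite accurate), but it does not affect the argument.
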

\begin{proof}
Clearly $\leq_\Gamma$ is reflexive.

Let $(f,g) \leq_\Gamma (f_1,g_1)$ and $(f_1,g_1) \leq_\Gamma (f,g)$. Then $(c,d) = (c_1,d_1)$ and $(c',d') = (c'_1,d'_1)$. So $j(c,c_1)=1_{c_1}$. Also since $f_1$ is an isomorphism, $(f_1)^\circ=f_1$. Hence we have
$$(f,g)= ((j(c,c_1)f_1)^\circ,(j(d,d_1)g_1)^\circ)= ((f_1)^\circ,(g_1)^\circ)=(f_1,g_1).$$
So, $\leq_\Gamma$ is is anti-symmetric.

Let $(f,g) \leq_\Gamma (f_1,g_1)$ and $(f_1,g_1) \leq_\Gamma (f_2,g_2)$ where $(f_2,g_2)$ is a morphism from $(c_2,d_2)$ to $(c'_2,d'_2)$. Then clearly $(c,d) \subseteq (c_2,d_2), (c',d') \subseteq (c'_2,d'_2) $. Also,

\begin{equation*}
\begin{split}
(f,g)&= ((j(c,c_1)f_1)^\circ,(j(d,d_1)g_1)^\circ) \\
&=((j(c,c_1)(j(c_1,c_2)f_2)^\circ)^\circ,(j(d,d_1)(j(d_1,d_2)g_2)^\circ)^\circ)\\
&=(1_{c}((j(c,c_1)j(c_1,c_2)f_2)^\circ)^\circ,1_{d}((j(d,d_1)j(d_1,d_2)g_2)^\circ)^\circ) \ \text{ (by \cite[Corollary II.4]{cross})}\\
&=(((j(c,c_2)f_2)^\circ)^\circ,((j(d,d_2)g_2)^\circ)^\circ)\\
&=((j(c,c_2)f_2)^\circ,(j(d,d_2)g_2)^\circ).\\
\end{split}
\end{equation*}

So $(f,g) \leq_\Gamma (f_2,g_2)$, and thus $\leq_\Gamma$ is transitive. Hence $\leq_\Gamma$ is a partial order on $\mathcal{G}_\Gamma$.
\end{proof}

Observe that the partial order $\leq_\Gamma$ restricted to the identities of $\mathcal{G}_\Gamma$ reduces to the natural partial order $\leqslant $ on the biordered set $E_{\Gamma}$, and may be written as follows.
$$(1_c,1_{d}) \leq_\Gamma (1_{c_1},1_{d_1}) \iff (c,d)\subseteq({c_1},{d_1}).$$

Now define restrictions and corestrictions on $\mathcal{G}_\Gamma$ as follows. Take a morphism  $(f,g)\in\mathcal{G}_\Gamma$ from $(c_1,d_1)$ to $(c_2,d_2)$. If $(c'_1,d'_1)\subseteq(c_1,d_1)$, then $j(c'_1,c_1)$ is an inclusion in $\mathcal{C}$ from $c'_1$ to $c_1$, and $j(d'_1,d_1)$ is an inclusion in $\mathcal{D}$ from $d'_1$ to $d_1$. So,
$(j(c'_1,c_1)f,j(d'_1,d_1)g)$
is a pair of monomorphisms in $\mathcal{C} \times \mathcal{D}$ from $(c'_1,d'_1)$ to $(c_2,d_2)$. Consider their epimorphic components, say $$(f',g'):=((j(c'_1,c_1)f)^\circ,(j(d'_1,d_1)g)^\circ).$$

Observe that $f'$ and $g'$ are isomorphisms, and $(j(d'_1,d_1)g)^\circ= (((j(c'_1,c_1)f)^\circ)^{-1})^*$. So, the \emph{restriction} of $(f,g)$ to $(c'_1,d'_1)$ in $\mathcal{G}_\Gamma$ is defined as the pair $(f',g')$.

Similarly, if $(c'_2,d'_2)\subseteq(c_2,d_2)$, then $j(c'_2,c_2)$ is an inclusion in $\mathcal{C}$ from $c'_2$ to $c_2$, and $j(d'_2,d_2)$ is an inclusion in $\mathcal{D}$ from $d'_2$ to $d_2$. So,
$(j(c'_2,c_2)f^{-1},j(d'_2,d_2)g^{-1})$
is a pair of monomorphisms in $\mathcal{C} \times \mathcal{D}$ from $(c'_2,d'_2)$ to $(c_1,d_1)$. Consider their epimorphic components $((j(c'_2,c_2)f^{-1})^\circ,(j(d'_2,d_2)g^{-1})^\circ)$. These are isomorphisms with domain $(c'_2,d'_2)$. Now take their inverses, say
$$(f'',g''):=\left(((j(c'_2,c_2)f^{-1})^\circ)^{-1},((j(d'_2,d_2)g^{-1})^\circ)^{-1}\right).$$
Then, the \emph{corestriction} of $(f,g)$ to $(c'_2,d'_2)$ in $\mathcal{G}_\Gamma$ is defined as the pair of isomorphisms $(f'',g'')$ with codomain $(c'_2,d'_2)$.

\begin{thm}\label{thmog}
$(\mathcal{G}_\Gamma,\leq_\Gamma)$ is an ordered groupoid with restrictions and corestrictions defined as above.
\end{thm}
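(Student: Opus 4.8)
The plan is to verify the four axioms (OG1), (OG2), (OG3), (OG3$^*$) of Definition~\ref{og} for the pair $(\mathcal{G}_\Gamma,\leq_\Gamma)$, exploiting the fact that everything in $\mathcal{G}_\Gamma$ is built componentwise from the normal categories $\mathcal{C}$ and $\mathcal{D}$, and that a morphism $(f,g)$ is completely determined by its first component $f$ (since $g=((f^{-1})^*)$ is forced by the cross-connection $\Gamma$). Thus most verifications reduce to a statement about a single normal category together with the observation that transposes are compatible with the relevant operations, via \cite[Corollaries IV.22 and IV.23]{cross} and \cite[Corollary II.4]{cross}, which are already invoked above.

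First I would establish (OG3) and its dual (OG3$^*$), since the restriction and corestriction morphisms have already been constructed explicitly right before the statement; what remains is to check they have the claimed properties. For (OG3): given $(f,g)\colon(c_1,d_1)\to(c_2,d_2)$ and $(1_{c'_1},1_{d'_1})\leq_\Gamma(1_{c_1},1_{d_1})$, i.e.\ $(c'_1,d'_1)\subseteq(c_1,d_1)$, I would show the pair $(f',g'):=((j(c'_1,c_1)f)^\circ,(j(d'_1,d_1)g)^\circ)$ is a genuine morphism of $\mathcal{G}_\Gamma$ (the pair is a pair of isomorphisms, already noted, and the transpose relation $(j(d'_1,d_1)g)^\circ=(((j(c'_1,c_1)f)^\circ)^{-1})^*$ holds, also already noted), that $(f',g')\leq_\Gamma(f,g)$ directly from the definition~\eqref{po} of $\leq_\Gamma$, and that $\mathbf{d}(f',g')=(c'_1,d'_1)$. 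Uniqueness follows because any $(f'',g'')\leq_\Gamma(f,g)$ with domain $(c'_1,d'_1)$ must, by~\eqref{po}, equal $((j(c'_1,c_1)f)^\circ,(j(d'_1,d_1)g)^\circ)$. The argument for (OG3$^*$) is dual, using the corestriction already defined via inverses of epimorphic components, or one can deduce (OG3$^*$) from (OG3) once (OG2) is in hand.

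Axiom (OG2) is immediate: if $(f,g)\leq_\Gamma(f_1,g_1)$ then by~\eqref{po} we have $(c,d)\subseteq(c_1,d_1)$, $(c',d')\subseteq(c'_1,d'_1)$ and $(f,g)=((j(c,c_1)f_1)^\circ,(j(d,d_1)g_1)^\circ)$; I would show that applying the componentwise inverse gives $(f^{-1},g^{-1})=((j(c',c'_1)f_1^{-1})^\circ,(j(d',d'_1)g_1^{-1})^\circ)$, which is exactly the condition $(f^{-1},g^{-1})\leq_\Gamma(f_1^{-1},g_1^{-1})$. This is essentially the assertion that the corestriction of $(f_1,g_1)$ to $(c',d')$ is the inverse of the restriction of $(f_1,g_1)^{-1}$ to $(c',d')$, a compatibility between the two one-sided operations in a normal category; it can be checked by a short diagram chase using the uniqueness of normal factorizations and the fact that inclusions split.

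The main obstacle I expect is (OG1), the compatibility of composition with the order: given $u\leq_\Gamma x$, $v\leq_\Gamma y$ with $\mathbf{r}(u)=\mathbf{d}(v)$ and $\mathbf{r}(x)=\mathbf{d}(y)$, one must show $uv\leq_\Gamma xy$. Writing $x=(f,g)$, $y=(f_1,g_1)$, $u=(f',g')$, $v=(f'_1,g'_1)$ and unpacking~\eqref{po}, this comes down to showing, in $\mathcal{C}$ alone, that $(j(\cdot)ff_1)^\circ=(j(\cdot)f)^\circ\cdot(j(\cdot)f_1)^\circ$ for appropriately nested subobjects, together with the matching identity in $\mathcal{D}$ and the fact that transposition respects composition (\cite[Corollary IV.22]{cross}) so that the $\mathcal{D}$-component comes along for free. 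The delicate point is tracking how the image of the first epimorphic component (which is the codomain of $u$, hence by $\mathbf{r}(u)=\mathbf{d}(v)$ the domain from which $v$ is corestricted/restricted) threads through the composite; I would handle this by decomposing $x$ and $y$ into their normal factorizations $x^{\phantom\circ}=q_x u_x j_x$, $y=q_y u_y j_y$, using (NC1), and using that restriction to a subobject commutes with retraction and isomorphism parts in the expected way — essentially the standard lemmas on ordered groupoids arising from normal categories (cf.\ \cite{cross}). Once (OG1) is settled, the theorem follows, and I would close by remarking that the induced order on identities is the natural partial order $\leqslant$ on $E_\Gamma$, as already observed.
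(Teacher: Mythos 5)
Your proposal is correct and follows essentially the same route as the paper: the axioms (OG1)--(OG3$^*$) are checked componentwise, with the composition law for epimorphic components (the paper cites \cite[Corollary II.4]{cross} for the identity $(j(c'_1,c_1)f)^\circ(j(c'_2,c_2)f_1)^\circ=(j(c'_1,c_1)ff_1)^\circ$) carrying (OG1), a direct verification that the inverse of a restriction is the corresponding corestriction of the inverse carrying (OG2), and the explicitly constructed restrictions/corestrictions together with the uniqueness built into the definition of $\leq_\Gamma$ giving (OG3) and (OG3$^*$). Your remark that the $\mathcal{D}$-component is forced by uniqueness of transposes is a harmless shortcut; the paper simply runs the symmetric computation in $\mathcal{D}$.
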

\begin{proof}
First, let $(f,g)$ be a morphism in $\mathcal{G}_\Gamma$ from $(c_1,d_1)$ to $(c_2,d_2)$, $(f_1,g_1)$ a morphism from $(c_2,d_2)$ to $(c_3,d_3)$, $(f',g')$ a morphism from $(c'_1,d'_1)$ to $(c'_2,d'_2)$ and $(f'_1,g'_1)$ a morphism  from $(c'_2,d'_2)$ to $(c'_3,d'_3)$ such that $(f',g')\leq_\Gamma(f,g)$ and $(f'_1,g'_1)\leq_\Gamma(f_1,g_1)$.

Then $(ff_1,gg_1)$ and $(f'f'_1,g'g'_1)$ are morphisms in $\mathcal{G}_\Gamma$ from $(c_1,d_1)$ to $(c_3,d_3)$ and  $(c'_1,d'_1)$ to $(c'_3,d'_3)$ respectively, such that $(c'_1,d'_1)\subseteq(c_1,d_1)$ and $(c'_3,d'_3)\subseteq(c_3,d_3)$.

We have
\begin{equation*}
\begin{split}
(f'f'_1,g'g'_1)&= ((j(c'_1,c_1)f)^\circ(j(c'_2,c_2)f_1)^\circ,(j(d'_1,d_1)g)^\circ(j(d'_2,d_2)g_1)^\circ)\\
&= ((j(c'_1,c_1)ff_1)^\circ,(j(d'_1,d_1)gg_1)^\circ) \quad \text{(using \cite[Corollary II.4]{cross})}.\\
\end{split}
\end{equation*}
Hence $(ff_1,gg_1) \leq_\Gamma (f'f'_1,g'g'_1)$, and (OG1) is satisfied.

Now if $(f,g)$ is a morphism in $\mathcal{G}_\Gamma$ from $(c_1,d_1)$ to $(c_2,d_2)$ and $(f',g')$ a morphism from $(c'_1,d'_1)$ to $(c'_2,d'_2)$ such that $(f',g')\leq_\Gamma(f,g)$ , then we need to show $(f',g')^{-1} \leq_\Gamma (f,g)^{-1}$. Clearly, $(c'_1,d'_1)\subseteq(c_1,d_1)$ and $(c'_2,d'_2)\subseteq(c_2,d_2)$.

Observe that using Proposition \ref{proepi},
$$(j(c'_2,c_2)f^{-1})^\circ(j(c'_1,c_1)f)^\circ= (j(c'_2,c_2)f^{-1}f)^\circ = (j(c'_2,c_2))^\circ =1_{c'_2},$$ and
$$(j(c'_1,c_1)f)^\circ(j(c'_2,c_2)f^{-1})^\circ= (j(c'_1,c_1)ff^{-1})^\circ = (j(c'_1,c_1))^\circ =1_{c'_1}.$$
Hence $((j(c'_1,c_1)f)^\circ)^{-1}=(j(c'_2,c_2)f^{-1})^\circ$, and similarly we show that $((j(d'_1,d_1)g)^\circ)^{-1}=(j(d'_2,d_2)g^{-1})^\circ$. So,
\begin{equation*}
\begin{split}
(f',g')^{-1}&= (f'^{-1},g'^{-1})\\
&= (((j(c'_1,c_1)f)^\circ)^{-1},((j(d'_1,d_1)g)^\circ)^{-1}) \\
&= ((j(c'_2,c_2)f^{-1})^\circ,(j(d'_2,d_2)g^{-1})^\circ).
\end{split}
\end{equation*}
Hence $(f',g')^{-1} \leq_\Gamma (f,g)^{-1}$, and (OG2) is satisfied.

Finally, if $(f,g)$ is a morphism in $\mathcal{G}_\Gamma$ from $(c_1,d_1)$ to $(c_2,d_2)$, and $(c'_1,d'_1)\subseteq(c_1,d_1)$, then we define the restriction of $(f,g)$ to $(c'_1,d'_1)$ as $((j(c'_1,c_1)f)^\circ,(j(d'_1,d_1)g)^\circ)$. So $((j(c'_1,c_1)f)^\circ,(j(d'_1,d_1)g)^\circ)\leq_\Gamma(f,g)$ and $\mathbf{d}((j(c'_1,c_1)f)^\circ,(j(d'_1,d_1)g)^\circ)=  (c'_1,d'_1)$, and (OG3) is satisfied. Similarly, we can verify (OG3*).

Hence $(\mathcal{G}_\Gamma,\leq_\Gamma)$ is an ordered groupoid.
\end{proof}

Now, since $E_{\Gamma}$ is a regular biordered set, we can build an ordered groupoid $\mathcal{G}(E_{\Gamma})$ of the $E$-chains of $E_{\Gamma}$. But to that end, we need to discuss how we can compose two cones in a normal category.

Recall that for a cone $\gamma$ in the category $\mathcal{C}$ and an epimorphism $f\colon c_\gamma \to d$, the map $\gamma*f \colon  a \mapsto\gamma(a)f$ from $v\mathcal{C}$ to $\mathcal{C}$ is a cone with apex $d$. Hence, given two cones $\gamma$ and $\sigma $, we can compose them as follows:
\begin{equation} \label{eqnsg1}
\gamma \cdot \sigma = \gamma*(\sigma(c_\gamma))^\circ
\end{equation}
where $(\sigma(c_\gamma))^\circ$ is the epimorphic component of the morphism $\sigma(c_\gamma)$.

Now, we define a partial order on the set $\mathcal{G}(E_{\Gamma})$ of the $E$-chains of $E_{\Gamma}$. First, for an $E$-chain $\mathfrak{c}=((c_1,d_1),(c_2,d_2),\dots,(c_n,d_n))$ and for $(h,k)\in E_\Gamma$ with $(h,k)\leqslant (c_1,d_1)$, let
$$(h,k)\cdot \mathfrak{c} := ((h_0,k_0),(h_1,k_1),(h_2,k_2),\dots,(h_n,k_n))$$
where $(h_0,k_0):=(h,k)$ and for $i=1,\dots,n$, the pairs $(h_i,k_i)$ are such that
\begin{subequations}
\begin{align}
\gamma(h_i,k_i)&:= \gamma(c_i,d_i)\gamma(h_{i-1},k_{i-1})\gamma(c_i,d_i)\ \text{ and }\label{eq:po1}\\
\delta(h_i,k_i)&:= \delta(c_i,d_i)\delta(h_{i-1},k_{i-1})\delta(c_i,d_i),\label{eq:po2}
\end{align}
\end{subequations}
and the cones in the right hand sides of \eqref{eq:po1} and \eqref{eq:po2} are composed as in \eqref{eqnsg1}.

Then for $E$-chains $\mathfrak{c}$ as above and  $\mathfrak{c}'=((c'_1,d'_1),(c'_2,d'_2),\dots,(c'_m,d'_m))$, define
\begin{equation*}
\mathfrak{c}\leq_E\mathfrak{c}'\iff (c_1,d_1)\subseteq(c'_1,d'_1)\
\text{ and }\mathfrak{c}= (c_1,d_1)\cdot \mathfrak{c}'.
\end{equation*}
Now, it may be verified that $(\mathcal{G}(E_{\Gamma}),\leq_E)$ forms an ordered groupoid with restriction $(h,k){\downharpoonleft}\mathfrak{c}=(h,k)\cdot \mathfrak{c}$.

Further, we define an evaluation map $\epsilon_\Gamma \colon\mathcal{G}(E_{\Gamma}) \to \mathcal{G}_\Gamma$ as follows. In the sequel, for convenience, we often denote the idempotent cones $\gamma(c_i,d_i)$ and $\delta(c_i,d_i)$ by $\gamma_i$ and $\delta_i$, respectively.
The object function of the evaluation functor is $v\epsilon_\Gamma =1_{E_{\Gamma}}$, and for an arbitrary $E$-chain $\mathfrak{c}=((c_1,d_1),(c_2,d_2),\dotsc,(c_n,d_n))$  in $\mathcal{G}(E_{\Gamma})$, we let
$$\epsilon_\Gamma(\mathfrak{c}) := ((\gamma_1\gamma_2\dotsc\gamma_n)(c_1),(\delta_1\delta_2\dotsc\delta_n)(d_1)).$$
Before proceeding, we need to verify the following lemma.
\begin{lem}
If $\mathfrak{c}=((c_1,d_1),(c_2,d_2),\dotsc,(c_n,d_n))\in\mathcal{G}(E_{\Gamma})$, then $\epsilon_\Gamma (\mathfrak{c}) \in \mathcal{G}_\Gamma$.
\end{lem}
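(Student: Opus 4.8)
The plan is to show that the pair $\epsilon_\Gamma(\mathfrak{c}) = \bigl((\gamma_1\gamma_2\cdots\gamma_n)(c_1),\,(\delta_1\delta_2\cdots\delta_n)(d_1)\bigr)$ is a legitimate morphism of $\mathcal{G}_\Gamma$, i.e.\ it is a pair $(f,g)$ of isomorphisms $f\colon c_1'\to c_n'$ in $\mathcal{C}$ and $g\colon d_1'\to d_n'$ in $\mathcal{D}$, where $c_1'=(\gamma_1\cdots\gamma_n)(c_1)$ etc., such that $(c_1',d_1'),(c_n',d_n')\in E_\Gamma$ and $g=(f^{-1})^*$ is the transpose of $f^{-1}$ relative to $\Gamma$. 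So there are three things to check: (i) the relevant composites of cones are defined and their values at the appropriate objects are isomorphisms; (ii) the resulting apex pairs lie in $E_\Gamma$; and (iii) the $\mathcal{C}$-component and the $\mathcal{D}$-component are transposes of each other.

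First I would reduce to the case $n=2$ by induction on the length of the $E$-chain, since a composite $\gamma_1\cdots\gamma_n$ is built by iterating~\eqref{eqnsg1}, and the product of morphisms in $\mathcal{G}_\Gamma$ is componentwise (by the first lemma of this section). For $n=1$ the statement is trivial: $\epsilon_\Gamma((c_1,d_1))=(1_{c_1},1_{d_1})$, and $(1_{c_1},1_{d_1})$ is the identity morphism at $(c_1,d_1)\in E_\Gamma$. For the inductive step it suffices to analyse the composition of two idempotent cones $\gamma_i\cdot\gamma_{i+1}$ arising from consecutive entries of the $E$-chain, for which $(c_i,d_i)\mathrel{(\mathscr{R}\cup\mathscr{L})}(c_{i+1},d_{i+1})$; by the defining preorders~\eqref{eqb2} this means either $d_i=d_{i+1}$ (the $\mathscr{R}$ case) or $c_i=c_{i+1}$ (the $\mathscr{L}$ case). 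In either case one can compute $(\gamma_i\cdot\gamma_{i+1})(c_i)$ and $(\gamma_i\cdot\gamma_{i+1})(c_{i+1})$ explicitly: by~\eqref{eqnsg1}, $\gamma_i\cdot\gamma_{i+1}=\gamma_i*(\gamma_{i+1}(c_i))^\circ$, so $(\gamma_i\cdot\gamma_{i+1})(a)=\gamma_i(a)\,(\gamma_{i+1}(c_i))^\circ$. Using that $\gamma_i$ is an idempotent cone with apex $c_i$, so $\gamma_i(c_i)=1_{c_i}$ and $\gamma_i(a)$ is an isomorphism precisely for $a\in M\gamma_i$, I would show the value at the apex of the composite is again an isomorphism and its codomain is $\im(\gamma_{i+1}(c_i)) = \im\gamma_{i+1}(c_i)$, and that this codomain object together with the matching object on the $\mathcal{D}$ side forms an element of $E_\Gamma$ — this is exactly the content of the basic-product formulas~\eqref{eqb3}, which were asserted to produce elements of $E_\Gamma$. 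Concretely the apex of $\gamma_1\cdots\gamma_n$ is the object of the basic product $(c_1,d_1)(c_2,d_2)\cdots(c_n,d_n)$ in the biordered set $E_\Gamma$, and membership in $E_\Gamma$ follows since $E_\Gamma$ is closed under basic products.

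For step (iii), the transpose condition, I would invoke the duality built into the cross-connection: the dual cross-connection $\Delta$ satisfies $(c,d)\in E_\Gamma\iff(d,c)\in E_\Delta$, and $\delta(c,d)$ is defined precisely as the idempotent cone in $\mathcal{D}$ corresponding to $\gamma(c,d)$ under this duality. The claim is that the $\mathcal{C}$-isomorphism $(\gamma_1\cdots\gamma_n)(c_1)$ and the $\mathcal{D}$-isomorphism $(\delta_1\cdots\delta_n)(d_1)$ are transposes; since transpose and inverse behave contravariantly and the composition in $\mathcal{G}_\Gamma$ was already shown (in the first lemma of this section, via \cite[Corollary IV.22]{cross}) to respect the transpose–pairing, it is enough to verify this for a single step, i.e.\ that $(\gamma_i\cdot\gamma_{i+1})(c_i)$ and $(\delta_i\cdot\delta_{i+1})(d_i)$ are transposes in the appropriate sense, together with the length-one base case where $(1_{c_1})^* = 1_{d_1}$. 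This in turn follows from the compatibility of the cone-composition~\eqref{eqnsg1} with the dual cross-connection, which is part of Nambooripad's cross-connection machinery (it is implicit in the definition of $\delta(c,d)$ and the relation between $\Gamma$ and $\Delta$).

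The main obstacle I expect is step (i)/(ii) in the $\mathscr{R}$ versus $\mathscr{L}$ case analysis: one must be careful that $(\gamma_{i+1}(c_i))^\circ$ is genuinely an epimorphism with the right domain so that $\gamma_i*(\gamma_{i+1}(c_i))^\circ$ is a well-defined cone (requiring $c_i = c_{\gamma_i}$, which holds), and that the \emph{value at the apex} of the resulting composite cone $\gamma_1\cdots\gamma_n$ is an isomorphism — this is where the condition $(c_i,d_i)\in M\Gamma$-type membership, encoded by the $E$-square/$E$-chain structure, gets used. Unwinding the $H$-functor description and the identification $H(\gamma(c,d);-)=\Gamma(d)$ keeps this honest. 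Once the one-step computation is pinned down, the rest is a routine induction using that $\mathcal{G}_\Gamma$ is a groupoid with componentwise composition.
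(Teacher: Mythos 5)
Your outline of parts (i)--(ii) essentially matches the paper: one telescopes $(\gamma_1\cdots\gamma_n)(c_1)=\gamma_2(c_1)\gamma_3(c_2)\cdots\gamma_n(c_{n-1})$ via \eqref{eqnsg1} and uses the $\mathscr{L}$/$\mathscr{R}$ case analysis (identity component in the $\mathscr{L}$ case, isomorphism component in the $\mathscr{R}$ case, the latter because $c_{i-1}\in M\gamma_i$) to conclude the $\mathcal{C}$- and $\mathcal{D}$-components are isomorphisms. Two remarks on this part: what must be an isomorphism is the value at $c_1$, not ``the value at the apex'' of the composite cone; and your route to membership of the codomain pair in $E_\Gamma$ via ``the object of the basic product $(c_1,d_1)(c_2,d_2)\cdots(c_n,d_n)$'' is both unnecessary and unsound --- the iterated basic product of \eqref{eqb3} along a chain need not even be defined, whereas the codomain of the telescoped composite is simply $c_n$, so the codomain pair is $(c_n,d_n)\in E_\Gamma$ by hypothesis.

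The genuine gap is step (iii), which is the actual content of the lemma. Your reduction to a single step is fine (it is the same reduction the paper makes via $((fg)^{-1})^*=(f^{-1})^*(g^{-1})^*$), but the one-step claim --- that $\delta_{i+1}(d_i)$ is the transpose of $(\gamma_{i+1}(c_i))^{-1}$ --- is not ``implicit in the definition of $\delta(c,d)$'': the definition only pins down $\delta(c,d)$ as the idempotent cone with apex $d$ and $H(\delta(c,d);-)=\Delta(c)$, and relating its \emph{components} to transposes of the components of $\gamma$-cones requires a specific result. The paper supplies exactly this: it invokes the identity $(\gamma_{i}(c_{i+1}))^*=\delta_{i+1}(d_{i})$ (the Equation~(25) form of Lemma~IV.18 in \cite{cross}) and then proves $\gamma_{i}(c_{i+1})=(\gamma_{i+1}(c_{i}))^{-1}$ by evaluating the idempotent-cone relations $\gamma_i\gamma_{i+1}=\gamma_{i+1}$, $\gamma_{i+1}\gamma_i=\gamma_i$ (in the $\mathscr{R}$ case; the $\mathscr{L}$ case being trivial) at the two apexes. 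Without naming such a transpose formula and carrying out this inverse computation, your appeal to ``compatibility of the cone composition with the dual cross-connection'' is an assertion of the conclusion rather than a proof of it; as written, the proposal does not close this step.
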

\begin{proof}
First, observe that since $\gamma_i$ and $\delta_i$ are idempotent cones, we have $\gamma_i(c_i)=1_{c_i}$ and $\delta_i(d_i)=1_{d_i}$. Since $\mathfrak{c}$ is an $E$-chain, either $(c_{i-1},d_{i-1})\mathrel{\mathscr{L}}(c_{i},d_{i})$ or $(c_{i-1},d_{i-1}) \mathrel{\mathscr{R}} (c_{i},d_{i})$. If $(c_{i-1},d_{i-1}) \mathrel{\mathscr{L}} (c_{i},d_{i})$, then by (\ref{eqb2}), $c_{i-1}=c_i$ and so $\gamma_i(c_{i-1})=\gamma_i(c_{i})=1_{c_i}$. Otherwise, if $(c_{i-1},d_{i-1}) \mathrel{\mathscr{R}} (c_{i},d_{i})$, then by \cite[Proposition III.7]{cross},  $\gamma_i(c_{i-1})$ is an isomorphism. In either case, we see that $\gamma_i(c_{i-1})$ is an isomorphism. Similarly, we can verify that $\delta_i(d_{i-1})$ is an isomorphism. So,
\begin{equation*}
\begin{split}
(\gamma_1\gamma_2\dotsc\gamma_n)(c_1)&= \gamma_1(c_1)\ast((\gamma_2\dotsc\gamma_n)(c_1))^\circ \quad (\text{by cone multiplication: see } (\ref{eqnsg1}))\\ &=\gamma_1(c_1)\ast(\gamma_2(c_1)\ast(\gamma_3\dotsc\gamma_n)(c_2))^\circ\quad (\texttt{--"--})\\
&=\gamma_1(c_1)(\gamma_2(c_1))^\circ\ast((\gamma_3\dotsc\gamma_n)(c_2))^\circ\quad (\text{using Proposition \ref{proepi} })\\
&=\gamma_1(c_1)\gamma_2(c_1)\ast((\gamma_3\dotsc\gamma_n)(c_2))^\circ\quad  (\text{since $\gamma_2(c_{1})$ is an isomorphism})\\
&=\gamma_1(c_1)\:\gamma_2(c_1)\:\gamma_3(c_2)\:\dots\:\gamma_n(c_{n-1})\quad (\text{repeated use of previous arguments})\\
&= \gamma_2(c_1)\:\gamma_3(c_2)\:\dots\:\gamma_n(c_{n-1})\quad  (\text{since }\gamma_1(c_{1})=1_{c_1}).
\end{split}
\end{equation*}

Since each $\gamma_i(c_{i-1})$ is an isomorphism, we conclude that the morphism $(\gamma_1\gamma_2\dotsc\gamma_n)(c_1)$ is an isomorphism in $\mathcal{C}$. Similarly
$$(\delta_1\delta_2\dotsc\delta_n)(d_1)= \delta_2(d_1)\:\dots\:\delta_n(d_{n-1}),$$
and so $(\delta_1\delta_2\dotsc\delta_n)(d_1)$ is an  isomorphism in $\mathcal{D}$.

Now to show that $\epsilon_\Gamma (\mathfrak{c}) \in \mathcal{G}_\Gamma$, we need to show that
$$(((\gamma_1\gamma_2\dotsc\gamma_n)(c_1))^{-1})^*=(\delta_1\delta_2\dotsc\delta_n)(d_1).$$
But since $((fg)^{-1})^*=(f^{-1})^*(g^{-1})^*$, $\gamma(c_1,d_1)(c_1)=1_{c_1}$ and $\delta(c_1,d_1)(d_1)=1_{d_1}$, it suffices to show that
$$((\gamma_n(c_{n-1}))^{-1})^*=\delta_n(d_{n-1}) \text{ for } n=2,\dots,n.$$
Using \cite[Page 94, Equation (25) version of Lemma IV.18]{cross}, we have
$$(\gamma_{n-1}(c_{n}))^*=\delta_n(d_{n-1}).$$
So it remains to verify that
$$\gamma_{n-1}(c_{n})=(\gamma_n(c_{n-1}))^{-1}.$$

Observe that since $\mathfrak{c}$ is an $E$-chain, $\gamma_{n-1}$ and $\gamma_n$ are idempotent cones such that $\gamma_{n-1}\mathrel{\mathscr{R}}\gamma_n$ or $\gamma_{n-1}\mathrel{\mathscr{L}}\gamma_n$. If $\gamma_{n-1}\mathrel{\mathscr{L}}\gamma_n$, then $c_{n-1}=c_{n}$, so that
$$\gamma_{n-1}(c_{n}) = 1_{c_n} = (\gamma_n(c_{n-1}))^{-1}.$$

Otherwise, if $\gamma_{n-1}\mathrel{\mathscr{R}}\gamma_n$, then $d_{n-1}=d_n$ and
$$\gamma_{n-1}\gamma_n=\gamma_n\text{ and }\gamma_{n}\gamma_{n-1}=\gamma_{n-1}.$$
That is,
$$\gamma_{n-1}\gamma_n(c_{n-1})=\gamma_n\text{ and }\gamma_{n}\gamma_{n-1}(c_n)=\gamma_{n-1}.$$
Equating the first equation at the apex $c_n$ and the second equation at the apex $c_{n-1}$, we get
$$\gamma_{n-1}(c_n)\gamma_n(c_{n-1})=1_{c_n}\text{ and }\gamma_{n}(c_{n-1})\gamma_{n-1}(c_n)=1_{c_{n-1}}.$$
That is, $\gamma_{n-1}(c_{n})=(\gamma_n(c_{n-1}))^{-1}$; hence the lemma.
\end{proof}

\begin{pro}
$\epsilon_\Gamma \colon\mathcal{G}(E_{\Gamma}) \to \mathcal{G}_\Gamma$ is an evaluation functor.
\end{pro}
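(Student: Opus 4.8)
The plan is to check directly that $\epsilon_\Gamma$ meets the definition of an evaluation functor: it should be a functor of ordered groupoids whose object map is an order isomorphism. The object map is $v\epsilon_\Gamma = 1_{E_\Gamma}$, and since the partial order that $\leq_E$ induces on the identities of $\mathcal{G}(E_\Gamma)$ and the partial order that $\leq_\Gamma$ induces on the identities of $\mathcal{G}_\Gamma$ both coincide with the natural partial order $\leqslant$ of $E_\Gamma$, this map is trivially an order isomorphism. The preceding lemma already guarantees $\epsilon_\Gamma(\mathfrak{c}) \in \mathcal{G}_\Gamma$; I would first record that it has the expected domain and codomain: since each leg $\gamma_i(c_{i-1})$ is an isomorphism onto $c_i$ (as observed in the proof of that lemma), an easy induction shows that $\gamma_1\gamma_2\cdots\gamma_n$ has apex $c_n$, and hence $\mathbf{d}(\epsilon_\Gamma(\mathfrak{c})) = (c_1,d_1)$ and $\mathbf{r}(\epsilon_\Gamma(\mathfrak{c})) = (c_n,d_n)$. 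I would also verify that $\epsilon_\Gamma$ is well defined on $E$-chains, i.e.\ unaffected by deleting an inessential element $e_i$: then $\gamma_{i-1},\gamma_i,\gamma_{i+1}$ are pairwise $\mathscr{R}$-related (or pairwise $\mathscr{L}$-related) idempotent cones, so $\gamma_i\gamma_{i+1} = \gamma_{i+1}$ (resp.\ $\gamma_{i-1}\gamma_i = \gamma_{i-1}$), and associativity of the composition \eqref{eqnsg1} removes $\gamma_i$ from $\gamma_1\cdots\gamma_n$ without changing the product; the $\delta$-coordinate is symmetric.

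For functoriality, the identities are immediate: for a trivial $E$-chain $((c,d))$, idempotence of $\gamma(c,d)$ and $\delta(c,d)$ gives $\epsilon_\Gamma((c,d)) = (\gamma(c,d)(c),\delta(c,d)(d)) = (1_c,1_d) = 1_{(c,d)}$. For composition, let $\mathfrak{c} = ((c_1,d_1),\dots,(c_n,d_n))$ and $\mathfrak{c}' = ((c_n,d_n),(c_{n+1},d_{n+1}),\dots,(c_m,d_m))$ be composable, so $\mathfrak{c}\mathfrak{c}' = ((c_1,d_1),\dots,(c_m,d_m))$. Put $\alpha = \gamma_1\cdots\gamma_n$ and $\beta = \gamma_n\gamma_{n+1}\cdots\gamma_m$. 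Since $\gamma_n$ is idempotent with apex $c_n = c_\alpha$, one has $\alpha\cdot\gamma_n = \alpha$, so by associativity $\gamma_1\cdots\gamma_m = \alpha\cdot\beta$; evaluating at $c_1$ gives $(\gamma_1\cdots\gamma_m)(c_1) = \alpha(c_1)\cdot(\beta(c_n))^\circ$, and as $\gamma_n(c_n) = 1_{c_n}$ the leg $\beta(c_n) = \big((\gamma_{n+1}\cdots\gamma_m)(c_n)\big)^\circ$ is already an epimorphism, whence $(\gamma_1\cdots\gamma_m)(c_1) = \alpha(c_1)\cdot\beta(c_n)$ --- exactly the product of the first coordinates of $\epsilon_\Gamma(\mathfrak{c})$ and $\epsilon_\Gamma(\mathfrak{c}')$. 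The $\delta$-coordinate is handled the same way, and since composition in $\mathcal{G}_\Gamma$ is coordinatewise, $\epsilon_\Gamma(\mathfrak{c}\mathfrak{c}') = \epsilon_\Gamma(\mathfrak{c})\epsilon_\Gamma(\mathfrak{c}')$.

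Finally one must show $\epsilon_\Gamma$ is order preserving; equivalently, since $\epsilon_\Gamma$ is a functor, that it carries restrictions to restrictions. Given $(h,k)\subseteq\mathbf{d}(\mathfrak{c}') = (c'_1,d'_1)$, by the uniqueness of restrictions in $\mathcal{G}_\Gamma$ (Theorem \ref{thmog}) and the fact that $\mathbf{d}(\epsilon_\Gamma((h,k)\cdot\mathfrak{c}')) = (h,k)$, it suffices to prove $\epsilon_\Gamma((h,k)\cdot\mathfrak{c}') \leq_\Gamma \epsilon_\Gamma(\mathfrak{c}')$; unwinding this through \eqref{po}, the entire content is the identity
\[
\big(\gamma(h,k)\,\gamma(h_1,k_1)\cdots\gamma(h_m,k_m)\big)(h) = \big(j(h,c'_1)\,(\gamma'_1\gamma'_2\cdots\gamma'_m)(c'_1)\big)^{\circ}
\]
together with its $\delta$-dual, where $(h_0,k_0) := (h,k)$ and $\gamma(h_i,k_i) = \gamma'_i\,\gamma(h_{i-1},k_{i-1})\,\gamma'_i$ as in \eqref{eq:po1}. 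This matching of the two a priori unrelated descriptions of restriction --- the cone-recursion \eqref{eq:po1}--\eqref{eq:po2} in $\mathcal{G}(E_\Gamma)$ against the epimorphic-component formula \eqref{po} in $\mathcal{G}_\Gamma$ --- is what I expect to be the main obstacle. I would attack the identity by inducting on $i$ to rewrite $\gamma(h,k)\,\gamma(h_1,k_1)\cdots\gamma(h_i,k_i)$ purely in terms of $\gamma(h,k)$ and the $\gamma'_j$, using repeatedly that $\gamma(h,k)\cdot\gamma(c',d') = \gamma(h,k)$ whenever the apex of $\gamma(h,k)$ is a subobject of $c'$ (by (Ncone2) the relevant leg is then an inclusion, and an inclusion has identity epimorphic component), the idempotence of the cones $\gamma(h_i,k_i)$, and the standard fact that $(h_i,k_i)\subseteq(c'_i,d'_i)$ for each $i$ along the restricted chain; combining this with the verifications above completes the proof.
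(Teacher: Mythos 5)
Your skeleton follows the paper's own route: functoriality is verified by expanding the cone products into their legs (your computation with $\alpha=\gamma_1\cdots\gamma_n$ and $\beta=\gamma_n\cdots\gamma_m$ is correct, and your check that deleting an inessential element does not change $\epsilon_\Gamma$ is a legitimate addition that the paper leaves implicit), and order preservation is reduced, exactly as in the paper, to matching the recursion \eqref{eq:po1}--\eqref{eq:po2} against the epimorphic-component description \eqref{po} of restriction in $\mathcal{G}_\Gamma$. But at precisely that point --- which you yourself call the main obstacle --- you stop: the identity $\bigl(\gamma(h,k)\gamma(h_1,k_1)\cdots\gamma(h_m,k_m)\bigr)(h)=\bigl(j(h,c'_1)(\gamma'_1\cdots\gamma'_m)(c'_1)\bigr)^{\circ}$ is only announced together with a plan of attack, and this computation is the actual content of the proposition, so the proposal has a genuine gap. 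Moreover, the toolkit you list does not close your proposed induction as stated: the collapse $\gamma(h,k)\cdot\gamma(c',d')=\gamma(h,k)$ for $h\subseteq c'$ is only the $\lel$-type fact, whereas in the step from $i-1$ to $i$ (writing $\theta=\gamma(h,k)$, $\theta_i=\gamma(h_i,k_i)$, $\gamma'_i=\gamma(c'_i,d'_i)$, so $\theta_i=\gamma'_i\theta_{i-1}\gamma'_i$) you also need, in the case $(c'_{i-1},d'_{i-1})\mathrel{\mathscr{R}}(c'_i,d'_i)$, the $\ler$-type collapse $\gamma'_i\,\theta_{i-1}=\theta_{i-1}$, coming from $(h_{i-1},k_{i-1})\leqslant(c'_{i-1},d'_{i-1})\mathrel{\mathscr{R}}(c'_i,d'_i)$. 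With it one gets $\theta_{i-1}\theta_i=\theta_{i-1}\gamma'_i\theta_{i-1}\gamma'_i=\theta_{i-1}\gamma'_i$ in both the $\mathscr{R}$- and $\mathscr{L}$-cases, and then idempotence of $\theta_{i-1}$ gives $\theta\theta_1\cdots\theta_i=(\theta\gamma'_1\cdots\gamma'_{i-1})\gamma'_i$, so the induction yields $\theta\theta_1\cdots\theta_m=\theta\gamma'_1\cdots\gamma'_m$; evaluating at $h$ and using $\theta(h)=1_h$ then produces your displayed identity.

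For comparison, the paper does not induct: it unrolls the recursion completely, writing $\theta_i=\gamma'_i\cdots\gamma'_1\theta\gamma'_1\cdots\gamma'_i$, and collapses the product using the palindromic identity $\gamma'_1\cdots\gamma'_i\gamma'_{i-1}\cdots\gamma'_1=\gamma'_1$ (a consequence of consecutive cones in a chain being $\mathscr{R}$- or $\mathscr{L}$-related) together with $\theta\gamma'_1=\gamma'_1\theta=\theta$, arriving at the same cone equality $\theta\theta_1\cdots\theta_m=\theta\gamma'_1\cdots\gamma'_m$ before evaluating at $h$. So your completed parts are fine and your reduction via uniqueness of restrictions (Theorem \ref{thmog} and \eqref{poge}) is sound, but the decisive cone computation must actually be carried out, and doing so requires one more collapsing fact than your sketch provides.
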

\begin{proof}
To prove the proposition, we need to show that $\epsilon_\Gamma$ is a $v$-isomorphism.

First, we show that $\epsilon_\Gamma$ is a functor. Suppose $\mathfrak{c},\mathfrak{c}' \in \mathcal{G}(E_{\Gamma})$ are such that $\mathfrak{c}.\mathfrak{c}'$ exists. Then
\begin{equation*}
\begin{split}
\epsilon_\Gamma(\mathfrak{c})\epsilon_\Gamma(\mathfrak{c}') =& ((\gamma_1\gamma_2\dotsc\gamma_n)(c_1),(\delta_1\delta_2\dotsc\delta_n)(d_1))((\gamma'_1\gamma'_2\dotsc\gamma'_n)(c'_1),(\delta'_1\delta'_2\dotsc\delta'_n)(d'_1))\\
=&(\gamma_1(c_1)\:\gamma_2(c_1)\:\dots\:\gamma_n(c_{n-1})\:\gamma'_1(c'_1)\:\gamma'_2(c'_1)\:\dots\:\gamma'_n(c'_{n-1}),\\
&\qquad\qquad\qquad\delta_1(d_1)\delta_2(d_1)\:\dots\:\delta_n(d_{n-1})\:\delta'_1(d'_1)\delta'_2(d'_1)\:\dots\:\delta'_n(d'_{n-1}))\\
=&(\gamma_1(c_1)\:\gamma_2(c_1)\:\dots\:\gamma_n(c_{n-1})\:\gamma'_1(c_n)\:\gamma'_2(c'_1)\:\dots\:\gamma'_n(c'_{n-1}),\\
&\qquad\qquad\qquad\delta_1(d_1)\delta_2(d_1)\:\dots\:\delta_n(d_{n-1})\:\delta'_1(d_n)\delta'_2(d'_1)\:\dots\:\delta'_n(d'_{n-1}))\\
=&((\gamma_1\gamma_2\dots\gamma_n\gamma'_1\gamma'_2\dots\gamma'_n)(c_{1}),(\delta_1\delta_2\dots\delta_n\delta'_1\delta'_2\dots\delta'_n)(d_{1}))\\
=&\epsilon_\Gamma(\mathfrak{c}.\mathfrak{c}').
\end{split}
\end{equation*}

Now, to show that $\epsilon_\Gamma$ is order preserving, it suffices to show that for an arbitrary $\mathfrak{c}=((c_1,d_1),(c_2,d_2),\dotsc,(c_n,d_n))$  in $\mathcal{G}(E_{\Gamma})$ and $(h,k) \leqslant (c_1,d_1)$,
$$\epsilon_\Gamma((h,k){\downharpoonleft}\mathfrak{c}) = (h,k){\downharpoonleft}\epsilon_\Gamma(\mathfrak{c}).$$ In the sequel, we shall denote the idempotent cones $\gamma(h,k)$, $\gamma(h_i,k_i)$ and $\delta(h_i,k_i)$ by $\theta$, $\theta_i$ and $\eta_i$ respectively. So according to our notations, $\theta_i=\gamma_i\theta_{i-1}\gamma_i$ and $\eta_i=\delta_i\eta_{i-1}\delta_i$.

First observe that since $\gamma_i\mathrel{\mathscr{L}}\gamma_{i+1}$ or $\gamma_i\mathrel{\mathscr{R}}\gamma_{i+1}$, either 
$\gamma_i\gamma_{i+1}= \gamma_i$ or $\gamma_{i+1}\gamma_i= \gamma_i$. So, $$\gamma_1\gamma_2\dots\gamma_{i-1}\gamma_i\gamma_{i-1}\dots\gamma_2\gamma_1= \gamma_1 \quad \text{ for all }i=1,\cdots,n.$$ Similarly $\delta_1\delta_2\dots\delta_{i-1}\delta_i\delta_{i-1}\dots\delta_2\delta_1= \delta_1$.

Also since $(h,k) \leqslant  (c_1,d_1)$, we have $\theta\gamma_1=\gamma_1\theta=\theta$. Now,
\begin{equation*}
\begin{split}
\theta\theta_1\theta_2\dots\theta_n&= \theta(\gamma_1\theta\gamma_1)(\gamma_2\gamma_1\theta\gamma_1\gamma_2)\dots(\gamma_n\gamma_{n-1}\dots\gamma_1\theta\gamma_1\gamma_2\dots\gamma_n)\\
&= \theta(\gamma_1)\theta(\gamma_1\gamma_2\gamma_1)\theta(\gamma_1\dots\gamma_i\gamma_{i-1}\dots\gamma_1)\theta(\gamma_1\dots\gamma_n\gamma_{n-1}\dots\gamma_1)\theta(\gamma_1\gamma_2\dots\gamma_n)\\
&= \theta(\gamma_1)\theta(\gamma_1)\theta(\gamma_1)\theta(\gamma_1)\theta(\gamma_1\gamma_2\dots\gamma_n)\\
&=\theta\gamma_1\gamma_2\dots\gamma_n.
\end{split}
\end{equation*}

Similarly
$\eta\eta_1\eta_2\dots\eta_n=\eta\delta_1\delta_2\dots\delta_n$.

So,
\begin{equation*}
\begin{split}
\epsilon_\Gamma((h,k){\downharpoonleft}\mathfrak{c}) =& \epsilon_\Gamma(\mathfrak{c}((h,k),(h_1,k_1),(h_2,k_2),\dots,(h_n,k_n)))\\
=& (\theta\theta_1\theta_2\dots\theta_n(h),\eta\eta_1\eta_2\dots\eta_n(k))\\
=&(\theta\gamma_1\gamma_2\dots\gamma_n(h),\eta\delta_1\delta_2\dots\delta_n(k))\\
=&(\theta(h)((\gamma_1\gamma_2\dots\gamma_n)(h))^\circ,\eta(k)((\delta_1\delta_2\dots\delta_n)(k))^\circ)\quad (\text{by cone multiplication: see (\ref{eqnsg1}}))\\
=&(1_h.((\gamma_1\gamma_2\dots\gamma_n)(h))^\circ,1_k((\delta_1\delta_2\dots\delta_n)(k))^\circ)\\
=&((j(h,c_1)(\gamma_1\gamma_2\dots\gamma_n)(c_1))^\circ,(j(k,d_1)(\delta_1\delta_2\dots\delta_n)(d_1))^\circ)\quad(\text{as }(h,k)\subseteq(c_1,d_1))\\
=& (h,k){\downharpoonleft}((\gamma_1\gamma_2\dotsc\gamma_n)(c_1),(\delta_1\delta_2\dotsc\delta_n)(d_1))\\
=&(h,k) {\downharpoonleft}\epsilon_\Gamma(\mathfrak{c}).
\end{split}
\end{equation*}
Hence $\epsilon_\Gamma \colon\mathcal{G}(E_{\Gamma}) \to \mathcal{G}_\Gamma$ is an evaluation functor.
\end{proof}

\begin{rmk}\label{rmkog}
Observe that in the proof of Theorem \ref{thmog} that $(\mathcal{G}_\Gamma,\leq_\Gamma)$ is an ordered groupoid, we have not used any `cross-connection' properties. Hence using the same proof, one could show that the isomorphisms in the normal categories $\mathcal{C}$ and $\mathcal{D}$, denoted by $\mathcal{G}_{\mathcal{C}}$ and $\mathcal{G}_{\mathcal{D}}$ respectively, form ordered groupoids. The one-sided subgroupoids of $\mathcal{G}_\Gamma$, i.e., the groupoids $\mathcal{G}_{\Gamma|\mathcal{C}}$ and $\mathcal{G}_{\Gamma|\mathcal{D}}$ (which are subgroupoids of $\mathcal{G}_{\mathcal{C}}$ and $\mathcal{G}_{\mathcal{D}}$ respectively) obtained by restricting $\mathcal{G}_\Gamma$ to the categories $\mathcal{C}$ and $\mathcal{D}$ respectively also form ordered groupoids. Observe that restricting all the above described groupoids to the image $\epsilon_{\Gamma}(\mathcal{G}(E_{\Gamma}))$ of the evaluation functor $\epsilon_{\Gamma}$ also give rise to ordered groupoids. Hence given a cross-connection $\Gamma$, we can associate with it several interesting ordered groupoids, which are all subgroupoids of $\mathcal{G}_{\Gamma}$.
\end{rmk}

\subsection{The inductive groupoid $\mathcal{G}_\Gamma$}
Before we proceed to prove that $(\mathcal{G}_\Gamma,\epsilon_\Gamma)$ is an inductive groupoid, we need the following important lemma concerning the ordered groupoid $(\mathcal{G}_\Gamma,\leq_\Gamma)$ which need not necessarily hold in its ordered subgroupoids. It describes the relationship between the retractions in the normal categories and the restrictions in the inductive groupoid.

\begin{lem}\label{lemog}
Let $(f,g)$ be a morphism in $\mathcal{G}_\Gamma$ from $(c,d)$ to $(c',d')$ and let $(c_1,d_1)\in E_\Gamma$ be such that $(c_1,d_1) \subseteq (c,d)$. If $(f_1,g_1)$ is the restriction $(c_1,d_1){\downharpoonleft}(f,g)$ with codomain $(c'_1,d'_1)$, then
$$(f,g)(\gamma'_1(c'),\delta'_1(d'))= (\gamma_1(c),\delta_1(d))(f_1,g_1).$$
\end{lem}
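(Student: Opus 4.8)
The plan is to unpack both sides of the claimed equality as pairs of morphisms in $\mathcal{C}\times\mathcal{D}$ and to verify the two coordinates separately; by symmetry it suffices to treat the $\mathcal{C}$-coordinate and then invoke the transpose relation to handle the $\mathcal{D}$-coordinate for free. So the target identity becomes the single equation $f\,\gamma'_1(c') = \gamma_1(c)\,f_1$ in $\mathcal{C}$, where $f_1 = (j(c_1,c)f)^\circ$ is the epimorphic component defining the restriction, $c'_1 = \im(j(c_1,c)f)$, and $\gamma_1 = \gamma(c_1,d_1)$, $\gamma'_1 = \gamma(c'_1,d'_1)$ are the canonical idempotent cones attached to the relevant idempotents of $E_\Gamma$.

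The key computational ingredients are: first, since $(c_1,d_1)\subseteq(c,d)$ we have $c_1\subseteq c$, and the idempotent cone $\gamma_1$ with apex $c_1$ satisfies $\gamma_1(c) = j(c_1,c)\cdot\text{(something)}$; more precisely, because $c_1\subseteq c$ and $\gamma_1$ is idempotent with $\gamma_1(c_1)=1_{c_1}$, together with (Ncone2) one gets that $\gamma_1(c)$ is (up to the inclusion $j(c_1,c)$ sitting in its factorisation) essentially a retraction $c\to c_1$. Second, I would use the fact, recorded in the normal-category machinery of \cite{cross}, that the cone $\gamma'_1 = \gamma(c'_1,d'_1)$ is precisely the cone obtained by transporting $\gamma_1$ along the isomorphism underlying $f_1$: concretely $\gamma'_1 = \gamma_1 * f$ restricted appropriately, so that $\gamma'_1(a) = \gamma_1(a)\,f$ for $a$ in the relevant range, and in particular $\gamma'_1(c') = \gamma_1(c')\,f = \gamma_1(c)\,f$ once one checks $\gamma_1(c')=\gamma_1(c)$ via (Ncone2) applied to $c\subseteq c'$ — wait, here one must be careful about which inclusions hold. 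The cleanest route is: compute $\gamma_1(c)\,f_1$ by writing $\gamma_1(c) = r_1\,j(c_1,c)^{-1}$-style data is not available, so instead factor $\gamma_1(c) = q_1 j(c_1, c)$? No — $\gamma_1(c)\colon c\to d$, not into $c_1$. I would instead exploit the defining relation of $f_1$: $j(c_1,c)f_1 = (j(c_1,c)f)^\circ \cdot (\text{inclusion})$, and precompose the claimed identity $f\gamma'_1(c')=\gamma_1(c)f_1$ with the retraction associated to $j(c_1,c)$ and with $j(c_1,c)$ itself, using that $j(c_1,c)$ is a monomorphism (it lies in $\mathcal{P}$) to cancel it, reducing to an identity purely among idempotent cones of the form $\gamma_1\cdot\gamma'_1$-type products that can be evaluated using \eqref{eqnsg1} and the $\mathscr{R}$/$\mathscr{L}$-relations between $(c_1,d_1)$, $(c'_1,d'_1)$, $(c,d)$, $(c',d')$ — exactly the kind of bookkeeping already done in the proof of the preceding Proposition for $\epsilon_\Gamma$.

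I expect the main obstacle to be keeping the inclusions straight: $(c_1,d_1)\subseteq(c,d)$ gives $c_1\subseteq c$ and $d_1\subseteq d$, but the relationship between $(c'_1,d'_1)$ and $(c',d')$ is only $(c'_1,d'_1)\subseteq(c',d')$ with $c'_1 = \im(j(c_1,c)f)$ determined implicitly, and the cone $\gamma'_1$ must be correctly identified as the unique idempotent cone with apex $c'_1$ whose $H$-functor is $\Gamma(d'_1)$; showing that this cone equals the transported/restricted version of $\gamma_1$ is where the cross-connection axioms (in particular the compatibility of $\gamma(-,-)$ with the basic products \eqref{eqb3} and the transpose relation) genuinely enter. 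Once that identification is in hand, both sides of the equation applied at each object $a\in v\mathcal{C}$ become equal morphisms $a\to d'$ by a routine diagram chase, and the $\mathcal{D}$-coordinate follows by applying the transpose operation $(-)^*$ to the whole $\mathcal{C}$-identity and using \cite[Corollary IV.22]{cross} together with the definition of the restriction $(f_1,g_1)$, which was built precisely so that $g_1 = (((j(c_1,c)f)^\circ)^{-1})^*$. Hence the equality holds coordinatewise and the lemma follows.
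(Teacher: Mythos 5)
There is a genuine gap: the single identity that carries all the content, $f\,\gamma'_1(c')=\gamma_1(c)\,f_1$, is never actually proved in your plan. You propose to obtain it by identifying $\gamma'_1=\gamma(c'_1,d'_1)$ with a ``transport'' of $\gamma_1$, writing $\gamma'_1(a)=\gamma_1(a)\,f$; but this does not even typecheck, since $\gamma_1(a)$ is a morphism $a\to c_1$ (indeed $\gamma_1(c)$ is a retraction $c\to c_1$ -- your aside ``$\gamma_1(c)\colon c\to d$, not into $c_1$'' is itself mistaken), while $f$ has domain $c$, not $c_1$. The auxiliary step $\gamma_1(c')=\gamma_1(c)$ ``via (Ncone2) applied to $c\subseteq c'$'' rests on an inclusion that does not exist: $f\colon c\to c'$ is merely an isomorphism in $\mathcal{C}$, and $c,c'$ need not be comparable in the subobject preorder. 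Even if one repairs the formula to $\gamma_1\ast f_1$ (a cone with apex $c'_1$), there is no reason for it to be idempotent or to have $H$-functor $\Gamma(d'_1)$; establishing that is essentially the lemma itself in disguise, so the ``routine diagram chase'' you defer to is circular. The subsequent reduction ``precompose with the retraction associated to $j(c_1,c)$ and cancel'' is likewise only gestured at.

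The missing ingredient is \cite[Proposition IV.24]{cross}: for $(c_1,d_1)\subseteq(c,d)$ and $(c'_1,d'_1)\subseteq(c',d')$ the cone components in question are transposes of inclusions, namely $\gamma_1(c)=(j(d_1,d))^*$, $\gamma'_1(c')=(j(d'_1,d'))^*$, $\delta_1(d)=(j(c_1,c))^*$, $\delta'_1(d')=(j(c'_1,c'))^*$, and all of these are retractions. With this in hand the paper's argument is immediate: since $g_1=(j(d_1,d)g)^\circ$, one has $g_1\,j(d'_1,d')=j(d_1,d)\,g$; applying the (contravariant, order-reversing) transpose and using $g^*=f^{-1}$, $g_1^*=f_1^{-1}$ gives $\gamma'_1(c')f_1^{-1}=f^{-1}\gamma_1(c)$, i.e.\ $f\,\gamma'_1(c')=\gamma_1(c)\,f_1$, and the $\mathcal{D}$-coordinate is obtained dually. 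Your idea of deducing one coordinate from the other by transposing is sound (it is the paper's mechanism run in the opposite direction), but without the transpose description of $\gamma_1(c)$ and $\gamma'_1(c')$ the first coordinate is unsupported, so the proposal as written does not constitute a proof.
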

\begin{proof}
First, recall that, using \cite[Proposition IV.24]{cross},
$$(j(d'_1,d'))^*=\gamma'_1(c'),\: (j(c'_1,c'))^*=\delta'_1(d'),\:(j(d_1,d))^*=\gamma_1(c)\text{ and }(j(c_1,c))^*=\delta_1(d).$$
Also, all the morphisms on the right hand side are retractions.

Now, since $g_1= (j(d_1,d)g)^\circ$, we have
$$ g_1j(d'_1,d')= (j(d_1,d)g)^\circ j(d'_1,d')\implies g_1j(d'_1,d')= j(d_1,d)g.$$
Taking transposes we get
$$ (j(d'_1,d'))^*g_1^*= g^*(j(d_1,d))^* \implies \gamma'_1(c')(f_1)^{-1}=f^{-1}\gamma_1(c)\implies f\gamma'_1(c')=\gamma_1(c)f_1.$$

Similarly we can prove that $g\delta'_1(d')=\delta_1(d)g_1$. Hence the lemma.
\end{proof}
\begin{thm}
$(\mathcal{G}_\Gamma,\epsilon_\Gamma)$ is an inductive groupoid.
\end{thm}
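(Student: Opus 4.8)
By Theorem~\ref{thmog} and the fact, established above, that $\epsilon_\Gamma\colon\mathcal{G}(E_\Gamma)\to\mathcal{G}_\Gamma$ is an evaluation functor, we already know that $E_\Gamma$ is a regular biordered set, $(\mathcal{G}_\Gamma,\leq_\Gamma)$ is an ordered groupoid with object set $E_\Gamma$, and $\epsilon_\Gamma$ is a $v$-isomorphism; so it remains only to verify axioms (IG1) and (IG2) of Definition~\ref{dfnig} together with their duals. The plan rests on one reduction: a morphism of $\mathcal{G}_\Gamma$ is a pair $(f,g)$ in which $g$ is the transpose of $f^{-1}$, and $g$ is uniquely determined by $f$ together with the domain and codomain objects; hence two morphisms of $\mathcal{G}_\Gamma$ with the same domain, the same codomain and the same $\mathcal{C}$-component coincide. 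Thus each axiom splits into (i) checking that the two sides of the relevant equation have equal domains and equal codomains in $E_\Gamma$, and (ii) checking that their $\mathcal{C}$-components agree in $\mathcal{C}$; the dual axioms follow by the same argument with the roles of $(\mathcal{C},\gamma,\Gamma)$ and $(\mathcal{D},\delta,\Delta)$ interchanged.

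For (IG1), I would write $x=(f,g)\colon(c,d)\to(c',d')$ and $e_i=(c_i,d_i)\subseteq(c,d)$, and denote by $(c_i',d_i')$ the codomain of the restriction $(c_i,d_i){\downharpoonleft}x$. Unwinding~\eqref{eqb3}, the hypothesis $e_1\ler e_2$ (i.e.\ $d_1\subseteq d_2$ by~\eqref{eqb2}) gives $e_1e_2=(\im\gamma(c_2,d_2)(c_1),\,d_1)$, so $(e_1,e_1e_2)$ is an $\mathscr{R}$-chain; using the definition of $\epsilon_\Gamma$, of cone composition~\eqref{eqnsg1}, and $\gamma(c_1,d_1)(c_1)=1_{c_1}$, its $\mathcal{C}$-component works out to the epimorphic component $(\gamma(c_2,d_2)(c_1))^\circ$, and likewise the $\mathcal{C}$-component of $\epsilon_\Gamma(f_1,f_1f_2)$ is $(\gamma(c_2',d_2')(c_1'))^\circ$. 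A short computation with images of composites in the normal category $\mathcal{D}$ shows $d_1\subseteq d_2\Rightarrow d_1'\subseteq d_2'$, i.e.\ $f_1\ler f_2$, and that the two sides of (IG1) have the same codomain; this settles (i). What then remains is the $\mathcal{C}$-component equality in (ii), and here --- after substituting the explicit formula for the restriction $(c_i,d_i){\downharpoonleft}x$ and using the compatibility of epimorphic components with composition (\cite[Corollary~II.4]{cross}) --- the identity is precisely Lemma~\ref{lemog} applied to $x$ and the subobject $e_2$. Recognising Lemma~\ref{lemog} inside this identity, and carefully matching the $\epsilon_\Gamma$-images of the $2$-chains $(e_1,e_1e_2)$ and $(f_1,f_1f_2)$ with the cone components $\gamma(c_2,d_2)(c_1)$ and $\gamma(c_2',d_2')(c_1')$, is the technical heart of the theorem and the step I expect to be the main obstacle.

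For (IG2) it suffices, by left--right duality, to treat a row-singular square $\bigl[\begin{smallmatrix} g&h\\ eg&eh\end{smallmatrix}\bigr]$ with $g\mathrel{\mathscr{R}}h$ and $g,h\lel e$. Writing $g=(c_g,d_g)$, $h=(c_h,d_h)$, $e=(c_e,d_e)$, the hypotheses translate into $c_g,c_h\subseteq c_e$ and $d_g=d_h$; from~\eqref{eqb3} one computes $eg=(c_g,d^\ast)$ and $eh=(c_h,d^\ast)$ where $d^\ast=\im\delta(c_e,d_e)(d_g)$, so $g\mathrel{\mathscr{L}}eg$, $h\mathrel{\mathscr{L}}eh$, $eg\mathrel{\mathscr{R}}eh$, and both sides of~\eqref{eq:commut} run from $g$ to $eh$, which gives (i). Passing to $\mathcal{C}$-components for (ii): the $\mathscr{L}$-links $\epsilon_\Gamma(h,eh)$ and $\epsilon_\Gamma(g,eg)$ contribute identity morphisms, while the $\mathscr{R}$-links $\epsilon_\Gamma(g,h)$ and $\epsilon_\Gamma(eg,eh)$ contribute $\gamma(c_h,d_h)(c_g)$ and $\gamma(c_h,d^\ast)(c_g)$ respectively (these are isomorphisms, since the components of $\mathscr{R}$-related idempotent cones are mutually inverse, as shown in the proof of the lemma that $\epsilon_\Gamma(\mathfrak{c})\in\mathcal{G}_\Gamma$). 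So $\epsilon_\Gamma$-commutativity of the square reduces to the single identity $\gamma(c_h,d_h)(c_g)=\gamma(c_h,d^\ast)(c_g)$, which I would derive from the defining property of the idempotent cones attached to the $\mathscr{L}$-related idempotents $h$ and $eh$ (both with apex $c_h$) together with how cones restrict along the inclusion $c_g\subseteq c_e$ inside the normal dual; this is the point at which one invokes the cone/biorder compatibility of~\cite{cross}.

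Assembling (i) and (ii) for (IG1), (IG2) and their duals shows that $(\mathcal{G}_\Gamma,\epsilon_\Gamma)$ satisfies every axiom of Definition~\ref{dfnig}, hence is an inductive groupoid. Only (IG1) is genuinely delicate; everything else is bookkeeping once the reduction to $\mathcal{C}$-components and the role of Lemma~\ref{lemog} are in place.
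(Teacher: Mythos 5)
Your proposal is correct and takes essentially the same route as the paper: the verification of (IG1) hinges on Lemma~\ref{lemog} combined with the explicit formulas for $\epsilon_\Gamma$, cone composition~\eqref{eqnsg1} and epimorphic components, while (IG2) is a direct computation with the idempotent cones for singular $E$-squares, with the dual axioms following by symmetry. The only cosmetic differences are that you reduce everything to the $\mathcal{C}$-components via uniqueness of transposes (the paper tracks both components, the $\mathcal{D}$-side being essentially trivial since $d_3=d_1$), that you treat row-singular squares where the paper treats column-singular ones, and that you obtain $f_1\ler f_2$ by an image computation in $\mathcal{D}$ where the paper simply invokes Remark~\ref{rmkog}.
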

\begin{proof}
	
We need to verify that $(\mathcal{G}_\Gamma,\epsilon_\Gamma)$ satisfies the axioms of Definition \ref{dfnig}. First, we need to verify (IG1) of Definition \ref{dfnig}. To this end, consider an ordered groupoid $\mathcal{G}$ and an evaluation functor $\epsilon\colon\mathcal{G}(E)\to\mathcal{G} $. Let $x\in \mathcal{G}$ and for $i=1,2$, let $e_i$, $f_i \in E$ be such that  $e_1\ler  e_2$, $\epsilon (e_i) \leq \mathbf{d}(x)$ and $\epsilon (f_i)= \mathbf{r}(\epsilon (e_i){\downharpoonleft} x)$. Then we need to verify that $f_1\ler  f_2$, and
$$\epsilon (e_1,e_1e_2)(\epsilon (e_1e_2){\downharpoonleft} x) = (\epsilon (e_1){\downharpoonleft} x)\epsilon (f_1,f_1f_2).$$
The above condition can be illustrated by the commutativity of the solid arrows in Figure~\ref{indg1}. Observe that Lemma \ref{lemog} concerns with the commutativity of the square consisting of the elements $e$, $f$, $f_1f_2$ and $e_1e_2$ in Figure~\ref{indg1}.		

\begin{figure}[h]
	\centering
	$\xymatrixcolsep{1pc}\xymatrixrowsep{1pc}\xymatrix
	{  &&&&&&&f\ar@{.}[dddr]^{\leqslant }\ar@{.}[ddddddll]_{\leqslant }\\
		&& e \ar@{.}[dddr]^{\leqslant }\ar@{.}[dddddddll]_{\leqslant }\ar@{-->}@/^.8pc/[rrrrru]^{x} &&\\
		\\
		&&&&&&&&f_2\ar@{.}[dddr]^{\leqslant }\\
		&&&e_2\ar@{.}[ddddr]^{\leqslant }\\
		\\
		&&&&&f_1\ar@{->}[rrrr]^{\epsilon (f_1,f_1f_2)}&&&&f_1f_2\\\\
		e_1\ar@{->}@/^.7pc/[rrrrruu]^{\epsilon (e_1){\downharpoonleft} x}\ar@{->}[rrrr]_{\epsilon (e_1,e_1e_2)}&&&&e_1e_2 \ar@{->}@/^.7pc/[rrrrruu]_{\epsilon (e_1e_2){\downharpoonleft} x} 	
		}
	$
	\caption{The axiom (IG1) of an inductive groupoid corresponds to the commutativity of the  solid arrows (which represent the isomorphisms). The dashed line represents an arbitrary morphism in the inductive groupoid and the dotted lines correspond to the natural partial order $\leqslant $ between the elements of the biordered set.}\label{indg1}
\end{figure}
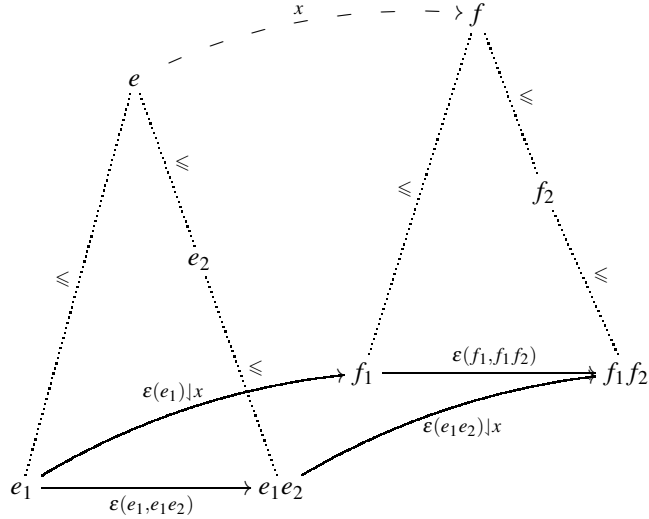

First, let $(f,g)$ be a morphism in $\mathcal{G}_{\Gamma}$ from $(c,d)$ to $(c',d')$ such that $(c_i,d_i) \subseteq (c,d)$  for $i=1,2$. Then the codomain
$\mathbf{r}((c_i,d_i){\downharpoonleft}(f,g))$ is $(c'_i,d'_i)= \im (j(c_i,c)f,j(d_i,d)g)$.

By Remark \ref{rmkog}, the groupoid $\mathcal{G}_{\Gamma|\mathcal{D}}$ is an ordered groupoid; so if $d_1\subseteq d_2$, then clearly $d'_1 \subseteq d'_2$. Also, by the partial binary composition of the biordered set,
$$(c_1,d_1)(c_2,d_2)=(\im\gamma(c_2,d_2)(c_1),d_1),\text{ denoted by }(c_3,d_3)\text{ in the sequel}.$$
As the reader sees, here we relabel $d_1$ as $d_3$. This is done for the sake of notational convenience as we want the indices of $\gamma_i$s and $\delta_i$s to match the indices of $(c_i,d_i)$ in the subsequent proof. Similarly, $$(c'_1,d'_1)(c'_2,d'_2)=(\im\gamma(c'_2,d'_2)(c'_1),d'_1)=:(c'_3,d'_3).$$

Now,
\begin{equation*}
\begin{split}
\epsilon_\Gamma((c_1,d_1),(c_3,d_3))(\epsilon_\Gamma((c_3,d_3)&{\downharpoonleft}(f,g))\\
&= (\gamma_1\gamma_3(c_1),\delta_1\delta_3(d_1))((c_3,d_3){\downharpoonleft}(f,g))\\
&= (\gamma_1\gamma_3(c_1),\delta_1\delta_3(d_1))((j(c_3,c)f)^\circ,(j(d_3,d)g)^\circ)\\
&= ((\gamma_3(c_1))^\circ(j(c_3,c)f)^\circ,(\delta_3(d_1))^\circ(j(d_3,d)g)^\circ)\\
&= (\gamma_3(c_1)(j(c_3,c)f)^\circ ,(1_{d_1})(j(d_3,d)g)^\circ)\\
&= (\gamma_3(c_1)(j(c_3,c)f)^\circ,(j(d_3,d)g)^\circ).\\
\end{split}
\end{equation*}

Also,
\begin{equation*}
\begin{split}
(\epsilon_\Gamma((c_1,d_1)){\downharpoonleft}(f,g))\epsilon_\Gamma(&(c'_1,d'_1),(c'_3,d'_3))\\
&= ((c_1,d_1){\downharpoonleft}(f,g))(\gamma'_1\gamma'_3(c'_1),\delta'_1\delta'_3(d'_1))\\
&= ((j(c_1,c)f)^\circ,(j(d_1,d)g)^\circ)(\gamma'_1\gamma'_3(c'_1),\delta'_1\delta'_3(d'_1))\\
&= ((j(c_1,c)f)^\circ(\gamma'_3(c'_1))^\circ,(j(d_1,d)g)^\circ(\delta'_3(d'_1))^\circ)\\
&= ((j(c_1,c)f)^\circ\gamma'_3(c'_1),(j(d_1,d)g)^\circ(1_{d'_1})^\circ)\\
&= ((j(c_1,c)f)^\circ j(c'_1,c')\gamma'_3(c'),(j(d_1,d)g)^\circ 1_{d'_1})\\
&=(j(c_1,c)f\gamma'_3(c'),(j(d_1,d)g)^\circ)\\
&=(j(c_1,c)\gamma_3(c)(j(c_3,c)f)^\circ,(j(d_1,d)g)^\circ)\quad(\text{By Lemma }\ref{lemog})\\
&=(\gamma_3(c_1)(j(c_3,c)f)^\circ,(j(d_1,d)g)^\circ).\\
\end{split}
\end{equation*}
Since $d_1=d_3$, the right hand sides coincide and thus we have verified (IG1). Similarly, we can verify its dual.

Now we need to verify (IG2), that is, every singular $E$-square is $\epsilon_\Gamma$-commutative. Let $\bigl[ \begin{smallmatrix} (h_1,k_1)&(h_1,k_1)(c,d)\\ (h_2,k_2)&(h_2,k_2)(c,d) \end{smallmatrix} \bigr]$
be a column-singular $E$-square such that $k_1,k_2 \subseteq d$ and $h_1=h_2$. Then,
$$\gamma\theta_1=\theta_1,\:\gamma\theta_2=\theta_2,\:\theta_1\theta_2=\theta_1,\:\theta_2\theta_1=\theta_2;\:\delta\eta_1=\eta_1,\:\delta\eta_2=\eta_2,\:\eta_1\eta_2=\eta_1\text{ and }\eta_2\eta_1=\eta_2.$$
So,
\begin{equation*}
\begin{split}
\epsilon_\Gamma((h_1,k_1),(h_2,k_2))\epsilon_\Gamma((h_2,k_2),(h_2,k_2)&(c,d))\\
&=(\theta_1\theta_2(h_1),\eta_1\eta_2(k_1))(\theta_2\theta_2\gamma(h_2),\eta_2\eta_2\delta(k_2))\\
&=(\theta_1\theta_2\theta_2\theta_2\gamma(h_1),\eta_1\eta_2\eta_2\eta_2\delta(k_1))\\
&=(\theta_1\gamma(h_1),\eta_1\delta(k_1)).\\
\end{split}
\end{equation*}
Also,
\begin{equation*}
\begin{split}
\epsilon_\Gamma((h_1,k_1),(h_1,k_1)(c,d))\epsilon_\Gamma((h_1,k_1)&(c,d),(h_2,k_2)(c,d))\\
&=(\theta_1\theta_1\gamma(h_1),\eta_1\eta_1\delta(k_1))(\theta_1\gamma\theta_2\gamma(h_1),\eta_1\delta\eta_2\delta(k_1))\\
&=(\theta_1\theta_1\gamma\theta_1\gamma\theta_2\gamma(h_1),\eta_1\eta_1\delta\eta_1\delta\eta_2\delta(k_1))\\
&=(\theta_1\gamma(h_1),\eta_1\delta(k_1)).\\
\end{split}
\end{equation*}
So, the column-singular $E$-square $\bigl[ \begin{smallmatrix} (h_1,k_1)&(h_1,k_1)(c,d)\\ (h_2,k_2)&(h_2,k_2)(c,d) \end{smallmatrix} \bigr]$ is $\epsilon_\Gamma$-commutative. Dually, we can show that every row-singular $E$-square is also $\epsilon_\Gamma$-commutative. So (IG2) also holds.

Hence $(\mathcal{G}_{\Gamma},\epsilon_\Gamma)$ is an inductive groupoid.
\end{proof}

\subsection{The functor $\mathbb{I}\colon \mathbf{CC}\to \mathbf{IG}$}

We have seen that given a cross-connection $(\mathcal{D},\mathcal{C};{\Gamma})$, it has a corresponding inductive groupoid $(\mathcal{G}_\Gamma,\epsilon_\Gamma)$. Now, we extend this correspondence to morphisms and also show that it is, in fact, functorial.

Given two cross-connections $(\mathcal{D},\mathcal{C};{\Gamma})$ and $(\mathcal{D}',\mathcal{C}';{\Gamma}')$, a \emph{morphism of cross-connections} $m:\Gamma\to \Gamma'$ is a pair $m=(F_m,G_m)$ of inclusion preserving functors $F_m:\mathcal{C}\to \mathcal{C}'$ and $G_m:\mathcal{D}\to \mathcal{D}'$ satisfying the  axioms of Definition \ref{morcxn}.

Clearly $m$ is a functor from $\mathcal{C}\times\mathcal{D}$ to $\mathcal{C}'\times\mathcal{D}'$ and $\mathcal{G}_\Gamma\subseteq \mathcal{C}\times\mathcal{D}$. So $m_{|\mathcal{G}_\Gamma}\colon \mathcal{G}_\Gamma\to\mathcal{G}_{\Gamma'}$ is also a functor.

\begin{pro}
$m_{|\mathcal{G}_\Gamma}$ is an inductive functor from $(\mathcal{G}_\Gamma,\epsilon_{\Gamma})$ to $(\mathcal{G}_{\Gamma'},\epsilon_{\Gamma'})$.
\end{pro}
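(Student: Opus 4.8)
The plan is to verify the three conditions that make $m_{|\mathcal{G}_\Gamma}$ an inductive functor: that it is an order-preserving functor whose image lies in $\mathcal{G}_{\Gamma'}$, that its object map $vm_{|\mathcal{G}_\Gamma}\colon E_\Gamma\to E_{\Gamma'}$ sending $(c,d)$ to $(F_m(c),G_m(d))$ is a regular bimorphism of biordered sets, and that the square \eqref{indf} built from the evaluation functors $\epsilon_\Gamma$ and $\epsilon_{\Gamma'}$ commutes. Well-definedness into $\mathcal{G}_{\Gamma'}$ is quick: if $(f,g)\in\mathcal{G}_\Gamma$ then $g=(f^{-1})^*$, so applying axiom (M2) of Definition~\ref{morcxn} to the transpose pair $\bigl(f^{-1},(f^{-1})^*\bigr)$ gives $G_m(g)=\bigl((F_m(f))^{-1}\bigr)^*$, whence $(F_m(f),G_m(g))\in\mathcal{G}_{\Gamma'}$.

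For order-preservation I would unwind the description \eqref{po} of $\leq_\Gamma$, which says that $(f,g)\leq_\Gamma(f_1,g_1)$ precisely when $(f,g)$ is the restriction $(c,d){\downharpoonleft}(f_1,g_1)=\bigl((j(c,c_1)f_1)^\circ,(j(d,d_1)g_1)^\circ\bigr)$. Since $F_m$ and $G_m$ preserve inclusions, $F_m(j(c,c_1))=j(F_m(c),F_m(c_1))$; moreover, being inclusion-preserving functors they preserve isomorphisms and, by (M1) (a cone-preservation axiom, which in particular sends the retractions determined by idempotent cones to retractions, cf.\ \cite{cross}), they preserve normal factorisations and hence epimorphic components: $F_m(h^\circ)=(F_m(h))^\circ$, and similarly for $G_m$. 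Applying $F_m$ and $G_m$ componentwise to the displayed equality then gives $m_{|\mathcal{G}_\Gamma}(f,g)=(F_m(c),G_m(d)){\downharpoonleft}m_{|\mathcal{G}_\Gamma}(f_1,g_1)\leq_{\Gamma'}m_{|\mathcal{G}_\Gamma}(f_1,g_1)$.

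To see that $vm_{|\mathcal{G}_\Gamma}$ is a regular bimorphism, note first that (M1) guarantees $(c,d)\in E_\Gamma\Rightarrow(F_m(c),G_m(d))\in E_{\Gamma'}$, so the map is well-defined, and (BM1) is then immediate from \eqref{eqb2} together with inclusion-preservation. Axiom (BM2) I would check on each of the four clauses of \eqref{eqb3}; the two clauses producing $\im\delta(c,d)(d')$ and $\im\gamma(c',d')(c)$ require pushing $F_m,G_m$ through an idempotent cone and through $\im$, which is exactly what (M1) (and its dual) allow. For (RBM) one can either argue directly from the biordered-set axioms or, more economically, invoke that $E_\Gamma$ is the regular biordered set canonically associated with $\Gamma$ in \cite{cross}, so that a cross-connection morphism induces a regular bimorphism on the associated biordered sets.

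The core of the argument is the commutativity of \eqref{indf}. Evaluating both composites on an $E$-chain $\mathfrak{c}=((c_1,d_1),\dots,(c_n,d_n))$, one has $m_{|\mathcal{G}_\Gamma}(\epsilon_\Gamma(\mathfrak{c}))=\bigl(F_m((\gamma_1\cdots\gamma_n)(c_1)),\,G_m((\delta_1\cdots\delta_n)(d_1))\bigr)$ and $\epsilon_{\Gamma'}(\mathcal{G}(vm_{|\mathcal{G}_\Gamma})(\mathfrak{c}))=\bigl((\gamma_1'\cdots\gamma_n')(F_m(c_1)),\,(\delta_1'\cdots\delta_n')(G_m(d_1))\bigr)$, where $\gamma_i'=\gamma(F_m(c_i),G_m(d_i))$ and $\delta_i'=\delta(F_m(c_i),G_m(d_i))$. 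Using the collapse of a composite of cones along an $E$-chain to the composite of the isomorphisms $\gamma_i(c_{i-1})$ (respectively $\delta_i(d_{i-1})$), established in the proof that $\epsilon_\Gamma(\mathfrak{c})\in\mathcal{G}_\Gamma$, together with the functoriality of $F_m$ and $G_m$, it then suffices to prove $F_m(\gamma_i(c_{i-1}))=\gamma_i'(F_m(c_{i-1}))$ and $G_m(\delta_i(d_{i-1}))=\delta_i'(G_m(d_{i-1}))$ for each $i$; the first is axiom (M1) read at the object $c_{i-1}$, and the second is its dual. The main obstacle I expect is precisely this $\mathcal{D}$-side identity: since (M1) is stated only for the cones $\gamma(c,d)$ in $\mathcal{C}$, obtaining the corresponding statement for the cones $\delta(c,d)$ in $\mathcal{D}$ requires first establishing that $m=(F_m,G_m)$ induces a morphism $(G_m,F_m)$ between the dual cross-connections $\Delta\to\Delta'$, i.e.\ that the $\delta$-version of (M1) follows from (M1) and (M2) through the transpose calculus of \cite{cross}; the same self-duality is also what closes the $\delta$-clauses of (BM2) above. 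With that duality in place, the remaining verifications reduce to routine manipulations with inclusions, retractions and epimorphic components.
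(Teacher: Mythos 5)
Your proposal is correct and follows essentially the same route as the paper: check that the object map is a regular bimorphism, check order preservation by showing restrictions are preserved (via inclusion-preservation and epimorphic components), and verify the commutativity of diagram \eqref{indf} by collapsing the cone composites along the $E$-chain and invoking (M1) componentwise. The only differences are matters of detail rather than strategy: the paper simply cites \cite[Lemma V.4]{cross} for the regular-bimorphism step where you sketch a direct verification, and you explicitly supply two points the paper leaves implicit, namely that (M2) forces the image of $m_{|\mathcal{G}_\Gamma}$ to land in $\mathcal{G}_{\Gamma'}$ and that the $\delta$-side of the diagram calculation needs the dual of (M1), obtained from the induced morphism of dual cross-connections.
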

\begin{proof}
First, by \cite[Lemma V.4]{cross}, $m_{|\mathcal{G}_\Gamma}$ is a regular bimorphism from $E_\Gamma$ to $E_{\Gamma'}$.

Recall from \cite[Section V.2]{cross} that any inclusion preserving functor between two normal categories preserves normal factorisation and in particular it preserves epimorphic components. Now, since $F_m$ and $G_m$ are inclusion preserving,
\begin{equation*}
\begin{split}
m_{|\mathcal{G}_\Gamma}((c_1,d_1){\downharpoonleft}(f,g))&= m_{|\mathcal{G}_\Gamma}((j(c_1,c)f)^\circ,(j(d_1,d)g)^\circ)\\
&= (F_m((j(c_1,c)f)^\circ),G_m((j(d_1,d)g)^\circ))\\
&= ((j(F_m(c_1),F_m(c))F_m(f))^\circ,(j(G_m(d_1),G_m(d))G_m(g))^\circ)\\
&= (F_m(c_1),G_m(d_1)){\downharpoonleft}(F_m(f),G_m(g))\\
&= m_{|\mathcal{G}_\Gamma}((c_1,d_1)){\downharpoonleft}m_{|\mathcal{G}_\Gamma}((f,g)).
\end{split}
\end{equation*}
So, $m_{|\mathcal{G}_\Gamma}$ is order preserving.

Given $((c_1,d_1),(c_2,d_2),\dots,(c_n,d_n)) \in  \mathcal{G}(E) $, as earlier, we denote the idempotent cones $\gamma(c_i,d_i)$ and $\delta(c_i,d_i)$ by $\gamma_i$ and $\delta_i$, respectively, and the cones $\gamma'(F_m(c_i),G_m(d_i))$ and $\delta'(F_m(c_i),G_m(d_i)$ by $\gamma'_i$ and $\delta'_i$, respectively. Then,
\begin{equation*}
\begin{split}
\mathcal{G}(vm_{|\mathcal{G}_\Gamma})\epsilon_{\Gamma'}&((c_1,d_1),(c_2,d_2),\dots,(c_n,d_n))\\
&= \epsilon_{\Gamma'}((F_m(c_1),G_m(d_1)),(F_m(c_2),G_m(d_2)),\dots,(F_m(c_n),G_m(d_n)))\\
&=(\gamma'_1\gamma'_2\dots\gamma'_n(F_m(c_1)),\delta'_1\delta'_2\dots\delta'_n(G_m(d_1)))\\
&=(F_m(\gamma_1\gamma_2\dots\gamma_n(c_1)),G_m(\delta_1\delta_2\dots\delta_n(d_1))) \quad\text{ (Using the axiom (M1)})\\
&=m_{|\mathcal{G}_\Gamma}(\gamma_1\gamma_2\dots\gamma_n(c_1),\delta_1\delta_2\dots\delta_n(d_1)) \\
&=\epsilon_{\Gamma}m_{|\mathcal{G}_\Gamma}((c_1,d_1),(c_2,d_2),\dots,(c_n,d_n)).
\end{split}
\end{equation*}
Hence the following diagram commutes:
\begin{equation*}\label{indfun}
\xymatrixcolsep{6pc}\xymatrixrowsep{4pc}\xymatrix
{
 \mathcal{G}(E) \ar[r]^{\mathcal{G}(vm_{|\mathcal{G}_\Gamma})} \ar[d]_{\epsilon_\Gamma}
 & \mathcal{G}(E') \ar[d]^{\epsilon_{\Gamma'}} \\
 \mathcal{G}_\Gamma \ar[r]^{m_{|\mathcal{G}_\Gamma}} & \mathcal{G}_{\Gamma'}
}
\end{equation*}
Thus $m_{|\mathcal{G}_\Gamma}$ is an inductive functor.
\end{proof}
Verification of the next result is routine.
\begin{thm}
The assignments
$$(\mathcal{D},\mathcal{C};{\Gamma})\mapsto (\mathcal{G}_\Gamma,\epsilon_{\Gamma}) \quad\quad m\mapsto m_{|\mathcal{G}_\Gamma}$$
is a functor $\mathbb{I}\colon \mathbf{CC}\to \mathbf{IG}$.
\end{thm}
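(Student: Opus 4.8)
The plan is to bolt together three facts that have just been proved. The object part of $\mathbb{I}$ is well defined because, by the results of this section, $(\mathcal{G}_\Gamma,\epsilon_\Gamma)$ is an inductive groupoid for every cross-connection $\Gamma=(\mathcal{D},\mathcal{C};\Gamma)$; the arrow part is well defined because the previous proposition shows that $m_{|\mathcal{G}_\Gamma}$ is an inductive functor from $(\mathcal{G}_\Gamma,\epsilon_\Gamma)$ to $(\mathcal{G}_{\Gamma'},\epsilon_{\Gamma'})$ for every cross-connection morphism $m=(F_m,G_m)\colon\Gamma\to\Gamma'$. So only the two functor axioms --- preservation of identities and of composition --- remain to be checked.

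For identities, the identity morphism on a cross-connection $\Gamma$ in $\mathbf{Cr}$ is the pair $(1_{\mathcal{C}},1_{\mathcal{D}})$ (which trivially satisfies (M1) and (M2)); since $\mathcal{G}_\Gamma$ is a subcategory of $\mathcal{C}\times\mathcal{D}$, its restriction to $\mathcal{G}_\Gamma$ is $1_{\mathcal{G}_\Gamma}$, which is the identity of $(\mathcal{G}_\Gamma,\epsilon_\Gamma)$ in $\mathbf{IG}$. For composition, take $m=(F_m,G_m)\colon\Gamma\to\Gamma'$ and $n=(F_n,G_n)\colon\Gamma'\to\Gamma''$ in $\mathbf{Cr}$. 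By Proposition~\ref{procat2} the composite $mn=(F_mF_n,G_mG_n)$ is again a cross-connection morphism (here, as everywhere in the paper, composition is taken left-to-right). Restricting a composite of functors on product categories to a subcategory yields the composite of the restrictions, so $(mn)_{|\mathcal{G}_\Gamma}=m_{|\mathcal{G}_\Gamma}\,n_{|\mathcal{G}_{\Gamma'}}$; that is, $\mathbb{I}(mn)=\mathbb{I}(m)\,\mathbb{I}(n)$, and the composite on the right is a legitimate arrow of $\mathbf{IG}$ by Proposition~\ref{procat1}. Together with the object and arrow assignments being well defined, this establishes that $\mathbb{I}$ is a functor.

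I do not expect a genuine obstacle. Every verification above is pure bookkeeping; the only points where something has to be \emph{known} rather than merely \emph{seen} are that cross-connection morphisms compose to cross-connection morphisms and that inductive functors compose to inductive functors, and these are precisely the contents of Propositions~\ref{procat2} and~\ref{procat1}. If any subtlety lurks, it is in double-checking that the restriction operation $m\mapsto m_{|\mathcal{G}_\Gamma}$ genuinely lands in the morphism set $\mathbf{IG}$ --- in particular that $m$ carries $\mathcal{G}_\Gamma$ into $\mathcal{G}_{\Gamma'}$ (which uses (M1) to see $(F_m(c),G_m(d))\in E_{\Gamma'}$ whenever $(c,d)\in E_\Gamma$, and (M2) to see that the transpose relation is preserved) --- but that has already been secured in the previous proposition, so nothing new is required here.
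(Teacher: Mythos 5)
Your proposal is correct and matches the paper's intent: the paper states this theorem without proof ("can be verified easily"), and the easy verification is exactly what you carry out — well-definedness on objects and arrows from the two preceding results, plus the routine check that restriction preserves identities and composites. Nothing further is needed.
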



\section{Cross-connection from an inductive groupoid}\label{cxnind}

Having constructed the inductive groupoid of a cross-connection, now we attempt the converse. Given the inductive groupoid $(\mathcal{G},\epsilon)$ with biordered set $E$ having preorders $\lel $ and $\ler $, we construct a cross-connection $(\mathcal{R}_G,\mathcal{L}_G;{\Gamma_G})$. Unlike the previous case where it sufficed to identify the category sitting inside the cross-connection, here we have to `split' the inductive groupoid and then `extend' each part to the required normal category.

\subsection{The normal category $\mathcal{L}_G$.}
We begin with building the `left' normal category $\mathcal{L}_G$ associated to the inductive groupoid $\mathcal{G}$. The crucial property of a normal category that will guide us in this construction is that every morphism has a normal factorisation into a retraction, an isomorphism and an inclusion. So, we shall build three separate categories: one category $\mathcal{P}_L$ `responsible' for inclusions, the other one $\mathcal{G}_L$ `responsible' for isomorphisms, and the last one $\mathcal{Q}_L$ `responsible' for retractions. Then we combine these categories to build our required category by extending the composition of the isomorphisms (which is inherited from  the given inductive groupoid).

The major obstacle in this procedure arises from the fact that normal factorisation of a morphism is not unique. But fortunately, the epimorphic component (retraction + isomorphism) of  a morphism is indeed unique. Exploiting this fact, we first build an intermediate category $\mathcal{E}_L$ from the categories $\mathcal{Q}_L$ and $\mathcal{G}_L$, and then finally realise $\mathcal{L}_G$ as a suitable product of $\mathcal{E}_L$ and $\mathcal{P}_L$.

Given the inductive groupoid $\mathcal{G}$ with regular biordered set $E$, let the object set $v\mathcal{L}_G$ be $E/\mathscr{L}$, where $\mathscr{L}\:=\:\lel \:\cap\:(\lel )^{-1}$. This gives a partially ordered set $E/\mathscr{L}$ with respect to the order $\leq_L\: := \: \lel /\mathscr{L}$. In fact, $E/\mathscr{L}$ forms a regular partially ordered set, in the sense of Grillet \cite{grilcross}. The proof of this statement may be found in \cite{bicxn}. Given $e\in E$, in the sequel, $\overleftarrow{e}$ shall denote the canonical image of $e$ in $E/\mathscr{L}$. This set $ E/\mathscr{L}$ shall act as the object set of all our three categories: $\mathcal{P}_L$, $\mathcal{G}_L$ and $\mathcal{Q}_L$. That is, $ v\mathcal{P}_L= v\mathcal{G}_L= v\mathcal{Q}_L:= E/\mathscr{L}$. 

Let us begin by completing our first category $\mathcal{P}_L$. Recall that a partially ordered set corresponds naturally to a strict preorder category. In fact, our first required category $\mathcal{P}_L$ is the preorder category associated with the partially ordered set $E/\mathscr{L}$. Hence, in $\mathcal{P}_L$, we introduce the formal symbol $j= j_\mathcal{L}(e,f)$ for a morphism from $\overleftarrow{e}$ to $\overleftarrow{f}$ whenever $\overleftarrow{e}\leq_L \overleftarrow{f}$. So, given two morphisms $j_\mathcal{L}(e,f)$ and $j_\mathcal{L}(g,h)$ in $\mathcal{P}_L$, they are equal if and only if $e\mathrel{\mathscr{L}}g$ and $f\mathrel{\mathscr{L}}h$. Given $j_\mathcal{L}(e,f)$ and $j_\mathcal{L}(f,g)$, we compose them using the composition induced by the partial binary composition of the biordered set $E$ as follows:
$$j_\mathcal{L}(e,f)\: j_\mathcal{L}(f,g) := j_\mathcal{L}(e,g).$$
Observe that since $e\lel  f$, we have $e\: f = e$ in $E$. Now, the verification of the following proposition is routine.

\begin{pro}\label{lempl}
$\mathcal{P}_L$ is a strict preorder category with the object set $v\mathcal{P}_L := E/\mathscr{L}$ and the morphisms in $\mathcal{P}_L$ as defined above.
\end{pro}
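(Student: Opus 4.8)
The plan is to verify directly that the data described --- objects the set $E/\mathscr{L}$, a single morphism $j_\mathcal{L}(e,f)\colon\overleftarrow{e}\to\overleftarrow{f}$ whenever $\overleftarrow{e}\leq_L\overleftarrow{f}$, and composition $j_\mathcal{L}(e,f)\,j_\mathcal{L}(f,g):=j_\mathcal{L}(e,g)$ --- satisfies the category axioms, and then to observe that the resulting category is automatically a strict preorder category. Throughout I will use the fact recorded in the discussion preceding the statement, namely that $(E/\mathscr{L},\leq_L)$ with $\leq_L\,=\,\lel/\mathscr{L}$ is a genuine partially ordered set; in particular $\leq_L$ is well defined, reflexive, transitive and antisymmetric. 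The first thing to check is that the morphisms are well defined, i.e.\ that $j_\mathcal{L}(e,f)$ depends only on the $\mathscr{L}$-classes $\overleftarrow{e}$ and $\overleftarrow{f}$. This is exactly the assertion that $e\mathrel{\mathscr{L}}e'$ and $f\mathrel{\mathscr{L}}f'$ imply $(e\lel f\iff e'\lel f')$, which in turn follows from $\lel$ being a preorder together with $\mathscr{L}\,=\,\lel\cap(\lel)^{-1}$: from $e'\lel e\lel f\lel f'$ one gets $e'\lel f'$, and symmetrically; equivalently, this is precisely the well-definedness of $\leq_L$, which is already granted. Hence each hom-set $\mathcal{P}_L(\overleftarrow{e},\overleftarrow{f})$ is unambiguously defined and contains at most one element.

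Next I would verify the categorical structure. For composable morphisms $j_\mathcal{L}(e,f)\colon\overleftarrow{e}\to\overleftarrow{f}$ and $j_\mathcal{L}(f',g)\colon\overleftarrow{f}\to\overleftarrow{g}$ (so $f\mathrel{\mathscr{L}}f'$), transitivity of $\leq_L$ gives $\overleftarrow{e}\leq_L\overleftarrow{g}$, so $j_\mathcal{L}(e,g)$ is indeed a morphism $\overleftarrow{e}\to\overleftarrow{g}$; and since it refers only to $\overleftarrow{e}$ and $\overleftarrow{g}$, the composite is independent of the representative chosen for the middle object. Associativity is immediate, since every iterated composite of a composable string $j_\mathcal{L}(e,f),\,j_\mathcal{L}(f,g),\,j_\mathcal{L}(g,h)$ equals $j_\mathcal{L}(e,h)$, independently of the bracketing. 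Finally, reflexivity of $\lel$ makes $j_\mathcal{L}(e,e)$ a morphism $\overleftarrow{e}\to\overleftarrow{e}$, and the identities $j_\mathcal{L}(e,e)\,j_\mathcal{L}(e,f)=j_\mathcal{L}(e,f)=j_\mathcal{L}(e,f)\,j_\mathcal{L}(f,f)$ exhibit it as a two-sided identity, so we set $1_{\overleftarrow{e}}:=j_\mathcal{L}(e,e)$. (The stipulation that this composition is ``induced by'' the basic product of $E$ is consistent, since $e\lel f$ means $e\,f=e$ is defined in $E$, but this observation is not needed for the axioms.)

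For strictness, note that by construction there is at most one morphism between any two objects of $\mathcal{P}_L$, and the underlying preorder on $v\mathcal{P}_L=E/\mathscr{L}$ is $\leq_L$, which is antisymmetric; hence $\mathcal{P}_L$ is a strict preorder category. There is no genuine obstacle in this proposition: the only point requiring a moment's care is the compatibility of the preorder $\lel$ with its associated equivalence $\mathscr{L}$ --- that passing to $\mathscr{L}$-classes neither collapses nor creates order relations --- and this is exactly what is encapsulated in the already-granted assertion that $(E/\mathscr{L},\leq_L)$ is a partially ordered set; everything else is bookkeeping.
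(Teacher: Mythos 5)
Your proposal is correct and follows essentially the same route as the paper: check associativity and identities directly from the formula $j_\mathcal{L}(e,f)\,j_\mathcal{L}(f,g)=j_\mathcal{L}(e,g)$, then deduce strictness from uniqueness of morphisms together with antisymmetry of $\leq_L$ (the paper phrases this by showing any isomorphism $j_\mathcal{L}(e,f)$ forces $\overleftarrow{e}=\overleftarrow{f}$ and hence is an identity). Your additional explicit check that $j_\mathcal{L}(e,f)$ depends only on the $\mathscr{L}$-classes is a harmless refinement of what the paper leaves implicit in the well-definedness of $\leq_L$.
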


Now, we move onto our second required category, namely $\mathcal{G}_L$ which shall be responsible for the isomorphisms in $\mathcal{L}_G$. Recall from Definition \ref{dfnig} that an inductive groupoid $\mathcal{G}$ comes equipped with an evaluation functor $\epsilon$, which helps you to `evaluate' $E$-chains of the ordered groupoid $\mathcal{G}(E)$ in the groupoid $\mathcal{G}$. Also, recall that the object set $v\mathcal{G}_L:=E/\mathscr{L}$. Then, to define morphisms in the category $\mathcal{G}_L$, given any two morphisms $\alpha,\beta$ in the inductive groupoid $\mathcal{G}$, we first define a relation $\sim_L$ as follows:
\begin{equation}\label{eqgl}
\alpha \sim_L \beta \:\iff\: \mathbf{d}(\alpha) \mathrel{\mathscr{L}} \mathbf{d}(\beta),\: \mathbf{r}(\alpha) \mathrel{\mathscr{L}} \mathbf{r}(\beta) \:\text{ and }\: \alpha\:\epsilon(\mathbf{r}(\alpha),\mathbf{r}(\beta)) = \epsilon(\mathbf{d}(\alpha),\mathbf{d}(\beta))\:\beta.
\end{equation}

\begin{lem}
$\sim_L$ is an equivalence relation.
\end{lem}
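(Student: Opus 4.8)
The plan is to verify the three defining properties of an equivalence relation --- reflexivity, symmetry, and transitivity --- for the relation $\sim_L$ defined in~\eqref{eqgl}. Reflexivity is essentially immediate: for any morphism $\alpha$ in $\mathcal{G}$, we have $\mathbf{d}(\alpha)\mathrel{\mathscr{L}}\mathbf{d}(\alpha)$ and $\mathbf{r}(\alpha)\mathrel{\mathscr{L}}\mathbf{r}(\alpha)$ trivially, and $\epsilon(\mathbf{r}(\alpha),\mathbf{r}(\alpha))$ is the identity $E$-chain $(\mathbf{r}(\alpha))$, which $\epsilon$ sends to the identity morphism $1_{\mathbf{r}(\alpha)}$ (since $v\epsilon = 1_E$ and $\epsilon$ is a functor); similarly on the domain side. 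Hence $\alpha\,\epsilon(\mathbf{r}(\alpha),\mathbf{r}(\alpha)) = \alpha = \epsilon(\mathbf{d}(\alpha),\mathbf{d}(\alpha))\,\alpha$, giving $\alpha\sim_L\alpha$.

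For symmetry, suppose $\alpha\sim_L\beta$. Then $\mathbf{d}(\alpha)\mathrel{\mathscr{L}}\mathbf{d}(\beta)$ and $\mathbf{r}(\alpha)\mathrel{\mathscr{L}}\mathbf{r}(\beta)$, and since $\mathscr{L}$ is symmetric these reverse to give the first two conditions for $\beta\sim_L\alpha$. For the equation, I would use that $\epsilon$ is a functor on the groupoid $\mathcal{G}(E)$ of $E$-chains, so $\epsilon(\mathbf{d}(\beta),\mathbf{d}(\alpha))$ is the inverse of $\epsilon(\mathbf{d}(\alpha),\mathbf{d}(\beta))$ (the $E$-chain $(\mathbf{d}(\beta),\mathbf{d}(\alpha))$ being the inverse of $(\mathbf{d}(\alpha),\mathbf{d}(\beta))$), and likewise on the codomain side. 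Rearranging $\alpha\,\epsilon(\mathbf{r}(\alpha),\mathbf{r}(\beta)) = \epsilon(\mathbf{d}(\alpha),\mathbf{d}(\beta))\,\beta$ by pre-composing with $\epsilon(\mathbf{d}(\beta),\mathbf{d}(\alpha))$ and post-composing with $\epsilon(\mathbf{r}(\beta),\mathbf{r}(\alpha))$ then yields $\beta\,\epsilon(\mathbf{r}(\beta),\mathbf{r}(\alpha)) = \epsilon(\mathbf{d}(\beta),\mathbf{d}(\alpha))\,\alpha$, which is exactly $\beta\sim_L\alpha$.

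For transitivity, suppose $\alpha\sim_L\beta$ and $\beta\sim_L\gamma$. The $\mathscr{L}$-conditions chain together by transitivity of $\mathscr{L}$. For the composition equation, the key point is that $\epsilon(\mathbf{d}(\alpha),\mathbf{d}(\beta))\,\epsilon(\mathbf{d}(\beta),\mathbf{d}(\gamma)) = \epsilon(\mathbf{d}(\alpha),\mathbf{d}(\gamma))$: this holds because $\epsilon$ is a functor and the concatenation of the $E$-chains $(\mathbf{d}(\alpha),\mathbf{d}(\beta))$ and $(\mathbf{d}(\beta),\mathbf{d}(\gamma))$ is essentially the same as $(\mathbf{d}(\alpha),\mathbf{d}(\gamma))$ --- indeed, since all three are $\mathscr{L}$-related, $\mathbf{d}(\beta)$ is an inessential element of the $E$-path $(\mathbf{d}(\alpha),\mathbf{d}(\beta),\mathbf{d}(\gamma))$. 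The analogous identity holds on the codomain side. Then from $\alpha\,\epsilon(\mathbf{r}(\alpha),\mathbf{r}(\beta)) = \epsilon(\mathbf{d}(\alpha),\mathbf{d}(\beta))\,\beta$ and $\beta\,\epsilon(\mathbf{r}(\beta),\mathbf{r}(\gamma)) = \epsilon(\mathbf{d}(\beta),\mathbf{d}(\gamma))\,\gamma$, substituting the second into the first after post-composing appropriately gives $\alpha\,\epsilon(\mathbf{r}(\alpha),\mathbf{r}(\gamma)) = \epsilon(\mathbf{d}(\alpha),\mathbf{d}(\gamma))\,\gamma$, completing the argument.

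The main obstacle, such as it is, is not conceptual but bookkeeping: one must be careful that all composites of the $\epsilon$-images of $E$-chains are actually defined (the domains and codomains must match, which is where the $\mathscr{L}$-relatedness hypotheses are used) and that the identities $\epsilon(e,f)\epsilon(f,h) = \epsilon(e,h)$ and $\epsilon(e,f)^{-1} = \epsilon(f,e)$ are invoked correctly --- these rest on $\epsilon$ being a functor together with the elementary structure of the groupoid $\mathcal{G}(E)$ of $E$-chains as recalled in Section~\ref{secpre}. I do not expect any genuine difficulty, and the proof should be short.
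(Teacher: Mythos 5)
Your proposal is correct and follows essentially the same route as the paper: reflexivity is immediate, symmetry is obtained by composing the defining equation with $\epsilon(\mathbf{d}(\beta),\mathbf{d}(\alpha))$ and $\epsilon(\mathbf{r}(\beta),\mathbf{r}(\alpha))$, and transitivity uses the identity $\epsilon(e,f)\epsilon(f,g)=\epsilon(e,g)$ for $\mathscr{L}$-related idempotents. Your explicit justification of that identity via inessential elements of $E$-paths and functoriality of $\epsilon$ is exactly what underlies the paper's (unstated) use of it.
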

\begin{proof}
Clearly $\sim_L$ is reflexive.

Now if $\alpha \sim_L \beta$, then
\begin{equation}\label{eqnsim}
\alpha\:\epsilon(\mathbf{r}(\alpha),\mathbf{r}(\beta)) = \epsilon(\mathbf{d}(\alpha),\mathbf{d}(\beta))\:\beta.
\end{equation}
Observe that $$\epsilon(\mathbf{r}(\alpha),\mathbf{r}(\beta))\epsilon(\mathbf{r}(\beta),\mathbf{r}(\alpha))=1_{\mathbf{r}(\alpha)}\text{ and }
\epsilon(\mathbf{d}(\beta),\mathbf{d}(\alpha))\epsilon(\mathbf{d}(\alpha),\mathbf{d}(\beta))=1_{\mathbf{d}(\beta)}.$$
Hence multiplying the equation (\ref{eqnsim}) by $\epsilon(\mathbf{r}(\beta),\mathbf{r}(\alpha))$ on the right and by $\epsilon(\mathbf{d}(\beta),\mathbf{d}(\alpha))$ on the left, we have $$ \epsilon(\mathbf{d}(\beta),\mathbf{d}(\alpha))\:\alpha = \beta\:\epsilon(\mathbf{r}(\beta),\mathbf{r}(\alpha)).$$ That is, $ \beta \sim_L \alpha$ and so the relation $\sim_L$ is symmetric.

If $\alpha \sim_L \beta$ and $\beta \sim_L \gamma$, then
\begin{equation}\label{eqnsim1}
\alpha\:\epsilon(\mathbf{r}(\alpha),\mathbf{r}(\beta)) = \epsilon(\mathbf{d}(\alpha),\mathbf{d}(\beta))\:\beta\text{ and }\beta\:\epsilon(\mathbf{r}(\beta),\mathbf{r}(\gamma)) = \epsilon(\mathbf{d}(\beta),\mathbf{d}(\gamma))\:\gamma.
\end{equation}
Observe that since $\mathbf{r}(\alpha)\mathrel{\mathscr{L}}\mathbf{r}(\beta)\mathrel{\mathscr{L}}\mathbf{r}(\gamma)$ and $\mathbf{d}(\alpha)\mathrel{\mathscr{L}}\mathbf{d}(\beta)\mathrel{\mathscr{L}}\mathbf{d}(\gamma)$, we have $$\epsilon(\mathbf{r}(\alpha),\mathbf{r}(\beta))\epsilon(\mathbf{r}(\beta),\mathbf{r}(\gamma))=\epsilon(\mathbf{r}(\alpha),\mathbf{r}(\gamma))\text{ and } \epsilon(\mathbf{d}(\alpha),\mathbf{d}(\beta))\epsilon(\mathbf{d}(\beta),\mathbf{d}(\gamma))=\epsilon(\mathbf{d}(\alpha),\mathbf{d}(\gamma)).$$
So,
\begin{align*}
\alpha\:\epsilon(\mathbf{r}(\alpha),\mathbf{r}(\gamma)) =&\alpha\:\epsilon(\mathbf{r}(\alpha),\mathbf{r}(\beta))\:\epsilon(\mathbf{r}(\beta),\mathbf{r}(\gamma))\\ =&\epsilon(\mathbf{d}(\alpha),\mathbf{d}(\beta))\:\beta\:\epsilon(\mathbf{r}(\beta),\mathbf{r}(\gamma))\\
=&\epsilon(\mathbf{d}(\alpha),\mathbf{d}(\beta))\epsilon(\mathbf{d}(\beta),\mathbf{d}(\gamma))\:\gamma\\
=&\epsilon(\mathbf{d}(\alpha),\mathbf{d}(\gamma))\:\gamma.
\end{align*}
So $ \alpha \sim_L \gamma$ and the relation $\sim_L$ is transitive. Hence the lemma.
\end{proof}
\begin{rmk}
Observe that the relation $\sim_L $ reduces to the $\mathrel{\mathscr{L}}$-relation on the identities of $\mathcal{G}$. Informally speaking, the relation (\ref{eqgl}) may be seen as a `left-sided' version of the crucial $p$-relation of \cite[Section 4]{mem}, which is defined later in this article: see equation (\ref{eqnp}). 
\end{rmk}
We shall require the following observation in the sequel.
\begin{pro}\label{progl}
Let $(\mathcal{D},\mathcal{C};\Gamma)$ be a cross-connection with its inductive groupoid $\mathcal{G}_\Gamma$. For $(f_1,g_1),(f_2,g_2)\in \mathcal{G}_\Gamma$,
$$(f_1,g_1)\sim_L(f_2,g_2) \iff f_1=f_2.$$
\end{pro}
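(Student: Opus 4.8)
The plan is to reduce both implications to the elementary fact, recorded right after the definition of $\mathcal{G}_\Gamma$, that a morphism of $\mathcal{G}_\Gamma$ is completely determined by its $\mathcal{C}$-component together with its domain and codomain. Throughout I would write $(f_i,g_i)\colon(c_i,d_i)\to(c'_i,d'_i)$ for $i=1,2$, and recall that in $E_\Gamma$ one has $(c,d)\mathrel{\mathscr{L}}(c',d')$ exactly when $c=c'$ (the inclusion subcategory of $\mathcal{C}$ being a strict preorder, hence antisymmetric). The one computation needed in advance is the behaviour of $\epsilon_\Gamma$ on a two-term $\mathscr{L}$-chain: if $c'_1=c'_2$, then unwinding the definition of $\epsilon_\Gamma$ together with the composition rule for cones and the fact that $\gamma(c'_1,d'_1)$ and $\gamma(c'_2,d'_2)$ are idempotent cones with common apex $c'_1=c'_2$ shows that the $\mathcal{C}$-component of $\epsilon_\Gamma((c'_1,d'_1),(c'_2,d'_2))$ equals $1_{c'_1}$; likewise the $\mathcal{C}$-component of $\epsilon_\Gamma((c_1,d_1),(c_2,d_2))$ is $1_{c_1}$ when $c_1=c_2$.

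For the forward implication, suppose $(f_1,g_1)\sim_L(f_2,g_2)$. By definition this forces $(c_1,d_1)\mathrel{\mathscr{L}}(c_2,d_2)$ and $(c'_1,d'_1)\mathrel{\mathscr{L}}(c'_2,d'_2)$, i.e.\ $c_1=c_2$ and $c'_1=c'_2$, and it yields the identity $(f_1,g_1)\,\epsilon_\Gamma((c'_1,d'_1),(c'_2,d'_2))=\epsilon_\Gamma((c_1,d_1),(c_2,d_2))\,(f_2,g_2)$ in $\mathcal{G}_\Gamma$. Since composition in $\mathcal{G}_\Gamma\subseteq\mathcal{C}\times\mathcal{D}$ is componentwise, comparing $\mathcal{C}$-components and inserting the two identity morphisms computed above gives $f_1=f_1\cdot 1_{c'_1}=1_{c_1}\cdot f_2=f_2$.

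For the converse, suppose $f_1=f_2$. Then $f_1$ and $f_2$ coincide as morphisms of $\mathcal{C}$, so $c_1=c_2=:c$ and $c'_1=c'_2=:c'$, whence $(c_1,d_1)\mathrel{\mathscr{L}}(c_2,d_2)$ and $(c'_1,d'_1)\mathrel{\mathscr{L}}(c'_2,d'_2)$. The idea is to exhibit the $\sim_L$-partner of $(f_1,g_1)$ as an explicit composite in the groupoid $\mathcal{G}_\Gamma$, namely
\[
\beta:=\epsilon_\Gamma((c,d_2),(c,d_1))\,(f_1,g_1)\,\epsilon_\Gamma((c',d'_1),(c',d'_2))\colon(c_2,d_2)\to(c'_2,d'_2),
\]
and then to verify two things. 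First, by the preliminary computation the $\mathcal{C}$-component of $\beta$ is $1_c\cdot f_1\cdot 1_{c'}=f_1=f_2$, so by the uniqueness of a morphism of $\mathcal{G}_\Gamma$ with prescribed endpoints and prescribed $\mathcal{C}$-component we conclude $\beta=(f_2,g_2)$. Second, $(f_1,g_1)\sim_L\beta$: the domains and codomains are $\mathscr{L}$-related by construction, and the $\sim_L$-equation reduces, after composing through, to the cancellation $\epsilon_\Gamma((c,d_1),(c,d_2))\,\epsilon_\Gamma((c,d_2),(c,d_1))=1_{(c,d_1)}$, which is immediate because $\epsilon_\Gamma$ is a functor between groupoids and $((c,d_2),(c,d_1))$ is the inverse in $\mathcal{G}(E_\Gamma)$ of the $E$-chain $((c,d_1),(c,d_2))$. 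Combining the two points, $(f_1,g_1)\sim_L(f_2,g_2)$.

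The only mildly delicate point is the bookkeeping of the $\mathcal{C}$- and $\mathcal{D}$-components: although $g_1$ and $g_2$ are genuinely distinct morphisms of $\mathcal{D}$ (each is the transpose of $f^{-1}$ relative to a different pair of objects), the argument never compares them directly — it is enough to cancel the $\mathscr{L}$-chain isomorphisms using functoriality of $\epsilon_\Gamma$ and the fact that it is trivial on $\mathcal{C}$-components over an $\mathscr{L}$-class. No biordered-set axioms or cross-connection facts beyond those already quoted in the text are required.
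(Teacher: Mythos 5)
Your proof is correct, and in substance it is the paper's argument: the forward implication is verbatim the same (evaluate $\epsilon_\Gamma$ on a two-term $\mathscr{L}$-chain, observe that its $\mathcal{C}$-component is an identity because $\gamma(c',d')(c')=1_{c'}$, and compare $\mathcal{C}$-components of the $\sim_L$-equation), and the converse rests on exactly the same key fact the paper invokes, namely the uniqueness of transposes for a fixed pair of objects of $E_\Gamma$, i.e.\ that a morphism of $\mathcal{G}_\Gamma$ is determined by its endpoints together with its $\mathcal{C}$-component. The only difference is organisational: where the paper verifies the defining equation of $\sim_L$ componentwise, identifying $g_1\delta'_2(d'_1)$ and $\delta_2(d_1)g_2$ as transposes of the same morphism with the same endpoints, you instead form the conjugate $\beta=\epsilon_\Gamma((c,d_2),(c,d_1))(f_1,g_1)\epsilon_\Gamma((c',d'_1),(c',d'_2))$, identify it with $(f_2,g_2)$ by the same uniqueness, and get the $\sim_L$-relation for free from functoriality of $\epsilon_\Gamma$ (the cancellation $\epsilon_\Gamma((c,d_1),(c,d_2))\epsilon_\Gamma((c,d_2),(c,d_1))=1_{(c,d_1)}$). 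Both auxiliary facts you use --- closure of $\mathcal{G}_\Gamma$ under composition and that $\epsilon_\Gamma$ is a functor --- are established in the paper before this proposition, so there is no gap; your packaging is marginally slicker in that it avoids naming the $\mathcal{D}$-components at all, at the cost of one extra appeal to the groupoid structure.
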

\begin{proof}
Suppose that $(f_i,g_i)\colon(c_i,d_i)\to (c_i',d_i')$ for $i=1,2$. Let $(f_1,g_1)\sim_L(f_2,g_2)$ so that $\mathbf{d}(f_1,g_1)=(c_1,d_1) \mathrel{\mathscr{L}} (c_2,d_2)=\mathbf{d}(f_2,g_2)$. Then, by (\ref{eqb2}), we have $c_1=c_2$. Similarly we have $c_1'=c_2'$. Further the last condition in (\ref{eqgl}) implies that:
\begin{align*}
 &\qquad (f_1,g_1)\:\epsilon((c_1',d_1'),(c_2',d_2')) = \epsilon((c_1,d_1),(c_2,d_2))\:(f_2,g_2)\\
\iff& \qquad(f_1,g_1)\:(\gamma_1'\gamma_2'(c_1'),\delta_1'\delta_2'(d_1')) = (\gamma_1\gamma_2(c_1),\delta_1\delta_2(d_1))\:(f_2,g_2)\\
\iff& \qquad(f_1\gamma_2'(c_1'),g_1\delta_2'(d_1')) = (\gamma_2(c_1)f_2,\delta_2(d_1)g_2).
\end{align*}
Since $c_1=c_2$ and $c_1'=c_2'$, we have $\gamma_2'(c_1')=1_{c_1'}$ and $\gamma_2(c_1)=1_{c_2}$. Thus, equating the first components gives us $f_1=f_2$.

Conversely, suppose that $f_1=f_2$. Since $c_1=c_2$ and $c_1'=c_2'$, by  (\ref{eqb2}), we have that $\mathbf{d}(f_1,g_1) \mathrel{\mathscr{L}} \mathbf{d}(f_2,g_2)$ and $\mathbf{r}(f_1,g_1) \mathrel{\mathscr{L}} \mathbf{r}(f_2,g_2)$. Also, since $\gamma_2'(c_1')=1_{c_1'}$ and $\gamma_2(c_1)=1_{c_2}$, we have that $f_1\gamma_2'(c_1') = \gamma_2(c_1)f_2$. Now observe that in the inductive groupoid $\mathcal{G}_\Gamma$, the morphisms $g_1\delta_2'(d_1')$ and $\delta_2(d_1)g_2$ are transposes of $(f_1\gamma_2'(c_1'))^{-1}$ and $ (\gamma_2(c_1)f_2)^{-1}$, respectively. But since we know that $(f_1\gamma_2'(c_1'))^{-1}= (\gamma_2(c_1)f_2)^{-1}$, and also that both the morphisms $g_1\delta_2'(d_1')$ and $\delta_2(d_1)g_2$ are from $d_1$ to $d_2'$, by the uniqueness of transposes (for a given domain and codomain), we have that $g_1\delta_2'(d_1')=\delta_2(d_1)g_2$. Hence we have $(f_1,g_1)\sim_L(f_2,g_2)$.
\end{proof}

Given a morphism $\alpha$ in $\mathcal{G}$ from $e$ to $f$, we shall denote the $\sim_L$-class of $\mathcal{G}$ containing the morphism $\alpha$ by $\overleftarrow{\alpha}=\overleftarrow{\alpha}(e,f)$. We shall define $\overleftarrow{\alpha}$ as a morphism in $\mathcal{G}_L$ from $\overleftarrow{e}$ to $\overleftarrow{f}$. Further, for $\overleftarrow{\alpha},\overleftarrow{\beta}\in \mathcal{G}_L$ such that $\mathbf{r}(\alpha)\mathrel{\mathscr{L}}\mathbf{d}(\beta)$, we define a composition in $\mathcal{G}_L$ as
$$\overleftarrow{\alpha}\overleftarrow{\beta}:= \overleftarrow{\alpha\:\epsilon(\mathbf{r}(\alpha),\mathbf{d}(\beta))\:\beta}.$$

\begin{pro}
$\mathcal{G}_L$ is a groupoid.
\end{pro}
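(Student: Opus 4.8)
The plan is to verify the four facts that together say $\mathcal{G}_L$ is a groupoid: that the composition $\overleftarrow{\alpha}\,\overleftarrow{\beta}:=\overleftarrow{\alpha\,\epsilon(\mathbf{r}(\alpha),\mathbf{d}(\beta))\,\beta}$ is independent of the chosen representatives $\alpha,\beta$; that it is associative; that $\overleftarrow{1_e}$ is an identity; and that every morphism is invertible. The only auxiliary fact I need repeatedly is this: whenever $a\mathrel{\mathscr{L}}b\mathrel{\mathscr{L}}c$ in $E$, the vertex $b$ is inessential in the $E$-path $(a,b,c)$, so $(a,b,c)$ and $(a,c)$ represent the same $E$-chain and hence $\epsilon(a,b)\,\epsilon(b,c)=\epsilon(a,c)$ in $\mathcal{G}$; in particular $\epsilon(a,a)=1_a$, and since $\mathcal{G}(E)$ is a groupoid, $\epsilon(b,a)=\epsilon(a,b)^{-1}$ whenever $a\mathrel{\mathscr{L}}b$. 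All of this follows from $\epsilon$ being a functor.

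First I would settle well-definedness. Suppose $\alpha\sim_L\alpha'$ and $\beta\sim_L\beta'$ with $\mathbf{r}(\alpha)\mathrel{\mathscr{L}}\mathbf{d}(\beta)$; write $e=\mathbf{d}(\alpha)$, $f=\mathbf{r}(\alpha)$, $g=\mathbf{d}(\beta)$, $h=\mathbf{r}(\beta)$, and $e',f',g',h'$ for the corresponding objects of $\alpha',\beta'$. Since $f'\mathrel{\mathscr{L}}f\mathrel{\mathscr{L}}g\mathrel{\mathscr{L}}g'$, the product $\overleftarrow{\alpha'}\,\overleftarrow{\beta'}$ is also defined, and with $\mu:=\alpha\,\epsilon(f,g)\,\beta$ and $\mu':=\alpha'\,\epsilon(f',g')\,\beta'$ one has $\mathbf{d}(\mu)=e\mathrel{\mathscr{L}}e'=\mathbf{d}(\mu')$ and $\mathbf{r}(\mu)=h\mathrel{\mathscr{L}}h'=\mathbf{r}(\mu')$. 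It remains to prove $\mu\sim_L\mu'$, i.e. $\mu\,\epsilon(h,h')=\epsilon(e,e')\,\mu'$. Starting from the left, I would use $\beta\sim_L\beta'$ in the form $\beta\,\epsilon(h,h')=\epsilon(g,g')\,\beta'$ and then collapse $\epsilon(f,g)\,\epsilon(g,g')=\epsilon(f,g')$, obtaining $\mu\,\epsilon(h,h')=\alpha\,\epsilon(f,g')\,\beta'$; starting from the right, I would use $\alpha\sim_L\alpha'$ in the form $\epsilon(e,e')\,\alpha'=\alpha\,\epsilon(f,f')$ and then collapse $\epsilon(f,f')\,\epsilon(f',g')=\epsilon(f,g')$, obtaining $\epsilon(e,e')\,\mu'=\alpha\,\epsilon(f,g')\,\beta'$ as well, so the two sides agree.

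The remaining points are quick. For associativity, unravelling the definition twice shows that for composable $\overleftarrow{\alpha},\overleftarrow{\beta},\overleftarrow{\gamma}$ both $(\overleftarrow{\alpha}\,\overleftarrow{\beta})\,\overleftarrow{\gamma}$ and $\overleftarrow{\alpha}\,(\overleftarrow{\beta}\,\overleftarrow{\gamma})$ equal the $\sim_L$-class of the single morphism $\alpha\,\epsilon(\mathbf{r}(\alpha),\mathbf{d}(\beta))\,\beta\,\epsilon(\mathbf{r}(\beta),\mathbf{d}(\gamma))\,\gamma$ of $\mathcal{G}$, using that the codomain of $\alpha\,\epsilon(\mathbf{r}(\alpha),\mathbf{d}(\beta))\,\beta$ is $\mathbf{r}(\beta)$ and the domain of $\beta\,\epsilon(\mathbf{r}(\beta),\mathbf{d}(\gamma))\,\gamma$ is $\mathbf{d}(\beta)$. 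For identities, the class $\overleftarrow{1_e}$ depends only on $\overleftarrow{e}$ since $\sim_L$ restricts to $\mathscr{L}$ on identities; and if $\mathbf{d}(\alpha)\mathrel{\mathscr{L}}e$ then $\overleftarrow{1_e}\,\overleftarrow{\alpha}=\overleftarrow{\epsilon(e,\mathbf{d}(\alpha))\,\alpha}=\overleftarrow{\alpha}$, because $\epsilon(e,\mathbf{d}(\alpha))\,\alpha\sim_L\alpha$ is immediate from the definition of $\sim_L$ together with $\epsilon(a,a)=1_a$; dually $\overleftarrow{\beta}\,\overleftarrow{1_e}=\overleftarrow{\beta}$ when $\mathbf{r}(\beta)\mathrel{\mathscr{L}}e$. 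Finally, every morphism of $\mathcal{G}_L$ has the form $\overleftarrow{\alpha}$ with $\alpha$ a morphism (hence isomorphism) of $\mathcal{G}$, and $\overleftarrow{\alpha}\,\overleftarrow{\alpha^{-1}}=\overleftarrow{\alpha\,\epsilon(\mathbf{r}(\alpha),\mathbf{r}(\alpha))\,\alpha^{-1}}=\overleftarrow{\alpha\alpha^{-1}}=\overleftarrow{1_{\mathbf{d}(\alpha)}}$ and symmetrically $\overleftarrow{\alpha^{-1}}\,\overleftarrow{\alpha}=\overleftarrow{1_{\mathbf{r}(\alpha)}}$, so $\overleftarrow{\alpha}$ is invertible. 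Hence $\mathcal{G}_L$ is a groupoid.

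I expect the main obstacle to be the well-definedness of the composition: it is the only place where the three clauses packed into the definition of $\sim_L$ and the inessential-vertex identity for $\epsilon$ must be combined, and some care is needed to route the two given $\sim_L$-relations through the right evaluation morphisms so that the telescoping $\epsilon(a,b)\,\epsilon(b,c)=\epsilon(a,c)$ can be applied on each side.
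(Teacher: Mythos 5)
Your proof is correct and follows essentially the same route as the paper's: well-definedness via the $\sim_L$ conditions combined with the telescoping identity $\epsilon(a,b)\,\epsilon(b,c)=\epsilon(a,c)$ for $\mathscr{L}$-related idempotents, associativity inherited from $\mathcal{G}$, identities $\overleftarrow{1_e}$, and inverses $\overleftarrow{\alpha^{-1}}$. The only differences are cosmetic (you reduce both sides of the well-definedness equation to the common expression $\alpha\,\epsilon(f,g')\,\beta'$ and spell out the associativity computation slightly more explicitly), so no changes are needed.
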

\begin{proof}
First, we verify that the composition is well-defined. Suppose $\alpha_1\sim_L\alpha_2$ and $\beta_1\sim_L\beta_2$ are such that $\overleftarrow{\alpha_1}\overleftarrow{\beta_1}$ and $\overleftarrow{\alpha_2}\overleftarrow{\beta_2}$ exist.
Let $\gamma_1:=\alpha_1\:\epsilon(\mathbf{r}(\alpha_1),\mathbf{d}(\beta_1))\:\beta_1$ and $\gamma_2:=\alpha_2\:\epsilon(\mathbf{r}(\alpha_2),\mathbf{d}(\beta_2))\:\beta_2$; so we need to show that  $\gamma_1\sim_L\gamma_2$.

Since  $\mathbf{d}(\alpha_1)\mathrel{\mathscr{L}}\mathbf{d}(\alpha_2)$,  we have that $\mathbf{d}(\gamma_1)\mathrel{\mathscr{L}}\mathbf{d}(\gamma_2)$ and since $\mathbf{r}(\beta_1)\mathrel{\mathscr{L}}\mathbf{r}(\beta_2)$, we have $\mathbf{r}(\gamma_1)\mathrel{\mathscr{L}}\mathbf{r}(\gamma_2)$. Also since  $\mathbf{r}(\alpha_1)\mathrel{\mathscr{L}}\mathbf{r}(\alpha_2)\mathrel{\mathscr{L}}\mathbf{d}(\beta_1)\mathrel{\mathscr{L}}\mathbf{d}(\beta_2)$, we get
\begin{align}
\label{eqnproid}
\epsilon(\mathbf{r}(\alpha_1),\mathbf{d}(\beta_1))=\epsilon(\mathbf{r}(\alpha_1),\mathbf{r}(\alpha_2))\:&\epsilon(\mathbf{r}(\alpha_2),\mathbf{d}(\beta_1))\quad \text{ and }\\
\epsilon(\mathbf{r}(\alpha_2),&\mathbf{d}(\beta_2))=\epsilon(\mathbf{r}(\alpha_2),\mathbf{d}(\beta_1))\:\epsilon(\mathbf{d}(\beta_1),\mathbf{d}(\beta_2)).\notag	
\end{align}
So,
\begin{align*}
\gamma_1&\:\epsilon(\mathbf{r}(\gamma_1),\mathbf{r}(\gamma_2)) \\
&=\alpha_1\:\epsilon(\mathbf{r}(\alpha_1),\mathbf{d}(\beta_1))\:\beta_1\:\epsilon(\mathbf{r}(\beta_1),\mathbf{r}(\beta_2))\quad (\text{by definition of }\gamma_1)\\
&=\alpha_1\:\epsilon(\mathbf{r}(\alpha_1),\mathbf{r}(\alpha_2))\:\epsilon(\mathbf{r}(\alpha_2),\mathbf{d}(\beta_1))\:\beta_1\:\epsilon(\mathbf{r}(\beta_1),\mathbf{r}(\beta_2)) \quad (\text{using } (\ref{eqnproid}))\\
&=\epsilon(\mathbf{d}(\alpha_1),\mathbf{d}(\alpha_2))\:\alpha_2\:\epsilon(\mathbf{r}(\alpha_2),\mathbf{d}(\beta_1))\:\epsilon(\mathbf{d}(\beta_1),\mathbf{d}(\beta_2))\:\beta_2 \quad (\text{since } \alpha_1\sim_L\alpha_2\text{ and }\beta_1\sim_L\beta_2)\\
&=\epsilon(\mathbf{d}(\alpha_1),\mathbf{d}(\alpha_2))\:\alpha_2\:\epsilon(\mathbf{r}(\alpha_2),\mathbf{d}(\beta_2))\:\beta_2\quad (\text{using } (\ref{eqnproid}))\\
&=\epsilon(\mathbf{d}(\gamma_1),\mathbf{d}(\gamma_2))\:\gamma_2 \quad (\text{by definition of }\gamma_2).
\end{align*}
Hence $\gamma_1\sim_L\gamma_2$ and so the composition is well-defined. The associativity of the composition follows from the associativity of the composition in $\mathcal{G}$.

Now given $\overleftarrow{\alpha} \in \mathcal{G}_L$ from $\overleftarrow{e}$ to $\overleftarrow{f}$, since
$$\overleftarrow{1_e}\overleftarrow{\alpha}=\overleftarrow{1_e\:\epsilon(e,e)\:\alpha}=\overleftarrow{\alpha}\text{ and }\overleftarrow{\alpha}\overleftarrow{1_f}=\overleftarrow{\alpha\:\epsilon(f,f)\:1_f}=\overleftarrow{\alpha},$$
$\overleftarrow{1_e}$ is the identity morphism at the apex $\overleftarrow{e}$.
Also,
$$\overleftarrow{\alpha^{-1}}\overleftarrow{\alpha}=\overleftarrow{\alpha^{-1}\:\epsilon(e,e)\:\alpha}=\overleftarrow{1_f}\text{ and }\overleftarrow{\alpha}\overleftarrow{\alpha^{-1}}=\overleftarrow{\alpha\:\epsilon(f,f)\:\alpha^{-1}}=\overleftarrow{1_e}.$$
So $(\overleftarrow{\alpha})^{-1}= \overleftarrow{\alpha^{-1}}$ and hence $\mathcal{G}_L$ is a groupoid.
\end{proof}
\begin{rmk}
It can be shown that, in fact, $\mathcal{G}_L$ is an ordered groupoid with respect to the order induced from the inductive groupoid.
\end{rmk}

Having built our required two categories, we move onto our third category $\mathcal{Q}_L$.  Recall again that $v\mathcal{Q}_L:=E/\mathscr{L}$. Now, if $f\lel  e$, for each $u$ in the biordered set $E$ such that $u\leqslant e$ and $u\mathrel{\mathscr{L}}f$, we define a morphism $q=q_\mathcal{L}(e,u)$ in $\mathcal{Q}_L$ from $\overleftarrow{e}$ to $\overleftarrow{u}=\overleftarrow{f}$. Then two such morphisms $q_\mathcal{L}(e,u)$ and $q_\mathcal{L}(g,v)$ are equal if and only if $e\mathrel{\mathscr{L}}g$ and $v=gu$ in $E$.
In that case, since $u\leqslant  e\mathrel{\mathscr{L}}g$, observe that $gu$ is a basic product. In particular, if $e\mathrel{\mathscr{L}}g$, then $g=ge$ and so $q_\mathcal{L}(e,e)=q_\mathcal{L}(g,g)$.

Further, if we have two morphisms $q_\mathcal{L}(e,u)$ and $q_\mathcal{L}(f,v)$ in $\mathcal{Q}_L$ such that $u\mathrel{\mathscr{L}}f$, then we define a composition on $\mathcal{Q}_L$ as
$$q_\mathcal{L}(e,u)\:q_\mathcal{L}(f,v) := q_\mathcal{L}(e,uv).$$
Since $v\lel u$, so $uv$ is a basic product in $E$ and by axiom (B2), $uv\mathrel{\mathscr{L}}v$. Since $v\lel  u \lel  u$, using axiom (B4), we have $uv=u(uv)$. That implies $uv \ler  u$. Combining this with the fact that $uv \lel  u$, we have $uv \leqslant  u$. Further, since $u \leqslant  e$, by transitivity, we have $uv \leqslant  e$. Hence $q_\mathcal{L}(e,uv)$ is a morphism in $\mathcal{Q}_L$ and the above composition is well-defined.

\begin{pro}
$\mathcal{Q}_L$ is a category.
\end{pro}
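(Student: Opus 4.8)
The plan is to verify the three remaining category axioms for $\mathcal{Q}_L$ --- that the composite is independent of the representatives chosen for the two morphisms, that composition is associative, and that each object $\overleftarrow{e}$ carries an identity --- given the fact, already recorded above, that $q_\mathcal{L}(e,uv)$ is a legitimate morphism whenever $u\mathrel{\mathscr{L}}f$. The uniform device is the equality rule $q_\mathcal{L}(e,u)=q_\mathcal{L}(g,v)\iff e\mathrel{\mathscr{L}}g$ and $v=gu$: through it, each of the three claims collapses to an identity among basic products of $E$, which I would settle using the biordered-set axioms (B2) and (B4), or --- where a short axiomatic derivation is awkward --- by Easdown's embedding $E\hookrightarrow E(S)$ \cite{eas}, under which such identities become instances of associativity in the semigroup $S$.

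Identities are easiest. The identity at $\overleftarrow{e}$ is the class of $q_\mathcal{L}(e,e)$, which is well defined since $q_\mathcal{L}(e,e)=q_\mathcal{L}(g,g)$ whenever $e\mathrel{\mathscr{L}}g$; using a representative matched to the other morphism, $q_\mathcal{L}(e,e)\,q_\mathcal{L}(e,w)=q_\mathcal{L}(e,ew)=q_\mathcal{L}(e,w)$ because $w\leqslant e$ forces $ew=w$, and $q_\mathcal{L}(e,w)\,q_\mathcal{L}(w,w)=q_\mathcal{L}(e,w)$ because $ww=w$, so the identity laws hold. For representative-independence of the composite, suppose $q_\mathcal{L}(e,u)=q_\mathcal{L}(e_1,u_1)$ and $q_\mathcal{L}(f,v)=q_\mathcal{L}(f_1,v_1)$ with $u\mathrel{\mathscr{L}}f$; then $u_1=e_1u$ and $v_1=f_1v$, and, using $u\mathrel{\mathscr{L}}f\mathrel{\mathscr{L}}f_1$ (hence $uf_1=u$) together with routine basic-product manipulation, one obtains $u_1v_1=(e_1u)(f_1v)=e_1(uv)$, whence $q_\mathcal{L}(e_1,u_1v_1)=q_\mathcal{L}(e,uv)$ by the equality rule.

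The real content is associativity. For composable $q_\mathcal{L}(e,u)$, $q_\mathcal{L}(f,v)$, $q_\mathcal{L}(g,w)$ one has $u\mathrel{\mathscr{L}}f$ and $v\mathrel{\mathscr{L}}g$, so $w\lel v\lel u$; forming the two bracketings yields $q_\mathcal{L}(e,(uv)w)$ and $q_\mathcal{L}(e,u(vw))$, and since $(uv)w\leqslant uv\leqslant u\leqslant e$ and likewise $u(vw)\leqslant e$, the equality rule reduces associativity to the single basic-product identity $(uv)w=u(vw)$, valid for every chain $w\lel v\lel u$ in $E$. This is where the work is: the basic product is not associative in general, so one must use that this particular configuration forces the two products to coincide. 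I would establish it by successive applications of the biordered-set axioms (B2)--(B4) (invoking (B3) or (B5) if needed) rewriting both sides to a common form; and should that prove unwieldy, by passing to a semigroup $S$ with $E(S)\cong E$, in which $(uv)w$ and $u(vw)$ are both the product $uvw$. With this identity in hand, associativity follows, and together with the identities above it shows that $\mathcal{Q}_L$ is a category.
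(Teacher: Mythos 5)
Your proof is correct and its skeleton coincides with the paper's: the identity at $\overleftarrow{e}$ is $q_\mathcal{L}(e,e)$, and associativity reduces, via the equality rule, to the single identity $(uv)w=u(vw)$ for a chain $w\lel v\lel u$. The only real divergence is how that identity is settled. The paper simply cites the dual of Proposition 2.3 of Nambooripad's memoir \cite{mem}, which is exactly the axiomatic derivation you sketch but do not carry out; your fallback --- realising $E$ as $E(S)$ via Easdown's theorem \cite{eas} and reading both bracketings as the semigroup product $uvw$ --- is also valid (both basic products are defined, as you note, and basic products agree with the products in $S$), though it invokes a deep realisation theorem to prove an elementary biorder identity and runs slightly against the spirit of the paper, which keeps $E$ abstract and lets semigroups enter only later through $T\mathcal{L}_G$. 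You additionally check that the composition is independent of the chosen representatives $q_\mathcal{L}(e,u)$ and $q_\mathcal{L}(f,v)$; the paper's proof does not spell this out (its ``well-defined'' only records that $q_\mathcal{L}(e,uv)$ is a legitimate morphism), so that is a welcome extra verification rather than a redundancy, and your computation $u_1v_1=(e_1u)(f_1v)=e_1(uv)$ is sound for the same reason as the associativity identity.
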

\begin{proof}
We need to verify associativity and identity.
If $q_\mathcal{L}(e,u), q_\mathcal{L}(f,v)$, and $q_\mathcal{L}(g,w)$ are composable morphisms in $\mathcal{Q}_L$, then observe that $w\lel v\lel u$. Then,
\begin{align*}
(q_\mathcal{L}(e,u)\:q_\mathcal{L}(f,v))\:q_\mathcal{L}(g,w)
&= q_\mathcal{L}(e,uv)\:q_\mathcal{L}(g,w)\\
&= q_\mathcal{L}(e,(uv)w)\\
&= q_\mathcal{L}(e,u(vw)) \quad\text{(Using the dual of \cite[Proposition 2.3]{mem})}\\\
&=q_\mathcal{L}(e,u)\:q_\mathcal{L}(f,vw)\\
&=q_\mathcal{L}(e,u)\:(q_\mathcal{L}(f,v)\:q_\mathcal{L}(g,w)).
\end{align*}
So it is associative. Also, since
$$q_\mathcal{L}(e,u)q_\mathcal{L}(u,u)=q_\mathcal{L}(e,u)\text{ and }q_\mathcal{L}(e,e)q_\mathcal{L}(e,u)=q_\mathcal{L}(e,u),$$
the identity morphism at $\overleftarrow{e}$ in $\mathcal{Q}_L$ is $q_\mathcal{L}(e,e)$. Hence $\mathcal{Q}_L$ is a category.
\end{proof}

We have successfully built all our three ingredient categories, namely  $\mathcal{P}_L$, $\mathcal{G}_L$, and $\mathcal{Q}_L$. Now we need to synthesise their composition to construct our required category  $\mathcal{L}_G$ from the inductive groupoid composition. As mentioned earlier, in this process, we shall rely heavily on the uniqueness of epimorphic components of morphisms in a normal category. To that end, we first build the category $\mathcal{E}_L$ (that would be responsible for epimorphisms) using the categories $\mathcal{Q}_L$ and $\mathcal{G}_L$. For this, we need the following concept of a quiver.

\begin{dfn}
A \emph{quiver} $\mathcal{K}$ consists of a set of objects (denoted as $v\mathcal{K}$) together with a set of morphisms (denoted by $\mathcal{K}$ itself) and two functions $\mathbf{d},\mathbf{r}\colon\mathcal{K}\rightrightarrows v\mathcal{K}$ giving the \emph{domain} and \emph{codomain} of each morphism. 
\end{dfn}

Now given categories $\mathcal{Q}_L$ and $\mathcal{G}_L$, consider an intermediary quiver $\mathcal{E}$ with object set $v\mathcal{E}:= E/\mathscr{L}$ and morphisms as follows:
$$\mathcal{E}:=\{(q,\overleftarrow{\alpha})\in \mathcal{Q}_L \times \mathcal{G}_L :  \mathbf{r}(q)=\mathbf{d}(\overleftarrow{\alpha}) \}.$$

Then, consider the following relation $\sim_\mathcal{E}$ on the morphisms of the quiver $\mathcal{E}$: for any $\xi_1:=(q_1(e,u),\overleftarrow{\alpha})$ and $\xi_2:=(q_2(f,v),\overleftarrow{\beta})$,
$$\xi_1\sim_\mathcal{E}\xi_2\iff e\mathrel{\mathscr{L}}f,\:u\mathrel{\mathscr{R}}v\ \text{ and }\overleftarrow{\alpha}=\overleftarrow{\epsilon(u,v)}\:\overleftarrow{\beta}.$$
The following lemma can be easily verified.
\begin{lem}
$\sim_\mathcal{E}$ is an equivalence relation.
\end{lem}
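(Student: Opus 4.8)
The plan is to verify the three defining properties of an equivalence relation — reflexivity, symmetry, transitivity — for the relation $\sim_\mathcal{E}$ on morphisms of the quiver $\mathcal{E}$, exploiting the fact that $\sim_L$ is already known to be an equivalence relation on $\mathcal{G}$ and that $\mathcal{G}_L$ is a groupoid. Throughout, I will write $\xi_i := (q_i(e_i,u_i),\overleftarrow{\alpha_i})$ for the morphisms involved and repeatedly use that, for $\mathscr{L}$-related or $\mathscr{R}$-related elements, the evaluation functor yields mutually inverse isomorphisms, e.g.\ $\epsilon(u,v)\epsilon(v,u)=1_{\mathbf{d}(\epsilon(u,v))}$, which passes to $\mathcal{G}_L$ as $\overleftarrow{\epsilon(u,v)}\,\overleftarrow{\epsilon(v,u)} = \overleftarrow{1}$.

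\emph{Reflexivity} is immediate: $e\mathrel{\mathscr{L}}e$, $u\mathrel{\mathscr{R}}u$, and $\overleftarrow{\epsilon(u,u)}$ is the identity morphism at $\overleftarrow{u}$ in $\mathcal{G}_L$, so $\overleftarrow{\alpha} = \overleftarrow{\epsilon(u,u)}\,\overleftarrow{\alpha}$. For \emph{symmetry}, suppose $\xi_1\sim_\mathcal{E}\xi_2$, so $e_1\mathrel{\mathscr{L}}e_2$, $u_1\mathrel{\mathscr{R}}u_2$ and $\overleftarrow{\alpha_1}=\overleftarrow{\epsilon(u_1,u_2)}\,\overleftarrow{\alpha_2}$. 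The relations $\mathscr{L}$ and $\mathscr{R}$ are symmetric, and composing the last equation on the left with $\overleftarrow{\epsilon(u_2,u_1)}$ (the inverse of $\overleftarrow{\epsilon(u_1,u_2)}$ in $\mathcal{G}_L$) yields $\overleftarrow{\epsilon(u_2,u_1)}\,\overleftarrow{\alpha_1}=\overleftarrow{\alpha_2}$, which is exactly $\xi_2\sim_\mathcal{E}\xi_1$. Here one must check that $\overleftarrow{\epsilon(u_2,u_1)}\,\overleftarrow{\epsilon(u_1,u_2)}$ is indeed the identity at $\overleftarrow{u_2}$; this follows since $u_2\mathrel{\mathscr{R}}u_1$ implies $\epsilon(u_2,u_1)\epsilon(u_1,u_2)$ is the identity morphism at $\mathbf{d}(\epsilon(u_2,u_1))$ in $\mathcal{G}$, and the groupoid structure of $\mathcal{G}_L$ respects this.

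For \emph{transitivity}, assume $\xi_1\sim_\mathcal{E}\xi_2$ and $\xi_2\sim_\mathcal{E}\xi_3$, i.e.\ $e_1\mathrel{\mathscr{L}}e_2\mathrel{\mathscr{L}}e_3$, $u_1\mathrel{\mathscr{R}}u_2\mathrel{\mathscr{R}}u_3$, $\overleftarrow{\alpha_1}=\overleftarrow{\epsilon(u_1,u_2)}\,\overleftarrow{\alpha_2}$ and $\overleftarrow{\alpha_2}=\overleftarrow{\epsilon(u_2,u_3)}\,\overleftarrow{\alpha_3}$. By transitivity of $\mathscr{L}$ and $\mathscr{R}$ we get $e_1\mathrel{\mathscr{L}}e_3$ and $u_1\mathrel{\mathscr{R}}u_3$. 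The key point is the composability identity $\overleftarrow{\epsilon(u_1,u_2)}\,\overleftarrow{\epsilon(u_2,u_3)}=\overleftarrow{\epsilon(u_1,u_3)}$: since $u_1\mathrel{\mathscr{R}}u_2\mathrel{\mathscr{R}}u_3$, the chain $(u_1,u_2,u_3)$ is essentially the same as $(u_1,u_3)$ (the element $u_2$ is inessential), so $\epsilon(u_1,u_2)\epsilon(u_2,u_3)=\epsilon(u_1,u_3)$ in $\mathcal{G}$, which transfers to $\mathcal{G}_L$. Substituting, $\overleftarrow{\alpha_1}=\overleftarrow{\epsilon(u_1,u_2)}\,\overleftarrow{\epsilon(u_2,u_3)}\,\overleftarrow{\alpha_3}=\overleftarrow{\epsilon(u_1,u_3)}\,\overleftarrow{\alpha_3}$, giving $\xi_1\sim_\mathcal{E}\xi_3$.

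\emph{The main obstacle} is not conceptual but notational/bookkeeping: one must be careful about \emph{which} identity morphism appears — the domains and codomains of the $\epsilon$-images shift through the $\mathscr{L}$- and $\mathscr{R}$-classes — and must consistently use that composition in $\mathcal{G}_L$ is defined via $\overleftarrow{\alpha}\,\overleftarrow{\beta}=\overleftarrow{\alpha\,\epsilon(\mathbf{r}(\alpha),\mathbf{d}(\beta))\,\beta}$ rather than naive composition in $\mathcal{G}$. In particular, to make the manipulations above rigorous one should first record as a small observation that, for $u\mathrel{\mathscr{R}}v\mathrel{\mathscr{R}}w$, the identities $\overleftarrow{\epsilon(u,v)}\,\overleftarrow{\epsilon(v,w)}=\overleftarrow{\epsilon(u,w)}$ and $\overleftarrow{\epsilon(u,v)}\,\overleftarrow{\epsilon(v,u)}=\overleftarrow{1_u}$ hold in $\mathcal{G}_L$; with these in hand, the verification is a routine substitution. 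Since the authors flag this lemma as "easily verified", I expect the intended write-up to be only a few lines, essentially the reflexivity/symmetry/transitivity checks above with the composability of the $\epsilon$-images cited as the crux.
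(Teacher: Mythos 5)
Your verification is correct: reflexivity, symmetry and transitivity follow exactly as you argue, using that $\epsilon$ is a functor on $E$-chains (so $\epsilon(u,v)\epsilon(v,u)=1_u$ and, for $u\mathrel{\mathscr{R}}v\mathrel{\mathscr{R}}w$, $\epsilon(u,v)\epsilon(v,w)=\epsilon(u,w)$ because the middle element is inessential) together with the already established groupoid structure of $\mathcal{G}_L$. The paper omits the proof as ``easily verified'', and your write-up is precisely the intended routine argument.
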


Now, we define the category $\mathcal{E}_L$ whose object set is $v\mathcal{E}_L := E/\mathscr{L}$ and whose morphisms are $\sim_\mathcal{E}$-classes of the quiver $\mathcal{E}$. We take the liberty of referring to $\mathcal{E}_L$ as a category even though we have not yet defined any composition in $\mathcal{E}_L$; if fact, heading to such a definition, we first need to specify a `good' representative in each $\sim_\mathcal{E}$-class. 

It is easy to see that for an arbitrary morphism $(q,\overleftarrow{\alpha}) = (q_\mathcal{L}(e,u),\overleftarrow{\alpha(f,g)}) \in\mathcal{E}$, the morphism $\theta:=\epsilon(u,f)\:\alpha$ from the inductive groupoid  $\mathcal{G}$ satisfies $\mathbf{d}(\theta)=u$ and $\overleftarrow{\alpha}=\overleftarrow{\theta}$. From the latter property, we conclude that $(q,\overleftarrow{\theta})$ lies in the  $\sim_\mathcal{E}$-class of $(q,\overleftarrow{\alpha})$. This allows us to represent each $\sim_\mathcal{E}$-class by a morphism of the form $(q_\mathcal{L}(e,u),\overleftarrow{\alpha(u,f)})$ which we refer to as a \emph{right epi} in the category $\mathcal{E}_L$. In the sequel, unless otherwise stated, a morphism in the category $\mathcal{E}_L$ shall always be represented by its right epi. For brevity, whenever there is no scope of confusion, we shall denote the above right epi by just $[e,\alpha\rangle$. 

Observe that a right epi $[e,\alpha\rangle$ represents the following $\sim_\mathcal{E}$-class of morphisms in the quiver $\mathcal{E}$:
$$ [  e , \alpha \rangle_{\sim_\mathcal{E}} = \{\:( q_\mathcal{L}(f,v),\overleftarrow{\theta})\in \mathcal{E}:\:f\mathrel{\mathscr{L}}e,\:v\mathrel{\mathscr{R}}\mathbf{d}(\alpha)\text{ and }\overleftarrow{\theta}=\overleftarrow{\epsilon(v,\mathbf{d}(\alpha))\:\alpha} \:\}. $$
We allow ourselves the notation $[e,\alpha\rangle\in\mathcal{E}_L$, meaning, of course, the $\sim_\mathcal{E}$-class just shown. 

So, given two right epis $ [  e, \alpha \rangle $ and $ [  f, \beta \rangle $  in the category $\mathcal{E}_L$, they are $\sim_\mathcal{E}$ related if and only if $e\mathrel{\mathscr{L}}f$, $\mathbf{d}(\alpha)\mathrel{\mathscr{R}}\mathbf{d}(\beta)$ and $\overleftarrow{\alpha}=\overleftarrow{\epsilon(\mathbf{d}(\alpha),\mathbf{d}(\beta))\:\beta}$. So, if $ [  e, \alpha \rangle , [  f,\beta \rangle$ are such that $e\mathrel{\mathscr{L}}\mathbf{d}(\alpha)$ and $f\mathrel{\mathscr{L}}\mathbf{d}(\beta)$, then $ [  e, \alpha \rangle =  [  f,\beta \rangle $ if and only if $\overleftarrow{\alpha}=\overleftarrow{\beta}$. Also $ [  e,1_u \rangle = [  f,1_v \rangle $ if and only if $e\mathrel{\mathscr{L}}f$ and $v=fu$. In particular, if $e\mathrel{\mathscr{L}}f$, we have $ [  e , 1_e \rangle = [  f , 1_f \rangle $.

As in \cite[Section 4]{mem}, consider the following relation $p$ on $\mathcal{G}$: for $\alpha,\beta\in\mathcal{G}$,
\begin{equation} \label{eqnp}
    \alpha\:p\:\beta \iff \mathbf{d}(\alpha)\mathrel{\mathscr{R}}\mathbf{d}(\beta),\:\mathbf{r}(\alpha)\mathrel{\mathscr{L}}\mathbf{r}(\beta)\:\text{ and } \alpha\:\epsilon(\mathbf{r}(\alpha),\mathbf{r}(\beta)) = \epsilon(\mathbf{d}(\alpha),\mathbf{d}(\beta))\:\beta.
\end{equation}
In contrast to \eqref{eqgl}, the definition of $p$ is `bilateral'; in \cite[Section 4]{mem}, it is verified that $p$ is an equivalence relation. As in \cite{mem}, we shall denote the $p$-class of $\mathcal{G}$ containing $\alpha$ by $\overline{\alpha}$. 

The following lemma which is crucial in the further considerations gives the relationship between the morphisms of the required normal category $\mathcal{L}_G$ and the given inductive groupoid $\mathcal{G}$.
\begin{lem}\label{lemp}
Let $ [  e,{\alpha} \rangle $, $ [  f, {\beta} \rangle $ be right epis in the category $\mathcal{E}_L$. Then
$ [  e, {\alpha} \rangle  =  [  f, {\beta} \rangle $ if and only if $e\mathrel{\mathscr{L}}f$ and $\overline{\alpha}=\overline{\beta}$.
\end{lem}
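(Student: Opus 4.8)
The plan is to unwind both sides of the claimed equivalence into explicit statements about morphisms of the inductive groupoid $\mathcal{G}$ and observe that they literally coincide. Recall that $[e,\alpha\rangle$ is, by definition, the right epi of the morphism $(q_\mathcal{L}(e,\mathbf{d}(\alpha)),\overleftarrow{\alpha})$ of $\mathcal{E}_L$, so $[e,\alpha\rangle=[f,\beta\rangle$ holds in $\mathcal{E}_L$ exactly when $(q_\mathcal{L}(e,\mathbf{d}(\alpha)),\overleftarrow{\alpha})\sim_\mathcal{E}(q_\mathcal{L}(f,\mathbf{d}(\beta)),\overleftarrow{\beta})$. Applying the definition of $\sim_\mathcal{E}$ together with the composition formula in $\mathcal{G}_L$ (which, in this case, only inserts an identity morphism), this is — as already recorded in the observation preceding the lemma — equivalent to the three conditions
$$e\mathrel{\mathscr{L}}f,\qquad \mathbf{d}(\alpha)\mathrel{\mathscr{R}}\mathbf{d}(\beta),\qquad \overleftarrow{\alpha}=\overleftarrow{\epsilon(\mathbf{d}(\alpha),\mathbf{d}(\beta))\,\beta}.$$
Since $\overline{\alpha}=\overline{\beta}$, i.e. $\alpha\,p\,\beta$, has $\mathbf{d}(\alpha)\mathrel{\mathscr{R}}\mathbf{d}(\beta)$ among its defining clauses and involves $e,f$ not at all, cancelling the common clause $e\mathrel{\mathscr{L}}f$ reduces the lemma to the equivalence, writing $u:=\mathbf{d}(\alpha)$ and $u':=\mathbf{d}(\beta)$,
$$\bigl[\,u\mathrel{\mathscr{R}}u'\ \text{ and }\ \overleftarrow{\alpha}=\overleftarrow{\epsilon(u,u')\,\beta}\,\bigr]\iff \overline{\alpha}=\overline{\beta}.$$

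To prove this reduced equivalence I would introduce the auxiliary morphism $\beta':=\epsilon(u,u')\,\beta$ of $\mathcal{G}$, which is defined precisely when $u\mathrel{\mathscr{R}}u'$ and has domain $u=\mathbf{d}(\alpha)$ and codomain $\mathbf{r}(\beta)$. Then $\overleftarrow{\alpha}=\overleftarrow{\beta'}$ means $\alpha\sim_L\beta'$, and since $\mathbf{d}(\alpha)=\mathbf{d}(\beta')=u$ the morphism $\epsilon(\mathbf{d}(\alpha),\mathbf{d}(\beta'))$ occurring in the definition of $\sim_L$ collapses to $1_u$. Hence $\alpha\sim_L\beta'$ unwinds to exactly ``$\mathbf{r}(\alpha)\mathrel{\mathscr{L}}\mathbf{r}(\beta)$ and $\alpha\,\epsilon(\mathbf{r}(\alpha),\mathbf{r}(\beta))=\beta'=\epsilon(u,u')\,\beta$''. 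Adjoining the clause $u\mathrel{\mathscr{R}}u'$, the left-hand side of the reduced equivalence becomes word for word the defining condition of $\alpha\,p\,\beta$, that is, $\overline{\alpha}=\overline{\beta}$; this yields both implications simultaneously. Combining with the first paragraph gives $[e,\alpha\rangle=[f,\beta\rangle$ if and only if $e\mathrel{\mathscr{L}}f$ and $\overline{\alpha}=\overline{\beta}$.

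So the proof is essentially bookkeeping, and I do not expect a genuine obstacle; the only points requiring care are keeping track of which $\epsilon$-morphisms collapse to identities (those whose two arguments are $\mathscr{L}$- or $\mathscr{R}$-equal to each other), and the fact that in a right epi $[e,\alpha\rangle$ the retraction component is forced to be $q_\mathcal{L}(e,\mathbf{d}(\alpha))$, so that from the $\sim_\mathcal{E}$ relation only the condition $e\mathrel{\mathscr{L}}f$ — and not an equality of retractions in $\mathcal{Q}_L$ — enters. If one prefers not to invoke the preceding observation, the same three conditions can be read off directly from the definitions of $\sim_\mathcal{E}$ and of the composition in $\mathcal{G}_L$ with one additional line.
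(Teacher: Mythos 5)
Your proof is correct and follows essentially the same route as the paper's: both unwind the equality of right epis (via the observation preceding the lemma) into the three conditions $e\mathrel{\mathscr{L}}f$, $\mathbf{d}(\alpha)\mathrel{\mathscr{R}}\mathbf{d}(\beta)$, $\overleftarrow{\alpha}=\overleftarrow{\epsilon(\mathbf{d}(\alpha),\mathbf{d}(\beta))\,\beta}$, and then identify the latter two, after collapsing the $\epsilon$-factors whose arguments coincide or are $\mathscr{L}$-related, with the defining clauses of the relation $p$. The only cosmetic difference is that you handle both implications at once through the auxiliary morphism $\beta'=\epsilon(\mathbf{d}(\alpha),\mathbf{d}(\beta))\,\beta$, whereas the paper argues the two directions separately.
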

\begin{proof}
Let $ [  e, {\alpha} \rangle  =  [  f, {\beta} \rangle $, then $e\mathrel{\mathscr{L}}f$, $\mathbf{d}(\alpha)\mathrel{\mathscr{R}}\mathbf{d}(\beta) $ and $\overleftarrow{\alpha}=\overleftarrow{\epsilon(\mathbf{d}(\alpha),\mathbf{d}(\beta))\:\beta}$. That is $\alpha\:\epsilon(\mathbf{r}(\alpha),\mathbf{r}(\beta)) = \epsilon(\mathbf{d}(\alpha),\mathbf{d}(\beta))\:\beta$ and $\mathbf{r}(\alpha)\mathrel{\mathscr{L}}\mathbf{r}(\beta) $. So $\overline{\alpha}=\overline{\beta}$.

Conversely, if $e\mathrel{\mathscr{L}}f$ and $\overline{\alpha}=\overline{\beta}$, then $\mathbf{d}(\alpha)\mathrel{\mathscr{R}}\mathbf{d}(\beta)$, $\mathbf{r}(\alpha)\mathrel{\mathscr{L}}\mathbf{r}(\beta)$ and $\alpha\:\epsilon(\mathbf{r}(\alpha),\mathbf{r}(\beta)) = \epsilon(\mathbf{d}(\alpha),\mathbf{d}(\beta))\:\beta$.  Now since $\overleftarrow{\epsilon(\mathbf{r}(\alpha),\mathbf{r}(\beta))}=1_{\mathbf{r}(\alpha)}$, we have $\overleftarrow{\alpha}=\overleftarrow{\epsilon(\mathbf{d}(\alpha),\mathbf{d}(\beta))\:\beta}$. So, $ [  e, {\alpha} \rangle  =  [  f, {\beta} \rangle $. Hence the lemma.
\end{proof}

Now we proceed to define a partial composition in the category $\mathcal{E}_L$ as follows. Let ${\varepsilon}_1=  [  e_1, {\alpha} \rangle $ and ${\varepsilon}_2=  [  e_2, {\beta} \rangle $ be right epis in the category $\mathcal{E}_L$ such that $\alpha\in \mathcal{G}(u,f_1)$ and $\beta\in \mathcal{G}(v,f_2)$. If $\overleftarrow{f_1}\leq_L\overleftarrow{e_2}$, then
\begin{equation}\label{eqncomp}
{\varepsilon}_1 {\varepsilon}_2 :=  [  e_1, {\theta} \rangle
\end{equation}
where for $h\in \mathcal{S}(\mathbf{r}(\alpha),\mathbf{d}(\beta))=\mathcal{S}(f_1,v)$,
$$\theta := (\alpha\circ\beta)_h=(\alpha{\downharpoonright} f_1 h)\:\epsilon(f_1 h,h)\:\epsilon(h,hv)\: (hv{\downharpoonleft}\beta).$$

Observe that the sandwich set $\mathcal{S}(f_1,v)$ is well-defined as it depends only on the $\mathrel{\mathscr{L}}$-class of $f_1$ and $\mathrel{\mathscr{R}}$-class of $v$. To verify that the composition is well-defined, we need to prove the following lemmas. The first lemma shows that the composition in (\ref{eqncomp}) is independent of the representing element of the morphism $ [  e, {\alpha} \rangle $ in $\mathcal{E}_L$.

\begin{lem}\label{lemcmp1}
Let $ [  e_1, {\alpha} \rangle = [  e'_1, {\alpha'} \rangle $ and  $  [  e_2, {\beta} \rangle = [  e'_2, {\beta'} \rangle $ be morphisms in the category $\mathcal{E}_L$ such that $\mathbf{r}(\overleftarrow{\alpha})\leq_L\overleftarrow{e_2}$ and $\mathbf{r}(\overleftarrow{\alpha'})\leq_L\overleftarrow{e'_2}$. Then for any fixed $h\in \mathcal{S}(\mathbf{r}(\alpha),\mathbf{d}(\beta))=\mathcal{S}(\mathbf{r}(\alpha'),\mathbf{d}(\beta'))$,
$$ [  e_1, (\alpha\circ\beta)_h \rangle = [  e'_1, (\alpha'\circ\beta')_h \rangle .$$
\end{lem}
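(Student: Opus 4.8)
The plan is to use Lemma~\ref{lemp} to recast both hypotheses and the conclusion in terms of the equivalence $p$ on the inductive groupoid, and then to reduce the claim to two ``one-sided'' facts describing how $p$ interacts with (co)restriction to a sandwiched element. Write $f_1:=\mathbf{r}(\alpha)$, $f_1':=\mathbf{r}(\alpha')$, $v:=\mathbf{d}(\beta)$, $v':=\mathbf{d}(\beta')$. By Lemma~\ref{lemp}, the hypothesis $[e_1,\alpha\rangle=[e_1',\alpha'\rangle$ means $e_1\mathrel{\mathscr{L}}e_1'$ together with $\overline{\alpha}=\overline{\alpha'}$, i.e.\ $\alpha\,p\,\alpha'$; in particular $\mathbf{d}(\alpha)\mathrel{\mathscr{R}}\mathbf{d}(\alpha')$, $f_1\mathrel{\mathscr{L}}f_1'$ and $\alpha\,\epsilon(f_1,f_1')=\epsilon(\mathbf{d}(\alpha),\mathbf{d}(\alpha'))\,\alpha'$. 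Likewise $[e_2,\beta\rangle=[e_2',\beta'\rangle$ yields $e_2\mathrel{\mathscr{L}}e_2'$ and $\beta\,p\,\beta'$, so that $v\mathrel{\mathscr{R}}v'$, $\mathbf{r}(\beta)\mathrel{\mathscr{L}}\mathbf{r}(\beta')$ and $\beta\,\epsilon(\mathbf{r}(\beta),\mathbf{r}(\beta'))=\epsilon(v,v')\,\beta'$. Since $f_1\mathrel{\mathscr{L}}f_1'$ and $v\mathrel{\mathscr{R}}v'$, the sandwich sets $\mathcal{S}(f_1,v)$ and $\mathcal{S}(f_1',v')$ coincide, so the fixed $h$ is legitimate for both composites; moreover both $[e_1,(\alpha\circ\beta)_h\rangle$ and $[e_1',(\alpha'\circ\beta')_h\rangle$ are right epis (as $\mathbf{d}(\alpha{\downharpoonright}f_1h)\leqslant\mathbf{d}(\alpha)\leqslant e_1$, and similarly on the other side). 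Hence, by Lemma~\ref{lemp} once more and since $e_1\mathrel{\mathscr{L}}e_1'$, it suffices to prove $(\alpha\circ\beta)_h\,p\,(\alpha'\circ\beta')_h$.

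Unpacking the definition of $p$, this amounts to three things: $\mathbf{d}(\alpha{\downharpoonright}f_1h)\mathrel{\mathscr{R}}\mathbf{d}(\alpha'{\downharpoonright}f_1'h)$, $\mathbf{r}(hv{\downharpoonleft}\beta)\mathrel{\mathscr{L}}\mathbf{r}(hv'{\downharpoonleft}\beta')$, and the twisting identity
\[
(\alpha\circ\beta)_h\,\epsilon\bigl(\mathbf{r}(hv{\downharpoonleft}\beta),\mathbf{r}(hv'{\downharpoonleft}\beta')\bigr)=\epsilon\bigl(\mathbf{d}(\alpha{\downharpoonright}f_1h),\mathbf{d}(\alpha'{\downharpoonright}f_1'h)\bigr)\,(\alpha'\circ\beta')_h .
\]
The first two relations I would read off from the ordered-groupoid axioms together with~(IG1) and its dual, which say exactly how the co/restriction maps of $\mathcal{G}$ respect $\lel$ and $\ler$ on the relevant principal ideals of $E$, using also the biordered-set facts $f_1h\mathrel{\mathscr{L}}h\mathrel{\mathscr{L}}f_1'h$ and $hv\mathrel{\mathscr{R}}h\mathrel{\mathscr{R}}hv'$ (consequences of~(B2) and transitivity of $\lel$, $\ler$ applied to $h\in\mathcal{S}(f_1,v)=\mathcal{S}(f_1',v')$); the same facts, via $f_1h\leqslant f_1$ and $hv\leqslant v$, also guarantee that $\alpha{\downharpoonright}f_1h$ and $hv{\downharpoonleft}\beta$ are defined.

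For the twisting identity the crucial observation is that, since $f_1h\mathrel{\mathscr{L}}f_1'h\mathrel{\mathscr{L}}h$ and $h\mathrel{\mathscr{R}}hv'\mathrel{\mathscr{R}}hv$, the elements $f_1'h$ and $hv'$ are inessential in the $E$-paths $(f_1h,f_1'h,h)$ and $(h,hv',hv)$, so that functoriality of $\epsilon$ on $\mathcal{G}(E)$ gives
\[
\epsilon(f_1h,h)\,\epsilon(h,hv)=\epsilon(f_1h,f_1'h)\,\epsilon(f_1'h,h)\,\epsilon(h,hv')\,\epsilon(hv',hv) .
\]
Substituting this into $(\alpha\circ\beta)_h=(\alpha{\downharpoonright}f_1h)\,\epsilon(f_1h,h)\,\epsilon(h,hv)\,(hv{\downharpoonleft}\beta)$ and regrouping, the twisting identity reduces, via the cancellation $\epsilon(hv',hv)\,\epsilon(hv,hv')=1_{hv'}$, to two facts: \textbf{(i)} $\alpha{\downharpoonright}f_1h\,p\,\alpha'{\downharpoonright}f_1'h$, that is $(\alpha{\downharpoonright}f_1h)\,\epsilon(f_1h,f_1'h)=\epsilon\bigl(\mathbf{d}(\alpha{\downharpoonright}f_1h),\mathbf{d}(\alpha'{\downharpoonright}f_1'h)\bigr)\,(\alpha'{\downharpoonright}f_1'h)$; and \textbf{(ii)} its dual $hv{\downharpoonleft}\beta\,p\,hv'{\downharpoonleft}\beta'$. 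I would prove~(i) by corestricting the defining equation $\alpha\,\epsilon(f_1,f_1')=\epsilon(\mathbf{d}(\alpha),\mathbf{d}(\alpha'))\,\alpha'$ of $\alpha\,p\,\alpha'$ to $f_1'h$ (using~(OG1) and~(OG3$^*$)), and then identifying the corestrictions of the $\epsilon$-images $\epsilon(f_1,f_1')$ and $\epsilon(\mathbf{d}(\alpha),\mathbf{d}(\alpha'))$ with the two-term $E$-chains $\epsilon(f_1h,f_1'h)$ and $\epsilon\bigl(\mathbf{d}(\alpha{\downharpoonright}f_1h),\mathbf{d}(\alpha'{\downharpoonright}f_1'h)\bigr)$, using the formula defining $\leq_E$ on $\mathcal{G}(E)$, the order-preservation of $\epsilon$, and~(IG1); fact~(ii) is the dual argument with $\beta$, $\beta'$ and restrictions.

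I expect step~(i)--(ii) to be the main obstacle. The delicate point is that $\alpha$ is corestricted to $f_1h$ whereas $\alpha'$ is corestricted to the \emph{different} object $f_1'h$ (only $\mathrel{\mathscr{L}}$-related to $f_1h$), so comparing the two corestrictions is not a formal consequence of $\alpha\,p\,\alpha'$: one has to verify, using the biordered-set axioms~(B2) and~(B4) and transitivity of $\lel$, $\ler$, that every product occurring is defined and lands in the asserted $\mathrel{\mathscr{L}}$- or $\mathrel{\mathscr{R}}$-class, and that the $E$-squares produced when a corestriction is slid past $\epsilon(f_1h,f_1'h)$ are singular and hence $\epsilon$-commutative by~(IG2). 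The rest is routine bookkeeping.
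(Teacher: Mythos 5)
Your overall strategy coincides with the paper's: translate both hypotheses and the desired equality through Lemma \ref{lemp}, note $e_1\mathrel{\mathscr{L}}e_1'$, and reduce everything to the single claim $\overline{(\alpha\circ\beta)_h}=\overline{(\alpha'\circ\beta')_h}$, i.e.\ $(\alpha\circ\beta)_h\,p\,(\alpha'\circ\beta')_h$. The paper disposes of exactly this claim by citing \cite[Lemma 4.7]{mem}, so up to that point your argument and the published one are identical. Your intermediate bookkeeping is also sound: the sandwich set indeed depends only on the $\mathrel{\mathscr{L}}$-class of $\mathbf{r}(\alpha)$ and the $\mathrel{\mathscr{R}}$-class of $\mathbf{d}(\beta)$, the composites are legitimate right epis since $\mathbf{d}((\alpha\circ\beta)_h)\leqslant\mathbf{d}(\alpha)\leqslant e_1$, and the insertion of the inessential vertices $f_1'h$ and $hv'$ into the $E$-chains $(f_1h,h)$ and $(h,hv)$, followed by functoriality of $\epsilon$, correctly reduces the twisting identity to your facts (i) $\alpha{\downharpoonright}f_1h\,p\,\alpha'{\downharpoonright}f_1'h$ and (ii) $hv{\downharpoonleft}\beta\,p\,hv'{\downharpoonleft}\beta'$.

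The gap is precisely at (i) and (ii), which you yourself flag as ``the main obstacle'' and then only outline (``I would prove (i) by corestricting \dots'', ``one has to verify \dots the rest is routine bookkeeping''). These two statements are not routine: they are the substantive content of the well-definedness of $\circ$ on $p$-classes, i.e.\ of \cite[Lemma 4.7]{mem} and the supporting lemmas in Section 4 of that memoir, and proving them requires exactly the delicate work you defer --- corestricting the relation $\alpha\,\epsilon(f_1,f_1')=\epsilon(\mathbf{d}(\alpha),\mathbf{d}(\alpha'))\,\alpha'$ to $f_1'h$, identifying the corestrictions of the evaluated $E$-chains with evaluations of two-element $E$-chains via the definition of $\leq_E$, and checking that the $E$-squares that arise are singular so that (IG2) applies. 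Your plan for this is plausible and points in the right direction, but as written nothing is actually verified there, so the proof is incomplete exactly where all the content lies. The fix is either to do what the paper does and invoke \cite[Lemma 4.7]{mem} (after which your first half already finishes the proof), or to carry out (i) and (ii) in full detail rather than in outline.
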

\begin{proof}
Clearly $e_1\mathrel{\mathscr{L}}e'_1$ and using \cite[Lemma 4.7]{mem}, we have $\overline{(\alpha\circ\beta)_h}=\overline{(\alpha'\circ\beta')_h}$. Hence the lemma follows from Lemma \ref{lemp}.
\end{proof}
We also need to show that (\ref{eqncomp}) is independent of the choice of the sandwich element in the sandwich set.
\begin{lem}\label{lemcmp2}
Let $ [  e_1, {\alpha} \rangle ,  [  e_2, {\beta} \rangle  \in \mathcal{E}_L$ such that $\mathbf{r}(\overleftarrow{\alpha})\leq_L\overleftarrow{e_2}$. For $h,h'\in \mathcal{S}(\mathbf{r}(\alpha),\mathbf{d}(\beta))$,
$$ [  e_1, (\alpha\circ\beta)_h \rangle = [  e_1, (\alpha\circ\beta)_{h'} \rangle .$$
\end{lem}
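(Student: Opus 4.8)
The plan is to reduce the claimed equality of morphisms in $\mathcal{E}_L$ to an equality of $p$-classes in the inductive groupoid $\mathcal{G}$, and then to invoke the corresponding invariance result from \cite{mem}. Write $e:=\mathbf{r}(\alpha)=f_1$ and $f:=\mathbf{d}(\beta)=v$, so that $h,h'\in\mathcal{S}(e,f)$, and set $\theta:=(\alpha\circ\beta)_h$ and $\theta':=(\alpha\circ\beta)_{h'}$. First I would observe that the right epis $[e_1,\theta\rangle$ and $[e_1,\theta'\rangle$ share the same first component $e_1$; hence, by Lemma \ref{lemp}, the equality $[e_1,\theta\rangle=[e_1,\theta'\rangle$ holds if and only if $\overline{\theta}=\overline{\theta'}$, i.e.\ if and only if $\theta$ and $\theta'$ lie in the same $p$-class of $\mathcal{G}$. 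This turns the statement into one purely about the inductive groupoid, and the constructed category $\mathcal{E}_L$ drops out of the picture.

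Next I would check the three defining conditions of the relation $p$ for $\theta$ and $\theta'$. Since a corestriction leaves the domain of a morphism unchanged, $\mathbf{d}(\theta)=\mathbf{d}(\theta')=\mathbf{d}(\alpha)$, so the domain condition is immediate and $\epsilon(\mathbf{d}(\theta),\mathbf{d}(\theta'))=1_{\mathbf{d}(\alpha)}$. For the codomains one uses that elements of a sandwich set differ only `rectangularly', so that $\mathbf{r}(\theta)\mathrel{\mathscr{L}}\mathbf{r}(\theta')$ follows from the standard relation between $h$ and $h'$ inside $\mathcal{S}(e,f)$ together with the fact that $\mathcal{G}$ is an inductive groupoid (the behaviour of restrictions of $\beta$ on $\mathscr{L}$-related subidentities below $\mathbf{d}(\beta)$, governed by (IG1) and its dual). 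Thus the only substantial point left is the middle identity, which here reduces to $\theta\,\epsilon(\mathbf{r}(\theta),\mathbf{r}(\theta'))=\theta'$. This is exactly the independence of the sandwich product on the chosen element of $\mathcal{S}(e,f)$ --- precisely the fact that makes the multiplication of $\mathcal{G}/p$ well defined in \cite{mem} --- and it belongs to the same circle of results as \cite[Lemma 4.7]{mem}, already quoted in the proof of Lemma \ref{lemcmp1}. I would therefore cite that result to conclude $\overline{\theta}=\overline{\theta'}$, and then the lemma follows from the reduction of the first paragraph.

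The main obstacle is, as usual with biordered sets, that middle identity: comparing $(\alpha{\downharpoonright} eh)\,\epsilon(eh,h)\,\epsilon(h,hf)\,(hf{\downharpoonleft}\beta)$ with its primed analogue amounts to moving restrictions and corestrictions past the distinguished morphisms $\epsilon(-,-)$ by means of (IG1), (IG2) and the biorder axioms (B1)--(B5) that describe the structure of $\mathcal{S}(e,f)$. Since this verification is carried out in full in \cite{mem}, in the present categorical setting it can be quoted rather than redone; the only genuinely new ingredient here is the translation step provided by Lemma \ref{lemp}, which is what lets us phrase the conclusion in terms of the morphisms $[e_1,-\rangle$ of $\mathcal{E}_L$.
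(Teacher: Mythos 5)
Your overall route is exactly the paper's: reduce, via Lemma \ref{lemp} (with both first components equal to $e_1$, so the $\mathscr{L}$-condition is automatic), to the statement that $(\alpha\circ\beta)_h$ and $(\alpha\circ\beta)_{h'}$ lie in the same $p$-class of $\mathcal{G}$, and then quote Nambooripad for that fact; the paper's proof is precisely Lemma \ref{lemp} together with \cite[Lemma 4.8]{mem}. Two points in your middle paragraph need correcting, however. First, the citation: the independence of $(\alpha\circ\beta)_h$ of the choice of $h\in\mathcal{S}(\mathbf{r}(\alpha),\mathbf{d}(\beta))$ is \cite[Lemma 4.8]{mem}, not Lemma 4.7 of \cite{mem} (the latter is what was used in Lemma \ref{lemcmp1} for independence of the representing element). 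Second, and more substantively, your claim that ``a corestriction leaves the domain of a morphism unchanged'' is false: by (OG3$^*$), the morphism $\alpha{\downharpoonright}f_1h\leq\alpha$ has codomain $f_1h$, while its domain is in general strictly below $\mathbf{d}(\alpha)$ and depends on $h$. Consequently $\mathbf{d}(\theta)$ and $\mathbf{d}(\theta')$ are in general distinct and only $\mathrel{\mathscr{R}}$-related (which is exactly why the relation $p$ demands $\mathscr{R}$-equivalent, not equal, domains), so the middle condition does not reduce to $\theta\:\epsilon(\mathbf{r}(\theta),\mathbf{r}(\theta'))=\theta'$; the left factor $\epsilon(\mathbf{d}(\theta),\mathbf{d}(\theta'))$ cannot be suppressed. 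Since in the end you do not carry out this verification yourself but defer it wholesale to \cite{mem}, the argument stands once the citation is corrected; but as written, the sketched check of the conditions defining $p$ would not go through.
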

\begin{proof}
The lemma follows from \cite[Lemma 4.8]{mem} and Lemma \ref{lemp}.
\end{proof}

\begin{figure}[h]
\xymatrixcolsep{1pc}\xymatrixrowsep{1pc}
\xymatrix{
\\
e_1\ar@{->}[ddddrrr]^{\leqslant  }\ar@{->}[dddddddddrrrr]_{\leqslant  }&&&&&&&&g_1=e_2\ar@{->}[ddddrr]^{\leqslant  }&&&&&&&&g_2\ar@{.}[ddddlll]_{\lel}\ar@{.}[ddddddddlll]^{\lel}\\
\\
\\
\\
&&&u\ar@{->}@/^1pc/[rrr]^{\alpha}&&&f_1\ar@{.}[uuuurr]^{\lel }\ar@{.}[dddddrr]_{\leqslant }&&&&v\ar@{.}[ddddl]^{\leqslant }\ar@{->}@/^1pc/[rrr]^{\beta}&&&f_2\\
\\\\\\
&&&&&&&&h\ar@{->}[r]&hv\ar@{->}@/^1pc/[rrrr]^{hv{\downharpoonleft}\beta}&&&&k_2\\
&&&&k_1\ar@{->}@/^1pc/[rrrr]^{\alpha{\downharpoonright} f_1 h}&&&&f_1h\ar@{-}[u]
}
\caption{Composition of morphisms in $\mathcal{E}_L$}\label{comp}
\end{figure}
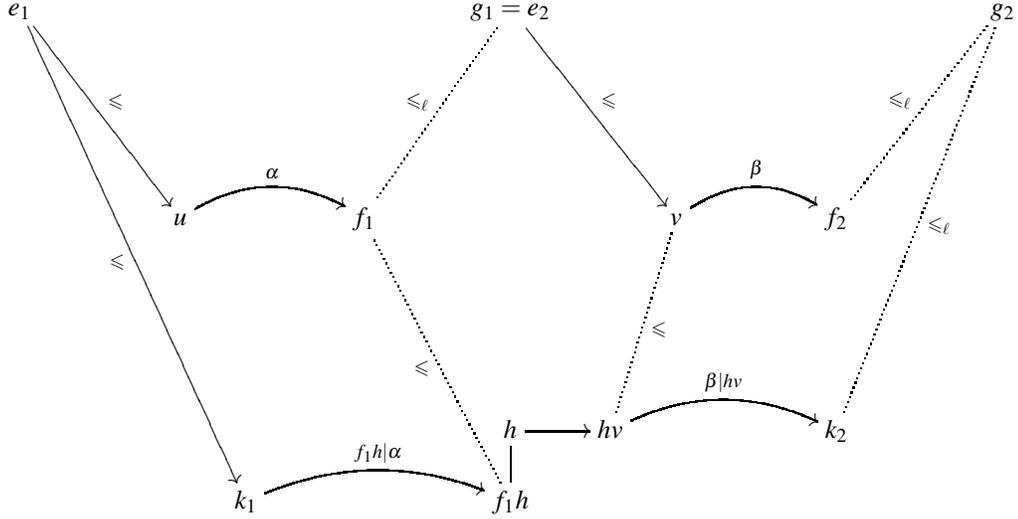

The partial composition defined in the category $\mathcal{E}_L$ may be illustrated using Figure \ref{comp} in the inductive groupoid $\mathcal{G}$. The solid arrows correspond to the relevant morphisms in the category $\mathcal{E}_L$.
\begin{rmk}\label{rmkepi}
Observe that the partial composition of $ [  e_1, {\alpha} \rangle $ and $ [  e_2, {\beta} \rangle $ in the category $\mathcal{E}_L$ does not depend on the condition that $\mathbf{r}(\overleftarrow{\alpha})\leq_L\overleftarrow{e_2}$. Hence this composition may be defined between any two morphisms in $\mathcal{E}_L$. 
\end{rmk}

\begin{pro}\label{proel}
$\mathcal{E}_L$ is a category.
\end{pro}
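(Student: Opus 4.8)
The plan is to verify the category axioms for $\mathcal E_L$: that the partial composition of~\eqref{eqncomp} is well-defined, that it admits identities, and that it is associative. The well-definedness is essentially already established: Lemma~\ref{lemcmp1} removes the dependence on the chosen right-epi representatives of the two factors, and Lemma~\ref{lemcmp2} removes the dependence on the chosen sandwich element $h\in\mathcal S(\mathbf r(\alpha),\mathbf d(\beta))$. What remains on this point is to check that $[e_1,\theta\rangle$ is genuinely a right epi with domain $\overleftarrow{e_1}$ and codomain equal to the codomain of $\varepsilon_2$: since $\theta=(\alpha{\downharpoonright}f_1h)\,\epsilon(f_1h,h)\,\epsilon(h,hv)\,(hv{\downharpoonleft}\beta)$ we have $\mathbf d(\theta)=\mathbf d(\alpha{\downharpoonright}f_1h)\leqslant\mathbf d(\alpha)\leqslant e_1$, so $q_{\mathcal L}(e_1,\mathbf d(\theta))$ exists, while the $\mathscr R$- and $\mathscr L$-links $f_1h\mathrel{\mathscr R}h\mathrel{\mathscr L}hv$ supplied by $h\in\mathcal S(f_1,v)$ force $\overleftarrow{\mathbf r(\theta)}$ to equal the codomain $\overleftarrow{f_2}$ of $\varepsilon_2$.

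For identities, fix an object $\overleftarrow e$ and set $1_{\overleftarrow e}:=[e,1_e\rangle$; this is independent of the representative $e$ since $[e,1_e\rangle=[f,1_f\rangle$ whenever $e\mathrel{\mathscr L}f$. To see it is a right identity, take $\varepsilon=[f,\beta(v,f_2)\rangle$ and compute $\varepsilon\,[f_2,1_{f_2}\rangle$ from~\eqref{eqncomp}: by Lemma~\ref{lemcmp2} we may use the sandwich element $h=f_2\in\mathcal S(f_2,f_2)$, and then the corestriction, the restriction and the two $\epsilon$-factors in $\theta$ all collapse to identities, leaving $\theta=\beta{\downharpoonright}f_2=\beta$, so $\varepsilon\,[f_2,1_{f_2}\rangle=[f,\beta\rangle=\varepsilon$. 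The left-identity law is the mirror computation: representing $\varepsilon$ with source label $\overleftarrow e=\mathbf d(\varepsilon)$ and using the sandwich element $h=v=\mathbf d(\beta)\in\mathcal S(e,v)$ gives $1_e{\downharpoonright}v=1_v$ and $v{\downharpoonleft}\beta=\beta$, hence $[e,1_e\rangle\,\varepsilon=\varepsilon$.

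The substantial part is associativity. Given $\varepsilon_i=[e_i,\alpha_i\rangle$ for $i=1,2,3$, composable in $\mathcal E_L$, I must show $(\varepsilon_1\varepsilon_2)\varepsilon_3=\varepsilon_1(\varepsilon_2\varepsilon_3)$. By Remark~\ref{rmkepi} all the binary products involved are defined, and by Lemma~\ref{lemp} it is enough to check two things: that the source labels of the two sides lie in the same $\mathscr L$-class --- immediate, since~\eqref{eqncomp} always retains the source label $e_1$ --- and that the isomorphism parts are $p$-equivalent. Writing out the composites, the second requirement becomes an identity of the shape $\overline{\bigl((\alpha_1\circ\alpha_2)_h\circ\alpha_3\bigr)_{h'}}=\overline{\bigl(\alpha_1\circ(\alpha_2\circ\alpha_3)_{k}\bigr)_{k'}}$ for suitable sandwich elements (the choices being immaterial by Lemma~\ref{lemcmp2}); this is precisely the associativity of the ``$\circ$''-product on the morphisms of the inductive groupoid modulo the relation $p$, which is exactly the computation Nambooripad carries out in \cite[Section~4]{mem} when he proves that the $p$-classes of an inductive groupoid form a regular semigroup under this product. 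So the plan is to invoke that associativity rather than to reprove it, and to let Lemma~\ref{lemp} transport it to equality of morphisms in $\mathcal E_L$.

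I expect the main obstacle to be exactly this last step: unwinding the nested restrictions, corestrictions and evaluation morphisms in $(\alpha_1\circ\alpha_2)_h\circ\alpha_3$ versus $\alpha_1\circ(\alpha_2\circ\alpha_3)_k$, aligning the two sides well enough to recognise them as an instance of the associativity already available from \cite[Section~4]{mem}, and verifying along the way that the right-epi bookkeeping --- source labels in the correct $\mathscr L$-class, codomains onto the correct objects --- is preserved throughout the reduction.
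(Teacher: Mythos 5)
Your proposal is correct and follows essentially the same route as the paper: well-definedness via Lemmas \ref{lemcmp1} and \ref{lemcmp2}, the identity laws checked with the same sandwich choices ($\mathbf{d}(\alpha)$ on the left, $\mathbf{r}(\alpha)$ on the right), and associativity delegated to Nambooripad's result in \cite[Section 4]{mem} (the paper cites \cite[Lemma 4.4]{mem}, which gives the equality $((\alpha\circ\beta)_{h_1}\circ\gamma)_{h'}=(\alpha\circ(\beta\circ\gamma)_{h_2})_{h}$ for suitable sandwich elements). Your variant of invoking the associativity only up to the relation $p$ and then transporting it through Lemma \ref{lemp} is a harmless cosmetic difference, since Lemma \ref{lemcmp2} already makes the sandwich choices immaterial.
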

\begin{proof}
Here, given two morphisms $ [  e_1, {\alpha} \rangle $ and $ [  e_2, {\beta} \rangle $ in the category $\mathcal{E}_L$, we shall prove well-definedness and associativity of the composition when  $\mathbf{r}(\overleftarrow{\alpha})\leq_L\overleftarrow{e_2}$. Clearly, the result also holds, in particular, when $\mathbf{r}([  e_1, {\alpha} \rangle)=\mathbf{d}([  e_2, {\beta} \rangle)$, i.e., when $\mathbf{r}(\overleftarrow{\alpha}) = \overleftarrow{e_2}$.

Let $ [  e_1, {\alpha} \rangle = [  e'_1,{\alpha'} \rangle $ and  $  [   e_2, {\beta} \rangle = [  e'_2, {\beta'} \rangle $ be morphisms in the category $\mathcal{E}_L$ such that $\mathbf{r}(\overleftarrow{\alpha})\leq_L\overleftarrow{e_2}$ and $\mathbf{r}(\overleftarrow{\alpha'})\leq_L\overleftarrow{e'_2}$. Then for $h,h'\in \mathcal{S}(\mathbf{r}(\alpha),\mathbf{d}(\beta))$, by Lemma \ref{lemcmp1} and Lemma \ref{lemcmp2},
$$ [  e_1,(\alpha\circ\beta)_h \rangle = [  e'_1,(\alpha'\circ\beta')_h \rangle = [  e'_1,(\alpha'\circ\beta')_{h'} \rangle .$$
Hence the composition is well-defined.

Now we need to verify identity and associativity. Given a morphism $ [  e,\alpha \rangle $ in $\mathcal{E}_L$ from $\overleftarrow{e}$ to $\overleftarrow{f}$,
$$ [  e, 1_e \rangle   [  e,\alpha \rangle  =  [  e, (1_e \circ \alpha)_{\mathbf{d}(\alpha)}  \rangle =  [  e, \alpha  \rangle  \text{ and }  [  e,\alpha \rangle  [  f, 1_f \rangle  =  [  e, (\alpha \circ 1_f)_f  \rangle =  [  e, \alpha  \rangle .$$
So $ [  e, 1_e \rangle $ is the identity element at $\overleftarrow{e}\in v\mathcal{E}_L$.

Also, if $ [  e,\alpha \rangle $, $ [  f,\beta \rangle $ and $ [  g,\gamma \rangle $ are composable morphisms in the category $\mathcal{E}_L$, then for $h_1\in \mathcal{S}(\mathbf{r}(\alpha),\mathbf{d}(\beta))$ and $h_2\in \mathcal{S}(\mathbf{r}(\beta),\mathbf{d}(\gamma))$, by \cite[Lemma 4.4 ]{mem}, there exist $h \in \mathcal{S}(\mathbf{r}(\alpha),\mathbf{d}((\beta\circ\gamma)_{h_2}))$ and $h' \in \mathcal{S}(\mathbf{r}((\alpha\circ\beta)_{h_1}),\mathbf{d}(\gamma))$ such that
\begin{equation}\label{eqnasso}
((\alpha\circ\beta)_{h_1})\circ\gamma)_{h'} =(\alpha\circ(\beta\circ\gamma)_{h_2})_{h}.
\end{equation}
So,
\begin{align*}
(\: [  e,\alpha \rangle \: [  f,\beta \rangle \:)\: [  g,\gamma \rangle =&(\: [  e,(\alpha\circ\beta)_{h_1} \rangle \:)\: [  g,\gamma \rangle \\
=& [  e,((\alpha\circ\beta)_{h_1}\circ\gamma)_{h'} \rangle \\
=& [  e,(\alpha\circ(\beta\circ\gamma)_{h_2})_{h} \rangle \\
=& [  e,\alpha \rangle \:(\: [  f,(\beta\circ \gamma)_{h_2} \rangle \:)\\
=& [  e,\alpha \rangle \:(\: [  f,\beta \rangle \: [  g,\gamma \rangle \:).\\
\end{align*}
Thus associativity also holds. Hence $\mathcal{E}_L$ is a category.
\end{proof}

Having prepared all the necessary ingredients, now we are in a position to define our category $\mathcal{L}_G$. Recall that $\mathcal{L}_G:=E/\mathscr{L}$ and we define the morphisms in $\mathcal{L}_G= \mathcal{E}_L\otimes \mathcal{P}_L$ as follows:
\begin{equation*}
\mathcal{L}_G:=\{({\varepsilon},{j})\in \mathcal{E}_L \times \mathcal{P}_L : \mathbf{r}(\varepsilon) =\mathbf{d}(j) \}.
\end{equation*}
In the sequel, a morphism $(\varepsilon,j)$ in $\mathcal{L}_G$ shall be denoted as $ [  e,\alpha,f \rangle $ where $\varepsilon= [  e,\alpha \rangle $ and $\overleftarrow{f}=\mathbf{r}(j)$. Hence, by Lemma \ref{lemp}, the morphisms $ [  e,\alpha,f \rangle  =  [  g,\beta,h \rangle $ if and only if $e\mathrel{\mathscr{L}}g$, $\overline{\alpha}=\overline{\beta}$ and $f\mathrel{\mathscr{L}}h$. So, if $e\mathrel{\mathscr{L}}\mathbf{d}(\alpha)$ and $g\mathrel{\mathscr{L}}\mathbf{d}(\beta)$ in the biordered set $E$, we have that $ [  e,\alpha,f \rangle  =  [  g,\beta,h \rangle $ if and only if $\overleftarrow{\alpha}=\overleftarrow{\beta}$ and $f\mathrel{\mathscr{L}}h$. In particular, $ [  e,1_e,e \rangle = [  f,1_f,f \rangle $ in $\mathcal{L}_G$ if and only if $e\mathscr{L}f$.

Further, given two morphisms $ [  e_1,\alpha,g_1 \rangle  ,  [  e_2,\beta,g_2 \rangle $ in the category $\mathcal{L}_G$ such that $\overleftarrow{g_1}=\overleftarrow{e_2}$, we define a partial composition in $\mathcal{L}_G$ as follows. For $h\in \mathcal{S}(\mathbf{r}(\alpha),\mathbf{d}(\beta))$,
\begin{equation}\label{eqncomplg}
 [  e_1,\alpha,g_1 \rangle  \:  [  e_2,\beta,g_2 \rangle  =  [  e_1,(\alpha\circ\beta)_h,g_2 \rangle .
\end{equation}
\begin{pro}
$\mathcal{L}_G$ is a category.
\end{pro}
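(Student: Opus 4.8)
The plan is to verify the three things needed for $\mathcal L_G$ to be a category: that the partial composition~\eqref{eqncomplg} is well defined, that $\overleftarrow e\mapsto[e,1_e,e\rangle$ supplies identities, and that the composition is associative. The key observation is that~\eqref{eqncomplg} acts on the middle ($\mathcal E_L$-)slot exactly as the composition of $\mathcal E_L$ does, namely through the sandwich operation $(\alpha\circ\beta)_h$, while on the last ($\mathcal P_L$-)slot it merely keeps the inclusion datum $g_2$ of the second factor. Consequently most of the verification can be imported from Proposition~\ref{proel} (and, through it, from the lemmas of \cite[Section~4]{mem}), the only genuinely new ingredient being the bookkeeping of the inclusion slot, which is immediate. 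Throughout I shall use the equality criterion for morphisms of $\mathcal L_G$, namely $[e,\alpha,f\rangle=[g,\beta,h\rangle$ if and only if $e\mathrel{\mathscr L}g$, $\overline\alpha=\overline\beta$ and $f\mathrel{\mathscr L}h$.

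For well-definedness I would argue as follows. First, independence of the sandwich element: the first and last slots of $[e_1,(\alpha\circ\beta)_h,g_2\rangle$ do not involve $h$, so Lemma~\ref{lemcmp2} (giving $[e_1,(\alpha\circ\beta)_h\rangle=[e_1,(\alpha\circ\beta)_{h'}\rangle$ in $\mathcal E_L$, hence $\overline{(\alpha\circ\beta)_h}=\overline{(\alpha\circ\beta)_{h'}}$ by Lemma~\ref{lemp}) together with the criterion above shows the result is independent of $h\in\mathcal S(\mathbf r(\alpha),\mathbf d(\beta))$. Second, independence of representatives: if $[e_1,\alpha,g_1\rangle=[e_1',\alpha',g_1'\rangle$ and $[e_2,\beta,g_2\rangle=[e_2',\beta',g_2'\rangle$, then $e_1\mathrel{\mathscr L}e_1'$, $g_2\mathrel{\mathscr L}g_2'$, $\overline\alpha=\overline{\alpha'}$ and $\overline\beta=\overline{\beta'}$; the last two give $[e_1,\alpha\rangle=[e_1',\alpha'\rangle$ and $[e_2,\beta\rangle=[e_2',\beta'\rangle$ in $\mathcal E_L$ by Lemma~\ref{lemp}, whence Lemma~\ref{lemcmp1} yields $\overline{(\alpha\circ\beta)_h}=\overline{(\alpha'\circ\beta')_h}$, and combining with $e_1\mathrel{\mathscr L}e_1'$ and $g_2\mathrel{\mathscr L}g_2'$ one gets $[e_1,(\alpha\circ\beta)_h,g_2\rangle=[e_1',(\alpha'\circ\beta')_h,g_2'\rangle$.

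For identities, $[e,1_e,e\rangle$ has domain and codomain $\overleftarrow e$; for a morphism $[e,\alpha,g\rangle$ presented by its right epi (so $\mathbf d(\alpha)\leqslant e$ and $\mathbf r(\alpha)\lel g$) one checks, using the biorder axioms and the defining properties of restrictions, corestrictions and the $\epsilon$-morphisms, that $(1_e\circ\alpha)_h\mathrel{p}\alpha$ for $h=\mathbf d(\alpha)\in\mathcal S(e,\mathbf d(\alpha))$ and $(\alpha\circ 1_g)_h\mathrel{p}\alpha$ for $h\in\mathcal S(\mathbf r(\alpha),g)$; hence $[e,1_e,e\rangle[e,\alpha,g\rangle=[e,\alpha,g\rangle=[e,\alpha,g\rangle[g,1_g,g\rangle$. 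For associativity, given composable $[e_1,\alpha,g_1\rangle$, $[e_2,\beta,g_2\rangle$, $[e_3,\gamma,g_3\rangle$ and choices $h_1\in\mathcal S(\mathbf r(\alpha),\mathbf d(\beta))$, $h_2\in\mathcal S(\mathbf r(\beta),\mathbf d(\gamma))$, \cite[Lemma~4.4]{mem} provides $h,h'$ with $((\alpha\circ\beta)_{h_1}\circ\gamma)_{h'}=(\alpha\circ(\beta\circ\gamma)_{h_2})_{h}$, so both bracketings of the triple product equal $[e_1,((\alpha\circ\beta)_{h_1}\circ\gamma)_{h'},g_3\rangle$; this is verbatim the computation in Proposition~\ref{proel}, with the slot $g_3$ carried along unchanged.

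The main obstacle, as usual in this circle of ideas, is the purely biordered-set bookkeeping underlying the identity laws — especially the right identity, where one must verify $(\alpha\circ 1_g)_h\mathrel{p}\alpha$ knowing only $\mathbf r(\alpha)\lel g$ — together with the ``associativity of sandwiches'' of \cite[Lemma~4.4]{mem}. Since both have already been established in~\cite{mem} and reused in the proof that $\mathcal E_L$ is a category, the proof here amounts to invoking them and tracking the inert inclusion slot, and is therefore short.
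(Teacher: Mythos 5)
Your proposal is correct and follows essentially the same route as the paper: well-definedness and associativity are inherited from the composition in $\mathcal{E}_L$ (via Lemmas \ref{lemcmp1}, \ref{lemcmp2}, \ref{lemp} and \cite[Lemma~4.4]{mem}, exactly as in Proposition~\ref{proel}), while the identities $[e,1_e,e\rangle$ are checked directly, with the inclusion slot carried along passively. Your extra care with the right identity (where only $\mathbf{r}(\alpha)\lel f$ holds) just makes explicit what the paper's computation $[e,\alpha,f\rangle[f,1_f,f\rangle=[e,(\alpha\circ 1_f)_f,f\rangle=[e,\alpha,f\rangle$ asserts.
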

\begin{proof}
Well-definedness and associativity of the composition in  $\mathcal{L}_G$ easily follow from well-definedness and associativity of the composition in the category $\mathcal{E}_L$. (See Proposition~\ref{proel} and Figure \ref{comp}.)

Also, given a morphism $ [  e,\alpha,f \rangle $ in $\mathcal{L}_G$ from $\overleftarrow{e}$ to $\overleftarrow{f}$, we can see that
$$ [  e, 1_e, e \rangle   [  e,\alpha,f \rangle  =  [  e, (1_e \circ \alpha)_{\mathbf{d}(\alpha)},f  \rangle =  [  e, \alpha,f  \rangle $$ { and } $$ [  e,\alpha,f \rangle  [  f, 1_f,f \rangle  =  [  e, (\alpha \circ 1_f)_f,f  \rangle =  [  e, \alpha,f  \rangle .$$
So $ [  e, 1_e,e \rangle $ is the identity element at $\overleftarrow{e}\in v\mathcal{L}_G$. Hence $\mathcal{L}_G$ is a category.
\end{proof}
\begin{rmk}\label{rmksubcat}
We know that $v\mathcal{P}_L=v\mathcal{G}_L=v\mathcal{Q}_L=v\mathcal{E}_L=v\mathcal{L}_G=E/\mathscr{L}$. We can see that in fact all these categories are subcategories of $\mathcal{L}_G$ by the following identification.
\begin{center}
\vspace{.5cm}
\begin{tabular}{ccccc}
\hline
Category&&Typical morphism&& Corresponding morphism in $\mathcal{L}_G$\\
\hline\\
$\mathcal{P}_L$&&$j_{\mathcal{L}}(e,f)$&&$ [  e,1_e,f \rangle $\\[3pt]
$\mathcal{G}_L$&&$\overleftarrow{\alpha}(e,f)$&&$ [  e,\alpha,f \rangle  $\\[3pt]
$\mathcal{Q}_L$&&$q_\mathcal{L}(e,u)$&&$ [  e,1_u,u \rangle  $\\[3pt]
$\mathcal{E}_L$&&$ [  e, \alpha \rangle $&&$ [  e,\alpha,\mathbf{r}(\alpha) \rangle  $\\[3pt]
\hline
\end{tabular}
\vspace{1cm}
\end{center}
\end{rmk}

\begin{lem}\label{lemcso}
$(\mathcal{L}_G,\mathcal{P}_L)$ is a category with subobjects.
\end{lem}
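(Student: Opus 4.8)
The plan is to verify the three defining requirements of a category with subobjects in turn: that $\mathcal{P}_L$ is a strict preorder subcategory of $\mathcal{L}_G$ with $v\mathcal{P}_L=v\mathcal{L}_G$, that every inclusion of $\mathcal{L}_G$ is a monomorphism, and that whenever an inclusion factors as $\phi\,j$ with $j$ an inclusion, $\phi$ must itself be an inclusion. The first requirement is essentially already in hand: $\mathcal{P}_L$ is a strict preorder category by Proposition \ref{lempl}, $v\mathcal{P}_L=v\mathcal{L}_G=E/\mathscr{L}$, and the assignment $j_\mathcal{L}(e,f)\mapsto[e,1_e,f\rangle$ recorded in Remark \ref{rmksubcat} exhibits $\mathcal{P}_L$ as a subcategory of $\mathcal{L}_G$; the one point needing a word is that this assignment respects composition, which is the special case $\alpha=1_e$ of the identity established next.

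The technical core of the argument is the following observation: \emph{post-composing any morphism of $\mathcal{L}_G$ with an inclusion merely enlarges its codomain component}, i.e.
\[
[d,\alpha,e\rangle\,[e,1_e,f\rangle=[d,\alpha,f\rangle
\]
for every $[d,\alpha,e\rangle$ in $\mathcal{L}_G$ and every inclusion $[e,1_e,f\rangle$. To prove this I would compute the composite via \eqref{eqncomplg} using the specific sandwich element $h:=e\,\mathbf{r}(\alpha)$. Since $\mathbf{r}(\alpha)\lel e$ (forced by the inclusion part of $[d,\alpha,e\rangle$), the biorder axioms (B2)--(B4) give $h\in\mathcal{S}(\mathbf{r}(\alpha),e)$ together with $h\mathrel{\mathscr{L}}\mathbf{r}(\alpha)$ and $h\leqslant e$; consequently in
\[
(\alpha\circ1_e)_h=(\alpha{\downharpoonright}\mathbf{r}(\alpha)h)\,\epsilon(\mathbf{r}(\alpha)h,h)\,\epsilon(h,he)\,(he{\downharpoonleft}1_e)
\]
the corestriction reduces to $\alpha$, the factor $\epsilon(h,he)$ and the final restriction both become $1_h$, and we are left with $(\alpha\circ1_e)_h=\alpha\,\epsilon(\mathbf{r}(\alpha),h)$. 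A short check then shows $\alpha\,\epsilon(\mathbf{r}(\alpha),h)$ is $p$-related to $\alpha$, so $\overline{(\alpha\circ1_e)_h}=\overline{\alpha}$ and Lemma \ref{lemp} yields $[d,(\alpha\circ1_e)_h,f\rangle=[d,\alpha,f\rangle$.

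With this identity the remaining two requirements reduce to the equality criterion for morphisms of $\mathcal{L}_G$ from Remark \ref{rmksubcat} (namely $[e,\alpha,f\rangle=[g,\beta,h\rangle$ iff $e\mathrel{\mathscr{L}}g$, $\overline{\alpha}=\overline{\beta}$ and $f\mathrel{\mathscr{L}}h$). For the monomorphism property: if $\phi_1=[d,\alpha_1,e_1\rangle$ and $\phi_2=[d,\alpha_2,e_2\rangle$ both have codomain $\overleftarrow{e}$ and satisfy $\phi_1[e,1_e,f\rangle=\phi_2[e,1_e,f\rangle$, the identity gives $[d,\alpha_1,f\rangle=[d,\alpha_2,f\rangle$, hence $\overline{\alpha_1}=\overline{\alpha_2}$, and since $e_1\mathrel{\mathscr{L}}e\mathrel{\mathscr{L}}e_2$ the criterion forces $\phi_1=\phi_2$. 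For the factorization property: if $[e,1_e,f\rangle=\phi\,[g,1_g,h\rangle$, write $\phi=[e,\alpha,g\rangle$; comparing codomains gives $f\mathrel{\mathscr{L}}h$, and the identity gives $[e,\alpha,h\rangle=[e,1_e,h\rangle$, so $\overline{\alpha}=\overline{1_e}$ and in particular $\mathbf{r}(\alpha)\mathrel{\mathscr{L}}e$. Then $[e,\alpha\rangle=[e,1_e\rangle$ in $\mathcal{E}_L$ by Lemma \ref{lemp}, whence $\phi=[e,1_e,g\rangle=j_\mathcal{L}(e,g)$ is an inclusion.

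The only genuinely delicate point is the central identity, and within it the two verifications that $e\,\mathbf{r}(\alpha)$ is a bona fide sandwich element and that the composite $(\alpha\circ1_e)_h$ collapses exactly as claimed; I would therefore isolate that identity as a separate preliminary lemma and carry out the bookkeeping with the biorder axioms there, after which the present statement follows formally from the equality criterion and Lemma \ref{lemp}.
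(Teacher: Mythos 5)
Your proposal is correct and follows essentially the same route as the paper's own proof: both arguments reduce the monomorphism and factorization conditions to the identity $[d,\alpha,e\rangle\,[e,1_e,f\rangle=[d,\alpha,f\rangle$ together with the equality criterion coming from Lemma~\ref{lemp}, after citing Proposition~\ref{lempl} for the strict preorder part. The only difference is that the paper asserts this identity without comment (writing the composite as $(\alpha\circ 1_e)_{e}$), whereas you justify it by exhibiting the sandwich element $e\,\mathbf{r}(\alpha)\in\mathcal{S}(\mathbf{r}(\alpha),e)$ and checking that $(\alpha\circ 1_e)_h$ is $p$-related to $\alpha$ — a worthwhile, and correct, filling-in of detail.
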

\begin{proof}
Clearly, $\mathcal{L}_G$ is a small category and by Proposition \ref{lempl}, the category $\mathcal{P}_L$ is a strict preorder subcategory of $\mathcal{L}_G$ such that $v\mathcal{L}_G = v\mathcal{P}_L$. Let $ [  e,1_e,f \rangle $ be a morphism in $\mathcal{P}_L$. If $ [  g,\alpha,e \rangle \: [  e,1_e,f \rangle = [  h,\beta,e \rangle \: [  e,1_e,f \rangle $, then $ [  g,(\alpha\circ1_e)_e,f \rangle = [  h,(\beta\circ1_e)_e,f \rangle $. That is, $ [  g,\alpha,f \rangle = [  h,\beta,f \rangle $.
So by Lemma \ref{lemp}, we have $g\mathrel{\mathscr{L}}h$ and  $\overline{\alpha}=\overline{\beta}$ where $\overline{\alpha}$ is the $p$-class of $\alpha$ in $\mathcal{G}$ (see (\ref{eqnp})). Hence $ [  g,\alpha,e \rangle = [  h,\beta,e \rangle $ and so $ [  e,1_e,f \rangle  \in \mathcal{P}_L$ is a monomorphism.

Now if $ [  e,1_e,f \rangle = [  g,\alpha,h \rangle  [  h,1_h,k \rangle $ for $ [  e,1_e,f \rangle , [  h,1_h,k \rangle \in \mathcal{P}_L$ and $ [  g,\alpha,h \rangle  \in \mathcal{L}_G$, then
since $ [  g,\alpha,h \rangle  [  h,1_h,k \rangle = [  g,\alpha,k \rangle $, we have $ [  g,\alpha,k \rangle  =  [  e,1_e,f \rangle $. Then $e\mathrel{\mathscr{L}}g$, $f\mathrel{\mathscr{L}}k$, and  $\overleftarrow{\alpha}=\overleftarrow{1_e}=\overleftarrow{1_g}$. So
$ [  g,\alpha,h \rangle  =  [  g,1_g,h \rangle $, i.e., $ [  g,\alpha,h \rangle \in \mathcal{P}_L$.
Hence $(\mathcal{L}_G,\mathcal{P}_L)$ is a category with subobjects.
\end{proof}

So, if $e\lel f$, a morphism $ [  e,1_e,f \rangle  \in \mathcal{P}_L\subseteq\mathcal{L}_G$ shall be an \emph{inclusion} in the category $\mathcal{L}_G$. So, two inclusions $ [  e,1_e,f \rangle$ and $[  g,1_g,h \rangle $ are equal if and only if $e\mathrel{\mathscr{L}}g$ and $f\mathrel{\mathscr{L}}h$  if and only if  $j_\mathcal{L}(e,f)= j_\mathcal{L}(g,h)$.

\begin{lem}\label{lemsplit}
Every inclusion in $\mathcal{L}_G$ {splits}.
\end{lem}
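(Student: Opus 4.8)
The plan is to produce an explicit splitting built from the basic product of $E$. By Remark~\ref{rmksubcat}, an inclusion in $\mathcal L_G$ is a morphism $[e,1_e,f\rangle$ with $e\lel f$, and the retractions of $\mathcal L_G$ are precisely the morphisms $[f,1_{fe},fe\rangle$ with $e\lel f$; so I claim that $[e,1_e,f\rangle$ splits via $q:=[f,1_{fe},fe\rangle$. First I would check that $q$ makes sense as a morphism $\overleftarrow f\to\overleftarrow e$ of $\mathcal L_G$: applying axiom (B2) to $e\lel f$ gives $e\mathrel{\mathscr L}fe$ and $fe\leqslant f$, so $fe$ is a genuine basic product, $\overleftarrow{fe}=\overleftarrow e$, and $q$ is the retraction $q_{\mathcal L}(f,fe)$ from $\overleftarrow f$ to $\overleftarrow{fe}=\overleftarrow e$.

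Next I would evaluate the composite $[e,1_e,f\rangle\,q$ using the composition rule \eqref{eqncomplg}. Since $E$ is regular, $\mathcal S(e,fe)\neq\varnothing$, and by Lemma~\ref{lemcmp2} the composite is independent of the chosen sandwich element, so any $h\in\mathcal S(e,fe)$ will do; the convenient choice is $h=fe$. One checks that indeed $fe\in\mathcal S(e,fe)$: from $e\mathrel{\mathscr L}fe$ we get $fe\lel e$ and $fe\ler fe$ (the membership conditions), and for any $g$ with $g\lel e$ and $g\ler fe$, axiom (B2) applied to $g\lel e$ yields $eg\leqslant e$ while (B2) applied to $g\ler fe$ yields $g(fe)\leqslant fe$, which give the two maximality relations (here $e(fe)=e$ and $(fe)(fe)=fe$). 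With $h=fe$ the recipe for $(1_e\circ 1_{fe})_{fe}$ collapses: since $\mathbf r(1_e)=e$, $\mathbf d(1_{fe})=fe$ and $e\lel fe$, the relevant "inner corners" are $e(fe)=e$ and $(fe)(fe)=fe$, the corestriction $1_e{\downharpoonright}e$ and the restriction $fe{\downharpoonleft}1_{fe}$ are identities, and $\epsilon(fe,fe)=1_{fe}$; hence
\[
(1_e\circ 1_{fe})_{fe}=(1_e{\downharpoonright}e)\,\epsilon(e,fe)\,\epsilon(fe,fe)\,(fe{\downharpoonleft}1_{fe})=\epsilon(e,fe),
\]
so that $[e,1_e,f\rangle\,q=[e,\epsilon(e,fe),fe\rangle$.

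Finally I would identify $[e,\epsilon(e,fe),fe\rangle$ with $1_{\overleftarrow e}=[e,1_e,e\rangle$. By Lemma~\ref{lemp} (and the description of equality of morphisms in $\mathcal L_G$ in Remark~\ref{rmksubcat}), since $e\mathrel{\mathscr L}e$ and $fe\mathrel{\mathscr L}e$ it suffices to show $\overline{\epsilon(e,fe)}=\overline{1_e}$, i.e.\ $\epsilon(e,fe)\mathrel p1_e$. The domain and codomain conditions $e\mathrel{\mathscr R}e$ and $fe\mathrel{\mathscr L}e$ hold, and $\epsilon(e,fe)\,\epsilon(fe,e)=\epsilon(e,fe,e)=\epsilon(e,e)=1_e$ because $fe$ is inessential in the $E$-chain $(e,fe,e)$ and $\epsilon$ is a functor with $\epsilon(e,e)=1_e$; this is exactly the defining identity of $p$ for $\epsilon(e,fe)$ and $1_e$. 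Therefore $[e,1_e,f\rangle\,q=1_{\overleftarrow e}$, so every inclusion in $\mathcal L_G$ splits. The only step that genuinely uses the biorder axioms rather than formal category manipulation is the verification that $fe\in\mathcal S(e,fe)$ (equivalently, a direct simplification of $(1_e\circ 1_{fe})_h$ for an arbitrary $h$), and this is where I expect the main care to be needed.
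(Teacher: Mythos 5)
Your proposal is correct and follows essentially the same route as the paper: both split the inclusion $[e,1_e,f\rangle$ by the retraction $[f,1_{fe},fe\rangle$ and compute the composite $[e,(1_e\circ 1_{fe})_{fe},fe\rangle=[e,1_e,e\rangle$. You merely fill in details the paper leaves implicit, namely the verification that $fe\in\mathcal S(e,fe)$, the explicit simplification $(1_e\circ 1_{fe})_{fe}=\epsilon(e,fe)$, and the identification $\overline{\epsilon(e,fe)}=\overline{1_e}$ via Lemma~\ref{lemp}; all of these checks are accurate.
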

\begin{proof}
Let $ [  e,1_e,f \rangle $ be an inclusion in $\mathcal{L}_G$. Then since $fe\mathrel{\mathscr{L}}e$, we have a retraction $ [  f,1_{ef},{ef} \rangle $ in $\mathcal{L}_G$  such that
$$ [  e,1_e,f \rangle \: [  f,1_{fe},{fe} \rangle = [  e,(1_e\circ1_{fe})_{fe},{fe} \rangle = [  e,1_{e},{fe} \rangle = [  e,1_{e},{e} \rangle .$$
Hence the lemma.
\end{proof}

\begin{rmk}\label{rmkog1}
If $e\lel f$, a morphism $ [  f,1_{fe},fe \rangle  \in \mathcal{Q}_L\subseteq\mathcal{L}_G$ is a right inverse to the inclusion $ [  e,1_{e},f \rangle$. Hence it will be a \emph{retraction} in the category $\mathcal{L}_G$. Observe that  two retractions $ [  e,1_u,u \rangle$ and $[  f,1_v,v \rangle $ are equal if and only if $e\mathcal{L}f$ and $v=fu$ if and only if $q_\mathcal{L}(e,u)=q_\mathcal{L}(f,v)$.

For $\alpha\in \mathcal{G}(e,f)$, we can easily verify that the morphism $ [  e,\alpha,f \rangle  \in \mathcal{G}_L\subseteq\mathcal{L}_G$ is an \emph{isomorphism} in $\mathcal{L}_G$. Observe that two isomorphisms $ [  e,\alpha,f \rangle$ and $ [  g,\beta,h \rangle $ are equal if and only if  $e \mathrel{\mathscr{L}} g,$ $f \mathrel{\mathscr{L}} h,$  and  $\alpha\:\epsilon(f,h) = \epsilon(e,g)\:\beta$ if and only if $\overleftarrow{\alpha}=\overleftarrow{\beta}$.

Also, given an arbitrary morphism $ [  e, \alpha,f \rangle $ in $\mathcal{L}_G$, we can easily see that the morphism $ [  e,\alpha,\mathbf{r}(\alpha) \rangle  \in \mathcal{E}_L\subseteq \mathcal{L}_G$ is the epimorphic component of the morphism $ [  e, \alpha,f \rangle $. The epimorphic component shall be denoted as just $ [  e, \alpha,f \rangle ^\circ$ in the sequel.
\end{rmk}

Now we proceed to construct certain distinguished cones in the category $\mathcal{L}_G$. Given a morphism $\alpha\colon e\to f$ in the inductive groupoid $\mathcal{G}$, recall that $ [  e, \alpha, f \rangle $ is an isomorphism in $\mathcal{L}_G$ from $\overleftarrow{e}$ to $\overleftarrow{f}$. Also, for an arbitrary $\overleftarrow{g}\in v\mathcal{L}_G$, observe that $[  g,1_g,g \rangle $ is the identity morphism at $\overleftarrow{g}$.

Recall from Remark \ref{rmkepi} that we can compose any two epimorphisms in $\mathcal{E}_L$ and similarly in the category $\mathcal{L}_G$. In particular, we can compose $[  g,1_g,g \rangle $ with $ [  e, \alpha, f \rangle $, using the rule \eqref{eqncomp} for the corresponding right epis $[g,1_g\rangle$ and $[e,\alpha\rangle$, respectively. We get the following morphism in $\mathcal{L}_G$ from $\overleftarrow{g}$ to $\overleftarrow{f}$:  
$$ [  g,1_g,g \rangle   [  e, \alpha, f \rangle  =  [  g,(1_g\circ\alpha)_h,f \rangle, $$ 
where $h\in \mathcal{S}(g,e)$. 

Fixing $\alpha\in\mathcal{G}$ and making $\overleftarrow{g}$ run over $v\mathcal{L}_G$, we define a map $r^\alpha\colon v\mathcal{L}_G \to \mathcal{L}_G $ as $$r^\alpha\colon\overleftarrow{g}\mapsto [  g,(1_g\circ\alpha)_h,f \rangle,$$ 
where $h\in \mathcal{S}(g,e)$ and $f\mathrel{\mathscr{L}}\mathbf{r}(\alpha)$.
\begin{rmk}\label{rmkcone}
Observe that for $\alpha,\beta \in \mathcal{G}$ such that $\overline{\alpha}=\overline{\beta}$, we have  $\overline{(1_g\circ\alpha)_h}=\overline{(1_g\circ\beta)_h}$ by \cite[Lemma 4.7]{mem}. So using Lemma \ref{lemp}, we have $[  g,(1_g\circ\alpha)_h,f \rangle = [  g,(1_g\circ\beta)_h,f \rangle $. That is, $r^\alpha=r^\beta$.
\end{rmk}
\begin{lem}
The map $r^\alpha$ is a cone in $\mathcal{L}_G$.
\end{lem}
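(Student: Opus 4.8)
The plan is to verify the three cone axioms (Ncone1)–(Ncone3) for the map $r^\alpha\colon v\mathcal{L}_G\to\mathcal{L}_G$ defined by $r^\alpha\colon\overleftarrow{g}\mapsto[g,(1_g\circ\alpha)_h,f\rangle$, where $h\in\mathcal{S}(g,e)$, $\alpha\in\mathcal{G}(e,f)$ and $f\mathrel{\mathscr{L}}\mathbf{r}(\alpha)$. First I would note that, by Lemma \ref{lemcmp2} (independence of the sandwich element) and Remark \ref{rmkcone}, the morphism $r^\alpha(\overleftarrow{g})$ is well-defined; this is really the content of everything preceding the statement. Axiom (Ncone1) is then immediate: by the definition of composition in $\mathcal{L}_G$, the morphism $[g,1_g,g\rangle\,[e,\alpha,f\rangle=[g,(1_g\circ\alpha)_h,f\rangle$ is a morphism from $\overleftarrow{g}$ to $\overleftarrow{f}=c_{r^\alpha}$.

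For (Ncone3) I would exhibit an object at which $r^\alpha$ is an isomorphism. The natural candidate is $\overleftarrow{e}$ itself: since $e\in\mathcal{S}(e,e)$ (as $e\lel e$ and $e\ler e$), we may take $h=e$, and then $(1_e\circ\alpha)_e=(1_e{\downharpoonright}e)\,\epsilon(e,e)\,\epsilon(e,e)\,(e{\downharpoonleft}\alpha)=\alpha$, so $r^\alpha(\overleftarrow{e})=[e,\alpha,f\rangle$, which is an isomorphism in $\mathcal{L}_G$ as recorded in Remark \ref{rmksubcat}. (If one prefers $\im\alpha$-type bookkeeping one can instead take $\overleftarrow{f}$, but $\overleftarrow{e}$ is cleaner.)

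The main obstacle is (Ncone2): I must show that if $\overleftarrow{g'}\leq_L\overleftarrow{g}$ — say witnessed by $g'\lel g$, so the inclusion $j(\overleftarrow{g'},\overleftarrow{g})$ is $[g',1_{g'},g\rangle$ — then $[g',1_{g'},g\rangle\,r^\alpha(\overleftarrow{g})=r^\alpha(\overleftarrow{g'})$. Unwinding the left side using the composition rule \eqref{eqncomplg}, with a sandwich element $h_1\in\mathcal{S}(g',g)$ and then $h_2\in\mathcal{S}(\mathbf{r}((1_{g'}\circ1_g)_{h_1}),e)$, this becomes $[g',\bigl((1_{g'}\circ1_g)_{h_1}\circ\alpha\bigr)_{h_2},f\rangle$; the right side is $[g',(1_{g'}\circ\alpha)_{h'}\rangle$-type with $h'\in\mathcal{S}(g',e)$. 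So the crux is the identity $\overline{\bigl((1_{g'}\circ 1_g)_{h_1}\circ\alpha\bigr)_{h_2}}=\overline{(1_{g'}\circ\alpha)_{h'}}$ in $\mathcal{G}/p$, after which Lemma \ref{lemp} (together with $g'\mathrel{\mathscr{L}}g'$) closes the argument. This is exactly the kind of associativity/compatibility bookkeeping for the circle product that Nambooripad establishes; I would derive it from the associativity relation \eqref{eqnasso} of \cite[Lemma 4.4]{mem} applied to $1_{g'}$, $1_g$, $\alpha$, noting $(1_{g'}\circ 1_g)_{h_1}$ reduces to a restriction of $1_{g'}$ (indeed its $p$-class is $\overline{1_{g''}}$ for the appropriate $g''\leqslant g'$), together with Lemma \ref{lemcmp2} to absorb the choice of sandwich elements. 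Thus all three axioms hold and $r^\alpha$ is a cone; the apex is $\overleftarrow{f}$.
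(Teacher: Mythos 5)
Your proposal is correct and follows essentially the same route as the paper's proof: well-definedness from independence of the representative and of the sandwich element, compatibility with inclusions obtained by composing $[g',1_{g'},g\rangle$ with $r^\alpha(\overleftarrow{g})$ and reassociating via \eqref{eqnasso} so that $(1_{g'}\circ 1_g)$ is absorbed and the result is $[g',(1_{g'}\circ\alpha)_{h_2},f\rangle=r^\alpha(\overleftarrow{g'})$, and (Ncone3) from $r^\alpha(\overleftarrow{e})=[e,\alpha,f\rangle$ being an isomorphism. The only point to tighten is your parenthetical: the reduction must land in the $p$-class $\overline{1_{g'}}$ itself, not merely $\overline{1_{g''}}$ for some $g''\leqslant g'$ (which would give a different morphism), and this holds precisely because $g'\lel g$ --- it is exactly the paper's step $(1_{g'}\circ 1_g)_{g'}=1_{g'}$.
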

\begin{proof}
Since $r^\alpha(g)$ is independent of $g\in \overleftarrow{g}$ and $h\in\mathcal{S}(g,\mathbf{d}(\alpha))$, we can see that $r^\alpha$ is a well-defined map. Now if $g'\leq_Lg$, then $g'\lel  g $ and so we have an inclusion $ [  g',1_{g'},g \rangle $ in $\mathcal{L}_G$.
Then,
\begin{align*}
 [  g',1_{g'},g \rangle  r^\alpha(\overleftarrow{g})
&= [  g',1_{g'},g \rangle  [  g,(1_g\circ\alpha)_h,f \rangle \\
&= [  g',(1_{g'}\circ (1_g\circ\alpha)_h)_{h_1},f \rangle \quad \quad (\text{where } h_1\in \mathcal{S}(g',\mathbf{d}((1_g\circ\alpha)_h)) \\
&= [  g',((1_{g'}\circ 1_g)_{g'}\circ\alpha)_{h_2},f \rangle \quad \quad (\text{where } h_2\in \mathcal{S}(g',\mathbf{d}(\alpha)) \text{ using (\ref{eqnasso})})\\
&= [  g',(1_{g'}\circ\alpha)_{h_2},f \rangle \\
&=r^\alpha(\overleftarrow{g'}).
\end{align*}

Also since $r^\alpha(\overleftarrow{e})= [  e,\alpha,f \rangle $ is an isomorphism in the category $\mathcal{L}_G$, we see that $r^\alpha$ is a cone in $\mathcal{L}_G$.
\end{proof}

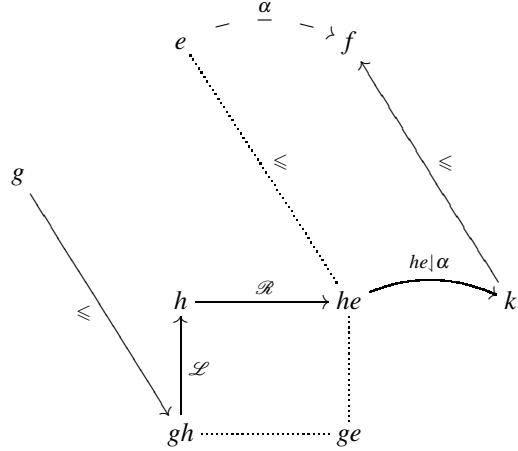
\begin{figure}[h]
\centering
$\xymatrixcolsep{4pc}\xymatrixrowsep{3pc}\xymatrix
{
 &e \ar@{-->}@/^.7pc/[r]^{\alpha} \ar@{.}[rdd]_{}^{\leqslant }&f\\
 g\ar@{->}[ddr]_{\leqslant }^{}\\
 &h \ar[r]^{\mathrel{\mathscr{R}}}&he\ar@{.}[d]\ar@/^.7pc/[r]^{he{\downharpoonleft}\alpha} &k\ar[luu]_{\leqslant }\\
 &gh \ar@{->}[u]_{\mathrel{\mathscr{L}}}\ar@{.}[r] & ge
}
$
\caption{Cone $r^\alpha$ in the category $\mathcal{L}_G$}\label{figpc}
\end{figure}

Figure \ref{figpc} illustrates the cone $r^\alpha$ in the category $\mathcal{L}_G$. The dashed arrow represents the morphism $\alpha\in\mathcal{G}$. The solid arrows give rise to the relevant morphisms in $\mathcal{L}_G$ arising from the inductive groupoid. If $h\in \mathcal{S}(g,e)$, then $ [  g,1_{gh},gh \rangle $ is a retraction in $\mathcal{L}_G$ from $\overleftarrow{g}$ to $\overleftarrow{gh}$. The morphism $\epsilon(gh,h)\:\epsilon(h,he)\:(he{\downharpoonleft}\alpha)$ is an isomorphism in $\mathcal{G}$ from $gh$ to $k=\mathbf{r}(he{\downharpoonleft}\alpha)$. So, $ [  gh, {\epsilon(gh,h)\:\epsilon(h,he)\:(he{\downharpoonleft}\alpha)}, k \rangle $ is an isomorphism in $\mathcal{L}_{G}$ from $\overleftarrow{gh}$ to $\overleftarrow{k}$. Also, $ [  k,1_k,f \rangle $ is an inclusion from $\overleftarrow{k}$ to $\overleftarrow{f}$. Composing these morphisms, we get a morphism $r^\alpha(\overleftarrow{g})=  [  g, {\epsilon(gh,h)\:\epsilon(h,he)\:(he{\downharpoonleft}\alpha)},f \rangle $ from $\overleftarrow{g}$ to $\overleftarrow{f}$ in the category $\mathcal{L}_G$.

Similarly, since $1_e\in \mathcal{G}$, we can build cones $r^e(\overleftarrow{g})= [  g,(1_g\circ1_e)_h,e \rangle $ by taking $\alpha=1_e$ and $h\in \mathcal{S}(g,e)$.

We shall need the following lemma in the sequel.

\begin{lem}\label{lemr}
Let $r^\alpha$ and $r^\beta$ be cones in the category $\mathcal{L}_G$ as defined above. Then $$r^\alpha\:r^\beta =r^{(\alpha\circ\beta)_h}\ \text{ where } h\in \mathcal{S}(\mathbf{r}(\alpha),\mathbf{d}(\beta)).$$
\end{lem}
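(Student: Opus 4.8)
The plan is to unfold the definition of cone composition, reduce the asserted equality of cones to an equality of $p$-classes of isomorphisms of $\mathcal{G}$ via the equality criterion for morphisms of $\mathcal{L}_G$, and then close the gap with the associativity relation \eqref{eqnasso} (that is, \cite[Lemma 4.4]{mem}) together with the independence of $\overline{(\,\cdot\,)}$ from the chosen sandwich element (\cite[Lemma 4.8]{mem}, the ingredient behind Lemma \ref{lemcmp2}). First note that $r^{(\alpha\circ\beta)_h}$ is unambiguous: by \cite[Lemma 4.8]{mem} the class $\overline{(\alpha\circ\beta)_h}$ does not depend on the choice of $h\in\mathcal{S}(\mathbf{r}(\alpha),\mathbf{d}(\beta))$, and by Remark \ref{rmkcone} the cone $r^{(\alpha\circ\beta)_h}$ depends only on that class.

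I would begin by recording the relevant data. The cone $r^\alpha$ has apex $c_{r^\alpha}=\overleftarrow{\mathbf{r}(\alpha)}$, so by \eqref{eqnsg1} we have $r^\alpha r^\beta=r^\alpha\ast\bigl(r^\beta(\overleftarrow{\mathbf{r}(\alpha)})\bigr)^\circ$. By the definition of $r^\beta$, $r^\beta(\overleftarrow{\mathbf{r}(\alpha)})=[\mathbf{r}(\alpha),(1_{\mathbf{r}(\alpha)}\circ\beta)_h,f\rangle$ for the very sandwich element $h\in\mathcal{S}(\mathbf{r}(\alpha),\mathbf{d}(\beta))$ of the statement and some $f\mathrel{\mathscr{L}}\mathbf{r}(\beta)$; writing $\mu:=(1_{\mathbf{r}(\alpha)}\circ\beta)_h$, its epimorphic component is the right epi $[\mathbf{r}(\alpha),\mu\rangle\in\mathcal{E}_L$. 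Inspecting the defining formula for $(\,\circ\,)_h$ shows that $\mu$ and $(\alpha\circ\beta)_h$ share the last factor $(h\mathbf{d}(\beta){\downharpoonleft}\beta)$, hence $\mathbf{r}(\mu)=\mathbf{r}\bigl((\alpha\circ\beta)_h\bigr)$, so both cones $r^\alpha r^\beta$ and $r^{(\alpha\circ\beta)_h}$ have apex $\overleftarrow{\mathbf{r}((\alpha\circ\beta)_h)}$.

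Next I would evaluate both cones at an arbitrary $\overleftarrow{g}\in v\mathcal{L}_G$. Using \eqref{eqncomplg}, $(r^\alpha r^\beta)(\overleftarrow{g})=r^\alpha(\overleftarrow{g})\cdot[\mathbf{r}(\alpha),\mu,\mathbf{r}(\mu)\rangle=[g,((1_g\circ\alpha)_{h_g}\circ\mu)_k,\mathbf{r}(\mu)\rangle$ for the sandwich element $h_g\in\mathcal{S}(g,\mathbf{d}(\alpha))$ occurring in $r^\alpha(\overleftarrow g)$ and a suitable $k$, while $r^{(\alpha\circ\beta)_h}(\overleftarrow{g})=[g,(1_g\circ(\alpha\circ\beta)_h)_{k'},\mathbf{r}((\alpha\circ\beta)_h)\rangle$. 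Since both morphisms have domain $\overleftarrow g$ and the same last coordinate up to $\mathscr{L}$ (indeed $\mathbf{r}(\mu)=\mathbf{r}((\alpha\circ\beta)_h)$), the criterion $[e,\gamma,f\rangle=[e',\gamma',f'\rangle\iff e\mathrel{\mathscr{L}}e',\ \overline{\gamma}=\overline{\gamma'},\ f\mathrel{\mathscr{L}}f'$ (a consequence of Lemma \ref{lemp}) reduces the claim to the single identity
\[
\overline{\bigl((1_g\circ\alpha)_{h_g}\circ\mu\bigr)_k}=\overline{\bigl(1_g\circ(\alpha\circ\beta)_h\bigr)_{k'}}.
\]

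The heart of the proof is to rewrite each of these two classes as $\overline{\bigl((1_g\circ\alpha)_{h_g}\circ\beta\bigr)_{h^\ast}}$ for a suitable $h^\ast\in\mathcal{S}\bigl(\mathbf{r}((1_g\circ\alpha)_{h_g}),\mathbf{d}(\beta)\bigr)$, whence \cite[Lemma 4.8]{mem} finishes the job. For the right-hand class this is one application of \eqref{eqnasso} to the triple $(1_g,\alpha,\beta)$, after which independence of the sandwich element matches it with the required form. For the left-hand class I would apply \eqref{eqnasso} to the triple $\bigl((1_g\circ\alpha)_{h_g},\,1_{\mathbf{r}(\alpha)},\,\beta\bigr)$ --- on one side of that identity the factor $(1_{\mathbf{r}(\alpha)}\circ\beta)_{h_2}$ appears, which for $h_2=h$ is precisely $\mu$ --- and then invoke the fact that $\circ$-composing a morphism $\xi$ with the identity $1_e$ where $\mathbf{r}(\xi)\leqslant e$ leaves $\overline{\xi}$ unchanged; the latter follows by evaluating the formula for $(\xi\circ 1_e)_\bullet$ at the sandwich element $\mathbf{r}(\xi)$, where it returns $\xi$ on the nose, together with \cite[Lemma 4.7]{mem}. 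The main obstacle I anticipate is purely organisational: keeping precise track of which sandwich set each intermediate isomorphism lives in, so that the hypotheses of \eqref{eqnasso} and of \cite[Lemmas 4.7 and 4.8]{mem} are genuinely met at every stage. All the real biorder content is already packaged inside those lemmas of \cite{mem}, so apart from the elementary identity-elimination computation no fresh work with the biorder axioms should be needed.
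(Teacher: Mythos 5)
Your proposal is correct and follows essentially the same route as the paper: evaluate both cones at an arbitrary object, unfold the compositions \eqref{eqnsg1} and \eqref{eqncomplg}, and close the computation with two applications of \cite[Lemma 4.4]{mem} together with independence of the sandwich element (\cite[Lemma 4.8]{mem}, i.e.\ Lemma \ref{lemcmp2}) and elimination of the identity factor. The only difference is organisational: the paper re-associates the nested composite directly in a chain of morphism equalities ending with $(\alpha\circ 1_f)_{h_6}=\alpha$, while you reduce both sides to a common form via the equality criterion of Lemma \ref{lemp} and compare $p$-classes, which amounts to the same argument.
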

\begin{proof}
Let $\alpha,\beta\in \mathcal{G}$ be such that $\mathbf{r}(\alpha)=f$ and $\mathbf{r}(\beta)=g$,  then for an arbitrary $e\in E$, using the composition (\ref{eqnsg1}) and for $h_1\in\mathcal{S}(e,\mathbf{d}(\alpha))$ and $ h_2\in \mathcal{S}(f,\mathbf{d}(\beta))$, we have \begin{align*}
r^\alpha\:r^\beta(\overleftarrow{e})&=[e,(1_e\circ\alpha)_{h_1}, f\rangle\:[f,(1_f\circ \beta)_{h_2},g\rangle^\circ \\
&=[e,(1_e\circ\alpha)_{h_1}, f\rangle\:[f,(1_f\circ \beta)_{h_2},g'\rangle\ 
\text{ where }g':=\mathbf{r}((1_f\circ \beta)_{h_2})\\
&=[e,((1_e\circ\alpha)_{h_1} \circ (1_f\circ \beta)_{h_2})_{h_3},g'\rangle\ \ \text{ where }h_3\in\mathcal{S}(\mathbf{r}((1_e\circ\alpha)_{h_1}),\mathbf{d}((1_f\circ \beta)_{h_2})).
\end{align*}
Then, using \cite[Lemma 4.4]{mem} with $h_4\in\mathcal{S}(f,\mathbf{d}((1_f\circ \beta)_{h_2}))$, $h_5\in \mathcal{S}(e,\mathbf{d}(\alpha \circ (1_f\circ \beta)_{h_2})_{h_4}))$, $h_6\in\mathcal{S}(\mathbf{d}(\alpha),f)$, and $h\in \mathcal{S}(\mathbf{r}((\alpha \circ 1_f)_{h_6}),\mathbf{d}(\beta))$, we have
\begin{align*}
((1_e\circ\alpha)_{h_1} \circ (1_f\circ \beta)_{h_2})_{h_3}&=(1_e\circ(\alpha \circ (1_f\circ \beta)_{h_2})_{h_4})_{h_5} \\
&=(1_e\circ((\alpha \circ 1_f)_{h_6}\circ \beta)_{h})_{h_5}\\
&=(1_e\circ(\alpha \circ\beta)_{h})_{h_5} \end{align*}
The last equality holds as $\mathbf{r}(\alpha)=f \text{ implies } (\alpha\circ1_f)_{h_6}=\alpha$ and $g'\mathrel{\mathscr{L}}(\alpha \circ\beta)_{h}$.

Combining the above arguments, for an arbitrary $e\in E$, we have
\begin{align*}
r^\alpha\:r^\beta(\overleftarrow{e})&= [e,((1_e\circ\alpha)_{h_1} \circ (1_f\circ \beta)_{h_2})_{h_3},g'\rangle \\
&= [e,(1_e\circ(\alpha \circ\beta)_{h})_{h_5},g'\rangle\\ 
&=r^{(\alpha\circ\beta)_h}(\overleftarrow{e}).
\end{align*}
Hence, $r^\alpha\:r^\beta =r^{(\alpha\circ\beta)_h}$. \end{proof}

The above proof essentially shows how the role played by sandwich sets in inductive groupoid theory is captured by the cone multiplication (\ref{eqnsg1}) in cross-connection theory.

\begin{rmk}\label{rmkic}
Observe that if $ef$ is a basic product in $E$, then in the inductive groupoid $\mathcal{G}$, $\overline{1_{ef}}=\overline{(1_e\circ1_f)_h}$ where $h\in \mathcal{S}(e,f)$. Hence using Remark \ref{rmkcone} and Lemma \ref{lemr}, we have $$r^{ef}=r^{(1_e\circ1_f)_h}=r^er^f.$$
Hence for an element $e\in E$, the cone $r^e$ is an idempotent cone in $\mathcal{L}_G$.
\end{rmk}

\begin{thm}
$(\mathcal{L}_G,\mathcal{P}_L)$\/ forms a normal category.
\end{thm}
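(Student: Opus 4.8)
The plan is to leverage everything already assembled: Lemma~\ref{lemcso} says $(\mathcal{L}_G,\mathcal{P}_L)$ is a category with subobjects, so it only remains to verify the three defining axioms (NC1), (NC2), (NC3). Axiom (NC2) --- every inclusion in $\mathcal{L}_G$ splits --- is literally Lemma~\ref{lemsplit}. Axiom (NC3) --- each object carries an idempotent cone --- is handed to us by the cones $r^e$: for $\overleftarrow{e}\in v\mathcal{L}_G$ the cone $r^e$ has apex $\overleftarrow{e}$, and evaluating at the apex gives $r^e(\overleftarrow{e})=[e,(1_e\circ 1_e)_e,e\rangle=[e,1_e,e\rangle=1_{\overleftarrow{e}}$, so $r^e$ is idempotent; this is exactly what Remark~\ref{rmkic} records. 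Hence the whole of the work is in (NC1).

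For (NC1) I would argue as follows. Let $[e,\alpha,f\rangle$ be an arbitrary morphism of $\mathcal{L}_G$, written as in Remark~\ref{rmksubcat} with $\alpha\in\mathcal{G}(u,g)$, where $u\leqslant e$ (so that $q_\mathcal{L}(e,u)$ is a genuine retraction) and $g\lel f$. I claim the required normal factorization is the obvious one suggested by Remark~\ref{rmksubcat}, namely
\[
[e,\alpha,f\rangle=[e,1_u,u\rangle\cdot[u,\alpha,g\rangle\cdot[g,1_g,f\rangle,
\]
where $[e,1_u,u\rangle=q_\mathcal{L}(e,u)$ is a retraction onto $\overleftarrow{u}\subseteq\overleftarrow{e}$ (the inclusion $j(\overleftarrow{u},\overleftarrow{e})$ splits through it, since $j_\mathcal{L}(u,e)\,[e,1_u,u\rangle=[u,(1_u\circ 1_u)_u,u\rangle=1_{\overleftarrow{u}}$), $[u,\alpha,g\rangle=\overleftarrow{\alpha}$ is an isomorphism from $\overleftarrow{u}$ to $\overleftarrow{g}$, and $[g,1_g,f\rangle=j_\mathcal{L}(g,f)$ is the inclusion $\overleftarrow{g}\subseteq\overleftarrow{f}$. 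To verify the displayed equality I would expand the two products by means of the composition rule~\eqref{eqncomplg}; since the value of that composition is independent of the chosen sandwich element (Lemmas~\ref{lemcmp1} and~\ref{lemcmp2}), I may pick $u\in\mathcal{S}(u,u)$ and then $g\in\mathcal{S}(g,g)$. Unwinding the definition of $(\,\cdot\circ\cdot\,)_h$ with these choices and using $uu=u$, $gg=g$, $\mathbf{d}(\alpha)=u$, $\mathbf{r}(\alpha)=g$ together with the fact that (co)restricting an identity or an isomorphism to its own object returns it, one gets
\[
(1_u\circ\alpha)_u=(1_u{\downharpoonright} u)\,\epsilon(u,u)\,\epsilon(u,u)\,(u{\downharpoonleft}\alpha)=\alpha,\qquad
(\alpha\circ 1_g)_g=(\alpha{\downharpoonright} g)\,\epsilon(g,g)\,\epsilon(g,g)\,(g{\downharpoonleft} 1_g)=\alpha,
\]
whence $[e,1_u,u\rangle\cdot[u,\alpha,g\rangle=[e,\alpha,g\rangle$ and $[e,\alpha,g\rangle\cdot[g,1_g,f\rangle=[e,\alpha,f\rangle$, which is (NC1).

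The main obstacle is not any single calculation but the bookkeeping: one has to keep the three-layer construction $\mathcal{P}_L,\mathcal{G}_L,\mathcal{Q}_L\rightsquigarrow\mathcal{E}_L\rightsquigarrow\mathcal{L}_G$ straight, confirm that the three factors above really are --- in the precise sense of a category with subobjects --- a retraction onto a subobject, an isomorphism, and an inclusion, with matching domains and codomains at each junction, and then evaluate $(1_u\circ\alpha)_h$ and $(\alpha\circ 1_g)_h$ correctly from their definition in terms of restrictions and corestrictions in the inductive groupoid. Once (NC1) is secured, combining it with (NC2) (Lemma~\ref{lemsplit}), (NC3) (Remark~\ref{rmkic}) and Lemma~\ref{lemcso} yields that $(\mathcal{L}_G,\mathcal{P}_L)$ is a normal category.
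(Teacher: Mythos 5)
Your proposal is correct and follows essentially the same route as the paper: Lemma~\ref{lemcso} for the subobject structure, the factorization of $[e,\alpha,f\rangle$ into the retraction $[e,1_{\mathbf{d}(\alpha)},\mathbf{d}(\alpha)\rangle$, the isomorphism $[\mathbf{d}(\alpha),\alpha,\mathbf{r}(\alpha)\rangle$ and the inclusion $[\mathbf{r}(\alpha),1_{\mathbf{r}(\alpha)},f\rangle$ for (NC1), Lemma~\ref{lemsplit} for (NC2), and the idempotent cones $r^e$ for (NC3). Your explicit check that $(1_u\circ\alpha)_u=\alpha$ and $(\alpha\circ 1_g)_g=\alpha$ via the sandwich-element computation is a correct elaboration of a step the paper merely asserts.
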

\begin{proof}
By Lemma \ref{lemcso}, $(\mathcal{L}_G,\mathcal{P}_L)$ is a category with subobjects. Given an arbitrary morphism in $ [  e,\alpha,f \rangle  \in \mathcal{L}_G$,
$$ [  e,\alpha,f \rangle = [  e,1_{\mathbf{d}(\alpha)},\mathbf{d}(\alpha) \rangle \: [  \mathbf{d}(\alpha),\alpha,\mathbf{r}(\alpha) \rangle \: [  \mathbf{r}(\alpha),1_{\mathbf{r}(\alpha)},f \rangle $$
is a normal factorisation such that $ [  e,1_{\mathbf{d}(\alpha)},\mathbf{d}(\alpha) \rangle $ is a retraction, $ [  \mathbf{d}(\alpha),\alpha,\mathbf{r}(\alpha) \rangle $ is an isomorphism and $ [  \mathbf{r}(\alpha),1_{\mathbf{r}(\alpha)},f \rangle $ is an inclusion. By Lemma \ref{lemsplit}, every inclusion in $\mathcal{L}_G$ splits. Also, given an object $\overleftarrow{e}\in v\mathcal{L}_G$, the map $r^e$ is an idempotent cone with apex $\overleftarrow{e}$. Hence the theorem.
\end{proof}

\begin{rmk}
By Remarks \ref{rmkog} and \ref{rmkog1}, we have several associated `one-sided' ordered groupoids with a given cross-connection. The above theorem describes the normal category (a `one-sided' category with a regular partially ordered set \cite{grilcross} as its object set) constructed from an inductive groupoid. So, it may be worthwhile to investigate if we can associate a normal category from a suitable ordered groupoid by assuming its object set to form a regular partially ordered set.
\end{rmk}

\subsection{The normal category $\mathcal{R}_G$}
Dually, given an inductive groupoid $\mathcal{G}$, we proceed to build a `right-hand side' normal category via intermediary categories $\mathcal{P}_R$, $\mathcal{G}_R$ and $\mathcal{Q}_R$, as follows. One major difference from the construction of the category $\mathcal{L}_G$ in the preceding subsection is that the morphisms in the ordered groupoid $\mathcal{G}_R$ are induced from the inductive groupoid in the opposite direction (see below). We shall omit most of the details as the dual arguments of the construction of $\mathcal{L}_G$ will suffice. 

First, given the inductive groupoid $\mathcal{G}$ with regular biordered set $E$, consider the quotient set $v\mathcal{R}_G:= E/\mathscr{R}$ where $\mathscr{R}\:=\:\ler \: \cap \:(\ler )^{-1}$. This gives a regular partially ordered set with respect to $\leq_R\: := \: \ler /\mathscr{R} $. We shall denote the $\mathscr{R}$-class of $e$ in $E/\mathscr{R}$ by $\overrightarrow{e}$.

Now we aim to build three categories --- $\mathcal{P}_R$, $\mathcal{G}_R$ and $\mathcal{Q}_R$ --- that all have $E/\mathscr{R}$ as the object set, that is, $v\mathcal{P}_R=v\mathcal{G}_R=v\mathcal{Q}_R:= E/\mathscr{R}$. It remains to define the morphisms in each of these three categories.

We begin with $\mathcal{P}_R$ which has a unique morphism, denoted $j_\mathcal{R}(e,f)$, from $\overrightarrow{e}$ to $\overrightarrow{f}$ for each pair $(\overrightarrow{e}, \overrightarrow{f})$ such that $\overrightarrow{e}\leq_R \overrightarrow{f}$. Clearly, $\mathcal{P}_R$ is a strict preorder category under the following composition:
$$j_\mathcal{R}(e,f)\: j_\mathcal{R}(f,g) := j_\mathcal{R}(e,g).$$

We proceed with defining the morphisms in $\mathcal{G}_R$. Given any two morphisms $\alpha,\beta$ in the inductive groupoid $\mathcal{G}$, we first define an equivalence  relation $\sim_R$ as follows:
\begin{equation}\label{eqgr}
\alpha \sim_R \beta \:\iff\: \mathbf{d}(\alpha) \mathrel{\mathscr{R}} \mathbf{d}(\beta),\ \mathbf{r}(\alpha) \mathrel{\mathscr{R}} \mathbf{r}(\beta) \:\text{ and }\: \alpha\:\epsilon(\mathbf{r}(\alpha),\mathbf{r}(\beta)) = \epsilon(\mathbf{d}(\alpha),\mathbf{d}(\beta))\:\beta.
\end{equation}
Given a morphism $\alpha\in\mathcal{G}(e,f)$, we denote the $\sim_R$-class of $\mathcal{G}$ containing $\alpha$ by $\overrightarrow{\alpha}$ and we treat $\overrightarrow{\alpha}$ as a morphism in $\mathcal{G}_R$ from $\overrightarrow{f}$ to $\overrightarrow{e}$; observe the direction change! By the definition of $\overrightarrow{\alpha}$, we have  $\mathbf{d(\overrightarrow{\alpha})}= \overrightarrow{f}=\overrightarrow{\mathbf{r}(\alpha)}$ so that $\mathbf{d(\overrightarrow{\alpha})}= \overrightarrow{\mathbf{r}(\alpha)}$ for every  $\alpha\in\mathcal{G}$.

Further for $\overrightarrow{\alpha},\overrightarrow{\beta}\in \mathcal{G}_R$ such that $\mathbf{d}(\alpha)\mathrel{\mathscr{R}}\mathbf{r}(\beta)$, we define a composition in $\mathcal{G}_R$ as
$$\overrightarrow{\alpha}\overrightarrow{\beta}:= \overrightarrow{\beta\:\epsilon(\mathbf{r}(\beta),\mathbf{d}(\alpha))\:\alpha}$$
so that $\overrightarrow{\alpha}\overrightarrow{\beta}$ will be a morphism in $\mathcal{G}_R$ from $\mathbf{d}(\overrightarrow{\alpha})=\overrightarrow{\mathbf{r}(\alpha)}$ to $\mathbf{r}(\overrightarrow{\beta})=\overrightarrow{\mathbf{d}(\beta)}$. It can be shown that $\mathcal{G}_R$ is a groupoid under the above composition.

Finally, for the morphisms in $\mathcal{Q}_R$, if $f\ler e$, then for each $u\in E$ such that $u\leqslant e$ and $u \mathrel{\mathscr{R}}f$, we define a morphism $q_\mathcal{R}(e,u)$ in $\mathcal{Q}_R$ from $\overrightarrow{e}$ to $\overrightarrow{f}$. If we have two morphisms $q_\mathcal{R}(e,u)$ and $q_\mathcal{R}(f,v)$ in $\mathcal{Q}_R$ such that $u\mathrel{\mathscr{R}}f$, then we compose them as follows:
$$q_\mathcal{R}(e,u)\:q_\mathcal{R}(f,v) = q_\mathcal{R}(e,vu)$$
so that $\mathcal{Q}_R$ forms a category.

Now, to build the category $\mathcal{E}_R$ using the categories $\mathcal{Q}_R$ and $\mathcal{G}_R$, we consider an intermediary quiver $\mathcal{E}$ with object set $v\mathcal{E}:= E/\mathscr{R}$ and morphisms as follows:
$$\mathcal{E}:=\{(q,\overrightarrow{\alpha})\in \mathcal{Q}_R \times \mathcal{G}_R :  \mathbf{r}(q)=\mathbf{d}(\overrightarrow{\alpha}) \}.$$
Then, we define an equivalence relation $\sim_\mathcal{E}$ on the set $\mathcal{E}$ as follows:  for $\xi_1=(q_1(e,u),\overrightarrow{\alpha})$, $\xi_2=(q_2(f,v),\overrightarrow{\beta})$, 
$$\xi_1\sim_\mathcal{E}\xi_2\iff e\mathrel{\mathscr{R}}f,\:u\mathrel{\mathscr{L}}v\:\text{ and }\overrightarrow{\alpha}=\overrightarrow{\epsilon(u,v)}\:\overrightarrow{\beta}.$$
Now define the category $\mathcal{E}_R$ with the object set $v\mathcal{E}_R := E/\mathscr{R}$ and with morphisms being $\sim_\mathcal{E}$-classes of morphisms in $\mathcal{E}$.
A morphism $(q_\mathcal{R}(e,u),\overrightarrow{\alpha(f,u)})$ in  $\mathcal{E}$ such that  $\mathbf{r}(\alpha)=u$ is called a \emph{left epi} in the category $\mathcal{E}_R$ and shall be denoted by $ \langle  e,\alpha  ]$ in the sequel. We use left epis as `good' representatives of $\sim_\mathcal{E}$-classes: the left epi $ \langle  e,\alpha  ]$ represents the $\sim_\mathcal{E}$-class
$$ \{\:( q_\mathcal{R}(f,v),\overrightarrow{\theta})\in \mathcal{E}:\:f\mathrel{\mathscr{R}}e,\:v\mathrel{\mathscr{L}}\mathbf{r}(\alpha)\text{ and }\overrightarrow{\theta}=\overrightarrow{\alpha \:\epsilon(\mathbf{r}(\alpha),v)}\:\}. $$
Then we define a composition in $ \mathcal{E}_R$ as follows. Let ${\varepsilon}_1=  \langle  e_1, {\alpha}(f_1,u)  ]  $ and ${\varepsilon}_2=  \langle  e_2, {\beta}(f_2,v)  ]  $ be left epis in the category $\mathcal{E}_R$. If $\overrightarrow{f_1}\leq_R\overrightarrow{e_2}$, then we define
\begin{equation}\label{eqncomp1}
{\varepsilon}_1 {\varepsilon}_2 :=  \langle   e_1, {\theta}   ]
\end{equation}
where for $h\in \mathcal{S}(\mathbf{r}(\beta),\mathbf{d}(\alpha))= \mathcal{S}(v,f_1)$,
$$\theta := (\beta\circ\alpha){_h}=(\beta{\downharpoonright}vh)\:\epsilon(vh,h)\: \epsilon(h,hf_1)\:(hf_1{\downharpoonleft}\alpha ).$$

Finally, we define the morphisms in $\mathcal{R}_G= \mathcal{E}_R\otimes \mathcal{P}_R$ as follows:
\begin{equation*}
\mathcal{R}_G:=\{({\varepsilon},{j})\in \mathcal{E}_R \times \mathcal{P}_R : \mathbf{r}(\varepsilon) =\mathbf{d}(j) \}.
\end{equation*}
We shall denote a morphism $(\varepsilon,j)$ in $\mathcal{R}_G$ by $ \langle   e,\alpha,f  ]  $ where $\varepsilon= \langle   e,\alpha  ]  $ and $\overrightarrow{f}=\mathbf{r}(j)$. Hence $ \langle  e,\alpha,f  ]   =  \langle  g,\beta,h  ]  $ if and only if $e\mathrel{\mathscr{R}}g$, $\overline{\alpha}=\overline{\beta}$ and $f\mathrel{\mathscr{R}}h$. So, if $e\mathrel{\mathscr{R}}\mathbf{r}(\alpha)$ and $g\mathrel{\mathscr{R}}\mathbf{r}(\beta)$ in the biordered set $E$, we have that $ \langle  e,\alpha,f  ]   =  \langle  g,\beta,h  ]  $ if and only if $\overrightarrow{\alpha}=\overrightarrow{\beta}$ and $f\mathrel{\mathscr{R}}h$. Recall that here $\overline{\alpha}$ represents the $p$-class of $\alpha$ in $\mathcal{G}$ (see (\ref{eqnp})). In particular, $ \langle  e,1_e,e  ]  = \langle  f,1_f,f  ]  $ in $\mathcal{R}_G$ if and only if $e\mathscr{R}f$.

Further, given two morphisms $ \langle  e_1,\alpha,g_1  ]   ,  \langle  e_2,\beta,g_2  ]  $ in $\mathcal{R}_G$ such that $\overrightarrow{g_1}=\overrightarrow{e_2}$, we define a partial composition in $\mathcal{R}_G$ as follows. For $h\in \mathcal{S}(\mathbf{r}(\beta),\mathbf{d}(\alpha))$,
\begin{equation}
 \langle  e_1,\alpha,g_1  ]   \:  \langle  e_2,\beta,g_2  ]   :=  \langle  e_1,(\beta\circ\alpha){_h},g_2  ]  .
\end{equation}

We can easily verify that $\mathcal{R}_G$ is a category such that $\mathcal{P}_R$, $\mathcal{G}_R$, $\mathcal{Q}_R$ and $\mathcal{E}_R$ are all subcategories of $\mathcal{R}_G$ by the following identification.
	
\begin{center}
	\vspace{.5cm}
	\begin{tabular}{ccccc}
		\hline
		Category&&Typical morphism&& Corresponding morphism in $\mathcal{R}_G$\\
		\hline\\
		$\mathcal{P}_R$&&$j_{\mathcal{R}}(e,f)$&&$ \langle  e,1_e,f  ]  $\\[3pt]
		$\mathcal{G}_R$&&$\overrightarrow{\alpha}$&&$ \langle  \mathbf{r}({\alpha}),\alpha,\mathbf{d}(\alpha)  ]   $\\[3pt]
		$\mathcal{Q}_R$&&$q_\mathcal{R}(e,u)$&&$ \langle  e,1_u,u  ]   $\\[3pt]
		$\mathcal{E}_R$&&$ \langle  e, \alpha  ]  $&&$ \langle  e,\alpha,\mathbf{d}(\alpha)  ]   $\\[3pt]
		\hline
	\end{tabular}
	\vspace{1cm}
\end{center}
	
As in the case of $\mathcal{L}_G$, we can easily see that given a   morphism $ \langle  e,1_e,f  ]   \in \mathcal{P}_R\subseteq\mathcal{R}_G$, it is an {inclusion}, a morphism $ \langle  e,\alpha,f  ]   \in \mathcal{G}_R\subseteq\mathcal{R}_G$ is an {isomorphism} and a morphism $ \langle  f,1_{fe},fe  ]   \in \mathcal{Q}_R\subseteq\mathcal{R}_G$ is a {retraction} in the category $\mathcal{R}_G$. Also, we can verify that $(\mathcal{R}_G,\mathcal{P}_R)$ forms a normal category with distinguished principal cones $l^\alpha$ defined as follows. For a morphism $\alpha$ in the inductive groupoid $\mathcal{G}$ and for every $\overrightarrow{g}\in v\mathcal{R}_G$,
$$l^\alpha(\overrightarrow{g}):=  \langle  g,(\alpha\circ1_g){_h},f  ]  $$
where $h\in \mathcal{S}(\mathbf{r}(\alpha),g)$ and $f \mathrel{\mathscr{R}} \mathbf{d}(\alpha)$.

\subsection{The cross-connection $\Gamma_G$ of an inductive groupoid $\mathcal{G}$}
Now, we proceed to construct the required cross-connection $\Gamma_G$. Recall that the principal cone $r^e$ is defined as $r^e(\overleftarrow{g})= [  g,(1_g\circ1_e)_h,e \rangle $ for each $g\in E$. The cone $r^e$ determines an $H$-functor $H(r^e;-)\colon$ $\mathcal{L}_G\to \mathbf{Set}$ so that $H(r^e;-)\in vN^*\mathcal{L}_G$. Also recall that $\eta_{r^e}$ is the natural isomorphism between the $H$-functor $H(r^e;-)$ and the covariant hom-functor  $\mathcal{L}_G(\overleftarrow{e},-)\colon \mathcal{L}_G\to \mathbf{Set}$ determined by the object $\overleftarrow{e}\in v\mathcal{L}_G$. Now, define a functor $\Gamma_G\colon  \mathcal{R}_G \to N^*\mathcal{L}_G$ as follows:
\begin{equation}\label{eqncxn}
v\Gamma_G(\overrightarrow{e}):=H(r^e;-) \text{ and } \Gamma_G( \langle   e, {\alpha},f  ]  ):=\eta_{r^e} \mathcal{L}_G(  [    f,  {\alpha},e \rangle ,-) \eta_{r^f}^{-1}
\end{equation}
for each $\overrightarrow{e} \in v\mathcal{R}_G$ and for each morphism $\langle   e, {\alpha},f  ] \in \mathcal{R}_G(\overrightarrow{e}, \overrightarrow{f})$. The natural transformation $\Gamma_G(\langle   e, {\alpha},f  ] )$ above may also be described by the following commutative diagram:
\begin{equation*}\label{eta}
\xymatrixcolsep{2pc}\xymatrixrowsep{4pc}\xymatrix
{
	H(r^e;-) \ar[rr]^{\eta_{r^e}} \ar[d]_{\Gamma_G(\langle   e, {\alpha},f  ] )}
	&& \mathcal{L}_G(\overleftarrow{e},-) \ar[d]_{\mathcal{L}([    f,  {\alpha},e \rangle,-)} &\overleftarrow{e}\\
	H(r^f;-) \ar[rr]^{\eta_{r^f}} && \mathcal{L}_G(\overleftarrow{f},-)&\overleftarrow{f}\ar[u]_{[    f,  {\alpha},e \rangle}
}
\end{equation*}

Now, we proceed to show that $\Gamma_G$ is a cross-connection and for that, first we need to prove that $\Gamma_G$ is a local isomorphism. One can prove directly that the functor $\Gamma_G$ is a local isomorphism by working with the morphisms in the functor category $N^*\mathcal{L}_G$, but it would involve a rather cumbersome argument and use of several undefined ideas. So, we take an alternate easier route. We shall realise the functor $\Gamma_G$ as a composition of two functors, namely a local isomorphism $\bar{F}\colon \mathcal{R}_{G} \to \mathcal{R}_{T\mathcal{L}_G}$ and an isomorphism $\bar{G} \colon \mathcal{R}_{T\mathcal{L}_G}\to N^\ast\mathcal{L}_G$; thereby proving that $\Gamma_G$ is a local isomorphism. It is worth noting that a similar technique has been employed in \cite[Section IV.1]{cross}. To this end, we shall require the following discussion about the structure arising from a given normal category. As the reader shall see, this is the only place where regular semigroups surface in our discussion.

Given a normal category $\mathcal{C}$, it can be seen that \cite{cross} the set of all the {cones} in $\mathcal{C}$ with the special composition as defined in (\ref{eqnsg1}) forms a regular semigroup known as the \emph{semigroup of cones} in $\mathcal{C}$ and denoted by $T\mathcal{C}$.

Also, given a regular semigroup $S$, we can  naturally associate with it two normal categories, each arising from the principal left and right ideals respectively (denoted as $\mathcal{L}_S$ and $\mathcal{R}_S$ respectively). An object of the category $\mathcal{L}_S$ is a principal left ideal $Se$ for $e\in E$, and a morphism from $Se$ to $Sf$ is a partial right translation $\rho(e,u,f)\colon  u \in eSf$. 
Two morphisms $\rho(e,u,f)$ and $\rho(g,v,h)$ are equal in $\mathcal{L}_S$ if and only if $e \mathrel{\mathscr{L}} g$, $f\mathrel{\mathscr{L}} h$, $u \in eSf$, $v\in gSh$, $u=ev$, and, consequently, $v=gu$. Dually, we can define $\mathcal{R}_S$.

Given an abstractly defined normal category $\mathcal{C}$ with an associated regular semigroup $T\mathcal{C}$, the relationship of the normal categories arising from the semigroup $T\mathcal{C}$ is described in the following theorem.

\begin{thm}[\!{\cite[Theorem III.19, Theorem III.25]{cross}}]\label{nd}
	Let $\mathcal{C}$ be a normal category with normal dual $N^\ast\mathcal{C}$. The category $\mathcal{L}_{T\mathcal{C}}$ is normal category isomorphic to the category $\mathcal{C}$  and the category $\mathcal{R}_{T\mathcal{C}}$ is normal category isomorphic to the normal dual $N^\ast\mathcal{C}$.
\end{thm}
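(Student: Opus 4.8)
\emph{Proof proposal.} The plan is to build explicit isomorphisms of normal categories in both directions, with the semigroup of cones $T\mathcal C$ serving as the common bridge. First I would record the structural facts about $T\mathcal C$ that drive everything: the idempotents of $T\mathcal C$ are exactly the idempotent cones; for cones $\gamma,\delta$ one has $\gamma\mathrel{\mathscr L}\delta$ in $T\mathcal C$ iff $c_\gamma=c_\delta$, and $\gamma\mathrel{\mathscr R}\delta$ in $T\mathcal C$ iff $H(\gamma;-)=H(\delta;-)$; and, since $T\mathcal C$ is regular (already recalled in the text), every $\mathscr R$-class of $T\mathcal C$ meets the idempotents. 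The $\mathscr L$-characterisation is a direct consequence of the product rule \eqref{eqnsg1}: expanding $\mu\cdot\delta=\mu*(\delta(c_\mu))^\circ$ shows that every member of $T\mathcal C\,\delta$ has apex a subobject of $c_\delta$ and that $\delta\in T\mathcal C\,\delta$, while for the converse one uses that $\gamma*\delta(c_\mu)^{-1}$ is again a cone whenever $\delta(c_\mu)$ is an isomorphism; the $\mathscr R$-characterisation is the dual computation read through $H$-functors.

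Second, for the isomorphism $\mathcal C\cong\mathcal L_{T\mathcal C}$, I would fix for each $c\in v\mathcal C$ an idempotent cone $e_c$ with apex $c$ (available by (NC3)) and define $G\colon\mathcal C\to\mathcal L_{T\mathcal C}$ by $G(c):=(T\mathcal C)e_c$ on objects and $G(g):=\rho\bigl(e_c,\,e_c*g^\circ,\,e_d\bigr)$ for $g\colon c\to d$. Using the $\mathscr L$-fact, $(T\mathcal C)e_c$ is independent of the choice of $e_c$ and the object map is a bijection of $v\mathcal C$ onto the principal left ideals of $T\mathcal C$. Next I would verify that $g\mapsto e_c*g^\circ$ is a bijection $\mathcal C(c,d)\to e_c(T\mathcal C)e_d=\mathcal L_{T\mathcal C}(G(c),G(d))$: injectivity follows by evaluating at $c$ (which returns $g^\circ$, and $g^\circ$ together with $\im g\subseteq d$ recovers $g$), and surjectivity follows from the observation that every element of $e_c(T\mathcal C)e_d$ has the form $e_c*\kappa$ for a unique epimorphism $\kappa$ out of $c$ onto a subobject of $d$, i.e. $\kappa=g^\circ$ for a unique $g\colon c\to d$. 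Functoriality $G(g)G(h)=G(gh)$ reduces, via the composition rule of $\mathcal L_{T\mathcal C}$, to the cone identity $(e_c*g^\circ)\cdot(e_d*h^\circ)=e_c*(gh)^\circ$. Finally $G$ sends inclusions to inclusions, retractions to retractions and isomorphisms to isomorphisms, checked on the respective normal forms of $g$; hence $G$ is a bijective-on-objects, fully faithful, inclusion-preserving functor with inclusion-preserving inverse, i.e. an isomorphism of normal categories.

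Third, for $N^\ast\mathcal C\cong\mathcal R_{T\mathcal C}$ I would argue dually. The objects of $\mathcal R_{T\mathcal C}$ are the principal right ideals $\epsilon(T\mathcal C)$ of idempotent cones; the rule $\epsilon(T\mathcal C)\mapsto H(\epsilon;-)$ is well defined and injective by the $\mathscr R$-fact, and surjective onto $vN^\ast\mathcal C$ because, for any cone $\gamma$, regularity of $T\mathcal C$ yields an idempotent cone $\epsilon$ with $\epsilon\mathrel{\mathscr R}\gamma$, whence $H(\epsilon;-)=H(\gamma;-)$. On hom-sets, the representability isomorphisms $\eta_\epsilon\colon H(\epsilon;-)\to\mathcal C(c_\epsilon,-)$ together with the Yoneda lemma identify $N^\ast\mathcal C(H(\epsilon;-),H(\epsilon';-))$ with $\mathcal C(c_{\epsilon'},c_\epsilon)$, while on the other side the partial left translations identify $\mathcal R_{T\mathcal C}(\epsilon(T\mathcal C),\epsilon'(T\mathcal C))$ with the same hom-set, dually to the $\mathcal L$-computation; matching these identifications yields the desired isomorphism of normal categories. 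Alternatively, one may try to deduce this statement from the first part applied to $N^\ast\mathcal C$, once the relation between $T(N^\ast\mathcal C)$ and $T\mathcal C$ is made precise.

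The hard part is not conceptual but a matter of careful bookkeeping: the product \eqref{eqnsg1} conflates composition with passage to an epimorphic component, and one must show that under the bijections above this twisted product corresponds exactly to ordinary composition in $\mathcal C$ (respectively to vertical composition of natural transformations in $N^\ast\mathcal C$) --- concretely, establishing $(e_c*g^\circ)\cdot(e_d*h^\circ)=e_c*(gh)^\circ$ and its dual with full attention to normal factorisations, images and retractions. The other nontrivial ingredients are the Green's-relation characterisations of $T\mathcal C$ recorded in the first step, and the verification that both functors \emph{and their inverses} preserve the inclusion subcategories, which is what upgrades ``equivalence'' to ``isomorphism of normal categories''.
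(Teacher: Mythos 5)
This theorem is quoted from Nambooripad's treatise \cite{cross} and the paper supplies no proof of its own, so the only available comparison is with the cited source and with the explicit maps the paper extracts from it. Your sketch is correct and is essentially that standard argument: the Green's-relation description of $T\mathcal{C}$ (same apex for $\mathscr{L}$, same $H$-functor for $\mathscr{R}$), the assignment $c\mapsto T\mathcal{C}\,e_c$, $g\mapsto\rho(e_c,\,e_c*g^\circ,\,e_d)$ resting on the cone identity $(e_c*g^\circ)\cdot(e_d*h^\circ)=e_c*(gh)^\circ$, and the identification $\epsilon\, T\mathcal{C}\mapsto H(\epsilon;-)$ with hom-sets matched through $\eta_\epsilon$ and the Yoneda lemma are exactly the isomorphisms underlying the functors $\bar F$ and $\bar G$ used later in \eqref{eqnf} and \eqref{eqng}.
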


So now, we first proceed to prove that $\bar{F}\colon \mathcal{R}_{G} \to \mathcal{R}_{T\mathcal{L}_G}$ defined as follows is a local isomorphism. For each $\overrightarrow{e}\in v\mathcal{R}_G$ and for each morphism $\langle   e, {\alpha},f  ] \in \mathcal{R}_G(\overrightarrow{e}, \overrightarrow{f})$,
\begin{equation}\label{eqnf}
v\bar{F}(\overrightarrow{e}):=r^eT\mathcal{L}_G \text{ and } \bar{F}( \langle   e, {\alpha},f  ]  ):=\lambda({r^e}, r^{\alpha},{r^f}).
\end{equation}
To show that $\bar{F}$ is a local isomorphism, we begin by verifying the following lemma.
\begin{lem}\label{lemF}
$\bar{F}$ is a covariant functor.
\end{lem}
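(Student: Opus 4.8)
The plan is to verify in turn the three defining properties of a covariant functor: that $\bar{F}$ is well defined on objects, that $\bar{F}(\langle e,\alpha,f])$ is a genuine morphism of $\mathcal{R}_{T\mathcal{L}_G}$ that does not depend on the representative chosen for $\langle e,\alpha,f]$, and that $\bar{F}$ respects identities and composition. Throughout, one uses the explicit description of $\mathcal{R}_{T\mathcal{L}_G}$ as the normal category of principal right ideals of the regular semigroup $T\mathcal{L}_G$ of cones in $\mathcal{L}_G$: its objects are the ideals $\pi\,T\mathcal{L}_G$ with $\pi$ an idempotent of $T\mathcal{L}_G$; a morphism $\lambda(\pi,\sigma,\tau)\colon \pi\,T\mathcal{L}_G\to\tau\,T\mathcal{L}_G$ is the partial left translation attached to an element $\sigma\in\tau(T\mathcal{L}_G)\pi$; and $\lambda(\pi,\sigma,\tau)=\lambda(\pi',\sigma',\tau')$ precisely when $\pi\mathrel{\mathscr{R}}\pi'$ and $\tau\mathrel{\mathscr{R}}\tau'$ in $T\mathcal{L}_G$ and $\sigma=\sigma'\pi$.

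\emph{Objects and the transfer of Green relations.} By Remark \ref{rmkic}, $r^e$ is an idempotent of $T\mathcal{L}_G$ for every $e\in E$, so $v\bar{F}(\overrightarrow{e})=r^eT\mathcal{L}_G$ is a legitimate object of $\mathcal{R}_{T\mathcal{L}_G}$. The key auxiliary fact is that $\mathscr{R}$-classes of $E$ pass to $\mathscr{R}$-classes of $T\mathcal{L}_G$: if $e\mathrel{\mathscr{R}}f$ in $E$, then $ef=f$ and $fe=e$ are basic products, so by Remark \ref{rmkic} $r^er^f=r^{ef}=r^f$ and $r^fr^e=r^{fe}=r^e$, whence $r^e\mathrel{\mathscr{R}}r^f$ in $T\mathcal{L}_G$ and $r^eT\mathcal{L}_G=r^fT\mathcal{L}_G$.

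\emph{Morphisms.} Let $\langle e,\alpha,f]$ be a morphism of $\mathcal{R}_G$. Using the order data carried by a left-epi-plus-inclusion form of such a morphism (namely the relations that $\mathbf{r}(\alpha)$ and $\mathbf{d}(\alpha)$ have with $e$ and $f$ in the biordered set $E$), together with Lemma \ref{lemr} applied to $r^{1_f}r^\alpha=r^{(1_f\circ\alpha)_h}$ and to $r^\alpha r^{1_e}=r^{(\alpha\circ 1_e)_{h'}}$, and with Remark \ref{rmkcone} (which tells us that $r^\gamma$ depends only on the $p$-class $\overline{\gamma}$), one checks that $\overline{(1_f\circ\alpha)_h}=\overline{\alpha}=\overline{(\alpha\circ 1_e)_{h'}}$, hence that $r^fr^\alpha=r^\alpha=r^\alpha r^e$, i.e. $r^\alpha\in r^f(T\mathcal{L}_G)r^e$. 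Thus $\lambda(r^e,r^\alpha,r^f)$ is indeed a morphism $r^eT\mathcal{L}_G\to r^fT\mathcal{L}_G$. For independence of the representative, suppose $\langle e,\alpha,f]=\langle g,\beta,h]$, so that $e\mathrel{\mathscr{R}}g$, $\overline{\alpha}=\overline{\beta}$, and $f\mathrel{\mathscr{R}}h$ in $E$. By the previous paragraph $r^e\mathrel{\mathscr{R}}r^g$ and $r^f\mathrel{\mathscr{R}}r^h$ in $T\mathcal{L}_G$; by Remark \ref{rmkcone} $r^\alpha=r^\beta$; and $r^\beta r^e=r^\alpha r^e=r^\alpha$ by the identity just established. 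Hence $\lambda(r^e,r^\alpha,r^f)=\lambda(r^g,r^\beta,r^h)$, as required.

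\emph{Functoriality and the main obstacle.} The identity of $\mathcal{R}_G$ at $\overrightarrow{e}$ is $\langle e,1_e,e]$, and $r^{1_e}=r^e$ by the definition of the idempotent cone $r^e$, so $\bar{F}(\langle e,1_e,e])=\lambda(r^e,r^e,r^e)=1_{r^eT\mathcal{L}_G}$. For composition, let $\langle e_1,\alpha,g_1]$ and $\langle e_2,\beta,g_2]$ be composable in $\mathcal{R}_G$, so that $\overrightarrow{g_1}=\overrightarrow{e_2}$ and, for $h\in\mathcal{S}(\mathbf{r}(\beta),\mathbf{d}(\alpha))$, their composite is $\langle e_1,(\beta\circ\alpha)_h,g_2]$. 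By the Objects paragraph $r^{g_1}\mathrel{\mathscr{R}}r^{e_2}$ in $T\mathcal{L}_G$, so $\lambda(r^{e_1},r^\alpha,r^{g_1})$ and $\lambda(r^{e_2},r^\beta,r^{g_2})$ are composable in $\mathcal{R}_{T\mathcal{L}_G}$, and composing the two partial left translations yields the one attached to the product $r^\beta r^\alpha$, i.e. $\lambda(r^{e_1},r^\beta r^\alpha,r^{g_2})$. Since $r^\beta r^\alpha=r^{(\beta\circ\alpha)_h}$ by Lemma \ref{lemr}, we obtain
\[
\bar{F}(\langle e_1,\alpha,g_1])\,\bar{F}(\langle e_2,\beta,g_2])=\lambda(r^{e_1},r^{(\beta\circ\alpha)_h},r^{g_2})=\bar{F}\bigl(\langle e_1,(\beta\circ\alpha)_h,g_2]\bigr),
\]
so $\bar{F}$ is a covariant functor. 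The only genuinely delicate point is the normalization identity $r^fr^\alpha=r^\alpha=r^\alpha r^e$ in the Morphisms step: establishing it means unwinding the definition of $(\,\cdot\circ\,\cdot)_h$, matching the $\mathscr{L}$- and $\mathscr{R}$-relations produced by the relevant sandwich sets with the partial-order information built into an $\mathcal{R}_G$-morphism, and keeping $\mathscr{R}$ in $E$ carefully separate from $\mathscr{R}$ in $T\mathcal{L}_G$ throughout.
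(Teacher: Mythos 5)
Your proposal is correct and follows essentially the same route as the paper: well-definedness on objects via $e\mathrel{\mathscr{R}}f\Rightarrow r^e\mathrel{\mathscr{R}}r^f$ (Remark \ref{rmkic}/Lemma \ref{lemr}), representative-independence on morphisms via the equality criterion for the $\lambda$'s together with $r^\alpha=r^\beta$ and the normalization $r^\beta r^e=r^\alpha$, and preservation of identities and composition via $r^\beta r^\alpha=r^{(\beta\circ\alpha)_h}$ from Lemma \ref{lemr}. Your only addition is the explicit verification that $r^fr^\alpha=r^\alpha=r^\alpha r^e$, i.e.\ $r^\alpha\in r^f(T\mathcal{L}_G)r^e$, which the paper leaves implicit but justifies by exactly the computation you indicate (using $\mathbf{r}(\alpha)\lel e$ and $\mathbf{d}(\alpha)\ler f$).
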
	
\begin{proof}
	We first need to verify that the $\bar{F}$ is well-defined. If $\overrightarrow{e}	=\overrightarrow{f}$, then $e\mathrel{\mathscr{R}}f$. Then for an arbitrary $k\in E$,
	\begin{align*}
	r^er^f (\overleftarrow{k})&=r^{(1_e\circ1_f)_h} (\overleftarrow{k})&&\text{ where }h\in\mathcal{S}(e,f), \text{ using Lemma \ref{lemr}}\\
	&=r^{e} (\overleftarrow{k})&&\text{ since } e\mathrel{\mathscr{R}}f\text{ implies } ef=e.	\end{align*}
Hence, in the semigroup $T\mathcal{L}_G$, we have $r^e\cdot r^f =r^f$. Similarly, we can show that $r^f\cdot r^e =r^e$. So, $r^e\mathrel{\mathscr{R}} r^f$ and so, the right ideal $r^eT\mathcal{L}_G=r^fT\mathcal{L}_G$. Hence $v\bar{F}$ is well-defined.

Now, suppose that $\langle   e, {\alpha},f  ] =\langle   g, {\beta},h  ] $. That is, $e\mathrel{\mathscr{R}}g$, $\overline{\alpha} = \overline{\beta}$ and $f\mathrel{\mathscr{R}}h	$.
Then, as shown above $r^e \mathrel{\mathscr{R}} r^g$ and $r^f \mathrel{\mathscr{R}}r^h$.
Also, for an arbitrary $k\in E$,
\begin{align*}
r^\beta\:r^e(\overleftarrow{k})&=r^{(\beta\circ1_e)_{h_1}}(\overleftarrow{k})\quad\text{where }h_1\in\mathcal{S}(\mathbf{r}(\beta),e), \text{ using Lemma \ref{lemr}}\\
&=r^{(\alpha\circ1_e)_{h_1}}(\overleftarrow{k}) \quad\text{ since }\overline{\alpha}=\overline{\beta}\\
&=r^\alpha(\overleftarrow{k})\quad\text{ since }\mathbf{r}(\alpha)\lel e \text{ implies }\mathbf{r}(\alpha)e=\mathbf{r}(\alpha)\text{ implies }(\alpha\circ1_e)_{h_1}=\alpha.
\end{align*}
That is, in the semigroup $T\mathcal{L}_G$, we have $r^\beta \:r^e=r^\alpha$ and so $\lambda({r^e}, r^{\alpha},{r^f})=\lambda({r^g}, r^{\beta},{r^h})$. Hence $\bar{F}$ is well-defined. Also, if $\langle   e, {\alpha},f  ] $ and $\langle   g, {\beta},h  ] $ are composable morphisms in the category $\mathcal{R}_G$, then
\begin{align*}
\bar{F}(\langle   e, {\alpha},f  ])\:\bar{F} (\langle   g, {\beta},h  ]  )&=\lambda(r^e,r^\alpha,r^f)\:\lambda(r^g,r^\beta,r^h)\\
&=\lambda(r^e,r^\beta r^\alpha,r^h)\quad\text{ since composition flips in $\mathcal{R}_{T\mathcal{L}_G}$}\\
&=\lambda(r^e,r^{(\beta\circ\alpha)_{h_1}},r^h)\ \text{ for }h_1\in \mathcal{S}(\mathbf{r}(\beta),\mathbf{d}(\alpha)), \text{ using Lemma \ref{lemr}}.
\end{align*}
Also,
\begin{align*}
\bar{F}(\langle   e, {\alpha},f  ] \langle   g, {\beta},h  ]  )&=\bar{F}(\langle   e, {(\beta\circ\alpha)_{h_1}	},h  ]) \quad\text{ for }h_1\in \mathcal{S}(\mathbf{r}(\beta),\mathbf{d}(\alpha))\\
&=\lambda(r^e,r^{(\beta\circ\alpha)_{h_1}},r^h).
\end{align*}
Hence, $$\bar{F}(\langle   e, {\alpha},f  ])\:\bar{F} (\langle   g, {\beta},h  ]  )= \bar{F}(\langle   e, {\alpha},f  ] \langle   g, {\beta},h  ]  ).$$
Moreover, $$\bar{F}(1_{\overrightarrow{e}})= \bar{F}(\langle   e, {1_e},e  ] )=\lambda(r^e,r^e,r^e)= 1_{r^eT\mathcal{L}_G}.$$
Hence $\bar{F}$ is a well-defined covariant functor.
\end{proof}

Before we proceed further, we prove the following lemma which relates the biordered set $E$ of an inductive groupoid $\mathcal{G}$ with the biordered set  of the semigroup of cones $T\mathcal{L}_G$ in the associated normal category $\mathcal{L}_G$.
\begin{lem}\label{lemwr}
The map $\theta\colon E \to E(T\mathcal{L}_G)$ given by $e\mapsto r^e$ is a regular bimorphism of biordered sets. Further, if $e,f\in E$ with $r^e\ler r^f$, then there exists $h\in E$ such that $h\theta=r^e$ and $h\ler f$.
\end{lem}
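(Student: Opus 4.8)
\emph{Proof plan.} The statement bundles two claims — that $\theta$ is a regular bimorphism, and the ``lifting'' assertion in its second sentence — and I treat them separately.

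\textbf{That $\theta$ is a regular bimorphism.} By Remark~\ref{rmkic} every $r^e$ is an idempotent cone, hence lies in $E(T\mathcal{L}_G)$, so $\theta$ is well defined. For (BM1) and (BM2): if $(e,f)\in D_E$ then, by (B1), one of $e\lel f$, $e\ler f$, $f\lel e$, $f\ler e$ holds, so the corresponding basic product ($ef$ or $fe$) is defined and equals one of $e$, $f$; applying Remark~\ref{rmkic} to that basic product gives $r^e r^f=r^{ef}$ (resp.\ $r^f r^e=r^{fe}$) in $T\mathcal{L}_G$. This shows simultaneously that $(r^e,r^f)$ lies in the domain of the basic product of $E(T\mathcal{L}_G)$ and that $\theta$ preserves it. For (RBM), let $h\in\mathcal{S}(e,f)$; the definition of the sandwich set gives $h\lel e$ and $h\ler f$, whence $r^h\lel r^e$ and $r^h\ler r^f$ by (BM2), and it remains to verify the two ``sandwich inequalities'' of $\mathcal{S}_{E(T\mathcal{L}_G)}(r^e,r^f)$ against an arbitrary idempotent cone $\sigma$ with $\sigma\lel r^e$, $\sigma\ler r^f$; this is a routine (if tedious) check using the multiplicativity $r^x r^y=r^{xy}$ on basic products (Remark~\ref{rmkic}, Lemma~\ref{lemr}) together with the defining inequalities of $\mathcal{S}(e,f)$ in $E$. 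Alternatively, (RBM) is a consequence of the general theory of \cite{cross} and Theorem~\ref{nd}, since $\theta$ is just the canonical map from $E$ into the biordered set of the cone semigroup of $\mathcal{L}_G$.

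\textbf{The lifting property.} Assume $e,f\ler g$ and $r^e\ler r^f$, i.e.\ $r^f\cdot r^e=r^e$ in $T\mathcal{L}_G$. Choose $k\in\mathcal{S}(f,e)$, non-empty since $E$ is regular; by Lemma~\ref{lemr}, $r^f\cdot r^e=r^{(1_f\circ 1_e)_k}$. Since $k\lel f$ and $k\ler e$, axiom (B2) yields $fk\leqslant f$ and $ke\leqslant e$, and since the restriction, resp.\ corestriction, of an identity morphism to a subobject is again an identity, the pseudo-product collapses:
\[
(1_f\circ 1_e)_k=\epsilon(fk,k)\,\epsilon(k,ke),
\]
an isomorphism $fk\to ke$ in $\mathcal{G}$. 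Hence $r^{\epsilon(fk,k)\epsilon(k,ke)}=r^e=r^{1_e}$. Because the assignment $\alpha\mapsto r^\alpha$ is faithful on $p$-classes — i.e.\ $r^\alpha=r^\beta$ iff $\overline{\alpha}=\overline{\beta}$, the forward direction being Remark~\ref{rmkcone} and the converse being intrinsic to the construction of $\mathcal{L}_G$ (cf.\ Lemma~\ref{lemp}) — we get $\overline{\epsilon(fk,k)\epsilon(k,ke)}=\overline{1_e}$, and comparing domains gives $fk=\mathbf{d}\bigl(\epsilon(fk,k)\epsilon(k,ke)\bigr)\mathrel{\mathscr{R}}\mathbf{d}(1_e)=e$. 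Therefore $e\ler fk$, and since also $fk\ler f$ (from $fk\leqslant f$) and $\ler$ is transitive by (B1), we obtain $e\ler f$. Thus $h:=e$ satisfies $h\ler f$ and $h\theta=r^e$, as required.

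\textbf{Where the difficulty lies.} The crux is the second part: one must convert the identity $r^f\cdot r^e=r^e$ of cones into a relation internal to the biordered set $E$. This is done by interposing a sandwich element $k$, rewriting $(1_f\circ 1_e)_k$ explicitly as $\epsilon(fk,k)\epsilon(k,ke)$, and invoking the faithfulness of $\alpha\mapsto r^\alpha$ on $p$-classes; the remaining $\mathscr{R}$/$\ler$ bookkeeping is then immediate, and $h=e$ works. In the first part the only non-automatic point is (RBM), which is mildly delicate precisely because the sandwich set in $E(T\mathcal{L}_G)$ quantifies over all idempotent cones, not merely those of the form $r^x$.
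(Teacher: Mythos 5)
The first half of your proposal (well-definedness via Remark~\ref{rmkic}, (BM1), (BM2), and the sketched (RBM)) follows the paper's own argument and is fine. The genuine gap is in the lifting part, which you hang on the claim that $\alpha\mapsto r^\alpha$ is ``faithful on $p$-classes'', i.e.\ that $r^\alpha=r^\beta$ implies $\overline{\alpha}=\overline{\beta}$. Only the forward implication (Remark~\ref{rmkcone}) is available; the converse is false. Take $\mathcal{G}$ to be the inductive groupoid of a rectangular band $I\times\Lambda$ (exactly one morphism between any two idempotents, $\overline{\alpha}$ determined by the $\mathscr{R}$-class of $\mathbf{d}(\alpha)$ and the $\mathscr{L}$-class of $\mathbf{r}(\alpha)$). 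For the distinct $\mathscr{L}$-related idempotents $e=(1,\lambda)$ and $e'=(2,\lambda)$ one has $r^{e}=r^{e'}$: the apices $\overleftarrow{e}=\overleftarrow{e'}$ agree, and by Lemma~\ref{lemp} the components $[g,(1_g\circ 1_e)_h,e\rangle$ and $[g,(1_g\circ 1_{e'})_{h'},e'\rangle$ coincide because both $p$-classes equal the product $g\cdot(\,\cdot\,)=(i_g,\lambda)$; yet $\overline{1_e}\neq\overline{1_{e'}}$ since $e$ and $e'$ are not $\mathscr{R}$-related. This non-injectivity of $\theta$ is precisely why the lemma asserts only the existence of \emph{some} $h$ with $h\theta=r^e$, not $e\ler f$ itself.

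Consequently the chain $r^{\mu}=r^{1_e}\Rightarrow\overline{\mu}=\overline{1_e}\Rightarrow fk\mathrel{\mathscr{R}}e\Rightarrow e\ler f$ is unjustified, and note that your argument never uses the hypothesis $e,f\ler g$: what it would establish is the unconditional implication ``$r^e\ler r^f\Rightarrow e\ler f$'', which is false. In the same rectangular band take $e=(1,\lambda)$ and $f=(2,\mu)$ with $\mu\neq\lambda$: then $r^f r^e=r^{(1_f\circ 1_e)_k}=r^{(2,\lambda)}=r^e$ (so $r^e\ler r^f$), but $fe=(2,\lambda)\neq e$, so $e\not\ler f$; here the lemma's conclusion is witnessed only by $h=(2,\lambda)\neq e$. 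Your collapse of $(1_f\circ 1_e)_k$ to $\epsilon(fk,k)\,\epsilon(k,ke)$ is correct, but it does not close the argument. The paper's route is simply to quote \cite[Proposition~2.14]{mem}: once $\theta$ is known to be a regular bimorphism, it weakly reflects $\ler$, which is exactly the second sentence of the lemma. A direct argument is possible, but it must genuinely use $g$: evaluate the cone equality $r^f r^e=r^e$ at the object $\overleftarrow{g}$, and use $ge=e$, $gf=f$ together with Lemma~\ref{lemp}, Lemma~\ref{lemr} and the associativity of the pseudoproduct (equation~(\ref{eqnasso})) to compare the resulting $p$-classes; the hypothesis $e,f\ler g$ is what makes that comparison work.
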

\begin{proof}
First, suppose that $(e,f) \in D_E$. Suppose that $e\lel  f$, i.e., $e\:f=e$. Then, $r^e\:r^f=r^{(1_e\circ1_f)_e}=r^e$. So $r^e\lel  r^f$ and $(r^e,r^f)\in D_{E(T\mathcal{L}_G)}$. Similarly we can verify for the preorders $\ler $, $(\lel )^{-1}$ and $(\ler )^{-1}$. Hence (BM1) is satisfied.

Also, using Remark \ref{rmkic}, we can see that
$$(e\:f)\theta=r^{e\:f}=r^e\:r^f=(e\theta)\:(f\theta).$$
Hence the condition (BM2) is also satisfied and $\theta$ is a bimorphism.

Now, if $h\in \mathcal{S}(e,f)$, then as above, we can easily verify that $r^h\in \mathcal{S}(r^e,r^f)$; thus (RBM) is satisfied and so $\theta$ is a regular bimorphism from $E$ to $E(T\mathcal{L}_G)$.

The rest of the statement of the lemma directly follows from \cite[Proposition 2.14]{mem}.
\end{proof}

\begin{pro}
The functor	$\bar{F}\colon \mathcal{R}_{G} \to \mathcal{R}_{T\mathcal{L}_G}$ is a local isomorphism.
\end{pro}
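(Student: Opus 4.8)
The plan is to establish, one by one, the three defining properties of a local isomorphism for $\bar F$: that it is inclusion preserving, that it is fully faithful, and that for every $\overrightarrow e\in v\mathcal{R}_G$ its restriction to the ideal $\langle\overrightarrow e\rangle$ is an isomorphism onto the ideal $\langle r^eT\mathcal{L}_G\rangle$ of $\mathcal{R}_{T\mathcal{L}_G}$. That $\bar F$ is a functor has already been shown in Lemma \ref{lemF}, and throughout we shall identify the objects of $\mathcal{R}_{T\mathcal{L}_G}$ with the principal right ideals $rT\mathcal{L}_G$ of the semigroup of cones, invoking Theorem \ref{nd} whenever it is convenient to view $\mathcal{R}_{T\mathcal{L}_G}$ as $N^\ast\mathcal{L}_G$.

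Inclusion preservation will be immediate from Lemma \ref{lemwr}: an inclusion of $\mathcal{R}_G$ is a morphism $\langle e,1_e,f]$ with $e\ler f$, and since $\theta\colon e\mapsto r^e$ is a bimorphism we get $r^e\ler r^f$, so $r^eT\mathcal{L}_G\subseteq r^fT\mathcal{L}_G$, while $\bar F(\langle e,1_e,f])=\lambda(r^e,r^e,r^f)$ is by definition the inclusion $j(r^eT\mathcal{L}_G,r^fT\mathcal{L}_G)$. For faithfulness, two parallel morphisms of $\mathcal{R}_G$ may be written $\langle e,\alpha,f]$ and $\langle e,\beta,f]$ and are equal precisely when $\overline\alpha=\overline\beta$; if $\bar F$ identifies them, then comparing the translate components of $\lambda(r^e,r^\alpha,r^f)$ and $\lambda(r^e,r^\beta,r^f)$ forces $r^\alpha=r^\beta$ in $T\mathcal{L}_G$, and evaluating these equal cones at the objects $\overleftarrow{\mathbf d(\alpha)}$ and $\overleftarrow{\mathbf d(\beta)}$ of $\mathcal{L}_G$ and applying Lemma \ref{lemp} recovers $\overline\alpha=\overline\beta$; hence $\bar F$ is faithful.

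The main work is the ideal restriction, and fullness will drop out of it. Fix $\overrightarrow e\in v\mathcal{R}_G$. On objects, $\bar F$ sends the objects $\overrightarrow f$ of $\langle\overrightarrow e\rangle$ (those with $f\ler e$) to the right ideals $r^fT\mathcal{L}_G$; this map is order preserving by the inclusion-preservation just noted, and it is surjective onto the objects of $\langle r^eT\mathcal{L}_G\rangle$ exactly because Lemma \ref{lemwr} asserts that $\theta$ weakly reflects $\ler$ — every idempotent of $T\mathcal{L}_G$ lying $\ler$-below $r^e$ is $\mathscr R$-equivalent to, hence generates the same right ideal as, some $r^f$ with $f\ler e$. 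Running the weak-reflection clause in both directions also yields that this object map reflects the order and is therefore injective. On morphisms, a morphism of $\langle\overrightarrow e\rangle$ is represented by some $\langle f_1,\alpha,f_2]$ and is carried to $\lambda(r^{f_1},r^\alpha,r^{f_2})$, whose translate component is $r^\alpha$; by Remark \ref{rmkcone} the assignment $\overline\alpha\mapsto r^\alpha$ is well defined, by the faithfulness argument above it is injective, and Lemma \ref{lemr} shows it is compatible with composition, so $\bar F|_{\langle\overrightarrow e\rangle}$ is a faithful, full, bijective-on-objects, inclusion-compatible functor onto $\langle r^eT\mathcal{L}_G\rangle$, i.e. an isomorphism. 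Finally, since $\mathcal{R}_{T\mathcal{L}_G}$ is a normal category, any morphism $r^eT\mathcal{L}_G\to r^fT\mathcal{L}_G$ factors through its epimorphic component and an inclusion, both of which lie in images of principal ideals under $\bar F$ by the previous sentence; this gives fullness of $\bar F$.

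I expect the delicate point to be precisely the surjectivity of the object map $\overrightarrow f\mapsto r^fT\mathcal{L}_G$ on each ideal $\langle\overrightarrow e\rangle$: this is where the clause of Lemma \ref{lemwr} that $\theta$ weakly reflects $\ler$ (and, through it, \cite[Proposition 2.14]{mem}) is indispensable, and it is also what makes it worthwhile to pass through the semigroup of cones $T\mathcal{L}_G$ and Theorem \ref{nd} rather than attempt to argue inside the functor category $N^\ast\mathcal{L}_G$ directly.
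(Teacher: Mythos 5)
Your outline follows the paper's own strategy (inclusion preservation, faithfulness, then the restriction to ideals), but two essential steps are missing or misplaced, so the proof does not close. First, fullness. You assert that $\bar F|_{\langle\overrightarrow e\rangle}$ is ``full \dots onto $\langle r^eT\mathcal{L}_G\rangle$'' and then deduce global fullness from this via normal factorisation --- but the local fullness you invoke is never proved, so the argument is circular. The substantive point, which nothing in your text supplies, is that every morphism $\lambda(r^e,\gamma,r^f)$ of $\mathcal{R}_{T\mathcal{L}_G}$, i.e.\ every cone $\gamma\in r^f\,T\mathcal{L}_G\,r^e$, is of the form $r^\alpha$ for some $\alpha\in\mathcal{G}$. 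The paper proves exactly this: from $r^f\gamma=\gamma=\gamma r^e$ one gets $\gamma=r^f\ast\gamma(\overleftarrow f)$ with $c_\gamma\subseteq\overleftarrow e$; the component $\gamma(\overleftarrow f)$ is an epimorphism of $\mathcal{L}_G$, hence equals $[f,\alpha,\mathbf r(\alpha)\rangle$ for some $\alpha\in\mathcal{G}$ with $\mathbf d(\alpha)\ler f$, and then $r^\alpha=r^f\ast\gamma(\overleftarrow f)=\gamma$. Without this, neither the local nor the global fullness claim is established.

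Second, the object map on an ideal. You claim surjectivity onto $v\langle r^gT\mathcal{L}_G\rangle$ holds ``exactly because'' $\theta$ weakly reflects $\ler$, but Lemma \ref{lemwr} only concerns idempotents already in the image of $\theta$ (those of the form $r^e$); an arbitrary idempotent cone $\epsilon$ with $\epsilon T\mathcal{L}_G\subseteq r^gT\mathcal{L}_G$ is not a priori of that form, so weak reflection gives nothing for surjectivity. The actual argument normalises to $\epsilon\leqslant r^g$, writes $\epsilon=r^g\ast\mu$ for a unique epimorphism $\mu$ by \cite[Lemma III.3]{cross}, shows $\mu$ is a retraction $[g,1_f,f\rangle$, and concludes $\epsilon=r^gr^f=r^f$. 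Weak reflection is instead what drives the injectivity half, and even there ``running the clause in both directions'' skips the real work: from $r^e\mathrel{\mathscr R}r^f$ one only obtains some $h$ with $r^h=r^e$ and $h\ler f$, and one must still prove $h\mathrel{\mathscr R}f$ (via $r^{hf}=r^f$ and comparison of apices) and then $e=h$ (via the chain $[e,1_e,e\rangle=\dots=[e,1_h,h\rangle$ and Lemma \ref{lemp}) before $\overrightarrow e=\overrightarrow f$ follows. A smaller point: in your faithfulness argument, evaluating $r^\alpha=r^\beta$ at $\overleftarrow{\mathbf d(\alpha)}$ only yields $\overline\alpha=\overline{(1_{\mathbf d(\alpha)}\circ\beta)_h}$; evaluating at $\overleftarrow f$, where $\mathbf d(\alpha),\mathbf d(\beta)\ler f$ makes both composites collapse, is what lets Lemma \ref{lemp} give $\overline\alpha=\overline\beta$ directly.
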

\begin{proof}
Lemma \ref{lemF} shows that $\bar{F}$ is a well-defined functor. To show that $\bar{F}$ is a local isomorphism, we need to show that $\bar{F}$ is inclusion preserving, fully faithful and for each $c \in v\mathcal{C}$, $F_{|\langle c \rangle}$ is an isomorphism of the ideal $\langle c \rangle$ onto $\langle F(c) \rangle$. First observe that for an inclusion $\langle e,1_e,f]\in \mathcal{R}_G$, we have $\bar{F}(\langle e,1_e,f])=\lambda(r^e,r^e,r^f)$. Then by dual of \cite[Proposition IV.13(d)]{cross}, the morphism $\lambda(r^e,r^e,r^f)$ is an inclusion in $\mathcal{R}_{T\mathcal{L}_G}$ and so $\bar{F}$ is inclusion preserving.

Now we proceed to show that $\bar{F}$ is fully-faithful. Suppose that $\langle e,\alpha,f]$ and $\langle e,\beta,f]$ are two morphisms in $\mathcal{R}_{G}$ from $\overrightarrow{e}$ to $\overrightarrow{f}$ such that $\bar{F}(\langle e,\alpha,f])=\bar{F}(\langle e,\beta,f])$. Then, $\lambda(r^e,r^\alpha,r^f)=\lambda(r^e,r^\alpha,r^f)$, that is, $r^\alpha=r^\beta$. In particular,  for $\overleftarrow{f}\in v\mathcal{L}_G$, we have $r^\alpha(\overleftarrow{f})=r^\beta(\overleftarrow{f})$. That is, $[f,(1_f\circ\alpha)_{h_1},\mathbf{r}(\alpha) \rangle = [f,(1_f\circ\beta)_{h_2},\mathbf{r}(\beta) \rangle $. But since $\mathbf{d}(\alpha)\ler  f$, we have $f\mathbf{d}(\alpha)=\mathbf{d}(\alpha)$, i.e., $(1_f\circ\alpha)_{h_1}=\alpha$. Also since $\mathbf{d}(\beta)\ler  f$, we have $(1_f\circ\beta)_{h_2}=\beta$. Thus we see that $[f,\alpha,\mathbf{r}(\alpha) \rangle = [f,\beta,\mathbf{r}(\beta) \rangle $. This implies that, in particular, $\overline{\alpha}=\overline{\beta}$. So, we have  $\langle e,\alpha,f] = \langle e,\beta,f]$ and hence $\bar{F}$ is a faithful functor.

To show that $\bar{F}$ is full, let $\lambda(r^e,\gamma,r^f)$ be an arbitrary morphism in $\mathcal{R}_{T\mathcal{L}_G}$ such that $\gamma\in r^f \:{T\mathcal{L}_G}\:r^e$. Then $r^f\gamma=\gamma=\gamma r^e$ and so, $\gamma=r^f\ast \gamma(\overleftarrow{f})$ and $c_\gamma\subseteq c_{r^e}= \overleftarrow{e}$. Observe that $\gamma(\overleftarrow{f})$ is an epimorphism in $\mathcal{L}_G$ and so $\gamma(\overleftarrow{f})=[f,\alpha,\mathbf{r}(\alpha)\rangle$ for some $\alpha\in\mathcal{G}$ such that $\mathbf{d}(\alpha)\ler f$. So,
$$r^\alpha=r^{(1_f\circ\alpha)_h}=r^fr^\alpha=r^f\ast [f,\alpha,\mathbf{r}(\alpha)\rangle=r^f\ast\gamma(\overleftarrow{f})=\gamma.$$
So, $\bar{F}(\langle e,\alpha,f])=\lambda(r^e,r^\alpha,r^f) = \lambda(r^e,\gamma,r^f)$ and $\bar{F}$ is full. Hence $\bar{F}$ is fully-faithful.

Now, to complete the proposition, we need to show that for any $g\in E$, the functor $\bar{F}_{|\langle\overrightarrow{g}\rangle}$  is an isomorphism. Since $\bar{F}$ is already shown to be fully-faithful, it suffices to show that the vertex map $v\bar{F}_{|\langle\overrightarrow{g}\rangle}$ is an order isomorphism. To this end, suppose that $\overrightarrow{e},\overrightarrow{f} \leq_R \overrightarrow{g}$ such that $\bar{F}(\overrightarrow{e})=\bar{F}(\overrightarrow{f})$. So, we have  $r^eT\mathcal{L}_G=r^fT\mathcal{L}_G$, i.e., $r^e\mathrel{\mathscr{R}}r^f\iff r^er^f=r^f$, in the biordered set $E(T\mathcal{L}_G)$. Then in the biordered set $E$, we have $e,f\in E$ such that $r^e\ler r^f$. So by Lemma \ref{lemwr}, there exists $h\in E$ such that $r^h=r^e$ and $h\ler f$. Then for $k\in \mathcal{S}(h,f)$ and using Remark \ref{rmkic}, we have
$$r^{hf}=r^hr^f=r^er^f=r^f.$$
Evaluating the apices of the cones, we get $hf\mathrel{\mathscr{L}}f$. Also since $h\ler f\text{ implies } hf\leqslant f$, we have that $hf=f$, i.e., $f\ler h$. But since $h\ler f$ also, we have $h\mathrel{\mathscr{R}}f$. Then,
\begin{align*}
[e,1_e,e\rangle&=[e,1_{ge},e\rangle &&\text{ since }e\ler g\text{ implies } e=ge\\
&=[e,(1_g\circ1_e)_{k_1},e\rangle &&\text{ where }k_1\in \mathcal{S}(g,e)\\
&=[e,1_g,g\rangle\:[g,1_e,e\rangle &&\text{ using }(\ref{eqncomplg})\\
&=[e,1_g,g\rangle\:[g,1_h,h\rangle &&\text{ since } [g,1_e,e\rangle=r^e(\overleftarrow{g})=r^h(\overleftarrow{g})=[g,1_h,h\rangle\\
&=[e,(1_g\circ1_h)_{k_2},h\rangle &&\text{ using }(\ref{eqncomplg}) \text{ where } k_2\in \mathcal{S}(g,h) \\
&=[e,1_{gh},h\rangle &&\text{ since $gh$ is a basic product in }E\\
&=[e,1_h,h\rangle &&\text{ since }h\ler g\text{ implies } h=gh.
\end{align*}
So by Lemma \ref{lemp}, we see that $\overline{1_e}=\overline{1_h}$. That is, $e=h$ and since $h\mathrel{\mathscr{R}}f$, we have $e\mathrel{\mathscr{R}}f$. Thus, $\overrightarrow{e}=\overrightarrow{f}$ and so $v\bar{F}$ is injective on $\langle\overrightarrow{g}\rangle$.

Now, to prove $v\bar{F}$ is surjective on $\langle\overrightarrow{g}\rangle$, let us suppose that $\epsilon\in E(T\mathcal{L}_G)$ such that $\epsilon \:T\mathcal{L}_G \subseteq r^g\:T\mathcal{L}_G$. Then $\epsilon\ler r^g$ and so $\epsilon\mathrel{\mathscr{R}}\epsilon r^g\leqslant r^g$. Without any loss in generality, we can assume that $\epsilon=\epsilon r^g$ and then we have $\epsilon \leqslant r^g$. If $c_{\epsilon}=\overleftarrow{e}$, then $\overleftarrow{e}=c_\epsilon\subseteq c_{r^g}=\overleftarrow{g}$ and so $e\lel g$. Since $e\mathrel{\mathscr{L}}ge\leqslant g$, we can assume that $e\leqslant g$. Now since  $\epsilon\ler r^g$ implies $r^g\epsilon=\epsilon$, using \cite[Lemma III.3]{cross}, there is a unique epimorphism $\mu\in \mathcal{L}_G(\overleftarrow{g},\overleftarrow{e})$ such that $\epsilon=r^g\ast\mu$. Then,
$$1_{\overleftarrow{e}}= [e,1_e,e\rangle= \epsilon(\overleftarrow{e})=r^g\epsilon(\overleftarrow{e})=r^g(\overleftarrow{e})\ast\mu=[e,1_{eg},g\rangle\mu=[e,1_{e},g\rangle\mu.$$
Observe that $[e,1_{e},g\rangle$ is an inclusion and so $\mu$ is a retraction in $\mathcal{L}_G$ of the form $[g,1_{f},f\rangle$ for some $f\leqslant g$, i.e., $gf=fg=f$. Then, since $\mu$ is an epimorphism and using (\ref{eqnsg1}),
$$\epsilon=r^g\ast [g,1_{f},f\rangle=r^g\ast (r^f(\overleftarrow{g}))^\circ=r^g\:r^f=r^{gf}=r^f.$$
Thus $\bar{F}(\overrightarrow{f})= r^f\:T\mathcal{L}_G= \epsilon \:T\mathcal{L}_G$ with $\overrightarrow{f} \leq_R  \overrightarrow{g}$ and so $\bar{F}$ is surjective on $\langle\overrightarrow{g}\rangle$. This also implies that if $\overrightarrow{f} \leq_R  \overrightarrow{g}\iff r^f \:T\mathcal{L}_G \subseteq r^g\:T\mathcal{L}_G$. Hence the object map $v\bar{F}$ is an order isomorphism on $\langle\overrightarrow{g}\rangle$. Thus, $\bar{F}$ is a local isomorphism.
\end{proof}

\begin{thm}
The triple $(\mathcal{R}_G,\mathcal{L}_G;\Gamma_G)$ is a cross-connection.
\end{thm}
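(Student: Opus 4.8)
The plan is to cash in the factorisation of $\Gamma_G$ that was announced just before the theorem. By Definition \ref{ccxn}, two things must be checked: that $\Gamma_G\colon\mathcal{R}_G\to N^\ast\mathcal{L}_G$ is a local isomorphism, and that for every $\overleftarrow{c}\in v\mathcal{L}_G$ there is some $\overrightarrow{d}\in v\mathcal{R}_G$ with $\overleftarrow{c}\in M\Gamma_G(\overrightarrow{d})$.

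For the local isomorphism part I would invoke Theorem \ref{nd}, which supplies an isomorphism of normal categories $\bar{G}\colon\mathcal{R}_{T\mathcal{L}_G}\to N^\ast\mathcal{L}_G$. Reading off its action (from \cite[Theorem III.25]{cross}): on objects it sends the principal right ideal $\epsilon\,T\mathcal{L}_G$ of an idempotent cone $\epsilon$ to the $H$-functor $H(\epsilon;-)$, and on morphisms it sends a left translation $\lambda(\epsilon,\sigma,\epsilon')$ to the natural transformation $\eta_\epsilon\,\mathcal{L}_G(g,-)\,\eta_{\epsilon'}^{-1}$, where $g\colon c_{\epsilon'}\to c_\epsilon$ is the morphism determined by $\sigma\in\epsilon'\,T\mathcal{L}_G\,\epsilon$ via the representability isomorphisms, namely $g$ is the composite of $\sigma(c_{\epsilon'})$ with the inclusion $c_\sigma\subseteq c_\epsilon$. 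Comparing this with \eqref{eqncxn} and \eqref{eqnf}: both $\Gamma_G$ and $\bar{G}\circ\bar{F}$ send $\overrightarrow{e}$ to $H(r^e;-)$; and on a morphism $\langle e,\alpha,f]$ the composite $\bar{G}\bar{F}$ yields the natural transformation attached to $r^\alpha\in r^f\,T\mathcal{L}_G\,r^e$. Using the identity $r^\alpha=r^f\ast[f,\alpha,\mathbf{r}(\alpha)\rangle$ established in the proof of the preceding proposition, the morphism $g$ attached to $r^\alpha$ is $r^\alpha(\overleftarrow{f})=[f,\alpha,\mathbf{r}(\alpha)\rangle$ followed by the inclusion $\overleftarrow{\mathbf{r}(\alpha)}\subseteq\overleftarrow{e}$, which is exactly $[f,\alpha,e\rangle$; hence $\bar{G}(\bar{F}(\langle e,\alpha,f]))=\eta_{r^e}\,\mathcal{L}_G([f,\alpha,e\rangle,-)\,\eta_{r^f}^{-1}=\Gamma_G(\langle e,\alpha,f])$. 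Thus $\Gamma_G=\bar{G}\circ\bar{F}$. Since $\bar{F}$ is a local isomorphism by the previous proposition and $\bar{G}$ is an isomorphism of normal categories, their composite is again a local isomorphism: inclusion preservation and full faithfulness are inherited, and for each $\overrightarrow{g}$ one has $(\bar{G}\bar{F})_{|\langle\overrightarrow{g}\rangle}=\bar{G}_{|\langle\bar{F}(\overrightarrow{g})\rangle}\circ\bar{F}_{|\langle\overrightarrow{g}\rangle}$, a composite of isomorphisms onto the corresponding ideals.

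For the remaining condition, fix $\overleftarrow{c}\in v\mathcal{L}_G$ and take $\overrightarrow{d}:=\overrightarrow{c}$. Then $\Gamma_G(\overrightarrow{c})=H(r^c;-)$, so $M\Gamma_G(\overrightarrow{c})=MH(r^c;-)=Mr^c=\{\overleftarrow{g}\in v\mathcal{L}_G:r^c(\overleftarrow{g})\text{ is an isomorphism}\}$. Now $c\in\mathcal{S}(c,c)$ (reflexivity gives $c\leqslant c$, and the sandwich maximality condition is immediate since $gc=cg=g$ for any $g\leqslant c$), so the defining formula of the cone $r^c$ gives $r^c(\overleftarrow{c})=[c,(1_c\circ 1_c)_c,c\rangle=[c,1_c,c\rangle=1_{\overleftarrow{c}}$, an isomorphism. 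Hence $\overleftarrow{c}\in Mr^c=M\Gamma_G(\overrightarrow{c})$, and $(\mathcal{R}_G,\mathcal{L}_G;\Gamma_G)$ is a cross-connection.

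The main obstacle is the identification $\Gamma_G=\bar{G}\circ\bar{F}$: one must unwind how the abstract isomorphism of Theorem \ref{nd} translates a left translation of the semigroup $T\mathcal{L}_G$ into a natural transformation of $N^\ast\mathcal{L}_G$, and then trace it through the representability isomorphisms $\eta_{r^e}$ so as to recognise the hom-functor map $\mathcal{L}_G([f,\alpha,e\rangle,-)$ of \eqref{eqncxn}. Once this bookkeeping is done, everything else — stability of the three local-isomorphism conditions under post-composition with an isomorphism, and the one-line $M$-set computation — is routine.
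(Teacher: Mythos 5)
Your proposal is correct and follows essentially the same route as the paper: it factorises $\Gamma_G$ as the composite of the local isomorphism $\bar{F}$ with the isomorphism $\bar{G}$ supplied by Theorem \ref{nd}, identifying $\widetilde{r^\alpha}=[f,\alpha,e\rangle$ so that $\bar{G}(\bar{F}(\langle e,\alpha,f]))=\eta_{r^e}\,\mathcal{L}_G([f,\alpha,e\rangle,-)\,\eta_{r^f}^{-1}=\Gamma_G(\langle e,\alpha,f])$, and then checks the $M$-set condition via $r^e(\overleftarrow{e})=[e,1_e,e\rangle$ being an isomorphism. The only difference is that you spell out the unwinding of $\bar{G}$ and the identification of $\widetilde{r^\alpha}$ in more detail than the paper, which simply cites \cite[Lemma III.22]{cross}.
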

\begin{proof}
First, recall from Theorem \ref{nd} that for a normal category $\mathcal{C}$, the normal category $\mathcal{R}_{T\mathcal{C}}$ is isomorphic to the normal dual $N^\ast\mathcal{C}$. For the case when $\mathcal{C}=\mathcal{L}_G$, an isomorphism  $\bar{G}\colon\mathcal{R}_{T\mathcal{L}_G}\to N^*\mathcal{L}_G$ can be defined as follows. For each $\epsilon T\mathcal{L}_G\in v\mathcal{R}_{T\mathcal{L}_G}$ and for each morphism $\lambda(\epsilon,\gamma,\epsilon') \colon \epsilon T\mathcal{L}_G \to \epsilon'  T\mathcal{L}_G$, we set
\begin{equation}\label{eqng}
v\bar{G}(\epsilon T\mathcal{L}_G):= H(\epsilon;-)\ \text{ and }\ \bar{G}( \lambda(\epsilon,\gamma,\epsilon')  ):=\eta_{\epsilon}\mathcal{L}_G(\widetilde{\gamma},-)\eta_{\epsilon'}^{-1}.
\end{equation}
Here $\widetilde{\gamma}=\gamma(c_{\epsilon'})j(c_\gamma,c_\epsilon)$, as defined in \cite[Lemma III.22]{cross}.

Now we make use of the local isomorphism $\bar{F}\colon \mathcal{R}_{G} \to \mathcal{R}_{T\mathcal{L}_G}$ as defined in (\ref{eqnf}). For a morphism $\langle e, \alpha, f] \in \mathcal{R}_G$, we have $\bar{F} (\langle e, \alpha, f])= \lambda(r^e,r^\alpha,r^f)$ by definition. Since  $\widetilde{r^\alpha}= [f,\alpha,e \rangle$,
we have
$$\bar{F} \bar{G} (\langle e, \alpha, f])=\eta_{r^e} \mathcal{L}_G(  [    f,  {\alpha},e \rangle ,-) \eta_{r^f}^{-1}.$$
We see that the functor $\Gamma_G\colon  \mathcal{R}_G \to N^*\mathcal{L}_G$ defined in (\ref{eqncxn}) is equal to the composition of the local isomorphism $\bar{F}$ and the isomorphism $\bar{G}$. Hence the functor $\Gamma_G$ is a local isomorphism from $\mathcal{R}_G$ to $N^*\mathcal{L}_G$.

Further, for every $\overleftarrow{e}\in v\mathcal{L}_G$, we have $M\Gamma_G(\overrightarrow{e})=MH(r^e;-)=Mr^e$. Then the component $r^e(\overleftarrow{e})=[e,1_e,e\rangle$ is an isomorphism, so that $\overleftarrow{e}\in M\Gamma_G(\overrightarrow{e})$. Thus $\Gamma_G$ is a cross-connection from $\mathcal{R}_G$ to $N^*\mathcal{L}_G$.
\end{proof}

Dually, we can show that $(\mathcal{L}_G,\mathcal{R}_G;\Delta_G)$ defined by the functor $\Delta_G\colon  \mathcal{L}_G \to N^*\mathcal{R}_G$ as follows 
\begin{equation}
v\Delta_G(\overleftarrow{e}):=H(l^e;-) \text{ and } \Delta_G(  [    e,{\alpha},f \rangle ):=\eta_{l^e} \mathcal{R}_G ( \langle  f,{\alpha},e  ]  ,-)\eta_{l^f}^{-1}
\end{equation}
is a cross-connection.

We can now see that the biordered set $E_{\Gamma_G}$ of the cross-connection $(\mathcal{R}_G,\mathcal{L}_G;\Gamma_G)$  is given by the set
$$E_{\Gamma_G}:=\{(\overleftarrow{e},\overrightarrow{e}) : e\in E\}.$$
Here the element $(\overleftarrow{e},\overrightarrow{e})$ corresponds to the pair of cones $(\gamma(\overleftarrow{e},\overrightarrow{e}),\delta(\overleftarrow{e},\overrightarrow{e}))= (r^e,l^e)$. 
Further if we define the preorders $\lel $ and $\ler $ on $E_{\Gamma_G}$ as follows:
\begin{equation}\label{eqpog}
(\overleftarrow{e},\overrightarrow{e})\lel (\overleftarrow{f},\overrightarrow{f}) \iff \overleftarrow{e}\leq_L\overleftarrow{f}\text{ and } (\overleftarrow{e},\overrightarrow{e})\ler (\overleftarrow{f},\overrightarrow{f}) \iff \overrightarrow{e}\leq_R\overrightarrow{f},
\end{equation}
then $E_{\Gamma_G}$ forms a regular biordered set and it is biorder isomorphic to the biordered set $E$ of the inductive groupoid $\mathcal{G}$ (also see \cite{bicxn}). Moreover, it can be easily verified that given $(\overleftarrow{e},\overrightarrow{e}), (\overleftarrow{f},\overrightarrow{f})\in E_{\Gamma_G}$, the morphism $[e,\alpha,f\rangle \in \mathcal{L}_G(\overleftarrow{e},\overleftarrow{f})$ will be the transpose of the morphism $\langle f,\alpha,e] \in \mathcal{R}_G(\overrightarrow{f},\overrightarrow{e})$, relative to the cross-connection $(\mathcal{R}_G,\mathcal{L}_G;\Gamma_G)$.

\subsection{The functor $\mathbb{C}\colon\mathbf{IG}\to\mathbf{CC}$}
Having built the cross-connection $(\mathcal{R}_G,\mathcal{L}_G;\Gamma_G)$ associated with an inductive groupoid $(\mathcal{G},\epsilon)$, now we proceed to show that this correspondence is, in fact, functorial.

Suppose that $(\mathcal{G},\epsilon)$ and $(\mathcal{G}',\epsilon')$ are inductive groupoids with biordered sets $E$ and $E'$, respectively. Let their associated cross-connections be $(\mathcal{R}_G,\mathcal{L}_G;\Gamma_G)$ and $(\mathcal{R}_{G'},\mathcal{L}_{G'};\Gamma_{G'})$, respectively. If $F\colon \mathcal{G}\to \mathcal{G}'$ is an inductive functor between the inductive groupoids $\mathcal{G}$ and $\mathcal{G}'$, then define two functors $F_1:\mathcal{L}_{G}\to\mathcal{L}_{G'}$ and $F_2:\mathcal{R}_{G}\to\mathcal{R}_{G'}$ as follows:
$$vF_1(\overleftarrow{e}):=\overleftarrow{F(e)}\text{ and } F_1([e,\alpha,f\rangle):= [F(e),F(\alpha),F(f)\rangle, \text{ and }$$
$$vF_2(\overrightarrow{e}):=\overrightarrow{F(e)}\text{ and } F_2(\langle e,\alpha,f]):= \langle F(e),F(\alpha),F(f)]. $$

\begin{pro}
The pair $(F_1,F_2)$ is a cross-connection morphism from $(\mathcal{R}_G,\mathcal{L}_G;\Gamma_G)$ to $(\mathcal{R}_{G'},\mathcal{L}_{G'};\Gamma_{G'})$.
\end{pro}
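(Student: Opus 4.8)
The plan is to verify the three requirements of Definition \ref{morcxn}: that $F_1\colon\mathcal{L}_G\to\mathcal{L}_{G'}$ and $F_2\colon\mathcal{R}_G\to\mathcal{R}_{G'}$ are well-defined inclusion preserving functors, that the pair satisfies axiom (M1), and that it satisfies axiom (M2). The groundwork — establishing that $F_1$ and $F_2$ are genuine functors — is the bulk of the argument, and it rests entirely on the two defining features of an inductive functor: the object map $vF$ is a regular bimorphism of biordered sets, so it preserves basic products, hence the preorders $\lel$, $\ler$, the relations $\mathscr{L}$, $\mathscr{R}$, and, by (RBM), sandwich sets; and the square \eqref{indf} commutes, so $F(\epsilon(e,f))=\epsilon'(F(e),F(f))$ for every $E$-chain $(e,f)$. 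Also, being order preserving, $F$ preserves restrictions and corestrictions by the uniqueness clauses in (OG3), (OG3$^*$).

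First I would check well-definedness on morphisms. Two right-epi expressions satisfy $[e,\alpha,f\rangle=[g,\beta,h\rangle$ in $\mathcal{L}_G$ exactly when $e\mathrel{\mathscr{L}}g$, $\overline{\alpha}=\overline{\beta}$ and $f\mathrel{\mathscr{L}}h$. Since $vF$ preserves $\mathscr{L}$, it remains to see that $F$ carries $p$-classes to $p$-classes; but the defining equation $\alpha\,\epsilon(\mathbf{r}(\alpha),\mathbf{r}(\beta))=\epsilon(\mathbf{d}(\alpha),\mathbf{d}(\beta))\,\beta$ of $\overline{\alpha}=\overline{\beta}$ maps under the functor $F$ to the corresponding equation for $F(\alpha)$, $F(\beta)$ once one rewrites $F(\epsilon(\cdot,\cdot))$ as $\epsilon'(F(\cdot),F(\cdot))$ via \eqref{indf}, while $\mathbf{d}(\alpha)\mathrel{\mathscr{R}}\mathbf{d}(\beta)$ and $\mathbf{r}(\alpha)\mathrel{\mathscr{L}}\mathbf{r}(\beta)$ are preserved by $vF$; the dual argument (with $\sim_R$, $\mathscr{R}$) handles $F_2$. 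Functoriality then follows: $F$ preserves identities, and from the formula $(\alpha\circ\beta)_h=(\alpha{\downharpoonright}\mathbf{r}(\alpha)h)\,\epsilon(\mathbf{r}(\alpha)h,h)\,\epsilon(h,h\mathbf{d}(\beta))\,(h\mathbf{d}(\beta){\downharpoonleft}\beta)$ together with $F(h)\in\mathcal{S}(F(\mathbf{r}(\alpha)),F(\mathbf{d}(\beta)))$ one gets $F((\alpha\circ\beta)_h)=(F(\alpha)\circ F(\beta))_{F(h)}$; inserting this into \eqref{eqncomplg} shows $F_1$ respects composition, and dually for $F_2$ via \eqref{eqncomp1}.

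Inclusion preservation is immediate: an inclusion of $\mathcal{L}_G$ is $[e,1_e,f\rangle$ with $e\lel f$, and $F_1([e,1_e,f\rangle)=[F(e),1_{F(e)},F(f)\rangle$ is an inclusion since $F(e)\lel F(f)$; likewise for $F_2$. For (M1), recall that $E_{\Gamma_G}=\{(\overleftarrow{e},\overrightarrow{e}):e\in E\}$ with $\gamma(\overleftarrow{e},\overrightarrow{e})=r^e$, so $(F_1(\overleftarrow{e}),F_2(\overrightarrow{e}))=(\overleftarrow{F(e)},\overrightarrow{F(e)})\in E_{\Gamma_{G'}}$, and for any $g\in E$, using $r^e(\overleftarrow{g})=[g,(1_g\circ 1_e)_h,e\rangle$ with $h\in\mathcal{S}(g,e)$ one computes $F_1(r^e(\overleftarrow{g}))=[F(g),(1_{F(g)}\circ 1_{F(e)})_{F(h)},F(e)\rangle=r^{F(e)}(\overleftarrow{F(g)})$ because $F(h)\in\mathcal{S}(F(g),F(e))$; this is precisely $\gamma(F_1(\overleftarrow{e}),F_2(\overrightarrow{e}))(F_1(\overleftarrow{g}))$. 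For (M2) I would appeal to the observation recorded just before this subsection: for $(\overleftarrow{e},\overrightarrow{e}),(\overleftarrow{f},\overrightarrow{f})\in E_{\Gamma_G}$ the transpose relative to $\Gamma_G$ of $[e,\alpha,f\rangle\in\mathcal{L}_G(\overleftarrow{e},\overleftarrow{f})$ is $\langle f,\alpha,e]\in\mathcal{R}_G(\overrightarrow{f},\overrightarrow{e})$; applying $F_1$, $F_2$ and using the same characterization in $(\mathcal{R}_{G'},\mathcal{L}_{G'};\Gamma_{G'})$ shows that $F_2(\langle f,\alpha,e])=\langle F(f),F(\alpha),F(e)]$ is the transpose of $F_1([e,\alpha,f\rangle)=[F(e),F(\alpha),F(f)\rangle$, which is (M2); here one first rewrites a given morphism of $\mathcal{L}_G(\overleftarrow e,\overleftarrow f)$ in the form $[e,\alpha,f\rangle$ (adjusting a representative $[e',\alpha',f'\rangle$ with $e'\mathrel{\mathscr{L}}e$, $f'\mathrel{\mathscr{L}}f$ by pre- and post-composing with suitable $\epsilon$-images), which is harmless since $F$ respects $\mathscr{L}$-classes and $\epsilon$.

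The main obstacle is the bookkeeping in the first two paragraphs — in particular confirming that $F$ transports the $p$-relation and the sandwich-mediated composition $(\alpha\circ\beta)_h$ correctly across the inductive functor; once that is in hand, (M1) is a one-line computation with the cones $r^e$ and (M2) is a direct consequence of the transpose description of the morphisms of the associated cross-connection, so the proposition follows.
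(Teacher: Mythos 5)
Your proposal is correct and follows essentially the same route as the paper: the paper likewise disposes of well-definedness, functoriality and inclusion preservation by citing that $F$ is an inductive (hence order- and sandwich-set-preserving) functor, then verifies (M1) by exactly your computation with the cones $r^e$ via $F(h)\in\mathcal{S}(F(g),F(e))$, and (M2) by the transpose description $[e,\alpha,f\rangle^*=\langle f,\alpha,e]$ transported through $F_1,F_2$. The only difference is that you spell out the ``easily verified'' groundwork (preservation of the $p$-relation via diagram \eqref{indf} and of $(\alpha\circ\beta)_h$ via restrictions and (RBM)), which the paper leaves implicit.
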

\begin{proof}
Since $F$ is an inductive functor, we can easily verify that $F_1$ and $F_2$ are  well-defined functors. Since $F$ is order preserving, $F_1$ and $F_2$ are inclusion preserving.	

For $e\in E$, if $(\overleftarrow{e},\overrightarrow{e})\in E_{\Gamma_G}$, then since $vF\colon E \to E'$ is a bimorphism, we have $F(e)\in E'$ and so $(\overleftarrow{F(e)},\overrightarrow{F(e)})\in E_{\Gamma_{G'}}$. Then, for an arbitrary $g\in E$,
\begin{align*}
F_1({\gamma(\overleftarrow{e},\overrightarrow{e})(\overleftarrow{g})})&=F_1([g,(1_g\circ1_e)_h,e\rangle)\quad\text{ where }h\in \mathcal{S}(g,e)\\
&= [F(g),F((1_g\circ1_e)_h),F(e)\rangle \\
&= [F(g),(F(1_g)\circ F(1_e))_{h'},F(e)\rangle\quad\text{ where } h':=F(h)\in F(\mathcal{S}(g,e))\\
&\phantom{= [F(g),(F(1_g)\circ F(1_e))_{h'},F(e)\rangle}\quad  \subseteq \mathcal{S}(F(g),F(e)) \text{ as $F$ is a functor}\\ &\phantom{= [F(g),(F(1_g)\circ F(1_e))_{h'},F(e)\rangle}\quad \text{ and $vF$ is a regular bimorphism}\\
&= r^{F(e)}(\overleftarrow{F(g)})\\
&= \gamma(\overleftarrow{F(e)},\overrightarrow{F(e)})(\overleftarrow{F(g)})\\
&= \gamma(F_1(\overleftarrow{e}),F_2(\overrightarrow{e}))(F_1(\overleftarrow{g}))
\end{align*}
Hence the pair of functors $(F_1,F_2)$ satisfies the condition (M1).

Suppose that $(\overleftarrow{e},\overrightarrow{e}), (\overleftarrow{f},\overrightarrow{f})\in E_{\Gamma_G}$ such that the morphism $\langle f,\alpha,e] \in \mathcal{R}_G(\overrightarrow{f},\overrightarrow{e})$ is the transpose of the morphism $[e,\alpha,f\rangle \in \mathcal{L}_G(\overleftarrow{e},\overleftarrow{f})$. Then,
\begin{align*}
F_2(\langle f,\alpha,e])&=\langle F(f),F(\alpha),F(e)]\\
&=[ F(e),F(\alpha),F(f)\rangle^*\\
&= (F_1([e,\alpha,f\rangle))^*
\end{align*}
So, the pair of functors $(F_1,F_2)$ satisfies the condition (M2) also and hence is a morphism of cross-connections.
\end{proof}
The proof of the following theorem is routine with all the preparations we have made so far.
\begin{thm}
The assignments
$$(\mathcal{G},\epsilon)\mapsto (\mathcal{R}_G,\mathcal{L}_G;\Gamma_G) \text{ and }F\mapsto(F_1,F_2)$$	
is a functor $\mathbb{C}\colon\mathbf{IG}\to\mathbf{CC}$.
\end{thm}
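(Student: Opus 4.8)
The plan is to check the three requirements for $\mathbb{C}$ to be a functor: that the object assignment lands in $\mathbf{Cr}$, that the morphism assignment lands among cross-connection morphisms, and that identities and composition are preserved. The first two are not really new work. The well-definedness of $(\mathcal{G},\epsilon)\mapsto(\mathcal{R}_G,\mathcal{L}_G;\Gamma_G)$ is precisely the theorem established just above, and the well-definedness of $F\mapsto(F_1,F_2)$ is the proposition immediately preceding this theorem. By Proposition~\ref{procat2} cross-connection morphisms compose, so it makes sense to ask whether $\mathbb{C}$ respects composition; by Proposition~\ref{procat1} the composite $FF'$ of inductive functors is again inductive, so that $\mathbb{C}(FF')$ is defined in the first place.

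First I would verify preservation of identities. Taking $F=1_{\mathcal{G}}$ gives $vF=1_E$, hence $vF_1(\overleftarrow e)=\overleftarrow{F(e)}=\overleftarrow e$ and $F_1([e,\alpha,f\rangle)=[F(e),F(\alpha),F(f)\rangle=[e,\alpha,f\rangle$ for every morphism of $\mathcal{L}_G$, so $F_1=1_{\mathcal{L}_G}$; dually $F_2=1_{\mathcal{R}_G}$. Since the identity morphism of $(\mathcal{R}_G,\mathcal{L}_G;\Gamma_G)$ in $\mathbf{Cr}$ is exactly the pair $(1_{\mathcal{L}_G},1_{\mathcal{R}_G})$, this yields $\mathbb{C}(1_{(\mathcal{G},\epsilon)})=1_{\mathbb{C}(\mathcal{G},\epsilon)}$.

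Then I would verify preservation of composition. For inductive functors $F\colon\mathcal{G}\to\mathcal{G}'$ and $F'\colon\mathcal{G}'\to\mathcal{G}''$, I would compare both components of $\mathbb{C}(FF')$ and of $\mathbb{C}(F)\,\mathbb{C}(F')=(F_1F'_1,F_2F'_2)$. On objects, $v(FF')_1(\overleftarrow e)=\overleftarrow{(FF')(e)}=\overleftarrow{F'(F(e))}=vF'_1\bigl(vF_1(\overleftarrow e)\bigr)$, and on morphisms $(FF')_1([e,\alpha,f\rangle)=[(FF')(e),(FF')(\alpha),(FF')(f)\rangle=F'_1\bigl(F_1([e,\alpha,f\rangle)\bigr)$, using only that $FF'$ acts componentwise on the triples and is itself a functor; hence $(FF')_1=F_1F'_1$. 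The verbatim argument with the $\langle e,\alpha,f]$-notation gives $(FF')_2=F_2F'_2$, whence $\mathbb{C}(FF')=\mathbb{C}(F)\,\mathbb{C}(F')$.

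There is no genuine obstacle here; the only delicate point — that $F_1$ and $F_2$ are well defined on the $\sim_L$-, $\sim_R$- and $p$-classes out of which $\mathcal{L}_G$ and $\mathcal{R}_G$ are built, which rests on $F$ being an order-preserving functor whose object map is a regular bimorphism — has already been dealt with in the preceding proposition. Once that is granted, functoriality follows formally from the fact that $F$ is applied componentwise to the triples $(e,\alpha,f)$.
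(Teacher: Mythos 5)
Your proposal is correct and is exactly the routine verification the paper has in mind: the paper gives no written proof (it states the theorem "can be verified easily"), and your outline — well-definedness from the two preceding results, identity preservation from $vF=1_E$, and composition preservation from the componentwise action of $F$ on the triples $[e,\alpha,f\rangle$ and $\langle e,\alpha,f]$ — fills in precisely those details in the intended way.
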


\section{The category equivalence}
\label{sec:equiv}

Having built the functors $\mathbb{I}\colon\mathbf{CC} \to\mathbf{IG}$ and the functor $\mathbb{C}:\mathbf{IG}\to\mathbf{CC}$ in the previous sections, now we proceed to prove the category eqivalence between the categories $\mathbf{IG}$ and $\mathbf{CC}$. To this end, we need to show that the functor $\mathbb{I}\mathbb{C}$ is naturally isomorphic to the functor $1_\mathbf{CC}$ and the functor $\mathbb{C}\mathbb{I}$ is naturally isomorphic to the functor $1_\mathbf{IG}$.

We first show that $\mathbb{I}\mathbb{C} \cong 1_\mathbf{CC}$. Suppose that $(\mathcal{D},\mathcal{C};\Gamma)$ is a cross-connection  with an associated inductive groupoid $\mathcal{G}_\Gamma$. Recall that for any two elements $(c,d)$ and $(c',d')$ in the biordered set $E_\Gamma$,
\begin{equation}\label{eqbi2}
(c,d)\lel (c',d')\iff c\subseteq c'\text{  and  }(c,d)\ler (c',d')\iff d\subseteq d'.
\end{equation}
That is, $(c,d)\mathrel{\mathscr{L}}(c',d')\iff c= c'$ and so for an arbitrary $(c,d)\in E_\Gamma$, the canonical image $\overleftarrow{(c,d)}=c$. Similarly, we have $\overrightarrow{(c,d)}=d$.

Also, for a morphism $(f,g)\colon(c,d)\to(c',d')$ in the groupoid $\mathcal{G}_\Gamma$, by Proposition \ref{progl}, we have $\overleftarrow{(f,g)}=f\colon c \to c'$. So, the corresponding morphism $\overleftarrow{(f,g)}$ in the category $(\mathcal{G}_\Gamma)_L$ can be represented by just $[c,f,c'\rangle$. Similarly, we have  $\overrightarrow{(f,g)}=\langle d,g,d']$.

So, if $(\mathcal{D},\mathcal{C};\Gamma)$ is a a cross-connection with an associated inductive groupoid $\mathcal{G}_\Gamma$ and the cross-connection $\mathbb{C}(\mathcal{G}_\Gamma)$ associated with the inductive groupoid $\mathcal{G}_\Gamma$ is given by $(\mathcal{R}_{\mathcal{G}_\Gamma},\mathcal{L}_{\mathcal{G}_\Gamma}; \Gamma_{\mathcal{G}_\Gamma})$, then the functors $(F_{\Gamma1},F_{\Gamma2})$ defined as follows constitute a cross-connection isomorphism. The functors $F_{\Gamma1}\colon\mathcal{C} \to \mathcal{L}_{\mathcal{G}_\Gamma}$ and $F_{\Gamma2}\colon\mathcal{D} \to \mathcal{R}_{\mathcal{G}_\Gamma}$ are given by
$$vF_{\Gamma1} (c):= c\text{ and }F_{\Gamma1}(f):=[c,f,c'\rangle;$$
$$vF_{\Gamma2} (d):= d\text{ and }F_{\Gamma2}(g):=\langle d,g,d'].$$
Hence, for each cross-connection  $(\mathcal{D},\mathcal{C};\Gamma)$, we can easily see that the assignment
 $$(\mathcal{D},\mathcal{C};\Gamma)\mapsto (F_{\Gamma1},F_{\Gamma2})$$
will be a natural isomorphism between the functor $1_{\mathbf{CC}}$ and the functor $\mathbb{I}\mathbb{C}$. That is, for an arbitrary cross-connection morphism $(F_1,F_2)\colon (\mathcal{D},\mathcal{C};\Gamma)\to (\mathcal{D}',\mathcal{C}';\Gamma')$, the following diagram commutes:
\begin{equation*}\label{}
\xymatrixcolsep{3.5pc}\xymatrixrowsep{5pc}\xymatrix
{
	(\mathcal{D},\mathcal{C};\Gamma) \ar[rr]^{(F_{\Gamma1},F_{\Gamma2})} \ar[d]_{(F_1,F_2)}
	&& (\mathcal{R}_{\mathcal{G}_\Gamma},\mathcal{L}_{\mathcal{G}_\Gamma}; \Gamma_{\mathcal{G}_\Gamma}) \ar[d]^{\mathbb{I}\mathbb{C}(F_1,F_2)} \\
	(\mathcal{D}',\mathcal{C}';\Gamma')\ar[rr]^{(F_{\Gamma1'},F_{\Gamma2'})} && (\mathcal{R}_{\mathcal{G}_{\Gamma'}},\mathcal{L}_{\mathcal{G}_{\Gamma'}}; \Gamma_{\mathcal{G}_{\Gamma'}})
}
\end{equation*}

Now, conversely suppose $(\mathcal{G},\epsilon)$ is an inductive groupoid such that $\mathbb{C}(\mathcal{G})=(\mathcal{R}_G,\mathcal{L}_G,\Gamma_G)$, then define the functor $F_\mathcal{G}\colon\mathcal{G}\to \mathcal{G}_{\mathbb{C}(\mathcal{G})} \subseteq \mathcal{L}_G\times\mathcal{R}_G$ as follows:
$$vF_\mathcal{G}(e):=(\overleftarrow{e},\overrightarrow{e})\text{ and }F_\mathcal{G}(\alpha):=([\mathbf{d}(\alpha),\alpha,\mathbf{r}(\alpha)\rangle,\langle\mathbf{d}(\alpha),\alpha^{-1},\mathbf{r}(\alpha)]).$$
We can easily verify that $F_\mathcal{G}$ is an inductive isomorphism. Further, for an inductive functor $F\colon \mathcal{G}\to\mathcal{G}'$, the assignment
$$\mathcal{G} \mapsto F_\mathcal{G}$$
makes the following diagram commute:
\begin{equation*}\label{}
\xymatrixcolsep{3pc}\xymatrixrowsep{4pc}\xymatrix
{
	\mathcal{G} \ar[rr]^{F_\mathcal{G}} \ar[d]_{F}
	&& \mathcal{G}_{\mathbb{C}(\mathcal{G})} \ar[d]^{\mathbb{C}\mathbb{I}(F)} \\
	\mathcal{G}'\ar[rr]^{F_{\mathcal{G}'}} && \mathcal{G}_{\mathbb{C}(\mathcal{G}')}
}
\end{equation*}
Hence, the functor $\mathbb{C}\mathbb{I}$ is naturally isomorphic with the functor $1_{\mathbf{IG}}$.

Summarising the discussion in this section, we have the following theorem which is the main result of this paper.
\begin{thm}
\label{thm:main}
The category $\mathbf{CC}$ is equivalent to the category $\mathbf{IG}$.	
\end{thm}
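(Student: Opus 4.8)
The strategy is to assemble the pieces that have already been constructed in Sections~\ref{indcxn} and~\ref{cxnind} into a genuine category equivalence by exhibiting the two required natural isomorphisms $\mathbb{I}\mathbb{C}\cong 1_{\mathbf{Cr}}$ and $\mathbb{C}\mathbb{I}\cong 1_{\mathbf{IG}}$. The whole argument rests on the observation, recorded just before the statement, that when one starts from a cross-connection $\Gamma=(\mathcal{D},\mathcal{C};\Gamma)$, forms its inductive groupoid $\mathcal{G}_\Gamma$ (so that $E_{\Gamma}$ sits inside $v\mathcal{C}\times v\mathcal{D}$ with $\overleftarrow{(c,d)}=c$, $\overrightarrow{(c,d)}=d$ by~\eqref{eqbi2}), and then runs the construction of Section~\ref{cxnind}, the left normal category $\mathcal{L}_{\mathcal{G}_\Gamma}$ one obtains is literally $\mathcal{C}$ up to the relabelling $c\mapsto\overleftarrow{(c,d)}$ and $f\mapsto [c,f,c'\rangle$ — this is exactly what Proposition~\ref{progl} is for, since it identifies the $\sim_L$-classes of morphisms of $\mathcal{G}_\Gamma$ with the morphisms of $\mathcal{C}$. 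Dually $\mathcal{R}_{\mathcal{G}_\Gamma}\cong\mathcal{D}$, and a short check shows these identifications intertwine the local isomorphisms $\Gamma$ and $\Gamma_{\mathcal{G}_\Gamma}$, so $(F_{\Gamma1},F_{\Gamma2})$ is an isomorphism of cross-connections.

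First I would spell out why $F_{\Gamma1}\colon\mathcal{C}\to\mathcal{L}_{\mathcal{G}_\Gamma}$ and $F_{\Gamma2}\colon\mathcal{D}\to\mathcal{R}_{\mathcal{G}_\Gamma}$ are functors: this amounts to checking that composition of morphisms $f\colon c\to c'$ in $\mathcal{C}$ corresponds to the partial composition~\eqref{eqncomplg} of the $[c,f,c'\rangle$ in $\mathcal{L}_{\mathcal{G}_\Gamma}$, which follows from the fact that the composition of isomorphisms in $\mathcal{G}_\Gamma$ is componentwise and that the sandwich element can be chosen trivially when the relevant objects are $\mathscr{L}$-comparable. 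Next I would verify that $F_{\Gamma1}$ is inclusion preserving, fully faithful, and a local isomorphism — these are immediate from the definitions of inclusions, isomorphisms and retractions in $\mathcal{L}_{\mathcal{G}_\Gamma}$ recorded in Remark~\ref{rmksubcat}. Then one checks (M1) and (M2) for the pair $(F_{\Gamma1},F_{\Gamma2})$: condition (M1) reduces to the identity $r^e(\overleftarrow{g})=[g,(1_g\circ 1_e)_h,e\rangle$ matching the evaluation of the cone $\gamma(c,d)$ in $\mathcal{C}$, and (M2) to the statement, already noted at the end of Section~\ref{cxnind}, that $[e,\alpha,f\rangle$ is the transpose of $\langle f,\alpha,e]$. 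Naturality in $\Gamma$ — i.e. commutativity of the square with an arbitrary cross-connection morphism $(F_1,F_2)$ on the left — follows because both composites send a morphism $f$ of $\mathcal{C}$ to $[F_1(c),F_1(f),F_1(c')\rangle$ and similarly on the $\mathcal{D}$-side, using the definition of $\mathbb{I}$ on morphisms ($m\mapsto m_{|\mathcal{G}_\Gamma}$) and of $\mathbb{C}$ on morphisms ($F\mapsto(F_1,F_2)$).

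For the other direction, starting from $(\mathcal{G},\epsilon)$ I would define $F_\mathcal{G}\colon\mathcal{G}\to\mathcal{G}_{\mathbb{C}(\mathcal{G})}$ by $vF_\mathcal{G}(e)=(\overleftarrow{e},\overrightarrow{e})$ and $F_\mathcal{G}(\alpha)=([\mathbf{d}(\alpha),\alpha,\mathbf{r}(\alpha)\rangle,\langle\mathbf{d}(\alpha),\alpha^{-1},\mathbf{r}(\alpha)])$, and check that (a) $vF_\mathcal{G}$ is the biorder isomorphism $E\to E_{\Gamma_G}$ already established in Section~\ref{cxnind} (also via \cite{bicxn}); (b) $F_\mathcal{G}$ is a groupoid morphism, which uses that composition in both $\mathcal{G}_L$ and $\mathcal{G}_R$ is inherited from $\mathcal{G}$ together with the evaluation functor; (c) $F_\mathcal{G}$ is an isomorphism, with inverse sending $([e,\alpha,f\rangle,\langle e,\beta,f])$ to $\alpha$ — well-defined because the $\sim_L$- and $\sim_R$-classes together pin down the $p$-class and hence, once domains and codomains are fixed, the morphism itself; (d) $F_\mathcal{G}$ is inductive, i.e. order preserving and compatible with the evaluation functors, which is where the axioms (IG1), (IG2) for $(\mathcal{G},\epsilon)$ and the construction of $\epsilon_{\Gamma_G}$ enter. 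Naturality in $\mathcal{G}$ then follows by unravelling $\mathbb{C}\mathbb{I}(F)$ on objects and morphisms. Finally, assembling (A) the natural isomorphism $1_{\mathbf{Cr}}\Rightarrow\mathbb{I}\mathbb{C}$ given by $\Gamma\mapsto(F_{\Gamma1},F_{\Gamma2})$ and (B) the natural isomorphism $1_{\mathbf{IG}}\Rightarrow\mathbb{C}\mathbb{I}$ given by $\mathcal{G}\mapsto F_\mathcal{G}$, and invoking the standard characterisation of a category equivalence (a pair of functors $\mathbb{I},\mathbb{C}$ with $\mathbb{I}\mathbb{C}\cong 1$ and $\mathbb{C}\mathbb{I}\cong 1$, see \cite{mac}), yields Theorem~\ref{thm:main}.

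The main obstacle I anticipate is not any single step but the bookkeeping needed to see that the two constructions are genuinely mutually inverse \emph{up to the natural isomorphisms}, rather than merely yielding equivalent structures: one must be careful that the relabelling $(c,d)\leftrightarrow c$ on the $\mathcal{C}$-side (and $(c,d)\leftrightarrow d$ on the $\mathcal{D}$-side) is consistent with the passage through $E$-chains and cones, in particular that the evaluation functor $\epsilon_\Gamma$ reconstructed from $\Gamma_{\mathcal{G}_\Gamma}$ coincides with the original $\epsilon_\Gamma$ under these identifications. Once Proposition~\ref{progl} and the transpose identity at the end of Section~\ref{cxnind} are in hand, however, each individual verification is routine, so the proof is essentially a careful diagram chase with no genuinely new idea required.
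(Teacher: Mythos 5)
Your proposal is correct and follows essentially the same route as the paper: the paper also establishes $\mathbb{I}\mathbb{C}\cong 1_{\mathbf{Cr}}$ via the pair $(F_{\Gamma1},F_{\Gamma2})$ built from the identifications $\overleftarrow{(c,d)}=c$, $\overrightarrow{(c,d)}=d$ and Proposition~\ref{progl}, and $\mathbb{C}\mathbb{I}\cong 1_{\mathbf{IG}}$ via the inductive isomorphism $F_\mathcal{G}$ with exactly the component formulas you give, then checks the same two naturality squares. Your extra remarks on verifying functoriality, (M1), (M2) and the inverse of $F_\mathcal{G}$ only flesh out steps the paper leaves as routine.
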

\section{Conclusion}
We have described the inductive groupoid associated with a cross-connection, and conversely, we have built the cross-connection associated with an inductive groupoid. We emphasise once again that both constructions have been accomplished within a purely category theoretic framework, free of any semigroup theoretic assumptions, under which the concept of an inductive groupoid and that of a cross-connection were initially conceived almost half a century ago.

Because of certain historical circumstances\footnote{See \cite[Section~1]{indcxn1} for a discussion of these circumstances.}, the category $\mathbf{CC}$ was much less known than the category $\mathbf{IG}$. The recent studies by the first author and Rajan \cite{tx,tlx,var,css} have shown that $\mathbf{CC}$ admits worthwhile applications within semigroup theory. We anticipate that our present result may serve as a starting point for looking for a wider spectrum of applications. As a concrete (though, most probably, difficult) task, one can consider developing an abelian version of the theory of cross-connections, aiming at a new categorical framework for the class of von Neumann regular rings. In a sense, this would bring the theory back to its initial origin --- see our discussion in the introduction.

Another possible development consists in involving categories enhanced with an appropriate topology. Many natural groupoid-based structures like pseudogroups of transformations and \'etale groupoids~\cite{resende} come with inbuilt topology by the very definition, and a topological version of the theory of inductive groupoids has already been considered by Rajan \cite{arr}. This suggests that topological variants of the theory of cross-connections should be possible and might be relevant.

A further direction is associated with the generalisations of the categorical structures involved. Several generalisations of inductive groupoids have been proposed and studied in the recent years (see \cite{armstrong,lawsonordered0,wangureg,swang2018}, for instance). On the other side, the cross-connections of \emph{consistent categories} (which are generalisations of normal categories) have been considered in \cite{conc}. It may be worthwhile to explore the inter-connections between the categorical structures arising from these generalisations. In particular, our results can guide in the characterisation of the cross-connection constructions corresponding to the aforementioned generalisations of inductive groupoids. In this regard, as the anonymous referee has pointed out, ``the cross-connections might prove a useful tool for non-regular semigroups and rings, where inductive groupoids and their generalisations fail, since the latter rely on idempotents whereas the former (in theory at
least) do not''.

\section*{Acknowledgements}
The authors thank the referee for their very meticulous reading of the manuscript, generous comments and several excellent suggestions which have helped improve the readability of the paper.

We acknowledge the financial support by the Ministry of Science and Higher Education of the Russian Federation (Ural Mathematical Center project No. 075-02-2021-1387).

\bibliographystyle{plain}

\begin{thebibliography}{99}
	
	\bibitem{armstrong}
	S.~Armstrong.
	\newblock Structure of concordant semigroups.
	\newblock {\em J. Algebra}, 118(1):205--260, 1988.
	
	\bibitem{var}
	P.~A. Azeef~Muhammed.
	\newblock Cross-connections and variants of the full transformation semigroup.
	\newblock {\em Acta Sci. Math. (Szeged)}, 84(3-4):377--399, 2018.
	
	\bibitem{tlx}
	P.~A. Azeef~Muhammed.
	\newblock Cross-connections of linear transformation semigroups.
	\newblock {\em Semigroup Forum}, 97(3):457--470, 2018.
	
	\bibitem{css}
	P.~A. Azeef~Muhammed and A.~R. Rajan.
	\newblock Cross-connections of completely simple semigroups.
	\newblock {\em Asian-European J. Math.}, 9(3):1650053, 2016.
	
	\bibitem{tx}
	P.~A. Azeef~Muhammed and A.~R. Rajan.
	\newblock Cross-connections of the singular transformation semigroup.
	\newblock {\em J. Algebra Appl.}, 17(3):1850047, 2018.
	
	\bibitem{conc}
	P.~A. Azeef~Muhammed, P.~G. Romeo, and K.~S.~S. Nambooripad.
	\newblock Cross-connection structure of concordant semigroups.
	\newblock {\em Internat. J. Algebra Comput.}, 30(1):181--216, 2020.
	
	
	\bibitem{indcxn1}
	P.~A. Azeef~Muhammed and M.~V. Volkov.
	\newblock Inductive groupoids and cross-connections of regular semigroups.
	\newblock {\em Acta Math. Hungar.},  157(1):80--120, 2019.
	
	\bibitem{clifinv}
	A.~H. Clifford.
	\newblock Semigroups admitting relative inverses.
	\newblock {\em Ann. Math.}, 42(4):1037--1049, 1941.
	
	\bibitem{eas}
	D.~Easdown.
	\newblock Biordered sets come from semigroups.
	\newblock {\em J. Algebra}, 96(2):581--591, 1985.
	
	\bibitem{ehrs}
	C.~Ehresmann.
	\newblock Gattungen von lokalen {S}trukturen.
	\newblock {\em Jahresber. Deutsch. Math.-Verein.}, 60:49--77, 1957.
	
	\bibitem{ehrs1}
	C.~Ehresmann.
	\newblock Categories inductives et pseudogroupes.
	\newblock {\em Ann. Inst. Fourier (Grenoble)}, 10:307--336, 1960.
	
	\bibitem{gouldrestr}
	V.~Gould and C.~Hollings.
	\newblock Restriction semigroups and inductive constellations.
	\newblock {\em Comm. Algebra}, 38(1):261--287, 2009.
	
	\bibitem{Gouldstokes2017}
	V.~Gould and T.~Stokes.
	\newblock Constellations and their relationship with categories.
	\newblock {\em Algebra Universalis}, 77(3):271--304, 2017.
	
	\bibitem{grilcross}
	P.~A. Grillet.
	\newblock Structure of regular semigroups: 1. {A} representation, 2.
	{C}ross-connections, 3. {T}he reduced case.
	\newblock {\em Semigroup Forum}, 8:177--183, 254--259, 260--265, 1974.
	
	\bibitem{hall}
	T.~E. Hall.
	\newblock On regular semigroups.
	\newblock {\em J. Algebra}, 24(1):1--24, 1973.
	
	\bibitem{lawsonordered0}
	M.~V. Lawson.
	\newblock Semigroups and ordered categories. {I}. {T}he reduced case.
	\newblock {\em J. Algebra}, 141(2):422--462, 1991.
	
	\bibitem{lawson}
	M.~V. Lawson.
	\newblock {\em Inverse Semigroups: The Theory of Partial Symmetries}.
	\newblock World Scientific, 1998.
	
	\bibitem{loday}
	J.~L. Loday.
	\newblock Dialgebras.
	\newblock In J.-L. Loday, A. Frabetti, F. Chapoton, F. Goichot, {\em Dialgebras and Related Operads}, Lecture Notes in Mathematics 1763: 7--66. Springer, 2001.
	
	\bibitem{mac}
	S.~MacLane.
	\newblock {\em Categories for the Working Mathematician}. Graduate Texts in Mathematics 5.
	\newblock Springer, 1971.
	
	\bibitem{jmar}
	J.~C. Meakin and A.~R. Rajan.
	\newblock Tribute to {K. S. S. Nambooripad}.
	\newblock {\em Semigroup Forum}, 91(2):299--304, 2015.
	
	\bibitem{bicxn}
	K.~S.~S. Nambooripad.
	\newblock Relations between cross-connections and biordered sets.
	\newblock {\em Semigroup Forum}, 16:67--82, 1978.
	
	\bibitem{mem}
	K.~S.~S. Nambooripad.
	\newblock {\em Structure of Regular Semigroups. {I}}.
	\newblock Mem. Amer. Math. Soc. 224, 1979.
	
	\bibitem{cross}
	K.~S.~S. Nambooripad.
	\newblock {\em Theory of Cross-connections}.
	\newblock Publication No. 28. Centre for Mathematical Sciences,
	Thiruvananthapuram, 1994.
	
	\bibitem{invpres}
	G.~B. Preston.
	\newblock Inverse semi-groups.
	\newblock {\em J. London Math. Soc.}, 29:396--403, 1954.
	
	\bibitem{putcha}
	M.~S. Putcha.
	\newblock {\em Linear Algebraic Monoids}.
	\newblock London Mathematical Society Lecture Note Series 133. Cambridge
	University Press, 1988.
	
	\bibitem{arr}
	A.~R. Rajan.
	\newblock Topological regular semigroups and topological inductive groupoids.
	\newblock {\em Semigroup Forum}, 46(1):160--167, 1993.
	
	\bibitem{rees}
	D.~Rees.
	\newblock On semigroups.
	\newblock {\em Math. Proc. Cambridge Philos. Soc.}, 36(4):387--400, 1940.
	
	\bibitem{resende}
	P.~Resende.
	\newblock \'Etale groupoids and their quantales.
	\newblock {\em Adv. Math.}, 208(1):147--209, 2007.
	
	\bibitem{schein}
	B.~M. Schein.
	\newblock On the theory of generalised groups and generalised heaps.
	\newblock {\em The Theory of Semigroups and its Applications, Saratov State University, Russia}, 1:286--324, 1965.
	\newblock (Russian).
	
	\bibitem{schein1}
	B.~M. Schein.
	\newblock On the theory of inverse semigroups and generalised grouds.
	\newblock {\em Amer. Math. Soc. Transl. Ser. 2}, 113:89--122, 1979.
	\newblock (expanded English translation of \cite{schein}).
	
	\bibitem{Stokes2017}
	T.~Stokes.
	\newblock D-semigroups and constellations.
	\newblock {\em Semigroup Forum}, 94(2):442--462, Apr 2017.
	
	\bibitem{rr}
	J.~von Neumann.
	\newblock On regular rings.
	\newblock {\em Proc. Natl. Acad. Sci. USA}, 12(22):707--713, 1936.
	
	\bibitem{cg1}
	J.~von Neumann.
	\newblock {\em Continuous Geometry}.
	\newblock  Princeton Landmarks in Mathematics and Physics 46. Princeton University Press, 1988.
	
	\bibitem{wagner}
	V.~V. Wagner.
	\newblock Generalised groups.
	\newblock {\em Doklady Acad. Nauk. SSSR}, 84:1119--1122, 1952.
	\newblock (Russian).
	
	\bibitem{swang2018}
	S.~Wang.
	\newblock On pseudo-{E}hresmann semigroups.
	\newblock {\em J. Aust. Math. Soc.}, 105(2):257–288, 2018.
	
	\bibitem{wangureg}
	Y.~Wang.
	\newblock Beyond regular semigroups.
	\newblock {\em Semigroup Forum}, 92(2):414--448, 2016.
	
\end{thebibliography}

\end{document}